 \theoremstyle{plain}
 \newtheorem{thm}{Theorem}[section]
  \newtheorem*{nonumberthm}{Theorem}
 \newtheorem{lem}[thm]{Lemma}
 \newtheorem{prop}[thm]{Proposition}
 \newtheorem{cor}[thm]{Corollary}
 \newtheorem{example}[thm]{Example}
 \theoremstyle{definition}
 \newtheorem{defn}[thm]{Definition}
 \newtheorem{defthm}[thm]{Definition/Theorem}
 \newtheorem{rem}[thm]{Remark}
  \newtheorem{conj}{Conjecture}
  \newtheorem*{res}{Result}
    \newtheorem*{ansatz}{Ansatz}
\newcommand{\bth}{\begin{thm}}
\renewcommand{\eth}{\end{thm}}
\newcommand{\bpr}{\begin{prop}}
\newcommand{\epr}{\end{prop}}
\newcommand{\ble}{\begin{lem}}
\newcommand{\ele}{\end{lem}}
\newcommand{\bco}{\begin{cor}}
\newcommand{\eco}{\end{cor}}
\newcommand{\bde}{\begin{defn}}
\newcommand{\ede}{\end{defn}}
\newcommand{\bex}{\begin{example}}
\newcommand{\eex}{\end{example}}
\newcommand{\bre}{\begin{rem}}
\newcommand{\ere}{\end{rem}}
\newcommand{\bcj}{\begin{conj}}
\newcommand{\ecj}{\end{conj}}
\newcommand{\beq}{\begin{equation}}
\newcommand{\eeq}{\end{equation}}
\newcommand{\ve}{{\varepsilon}}
\newcommand{\ot}{{\otimes}}
\newcommand{\op}{{\oplus}}
\newcommand{\lb}{\label}
\newcommand{\bpf}{\begin{proof}}
\newcommand{\epf}{\end{proof}}
\newcommand{\Hom}{\mathrm{Hom}}
\newcommand{\uu}{{\mathfrak u}}
\newcommand{\s}{{\mathfrak s}}
\newcommand{\E}{{\cal E}}
\newcommand{\T}{{\cal T}}
\newcommand{\C}{{\cal C}}
\newcommand{\Z}{{\cal Z}}
\newcommand{\D}{{\cal D}}
\newcommand{\bZ}{{\mathbb Z}}
\newcommand{\bC}{{\mathbbm C}}
\newcommand{\V}{{\cal V}}
\newcommand{\kk}{\mathbbm{k}}
\newcommand{\Rep}{\mathrm{Rep}}
\newcommand{\Fun}{\mathrm{Fun}}
\newcommand{\Aut}{\mathrm{Aut}}
\newcommand{\TL}{\mathcal{T\hspace{-1.5pt}L}}
\newcommand{\MF}{\mathrm{MF}}
\newcommand{\ZMF}{\mathrm{ZMF}}
\newcommand{\HMF}{\mathrm{HMF}}
\newcommand{\hmf}{\mathrm{hmf}}
\newcommand{\MFbi}{\mathrm{MF}_\mathrm{bi}}
\newcommand{\ZMFbi}{\mathrm{ZMF}_\mathrm{bi}}
\newcommand{\HMFbi}{\mathrm{HMF}_\mathrm{bi}}
\newcommand{\mfbi}{\mathrm{mf}_\mathrm{bi}}
\newcommand{\zmfbi}{\mathrm{zmf}_\mathrm{bi}}
\newcommand{\hmfbi}{\mathrm{hmf}_\mathrm{bi}}
\newcommand{\MFgr}{\mathrm{MF}^\mathrm{gr}}
\newcommand{\ZMFgr}{\mathrm{ZMF}^\mathrm{gr}}
\newcommand{\HMFgr}{\mathrm{HMF}^\mathrm{gr}}
\newcommand{\ZMFbigr}{\mathrm{ZMF}^\mathrm{gr}_\mathrm{bi}}
\newcommand{\HMFbigr}{\mathrm{HMF}^\mathrm{gr}_\mathrm{bi}}
\newcommand{\Pd}{\mathcal{PT}_d}
\newcommand{\Pdgr}{\mathcal{PT}_d^\mathrm{gr}}
\newcommand{\Ad}{\mathrm{Ad}}
\newcommand{\id}{\mathrm{id}}
\newcommand{\Id}{I\hspace{-.5pt}d}
\newlist{CFT}{enumerate}{1}
\setlist[CFT]{label=(C\arabic*):}
\newlist{VOA}{enumerate}{1}
\setlist[VOA]{label=VOA\arabic*:} 
\begin{document}

\chapter*{Matrix factorizations and the Landau-Ginzburg/conformal field theory correspondence}
\thispagestyle{empty}

\quad

\quad

\quad

\begin{center}

\large{Dissertation with the aim of achieving a doctoral degree

\quad

at the Faculty of Mathematics, Computer Science and Natural Sciences 

\quad

Department of Mathematics

\quad

of Universit{\"a}t Hamburg

\quad

\quad

\quad

\quad

\quad

submitted by Ana Ros Camacho

\quad

Hamburg, 2014}

\end{center}

\vspace{6cm}

\newpage
\quad

\quad

\quad

\quad

\quad
\quad

\quad

\quad

\quad

\quad
\quad

\quad

\quad

\quad

\quad
\quad

\quad

\quad

\quad

\quad
\quad

\quad

\quad
\quad

\quad

\quad

\quad

\quad
\quad

\quad

\quad

\quad

\quad
\quad

\quad

\quad

\quad

\quad
\quad

\quad

\quad

\quad

\quad

\quad

\quad
\begin{flushleft}
Day of oral defense: November 12th, 2014

\quad

The following evaluators recommend the admission of the dissertation:

Name: Prof. Dr. Ingo Runkel

Name: J--Prof. Dr. Daniel Roggenkamp
\end{flushleft}

\newpage
\chapter*{Eidesstattliche Versicherung}

\quad

\quad

Hiermit erkl\"are ich an Eides statt, dass ich die vorliegende Dissertationsschrift selbst verfasst und keine anderen als die angegebenen Quellen und Hilfsmittel benutzt habe.

\quad

\quad

\quad

\quad

\quad

\quad

\quad

\quad

\quad

\quad

\quad

\quad

\quad

\begin{flushright}
Hamburg, den 22. September 2014

\quad

\quad

\quad

\quad

Ana Ros Camacho
\end{flushright}

\newpage

\quad

\begin{flushright}
\textit{A vast similitude interlocks all,}

\textit{All spheres, grown, ungrown, small, large, suns, moons, planets}

\textit{All distances of place however wide,}

\textit{All distances of time, all inanimate forms,}

\textit{All souls, all living bodies, though they be ever so different, or in different worlds,}

\textit{All gaseous, watery, vegetable, mineral processes, the fishes, the brutes,}    

\textit{All nations, colors, barbarisms, civilizations, languages,}

\textit{All identities that have existed, or may exist, on this globe, or any globe,}

\textit{All lives and deaths, all of the past, present, future,}

\textit{This vast similitude spans them, and always has spann'd,}

\textit{And shall forever span them and compactly hold and enclose them.}

\quad
\quad

Walt Whitman, \textit{``On the beach at night alone"}
\end{flushright}

\quad

\quad

\quad

\quad

\quad

\quad

\begin{flushleft}
\textit{Out of the night that covers me,}

\textit{Black as the pit from pole to pole,}

\textit{I thank whatever gods may be}

\textit{For my unconquerable soul.}

\quad

\textit{In the fell clutch of circumstance}

\textit{I have not winced nor cried aloud.}

\textit{Under the bludgeonings of chance}

\textit{My head is bloody, but unbowed.}

\quad

\textit{Beyond this place of wrath and tears}

\textit{Looms but the horror of the shade,}

\textit{And yet the menace of the years}

\textit{Finds and shall find me unafraid.}

\quad

\textit{It matters not how strait the gate,}

\textit{How charged with punishments the scroll,}

\textit{I am the master of my fate:}

\textit{I am the captain of my soul.}

\quad
\quad

W.E. Henley, \textit{``Invictus"}
\end{flushleft}

\newpage
\tableofcontents

\newpage

\addcontentsline{toc}{chapter}{Introduction}
\chapter*{Introduction}

The focus of the present thesis is the so-called Landau-Ginzburg/conformal field theory (LG/CFT) correspondence. This correspondence dates from the late 80's and early 90's in the physics literature (\cite{KMS}, \cite{howewest1}, \cite{howewest2}, \cite{howewest3}, \cite{martinec}, \cite{vafawarner}) and in particular, it predicts a relation between defects in Landau-Ginzburg models and defects in conformal field theories. This relation is supported by examples, but not understood in general, nor up to date is there a clear mathematical conjecture of the LG/CFT correspondence. The pursuit of a precise mathematical statement of this conjecture continues to generate a rich mathematical output, and this PhD thesis is another contribution towards this end.

On the one hand of this correspondence, by a Landau-Ginzburg model we mean a 2-dimensional $\left( 2,2 \right)$--supersymmetric model characterized by a polynomial $W \in S$ (with $S$ a polynomial ring) called potential. Let two Landau-Ginzburg models be characterized by two polynomials $W$ and $W'$ resp. A defect between two Landau-Ginzburg models is a codimension one interface and it is described by a matrix (bi)factorization \cite{brunrogg1}, which is defined as a pair $\left(M,d^M \right)$, where $M$ is a $\mathbb{Z}_2$--graded free (bi)module over a polynomial ring and $d^M$ is a degree 1 $S$--(bi)module morphism satisfying that $d^M \circ d^M=(W-W').\mathrm{id}_M$. In 1980 Eisenbud \cite{eisenbud} first described matrix factorizations within the framework of maximal Cohen-Macaulay modules. They have been shown to occur in many areas of pure mathematics, such as representation theory, singularity theory, homological mirror symmetry, knot invariants and topological field theories (see e.g. \cite{buchweitz,khovroz,orlov,orlov2}). Matrix factorizations are the most important concept and the main tool of this thesis.

On the other side of the correspondence we have conformal field theories (CFTs). More precisely, we are interested in rational full CFTs, meaning a collection of single-valued functions called the correlators which satisfy the so-called Ward identities, and are compatible with the operator product expansion. In addition, a rational vertex algebra (some generalization of a commutative algebra) describes the chiral symmetries of the CFT. Starting from the ground--breaking paper by Moore and Seiberg \cite{mooreseiberg} and later by the works of Fuchs--Runkel--Schweigert et al \cite{tft1,tft2,tft3,tft4,tft5}, at present we have a good understanding of their behaviour. More precisely, a full CFT can be fixed with only a tuple $\left( \mathcal{V},A \right)$ where $\mathcal{V}$ is a rational vertex algebra and $A$ is a special symmetric Frobenius algebra in the representation category $\mathrm{Rep} \left( \mathcal{V} \right)$. Since $\mathcal{V}$ is rational, $\mathrm{Rep} \left( \mathcal{V} \right)$ is a modular tensor category. Defects between different CFTs are described by bimodules over two of these special symmetric Frobenius algebras.

Inspired by the physics literature, we would like to compare defects from both sides proving equivalences of tensor categories: on the one side we have categories of matrix factorizations; on the other, categories of modules over special symmetric Frobenius algebras. This was the main objective of the research carried out during the author's PhD. Two papers came out of this research: \cite{drcr} and \cite{crcr}.

This thesis is structured as follows: on Chapter \ref{ch:catback} we offer a brief review on some concepts of category theory required for the understanding of the later chapters, like e.g. bicategories, modular categories or Temperley-Lieb categories. 

In Chapter \ref{ch:mfs}, we introduce matrix factorizations and review their basic properties, as well as offer an exposition on how they arise within the physics context. 

In Chapter \ref{ch:LGCFT}, first we introduce the categorical approach to full CFTs and then explain the LG/CFT correspondence, reviewing results appearing in the physics literature supporting this correspondence.

Chapter \ref{LGCFTTL} contains the joint work developed together with Alexei Davydov and Ingo Runkel described in the paper \cite{drcr}. In this project we compare a certain kind of matrix factorizations, the so-called permutation-type matrix factorizations, and certain representations of the vertex operator algebra associated to the coset $\frac{\hat{\mathfrak{su}}(2)_{d-2} \oplus \hat{\mathfrak{u}}(1)_{4}}{\hat{\mathfrak{u}}(1)_{2d}}$. More precisely, the setup is:
\begin{itemize}
\item Consider the Deligne tensor product of categories of representations $$\mathrm{Rep}\left(\hat{\mathfrak{su}}(1)\right)_{d-2} \boxtimes \overline{\mathrm{Rep}} \left(\hat{\mathfrak{u}}(1)\right)_{2d}\boxtimes \mathrm{Rep} \left(\hat{\mathfrak{u}}(1)\right)_{4}$$ where $\mathrm{Rep}\left(\hat{\mathfrak{su}}(1)\right)_{d-2}$ is the category of integrable highest weight representations of $\hat{\mathfrak{su}}(2)_{d-2}$, $\mathrm{Rep}\left(\hat{\mathfrak{u}}(1)\right)_{2d}$ ($\mathrm{Rep}\left(\hat{\mathfrak{u}}(1)\right)_{4}$) is the category of representations of the rational vertex operator algebra obtained when extending the $\mathfrak{u}(1)$-current algebra by two fields of weight $d$ (resp. of weight 2). The overline means that we replace the braiding and twist by their inverses. Label the simple objects as $\left[ l,m,s \right]$ where $l \in \lbrace 0,\ldots,d-2 \rbrace$, $m \in \mathbb{Z}_{2d}$, $s \in \mathbb{Z}_4$. We will consider the full subcategory whose objects are isomorphic to direct sums of simples satisfying that $l+m \in 2 \mathbb{Z}$ and $s=0$. Denote it as $\C(N{=}2,d)$.

\item By a permutation-type matrix factorization we mean a matrix bifactorization of $x^d-y^d$ of the form $P_J=\begin{pmatrix} 0 & \prod\limits_{i \in J} \left( x-\eta^i y \right) \\ \prod\limits_{i \in \lbrace 0,\ldots,d-1 \rbrace \setminus J} \left( x-\eta^i y \right) & 0 \end{pmatrix} \ ,$ where $\eta = e^{2 \pi i/d}$ and $J$ is a subset of $\mathbbm{Z}_d$. We denote the category whose objects are matrix factorizations of $W=x^d-y^d$ ($\in S=\mathbbm{C} \left[ x,y \right]$) and whose morphisms are $S$-linear maps of degree zero closed with respect to the differential on the morphism space modulo homotopy as $\HMF_{\mathbbm{C} \left[ x,y \right],x^d-y^d}$. If we consider instead $\mathbb{C}$-graded matrix factorizations as objects, then we denote it as $\HMF^{\mathrm{gr}}_{\mathbbm{C} \left[ x,y \right],x^d-y^d}$. Consider the full subcategory of $\mathrm{HMF}^{\mathrm{gr}}_{\mathbbm{C}\left[x,y\right],x^d-y^d}$ which is tensor generated by permutation-type matrix factorizations whose sets are of the kind $S=\lbrace m,...,m+l \rbrace$ and whose morphisms are morphisms of matrix bifactorizations with $\mathbbm{C}$-degree zero. Denote it as $\mathcal{PT}^{\mathrm{gr}}_d$.
\end{itemize}

Our work builds on the following result:

\begin{nonumberthm}{\cite{brunrogg1}}
$\C(N{=}2,d)$ and $\mathcal{PT}^{\mathrm{gr}}_d$ are equivalent as $\mathbbm{C}$-linear categories, via the assignment
$$\left[l,l+2m,0 \right] \longleftrightarrow P_{\lbrace m,\ldots,m+l \rbrace} \ .$$ Moreover, this equivalence is multiplicative.
\end{nonumberthm}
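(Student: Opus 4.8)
The plan is to build the equivalence by hand from the stated object-level bijection, verifying full faithfulness and essential surjectivity separately and then treating the multiplicativity as an independent check at the level of fusion. Both categories are $\bC$-linear, additive and idempotent-complete, and each is by its very definition the additive-idempotent (Karoubi) closure of a distinguished family of objects: the simple objects $[l,m,0]$ on the coset side, and the ``interval'' permutation-type matrix factorizations $P_{\lbrace m,\ldots,m+l\rbrace}$ on the matrix factorization side. Consequently, once I have a functor sending the first family bijectively and isomorphism-faithfully onto the second, essential surjectivity is automatic, and the whole problem reduces to (i) matching the morphism spaces between the distinguished generators and (ii) matching the two multiplications.

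First I would pin down the generators and their morphisms on the CFT side. Since $\mathcal{V}$ is rational, $\Rep(\mathcal{V})$ and hence the Deligne product and its full subcategory $\C(N{=}2,d)$ are semisimple, so $\Hom([l,m,0],[l',m',0])$ is $\bC$ when the two simple labels agree after the coset field identifications and $0$ otherwise. I would tabulate those identifications together with the selection rules $l+m\in 2\bZ$, $s=0$, rewrite the surviving simples in the form $[l,l+2m,0]$ with $l\in\lbrace 0,\ldots,d-2\rbrace$ and $m$ ranging over the residue group $\bZ_d$ cut out by the charge, check that the displayed assignment is then a bijection on isomorphism classes, and record that the Hom spaces are $\bC\cdot\id$ on the diagonal and $0$ off it.

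Next I would carry out the matching computation on the matrix factorization side, which I expect to be the main obstacle. Here $\Pdgr$ lives in the homotopy category $\HMFgr$, so I must compute the $\bC$-degree-zero cohomology of the $\Hom$-complex between two interval factorizations, i.e. closed degree-zero module maps modulo homotopy. The concrete task is to write down the twisted differential on $\Hom(P_{\lbrace m,\ldots,m+l\rbrace},P_{\lbrace m',\ldots,m'+l'\rbrace})$ using the explicit $2\times 2$ form of the $P_J$ and the Koszul-type structure of the products of linear factors $(x-\eta^i y)$, compute its cohomology as a bigraded space, and then extract the internal-degree-zero piece via the $\bC$-grading; I would reduce this to a combinatorial count over the two index sets. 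The goal is to show that each interval factorization is indecomposable, with degree-zero endomorphisms $\bC\cdot\id$, and that the degree-zero morphism space between two distinct intervals in the allowed range vanishes, exactly mirroring the semisimple Hom spaces on the coset side. This makes the assignment extend to a fully faithful $\bC$-linear functor, and together with the Karoubi-closure remark it yields the equivalence of $\bC$-linear categories.

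Finally, for multiplicativity I would compare the two products on generators. On the coset side the fusion of the surviving simples factorizes into an $\hat{\mathfrak{su}}(2)_{d-2}$ Verlinde part in the label $l$ (a truncated Clebsch--Gordan sum) together with additive bookkeeping of the $\bZ_{2d}$-charge $m$. On the matrix factorization side the product is the composition of defects, realized as the tensor product of the underlying bimodules over the intermediate polynomial ring followed by passage to the homotopy category; the key step is an explicit decomposition $P_{\lbrace m,\ldots,m+l\rbrace}\ot P_{\lbrace m',\ldots,m'+l'\rbrace}\cong\bigoplus P_{\lbrace\cdots\rbrace}$ into interval factorizations. I would obtain it by computing the tensor product as a large matrix factorization over $\bC[x,z]$, splitting off the contractible summands, and identifying the surviving interval factorizations and their charges, then checking that these reproduce precisely the truncated Clebsch--Gordan sum with charge addition. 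Matching this against the coset fusion completes the proof that the equivalence is multiplicative. The two computational hearts of the argument --- the degree-zero $\Hom$-cohomology and the tensor-product decomposition of interval factorizations --- are where essentially all the difficulty lies.
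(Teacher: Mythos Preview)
Your outline is correct in spirit, but note that the paper does not actually prove this statement: it is quoted in the Introduction as a result of Brunner--Roggenkamp \cite{brunrogg1}, and the thesis builds on it rather than re-deriving it. So there is no ``paper's own proof'' to compare against directly.

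That said, the ingredients you describe do appear scattered in Chapter~\ref{LGCFTTL}, and there the concrete techniques differ somewhat from what you propose. For the Hom spaces between interval factorizations, the paper does not compute the full cohomology of the Hom-complex; instead it uses a pure degree-counting argument in $\ZMFbigr(\hat P_R,\hat P_S)$ (forcing $|R|=|S|$ and then $R=S$) together with a separate check that there are no degree-$(-1)$ homotopies, so $\HMFbigr=\ZMFbigr$ on these objects (Lemma~\ref{lem:hmfgrPSPR}). For the tensor product decomposition, your plan of ``splitting off contractible summands'' is not what is done: the original \cite{brunrogg1} argument passes to the stable category of $\bC[x,y]/\langle x^d-y^d\rangle$-modules, while the thesis (Appendix~\ref{app:graded-tensor}) instead writes down explicit $\bC$-degree-zero embeddings of the expected summands into $\hat P_{a:1}\otimes\hat P_{b:\mu}$ and verifies they assemble to an isomorphism via the Khovanov--Rozansky criterion of Remark~\ref{rem:H(M)} (isomorphism on $H(-)$), then inducts on $\lambda$. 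Your approach would work too, but the explicit-embedding-plus-$H(M)$ method sidesteps having to identify contractible pieces inside a large tensor product.
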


This raises the following natural question: are these two categories equivalent as tensor categories as well?

From a quantum field theory point of view, one expects a tensor equivalence, but mathematically this is surprising: the tensor product on the matrix factorization side is that of the underlying modules over polynomial rings and has trivial associator. On the other hand, the associator on the vertex algebra side involves the quantum 6j-symbols for $su(2)$.

Our approach to this problem is the following: consider the Deligne tensor product of the Temperley Lieb category $\mathcal{TL}_\kappa$ and the category of $\mathbb{Z}_d$-graded vector spaces $\mathbb{Z}_d\text{-}\mathrm{Vec}$. For $d$ odd, one can construct tensor functors from this category to $\C(N{=}2,d)$ and $\mathcal{PT}^{\mathrm{gr}}_d$ resp.:
\begin{equation}
\begin{split}
\mathcal{TL}_{\kappa} \boxtimes \mathbb{Z}_d\text{-}\mathrm{Vec} & \to \mathcal{PT}^{\mathrm{gr}}_d \\
\mathcal{TL}_\kappa \boxtimes \mathbb{Z}_d\text{-}\mathrm{Vec} & \to \C(N{=}2,d)
\end{split}
\label{functors}
\end{equation}

One then proves that both these functors annihilate the unique proper tensor ideal in $\mathcal{TL}_{\kappa}$ and therefore descend to equivalences from the quotient category onto the image of the functors:

\begin{nonumberthm}{(\textbf{\ref{thm:main}})}
For $d$ odd, $\mathcal{PT}^{\mathrm{gr}}_d$ and $\C(N{=}2,d)$ are equivalent as tensor categories.
\end{nonumberthm}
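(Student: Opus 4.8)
The plan is to realize both $\mathcal{PT}^{\mathrm{gr}}_d$ and $\C(N{=}2,d)$ as quotients of one common ``universal'' source category and then compare them through it. The natural candidate, as indicated in (\ref{functors}), is $\mathcal{TL}_\kappa \boxtimes \mathbb{Z}_d\text{-}\mathrm{Vec}$: the Temperley-Lieb factor packages precisely the associativity data of $su(2)$ fusion (its Jones-Wenzl idempotents and hence its $6j$-symbols), while the $\mathbb{Z}_d\text{-}\mathrm{Vec}$ factor accounts for the grading label $m$. The key conceptual point is that since the nontrivial associator on the CFT side \emph{originates} from $su(2)$ fusion, factoring both targets through $\mathcal{TL}_\kappa$ lets us transport monoidal structure between them without ever computing the comparison associator directly. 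I would fix the loop parameter $\kappa = 2\cos(\pi/d)$ so that $\mathcal{TL}_\kappa$ is a pivotal category possessing a unique proper tensor ideal, namely the ideal of negligible morphisms, generated by the Jones-Wenzl projector $p_{d-1}$; its quotient $\overline{\mathcal{TL}}_\kappa$ is a semisimple fusion category with simple objects indexed by $l \in \{0,\ldots,d-2\}$, matching exactly the range of the $l$-label in $\C(N{=}2,d)$.

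First I would construct the two functors of (\ref{functors}). On each side this reduces to specifying the image of the single generating object of $\mathcal{TL}_\kappa$ (the fundamental object, sent to the $l{=}1$ permutation-type factorization $P_{\{m,m+1\}}$ on the matrix-factorization side, respectively to the simple $[1,\ldots,0]$ on the CFT side), together with the images of the generating cup and cap morphisms and the grading data coming from $\mathbb{Z}_d\text{-}\mathrm{Vec}$. Here is where $d$ odd is used: multiplication by $2$ is then invertible modulo $d$, so the assignment $[l,l+2m,0]\leftrightarrow P_{\{m,\ldots,m+l\}}$ of \cite{brunrogg1} turns the $\mathbb{Z}_d$-grading of $\mathbb{Z}_d\text{-}\mathrm{Vec}$ into a clean bijection with the $m$-labels. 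The essential verification at this step is that the chosen images of cup and cap satisfy the defining Temperley-Lieb relations --- isotopy invariance and the loop value $\kappa$ --- so that the assignments extend to honest tensor functors.

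Next I would show that both functors annihilate the unique proper tensor ideal of $\mathcal{TL}_\kappa$. By the classification of tensor ideals in Temperley-Lieb it suffices to verify that the single generator $p_{d-1}$ is sent to $0$. On the CFT side this is immediate, since the putative target simple object with $l = d-1$ does not exist in $\C(N{=}2,d)$, so the relevant morphism space vanishes; on the matrix-factorization side it is a concrete computation showing the corresponding idempotent endomorphism of a tensor power of $P_{\{m,m+1\}}$ is null-homotopic. By the universal property of the quotient, both functors then descend to tensor functors out of $\overline{\mathcal{TL}}_\kappa \boxtimes \mathbb{Z}_d\text{-}\mathrm{Vec}$. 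Comparing the images of simple objects with the Brunner-Roggenkamp dictionary, each descended functor is essentially surjective and bijective on the relevant simple objects, hence a $\mathbb{C}$-linear equivalence onto $\mathcal{PT}^{\mathrm{gr}}_d$ and $\C(N{=}2,d)$ respectively; since each is monoidal by construction, composing one with a monoidal inverse of the other yields the tensor equivalence $\mathcal{PT}^{\mathrm{gr}}_d \simeq \C(N{=}2,d)$.

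The hard part will be the explicit construction and monoidality check of the two functors, and for quite different reasons on the two sides. On the matrix-factorization side the difficulty is analytic-combinatorial: one must write down actual $S$-module morphisms between tensor products of permutation-type factorizations that realize the cup and cap, and then prove that a closed loop evaluates to the scalar $\kappa$ and that the Temperley-Lieb and Jones-Wenzl relations hold \emph{up to homotopy}, which requires tight control of homotopy equivalences of tensor products of matrix factorizations. On the CFT side, by contrast, monoidality is exactly the statement that the images of the generators intertwine the $su(2)$ $6j$-symbols, so this is the precise place where the nontrivial associator genuinely enters and must be matched against the Temperley-Lieb recoupling data. I expect establishing that the cup/cap images satisfy these relations, rather than the subsequent descent argument, to be the crux of the proof.
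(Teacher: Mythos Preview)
Your high-level plan coincides with the paper's, but the paper's execution sidesteps both of the difficulties you flag as the crux.

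On the CFT side, the paper never verifies that cup/cap images intertwine the $su(2)$ $6j$-symbols. Instead, Proposition~\ref{ddn} shows directly that $\C(N{=}2,d)_{NS}\simeq \T_\kappa\boxtimes\V(\bZ_d)$ as fusion categories: the pointed subcategory generated by $[0,2]$ is non-degenerate (this uses $d$ odd), so by M\"uger's centralizer theorem $\C(N{=}2,d)_{NS}$ splits as a Deligne product with its centralizer $\T$, and $\T\simeq\T_\kappa$ follows from its fusion rules via Corollary~\ref{ftl}. The nontrivial associator is thus absorbed into an abstract identification rather than matched against Temperley--Lieb recoupling data by hand. This collapses one of your two hard parts to essentially nothing.

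On the matrix-factorization side, your sketch treats the $\TL_\kappa$ and $\bZ_d\text{-}\Vect$ factors as if they can be sent into $\Pdgr$ independently; but a functor out of a Deligne product requires a commuting-images structure between them, and this is where the real work lies. The paper handles it through the equivariant formalism of Appendix~\ref{sec:equiv+pointed}: the functor $\V(\bZ_d)\to\Pdgr$ via $a\mapsto{}_aI$ induces an inner $\bZ_d$-action on $\Pdgr$, and by Theorem~\ref{fpp} a tensor functor $\TL_\kappa\boxtimes\V(\bZ_d)\to\Pdgr$ is the same as a tensor functor $\TL_\kappa\to(\Pdgr)^{\bZ_d}$. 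The latter is specified by the self-dual object $\hat T=\hat P_{(d-1)/2:1}$ equipped with the equivariant structure~\eqref{eq:PS-Zd-equiv-structure}; the duality maps come from the explicit $\mathrm{ev}/\mathrm{coev}$ of~\cite{carqrunkel3}, and $u\circ n=\kappa$ is a one-line residue computation. Non-faithfulness is again checked by fusion rules (Theorem~\ref{thm:Pdgr-tensor-closed} plus Corollary~\ref{ftl}), not by computing $p_{d-1}$ explicitly. So the homotopy-theoretic verification of Temperley--Lieb relations that you anticipate is largely replaced by bookkeeping about the $\bZ_d$-equivariance.
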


Which is the first main result of this thesis.

Chapter \ref{ch:orbeq} contains the joint work with Nils Carqueville and Ingo Runkel described in the paper \cite{crcr}. In this project we proved a conjecture stated in \cite{carqrunkel2}. It is a well known fact that simple singularities have an ADE classification. For two variables (where more can be added via Kn{\"o}rrer periodicity), the associated polynomials are 
\begin{equation}
\begin{split}
W^{A_{d-1}} &=x^d-y^2 \quad , \quad W^{D_{d+1}}=x^d-xy^2 \\
W^{E_6} &=x^3+y^4 \quad , \quad W^{E_7}=x^3+xy^3 \quad , \quad W^{E_8}=x^3+y^5 \\
\end{split}
\nonumber
\end{equation}
Landau-Ginzburg models with these potentials are believed to correspond to $N=2$ minimal conformal field theories. These rational conformal field theories, for a given value of the central charge, are expected to be (generalised) orbifolds of each other. Inspired by this fact we wanted to prove a similar result for matrix factorizations. 

The framework to approach this problem is the bicategory of Landau-Ginzburg models $\mathcal{LG}$, where the objects are pairs $\left( S,W \right)$ of a polynomial ring over $\mathbbm{k}$ (for a fixed field $\mathbbm{k}$) and a potential. The category of 1-morphisms from $\left( S',W' \right)$ to $\left( S,W \right)$ is that of (finite rank) matrix bifactorizations $\mathrm{hmf}_{bi;(S,W),(S',W')}$. The composition of 1-morphisms is given by the tensor product of matrix factorizations. This bicategory has adjoints which can be explicitly written via Atiyah classes providing a practical hands-on toolkit for computations \cite{carqmurfet}. In \cite{carqrunkel2} an equivalence relation on objects of $\mathcal{LG}$ is introduced, which states that two objects are equivalent if there exists a 1-morphism between them which has invertible quantum dimension (that means, it has to satisfy certain properties concerning its adjoints). This implies non-obvious equivalences of categories, for example:

\begin{prop}{\cite{carqrunkel2}}
Let $\left( S,W \right)$, $\left( S',W' \right) \in \mathrm{Ob} \left( \mathcal{LG} \right)$ and $X \in \mathrm{hmf}_{S \otimes_\mathbbm{k} S', W-W'}$ with invertible quantum dimension. Denote by $X^\dagger$ the right adjoint of $X$. Then,
$$\mathrm{hmf}_{S', W'} \cong \mathrm{mod} \left( X^\dagger \otimes X \right)$$
where $X^\dagger \otimes X$ is a symmetric separable Frobenius algebra object in the 1-morphism category $\mathcal{LG} \left( \left( S,W \right),\left( S,W \right) \right)$ and $\mathrm{mod} \left( X^\dagger \otimes X \right)$ denotes the category of modules over $X^\dagger \otimes X$ in $\mathrm{hmf}_{S, W}$.
\label{proporbeq}
\end{prop}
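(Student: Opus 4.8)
The statement is an instance of the general principle that a $1$-morphism with invertible quantum dimension induces a Morita-type equivalence, realized through the Eilenberg--Moore category of the (co)monad it generates. I would organize the argument in four steps. First I would equip $A := X^\dagger \otimes X$ with the structure of a Frobenius algebra in the monoidal $1$-morphism category $\mathcal{LG}((S,W),(S,W))$ using only the adjunction data. Since $\mathcal{LG}$ carries explicit adjoints, computed via Atiyah classes in \cite{carqmurfet}, I have at my disposal evaluation and coevaluation $2$-morphisms $\mathrm{ev}\colon X \otimes X^\dagger \to \mathrm{id}$ and $\mathrm{coev}\colon \mathrm{id} \to X^\dagger \otimes X$, together with the $2$-morphisms of the opposite adjunction. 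The unit of $A$ is $\mathrm{coev}$, and the multiplication $A \otimes A \to A$ contracts the inner $X \otimes X^\dagger$ pair by $\mathrm{ev}$; the comultiplication and counit are built dually from the opposite adjunction. Associativity, unitality and the Frobenius compatibility then reduce to the snake (zig-zag) identities, while symmetry of the Frobenius form follows from the pivotal structure of $\mathcal{LG}$.

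The second step is where the hypothesis genuinely enters. The left and right quantum dimensions of $X$ are the scalar endomorphisms of the identity $1$-morphism obtained by closing up $\mathrm{ev}$ against $\mathrm{coev}$, and their invertibility lets me rescale the copairing so that the composite of comultiplication followed by multiplication equals $\mathrm{id}_A$. This exhibits a separability idempotent and shows that $A$ is a separable symmetric Frobenius algebra, as claimed.

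Third, I would produce the functors. Tensoring with $X$ and with $X^\dagger$ gives an adjoint pair between $\mathrm{hmf}_{S',W'}$ and $\mathrm{hmf}_{S,W}$; the associated (co)monad on $\mathrm{hmf}_{S,W}$ has underlying endofunctor $A \otimes (-)$, so there is a canonical comparison functor into $\mathrm{mod}(A)$. Concretely, the candidate equivalence sends $Y \in \mathrm{hmf}_{S',W'}$ to $X \otimes Y$ with the $A$-action induced by $\mathrm{ev}$, and its tentative inverse sends an $A$-module $M$ to the relative tensor product $X^\dagger \otimes_A M$. One composite collapses to the identity by a snake identity; the other requires that every $A$-module be recovered as $X \otimes (X^\dagger \otimes_A M)$.

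The main obstacle is exactly this last point. A bare adjunction only yields a comparison functor landing in $\mathrm{mod}(A)$, never an equivalence on its own: one needs the adjunction to be \emph{separably monadic}. Here separability together with invertibility of the quantum dimension is precisely what makes the relevant unit and counit split, so that the comparison functor becomes fully faithful and essentially surjective; because $\mathrm{hmf}$ is already idempotent-complete, no Karoubi completion is needed and the required splittings live in the category itself. Carrying out the verification of the Frobenius axioms and of the separability splitting explicitly inside $\mathcal{LG}$, through the Atiyah-class description of the adjunction in \cite{carqmurfet}, is the technical heart of the argument.
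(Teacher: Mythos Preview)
Your outline is sound and captures the essential mechanism, but note that the thesis does not itself prove this proposition: it is quoted from \cite{carqrunkel2} in the Introduction, and Chapter~\ref{ch:orbeq} only \emph{reviews} the surrounding framework (Theorem~\ref{adjointequivalence} and Proposition~\ref{adjointequivalenceformfs}) without reproducing the proof.

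That said, the framework reviewed there is organized somewhat differently from your monadicity sketch. Rather than invoking a Beck-style comparison functor and separable monadicity directly, the approach of \cite{carqrunkel2} passes through the \emph{equivariant completion} $\mathcal{B}_{\mathrm{eq}}$: one shows that $X$ and $X^\dagger$ constitute an adjoint equivalence between the objects $(a,A)$ and $(b,I_b)$ in $\mathcal{B}_{\mathrm{eq}}$, and the desired equivalence of module categories is then read off from the induced equivalence of $1$-morphism categories in that bicategory. Your argument and theirs are of course two faces of the same Morita-theoretic coin; the equivariant-completion packaging has the advantage that the bicategorical bookkeeping (associators, unitors, pivotality) is handled once and for all at the level of $\mathcal{B}_{\mathrm{eq}}$, whereas your direct approach makes the role of the separability splitting more transparent.

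One small correction: you assert that $\mathrm{hmf}$ is idempotent complete, but the thesis explicitly notes (in the footnote introducing $\mathrm{hmf}^\omega$) that this can fail, which is precisely why Proposition~\ref{adjointequivalenceformfs} is stated for $\mathrm{hmf}^\omega$. The splittings you need are therefore guaranteed only after passing to the idempotent completion.
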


By giving explicitly matrix factorizations of non-zero quantum dimension, in 
\cite{crcr} we classify equivalence classes within the ADE potentials listed above.

\begin{nonumberthm}{(\textbf{\ref{thm:ADEorbifolds}})}
The equivalences between simple singularities are generated by (the ring is $\mathbbm{C}[x,y]$ in each case)
$$
W^{A_d} \sim W^{D_{d+1}}
~~,\quad
W^{A_{11}} \sim W^{E_{6}}
~~,\quad
W^{A_{17}} \sim W^{E_7}
~~,\quad
W^{A_{29}} \sim W^{E_8} \ .
$$
\end{nonumberthm}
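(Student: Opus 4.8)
The statement has two halves: each displayed relation must genuinely be an instance of $\sim$, and these relations, closed under reflexivity, symmetry and transitivity, must exhaust $\sim$ on the ADE potentials. The plan is to reduce the first half to Proposition \ref{proporbeq}: by construction $(S,W)\sim(S',W')$ as soon as one exhibits a finite-rank matrix bifactorization $X$ of $W-W'$ over $S\otimes_{\mathbbm{C}}S'$ whose left and right quantum dimensions are invertible, i.e.\ nonzero scalars, since this is exactly what makes $X^\dagger\otimes X$ a separable symmetric Frobenius algebra. Because $\sim$ is reflexive (identity defect, quantum dimension $1$), symmetric (pass to the adjoint $X^\dagger$) and transitive (compose defects; quantum dimension is multiplicative), it suffices to produce one such $X$ for each displayed relation and then, separately, to prove completeness.

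For the existence half I would, for each pair, write down an explicit graded matrix factorization of the difference potential and verify $d_X\circ d_X=(W-W')\cdot\id$. Quasi-homogeneity is the main guide: $X$ must be homogeneous for the $\mathbb{Q}$-grading given by the $R$-charges of the two potentials, which sharply constrains the admissible entries. The $A$--$D$ relations should admit a single uniform family of defects adapted to the cyclic symmetry of $x^n$; the three exceptional relations have no canonical guess, so I would set up a graded ansatz with undetermined coefficients and solve the quadratic equation $d_X^2=(W-W')\,\id$ by computer algebra. Given $X$, I would then compute its left and right quantum dimensions through the residue/supertrace formula of Carqueville--Murfet \cite{carqmurfet} (a supertrace of a product of the partial derivatives $\partial_{x_i}d_X$ and $\partial_{z_j}d_X$, presented as a Grothendieck residue) and check that both are nonzero. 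This last check is precisely what upgrades $X$ to an orbifold equivalence via Proposition \ref{proporbeq}.

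For completeness I would use that $\sim$ preserves the central charge. A defect with invertible quantum dimension is homogeneous of $\mathbb{Q}$-degree $0$, and tracking the grading through the residue formula forces the effective central charges $\sum_i(1-2q_i)$ of the two potentials to coincide; equivalently $c=3-6/h$ with $h$ the Coxeter number. Central charge is therefore an invariant of $\sim$, and it is a \emph{complete} one once the listed relations are shown to connect all ADE potentials of equal $h$: for even $h$ other than $12,18,30$ the only types present are $A_{h-1}$ (potential $x^{h}-y^2$) and $D_{h/2+1}$ (potential $x^{h/2}-xy^2$), joined by the $A$--$D$ family, while at $h=12,18,30$ the exceptional types $E_6,E_7,E_8$ additionally appear and are joined to $A_{11},A_{17},A_{29}$ by the three remaining relations. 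Hence the closure of the displayed list is all of $\sim$.

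The main obstacle is the existence half for $E_6,E_7,E_8$: there is no structural reason to expect a low-rank defect, the graded ansatz still carries many free parameters, and both solving $d_X^2=(W-W')\,\id$ and evaluating the quantum-dimension residue are heavy symbolic computations whose outcome (a \emph{nonzero} scalar) is not guaranteed in advance. Exploiting the quasi-homogeneous grading to cut the ansatz down, and arranging the variables so that the residue localizes cleanly, is where the real work lies; by contrast the completeness half is comparatively soft once the central-charge invariance lemma is established.
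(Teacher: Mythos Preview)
Your proposal is correct and follows essentially the same route as the paper: explicit graded matrix factorizations with nonzero quantum dimensions for each listed pair (existence), together with the central-charge invariant from Proposition~\ref{prop:necessary} to rule out any further relations (completeness). The one practical refinement the paper adds to your ``graded ansatz with undetermined coefficients'' is a good starting point: rather than parametrising all homogeneous entries from scratch, one begins with a known low-rank factorization $X_0$ of $W(x,y)-v^2$ (these are classified for ADE potentials) and then deforms its entries by $u\cdot p(u,v,x,y)$, reducing free parameters by similarity transformations before imposing $d_{X_u}^2=(W-V)\cdot\id$; this is what keeps the $\mathrm{E}_8$ case (which still has $60$ parameters after reduction) within reach.
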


We also describe explicitly the symmetric separable Frobenius algebras from Proposition \ref{proporbeq}, that is, we give algebras $A_{D_{d+1}}$, $A_{E_6}$, $A_{E_7}$, $A_{E_8}$ in the respective 1-endomorphism categories, such that $\mathrm{hmf} \left(\mathbbm{C}[x,y],W^T \right) \cong \mathrm{mod} \left(A_T \right)$ for $T \in\{ D_{d+1}, E_{6,7,8} \}$.

In addition to these results, we also point out some guidelines with which we hope to find new orbifold equivalences.

Finally, in the Appendix one can find a missing proof in Chapter \ref{ch:LGCFT}, as well as a collection of results on categories of equivariant objects and pointed categories useful for the understanding of the approach taken in Chapter \ref{LGCFTTL}.

\chapter{Categorical background}
\label{ch:catback}

In this chapter we will introduce some background on categories which will be necessary for our exposition.

\section{Category theory background: bicategories and modular categories}
\label{sec:catth}

We will assume only very basic knowledge of categories. Our main references are \cite{borceux,benabou,maclane,mueger,deligne,BaKiBook,kelly,carqmurfet,schweigert}.

First of all, we would like to construct a certain higher categorical structure: that of bicategories.

\begin{defn}
A bicategory $\mathcal{B}$ is specified by the following data:
\begin{itemize}
\item a class $\mathrm{Ob} \left( \mathcal{B} \right)$ of objects;
\item for each pair $A,B \in \mathrm{Ob} \left( \mathcal{B} \right)$, a category $\mathcal{B} \left( A,B \right)$ whose objects are called \textit{1-morphisms} and whose morphisms are called \textit{2-morphisms}; we write $\alpha \odot \beta$ for the composite of the 2-morphisms $\alpha,\beta$;
\item for each triple of objects $A,B,C \in \mathrm{Ob} \left( \mathcal{B} \right)$, a composition law given by a functor $$c_{ABC} \colon \mathcal{B} \left( A,B \right) \times \mathcal{B} \left( B,C \right) \to \mathcal{B} \left( A,C \right);$$ given 1-morphisms $f \colon A \to B$, $g \colon B \to C$ of the bicategory $\mathcal{B}$, we write $g \circ f$ for their composite $c_{ABC} \left( f,g \right)$; given other 1-morphisms $f' \colon A \to B$, $g' \colon B \to C$ and 2-morphisms $\gamma \colon f \Rightarrow f'$, $\delta \colon g \Rightarrow g'$, we write $\delta \ast \gamma$ for their composite $c_{ABC} \left( \gamma,\delta \right)$;
\item for each $A \in \mathrm{Ob} \left( \mathcal{B} \right)$, there is a functor $\upsilon_A \colon \mathbf{1} \to \mathcal{B} \left( A,A \right)$, where $\mathbf{1}$ is the terminal objects of the category of small categories. The \textit{identity 1-morphism} $1_A \colon A \to A$ is the image of the unique object of $\mathbf{1}$ under the functor $\upsilon_A$; we write $\iota_A$ for the identity 2-morphisms on $1_A$ and $\iota_f$ for the identity 2-endomorphism of a 1-morphism $f \colon A \to B$;
\item for each quadruple $A,B,C,D \in \mathrm{Ob} \left( \mathcal{B} \right)$, a natural isomorphism $$\alpha_{ABCD} \colon c_{ABD} \circ \left( 1 \times c_{BCD} \right) \Rightarrow c_{ACD} \circ \left( c_{ABC} \times 1 \right)$$ which is called the \textit{associativity isomorphism};
\item for each pair $A,B \in \mathrm{Ob} \left( \mathcal{B} \right)$, two natural isomorphisms
\begin{equation}
\begin{split}
\lambda_{AB} &\colon c_{AAB} \circ \left( \upsilon_A \times 1 \right)  \Rightarrow 1\\
\rho_{AB} &\colon c_{ABB} \circ \left( 1 \times \upsilon_B \right) \Rightarrow 1
\end{split}
\nonumber
\end{equation} which are called the \textit{left} and \textit{right unit isomorphisms} resp.;
\end{itemize}
satisfying
\begin{enumerate}
\item Associativity coherence: given objects $A,B,C,D,E \in \mathrm{Ob} \left( \mathcal{B} \right)$ and 1-morphisms $$\xymatrix{ A \ar[r]^f & B \ar[r]^g & C \ar[r]^h & D \ar[r]^k & E}$$ such that the following diagram commutes:
\begin{equation}
\xymatrix{
k \circ \left( h \circ \left( g \circ f \right) \right) \ar[d]_-{\alpha_{g \circ f,h,k}} \ar[rr]^-{\iota_k \ast \alpha_{f,g,h}} && k \circ \left( \left( h \circ g \right) \circ f \right) \ar[rr]^-{\alpha_{f,h \circ g,k}} && \left( k \circ \left( h \circ g \right) \right) \circ f \ar[d]^-{\alpha_{g,h,k} \ast \iota_f} \\ \left( k \circ h \right) \circ \left( g \circ f \right) \ar[rrrr]^-{\alpha_{f,g,k \circ h}} &&&& \left( \left( k \circ h \right) \circ g \right) \circ f
}
\nonumber
\end{equation}
where we have written $\alpha_{f,g,h}$ instead of $\left( \alpha_{ABCD} \right)_{(f,g,h)}$ for the sake of simplicity.

\item Identity coherence: given objects $A,B,C \in \mathrm{Ob} \left( \mathcal{B} \right)$ and 1-morphisms $$\xymatrix{A \ar[r]^f &B \ar[r]^g &C}$$ such that the following diagram commutes:
\begin{equation}
\xymatrix{
\left( g \circ 1_B \right) \circ f \ar[dr]_-{\rho_g \ast \iota_f} && g \circ \left( 1_B \circ f \right) \ar[dl]^-{\iota_g \ast \lambda_f} \ar[ll]_-{\alpha_{f,1_B,g}} \\ & g \circ f&}
\nonumber
\end{equation}
where we have written $\lambda_f,\rho_g$ instead of $\left( \lambda_{AB} \right)_f$, $\left( \rho_{BC} \right)_g$ again for simplicity.

\end{enumerate}
\label{bicat}
\end{defn}

We will also introduce the notion of adjoints in a bicategory:
\begin{defn}
\begin{itemize}
\item Let $\mathcal{B}$ be a bicategory. An \textit{adjunction} in $\mathcal{B}$ is a tuple $\left( f,g,\mathrm{coev},\mathrm{ev} \right)$ where $f,g$ are two 1-morphisms (the \textit{adjoint pair}) $f \colon A \to B$ and $g \colon B \to A$ (with $A,B\in \mathrm{Ob} \left( \mathcal{B} \right)$) and 
\begin{equation}
\begin{split}
\mathrm{coev} & \colon 1_B \Rightarrow f \circ g \\
\mathrm{ev} & \colon g \circ f \Rightarrow 1_A
\end{split}
\nonumber
\end{equation}
two 2-morphisms which altogether satisfy the following equalities \footnote{Notice here that we use explicitly the fact that $\alpha$ is an isomorphism, not just a natural transformation.}:
\begin{equation}
\begin{split}
\left( \iota_f \ast \mathrm{ev} \right) \odot \alpha_{f,g,f} \odot \left( \mathrm{coev} \ast \iota_f \right) &= \rho_f \odot \lambda_f \\
\left( \mathrm{ev} \ast \iota_g \right) \odot \alpha^{-1}_{g,f,g} \odot \left( \iota_g \ast \mathrm{coev} \right) &=\lambda_g \odot \rho_g
\end{split}
\label{bicatzorro}
\end{equation}
In this case we say that $g$ is \textit{left adjoint} to $f$ and that $f$ is \textit{right adjoint} to $g$. The 2-morphisms $\mathrm{ev}$ and $\mathrm{coev}$ are referred to as the \textit{evaluation} and \textit{coevaluation} maps resp. of the adjunction.
\item $\mathcal{B}$ has \textit{left adjoints} (resp. \textit{right adjoints}) if every 1-morphism in $\mathcal{B}$ admits a left adjoint (resp. right adjoint). 
\end{itemize}
\end{defn}

If they exist these adjoints are unique up to isomorphism, and the left and right adjoints of $f$ are denoted by $^\dagger f$ and $f^\dagger$, resp. If a 1-morphism $f:A \rightarrow B$ has both a left and right adjoint then we write the evaluation and coevaluation maps for ${^\dagger f}$ left adjoint to $f$ as:
\begin{equation}
\begin{split}
\mathrm{ev}_f & \colon {^\dagger f} \circ f \Rightarrow 1_A \\
\mathrm{coev}_f & \colon 1_B \Rightarrow f \circ {^\dagger f}
\end{split}
\nonumber
\end{equation}
and for $f$ left adjoint to $f^\dagger$,
\begin{equation}
\begin{split}
\widetilde{\mathrm{ev}}_f & \colon f \circ f^\dagger \Rightarrow 1_B \\
\widetilde{\mathrm{coev}}_f & \colon 1_A \Rightarrow f^\dagger \circ f
\end{split}
\nonumber
\end{equation}

Note here that left and right adjoints do not coincide in general. We will see some examples within the bicategory of Landau-Ginzburg models in Chapter \ref{ch:mfs}.

\begin{rem}
Equations \ref{bicatzorro} are frequently called the ``(right) snake diagrams" or also the ``(right) Zorro moves". This is because adjoints have a graphical language which allows a very comfortable description of these equations. We will though not introduce it here.
\end{rem}

One may wonder at this point if one can relate these left and right adjoints. One case of interest is when they coincide: ${^\dagger f}=f^\dagger$\footnote{Actually, one may soften this condition to an isomorphism instead of an equality. We choose the strictified version for the sake of simplicity.}. Under this assumption, we say such a bicategory $\mathcal{B}$ is \textsl{pivotal} if the following diagrams commute:
\begin{equation}
\begin{split}
\xymatrix{ 
f^\dagger \ar[d]^{\lambda_{BA}^{-1}} \ar[r]^-{\rho^{-1}_{BA}} & f^\dagger \circ 1_B \ar[r]^-{\iota_{f^\dagger} \ast \mathrm{coev}_g} & f^\dagger \circ \left( g \circ g^\dagger \right) \ar[rr]^-{\iota_{f^\dagger} \ast \left( \phi \ast \iota_{g^\dagger} \right)} && f^\dagger \circ \left( f \circ g^\dagger \right) \ar[d]^-{\alpha^{-1}_{g^\dagger,g,f^\dagger}} \\ 1_A \circ f^\dagger \ar[d]^{\widetilde{\mathrm{coev}}_g \ast \iota_{f^\dagger}} &&&& \left( f^\dagger \circ f \right) \circ g^\dagger \ar[d]^-{\mathrm{ev}_f \ast \iota_{g^\dagger}} \\
\left( g^\dagger \circ g \right) \circ f^\dagger \ar[dr]_-{\left( \iota_{g^\dagger} \ast \phi \right) \ast \iota_{f^\dagger}} &&  && g^\dagger \\ 
&\left( g^\dagger \circ f \right) \circ f^\dagger \ar[r]^-{\alpha_{f^\dagger,f,g^\dagger}} & g^\dagger \circ \left( f \circ f^\dagger \right) \ar[r]^-{\iota_{g^\dagger} \ast \widetilde{\mathrm{ev}}_f} & g^\dagger \circ 1_B \ar[ur]^{\rho_{BA}} & 
}
\end{split}
\nonumber
\end{equation}
for $A,B \in \mathrm{Ob} \left( \mathcal{B} \right)$, $f,g \in \mathcal{B} \left( A,B \right)$, $\phi \colon f \Rightarrow g$, and
\begin{equation}
\begin{split}
\xymatrix{ 
\left( g \circ f \right)^\dagger \ar[r]^-{\lambda_{CA}^{-1}}  \ar[d]^-{\lambda_{CA}^{-1}}& \left( g \circ f \right)^\dagger \circ 1_C \ar[rr]^-{\iota_{(g \circ f)^\dagger} \ast \mathrm{coev}_g} && \left( g \circ f \right)^\dagger \circ \left( g \circ g^\dagger \right) \ar[d]_-{\alpha^{-1}_{g^\dagger,g,(g \circ f)^\dagger}} \\
1_A \circ \left( g \circ f \right)^\dagger \ar[d]^-{\widetilde{\mathrm{coev}}_f \ast \iota_{(g \circ f)^\dagger}} &&& \left( \left( g \circ f \right)^\dagger \circ g \right) \circ g^\dagger \ar[d]_-{\iota_{(g \circ f)^\dagger \circ g} \ast \lambda_{BC}} \\
\left( f^\dagger \circ f \right) \circ \left( g \circ f \right)^\dagger \ar[d]^-{\alpha_{f^\dagger,f,(g \circ f)^\dagger}} &&& \left( \left( g \circ f \right)^\dagger \circ g \right) \circ 1_B \circ g^\dagger \ar[d]_-{\iota_{(g \circ f)^\dagger \circ g} \ast \mathrm{coev}_f \ast \iota_{g^\dagger}} \\
f^\dagger \circ \left( f \circ \left( g \circ f \right)^\dagger \right) \ar[d]^-{\rho_{BA} \ast \iota_{f \circ (g \circ f)^\dagger}} &&& \left( \left( g \circ f \right)^\dagger \circ g \right) \circ \left( f \circ f^\dagger \right) \circ g^\dagger \ar[d]_-{\iota_{(g\circ f)^\dagger \circ g} \ast \alpha_{g^\dagger,f^\dagger,f}}
\\
f^\dagger \circ 1_B \circ \left( f \circ \left( g \circ f \right)^\dagger \right) \ar[d]^-{\iota_{f^\dagger} \ast \widetilde{\mathrm{coev}}_g \ast \iota_{f \circ (g \circ f)^\dagger}} &&& \left( \left( g \circ f \right)^\dagger \circ g \right) \circ f \circ \left( f^\dagger \circ g^\dagger \right) \ar[d]_{\alpha_{(g \circ f)^\dagger,g,f} \ast \iota_{f^\dagger \circ g^\dagger}} \\
f^\dagger \circ \left( g^\dagger \circ g \right) \circ \left( f \circ \left( g \circ f \right)^\dagger \right) \ar[d]^-{\alpha^{-1}_{g,g^\dagger,f^\dagger} \ast \iota_{f \circ(g \circ f)^\dagger}} &&& \left( g \circ f \right)^\dagger \circ \left( g \circ f \right) \circ \left( f^\dagger \circ g^\dagger \right) \ar[d]_-{\mathrm{ev}_{g \circ f} \ast \iota_{f^\dagger \circ g^\dagger}} \\
\left( f^\dagger \circ g^\dagger \right) \circ g \circ \left( f \circ \left( g \circ f \right)^\dagger \right) \ar[d]^-{\iota_{f^\dagger \circ g^\dagger} \ast \alpha^{-1}_{(g \circ f)^\dagger,f,g}} &&& 1_A \circ \left( f^\dagger \circ g^\dagger \right) \ar[d]_-{\lambda_{CA}}\\ 
\left( f^\dagger \circ g^\dagger \right) \circ \left( g \circ f \right) \circ \left( g \circ f \right)^\dagger \ar[drrr]^-{\iota_{f^\dagger \circ g^\dagger} \ast \widetilde{\mathrm{ev}}_{g \circ f}} &&& f^\dagger \circ g^\dagger \\
&&& \left( f^\dagger \circ g^\dagger \right) \circ 1_C \ar[u]^-{\rho_{CA}}
}
\end{split}
\nonumber
\end{equation}
for $A,B,C \in \mathrm{Ob} \left( \mathcal{B} \right)$ and composable 1-morphisms $\xymatrix{A \ar[r]^-f & B \ar[r]^-g &C}$ and their duals.

One can show \cite{carqrunkel3} that in a pivotal bicategory there are natural monoidal isomorphisms $\lbrace \delta_f \rbrace$ ($f \in \mathcal{B} \left( A,B \right)$) between the identity functor and $(-)^{\dagger\dagger}$ on $\mathcal{B}(A,B)$, see \cite{carqrunkel2}.

Then, one can write $\widetilde{\mathrm{ev}}$ and $\widetilde{\mathrm{coev}}$ in terms of $\mathrm{ev}$ and $\mathrm{coev}$ and these natural isomorphisms:
\begin{equation}
\begin{split}
\widetilde{\mathrm{ev}}_f &=\mathrm{ev}_{f^\dagger} \odot (\delta_f \circ \iota_{f^\dagger}) \\
\widetilde{\mathrm{coev}}_f &=(\iota_{f^\dagger} \circ (\delta_f)^{-1}) \odot \mathrm{coev}_{f^\dagger}
\end{split}
\nonumber
\end{equation}

Let us introduce a special case of composition of the above described maps. The \textit{left quantum dimension} of~$f$ is the image of $\iota_B$ under the map $\mathcal{D}_l(f) : \mathrm{End}(1_B) \rightarrow \mathrm{End}(1_A)$ given by 
\begin{equation}
\mathcal{D}_l(f) = \mathrm{ev}_f \odot \, [1_{f^\dagger} \circ (\lambda_f \odot ((-)\circ 1_f) \odot \lambda_f^{-1})) ] \odot \, \widetilde{\mathrm{coev}}_f  \,
\nonumber
\end{equation}
and similarly for the \textit{right quantum dimension} $\mathrm{qdim}_r(f) = \mathcal{D}_r(f)(\iota_A)$. For $g\in \mathcal{B}(B,C)$ these operators satisfy 
\begin{equation}
\mathcal{D}_l(f) \odot \mathcal{D}_l(g) = \mathcal{D}_l (g \circ f) 
\, , \quad 
\mathcal{D}_r(g) \odot \mathcal{D}_r(f) = \mathcal{D}_r (g \circ f ) \,  ,
\nonumber
\end{equation}
and for a proof we refer to e.g. \cite{carqmurfet}.

If the class of objects of a bicategory consists of only one object, the bicategory is equivalent to a \textit{monoidal}\footnote{Sometimes monoidal categories are also called tensor categories, although both notions will not be the same in this thesis. Let $\mathbbm{k}$ be an algebraically closed field (which can be assumed to be the field $\bC$ of complex numbers). We call a category $\C$ \textit{tensor} if it is an additive $\mathbbm{k}$-linear monoidal category such that the tensor product is $\mathbbm{k}$-linear in both arguments.} \textit{category}: with only one object, the 1-morphisms and 2-morphisms are described by the objects and morphisms resp. of a category $\mathcal{C}$. Denote the composition functor $c$ in this case as $\otimes \colon \mathcal{C} \times \mathcal{C} \to \mathcal{C}$. We denote then the composition of the morphisms of $\mathcal{C}$ with $\circ$. The identity 1--morphism $1_\mathcal{C}$ becomes a unit object $I \in \mathrm{Ob} \left( \mathcal{C} \right)$, where the identity 2--morphism $\iota_\mathcal{C}$ is the identity morphism $\mathrm{id}_\mathcal{C}$, and similarly for the associativity and unit isomorphisms: concerning the associativity isomorphisms, they become isomorphisms $\alpha_{A,B,C}: \left( A \otimes B \right) \otimes C \rightarrow A \otimes \left( B \otimes C \right)$ called \textit{associators}, natural in $A$, $B$, $C \in \mathrm{Ob} \left( \mathcal{C} \right)$; on the other hand the unit natural isomorphisms become isomorphisms $\lambda_A : I \otimes A \rightarrow A$, $\rho_A : A \otimes I \rightarrow A$, natural in $A \in \mathrm{Ob} \left( \mathcal{C} \right)$, called \textit{left} and \textit{right unit isomorphisms}. Denote then the monoidal category and its structure morphisms as a tuple $\left( \mathcal{C}, \otimes, I, \alpha, \lambda, \rho \right)$ --or simply as $\mathcal{C}$ when clear by the context--where some subindices will be added to prevent confusion if necessary. This tuple of data must of course satisfy the corresponding associativity and identity coherence conditions.

\begin{rem}
\begin{itemize}
\item Assume $\mathcal{C}$ additive. The endomorphisms of the identity arrow (also called the \textit{tensor unit}) $\mathrm{End} \left( I \right)$ form a commutative ring $\mathbbm{k}$, called the \textit{ground ring}, and for every $A, B \in \mathrm{Ob} \left( \mathcal{C} \right)$, $\mathcal{C} \left( A,B \right)$ is a $\mathbbm{k}$-module.
\item If the natural isomorphisms $\alpha$, $\rho$ and $\lambda$ are actually identities, we say the category is \textit{strict}. By the so-called coherence theorems (see e.g. \cite{maclane}) there is no loss of generality in imposing this strictness property.
\end{itemize}
\end{rem}

Define the following objects:
\begin{defn}
\begin{itemize}
\item An \textit{algebra object}, or simply an \textit{algebra}, in a strict monoidal category $\mathcal{C}$ is a triple $\left( A, m, \eta \right)$ where $A \in \mathrm{Ob} \left( \mathcal{C} \right)$, $m \in \mathrm{Hom} \left( A \otimes A, A \right)$ (\textit{multiplication morphism}) and $\eta \in \mathrm{Hom} \left( \mathbbm{1},A \right)$ (\textit{unit morphism}), such that:
\begin{equation}
\begin{split}
m \circ \left( m \otimes \mathrm{id}_A \right) &=m \circ \left( \mathrm{id}_A \otimes m \right) \\
m \circ \left( \eta \otimes \mathrm{id}_A \right)) &= \mathrm{id}_A =m \circ \left( \mathrm{id}_A \otimes \eta \right)
\end{split}
\nonumber
\end{equation}
\item A \textit{coalgebra} in a strict monoidal category $\mathcal{C}$ is triple $\left( A, \Delta, \epsilon \right)$ where $A \in \mathrm{Ob} \left( \mathcal{C} \right)$, $\Delta \in \mathrm{Hom} \left( A, A \otimes A \right)$ (\textit{coassociative coproduct}) and $\epsilon \in \mathrm{Hom} \left( A,I \right)$ (\textit{counit}), such that: 
\begin{equation}
\begin{split}
\left( \Delta \otimes \mathrm{id}_A \right) \circ \Delta &=\left( \mathrm{id}_A \otimes \Delta \right) \circ \Delta \\
\left( \epsilon \otimes \mathrm{id}_A \right) \circ \Delta &=\mathrm{id}_A =\left( \mathrm{id}_A \otimes \epsilon \right) \circ \Delta
\end{split}
\nonumber
\end{equation}
\end{itemize}
\end{defn}

To call $A$ an algebra is appropriate because in the particular case that $\mathcal{C}$ is the category of $\mathbbm{C}$-vector spaces (or some other field), the prescription reduces to the conventional notion of an algebra.

It is possible to define functors between monoidal categories, which are called accordingly \textit{monoidal functors}\footnote{The notion of tensor functor works analogously to the one for categories: a monoidal functor between tensor categories is \textit{tensor} if it is $\mathbbm{k}$-linear.}, and natural transformations between monoidal functors, called \textit{monoidal natural transformations} and we will describe them now in detail.

\begin{defn}
\begin{itemize}
\item Let $\left( \mathcal{C}, \otimes_{\mathcal{C}}, \alpha_{\mathcal{C}}, I_{\mathcal{C}}, \lambda_{\mathcal{C}}, \rho_{\mathcal{C}} \right)$ and $\left( \mathcal{D}, \otimes_{\mathcal{D}}, \alpha_{\mathcal{D}}, I_{\mathcal{D}}, \lambda_{\mathcal{D}}, \rho_{\mathcal{D}} \right)$ be two monoidal categories. A \textit{monoidal functor} from $\mathcal{C}$ to $\mathcal{D}$ is a triple $\left( F,\varphi_0, \varphi_2 \right)$ where:
\begin{itemize}
\item $F$ is a functor $F \colon \mathcal{C} \to \mathcal{D}$;
\item $\varphi_0$ is an isomorphism $\varphi_0 \colon I_\mathcal{D} \to F \left( I_\mathcal{C} \right)$ in the category $\mathcal{D}$;
\item $\varphi_2 \colon \otimes_\mathcal{D} \circ \left( F \times F \right) \to F \circ \otimes_{\mathcal{C}}$ is a natural isomorphism of functors $\mathcal{C} \times \mathcal{C} \rightarrow \mathcal{D}$, including in particular an isomorphism for any pair of objects $A$, $B \in \mathrm{Ob} \left( \mathcal{C} \right)$, $\varphi_2 \left( A,B \right) \colon F \left( A \right) \otimes_{\mathcal{D}} F \left( B \right) \to F \left( A \otimes_{\mathcal{C}} B \right)$;
\end{itemize}
such that:
\begin{itemize}
\item compatibility with associativity holds:
\begin{equation}
\xymatrix{
\left( F \left( A \right) \otimes F \left( B \right) \right) \otimes F \left( C \right) \ar[rrr]^{\alpha_{F(A),F(B),F(C)}} \ar[d]^{\varphi_2 (A,B) \otimes \mathrm{id}_{F(C)}} &&& F \left( A \right) \otimes \left( F \left( B \right) \otimes F \left( C \right) \right) \ar[d]^{\mathrm{id}_{F(A)\otimes \varphi_2(B,C)}} \\
F \left( A \otimes B \right) \otimes F \left( C \right) \ar[d]^{\varphi_2 (A \otimes B, C)} &&& F \left( A \right) \otimes F \left( B \otimes C \right)  \ar[d]^{\varphi_2 (A,B \otimes C)} \\ F \left( \left( A \otimes B \right) \otimes C \right) \ar[rrr]^{F(\alpha_{A \otimes B \otimes C})} &&& F\left( A \otimes \left( B \otimes C \right) \right)
}
\nonumber
\end{equation}
\item compatibility with the left unit holds:
\begin{equation}
\xymatrix{
I_\mathcal{D} \otimes F \left( A \right) \ar[rr]^{\lambda_{F(A)}} \ar[d]^{\varphi_0 \otimes \mathrm{id}_{F(A)}} && F \left( A \right) \\ 
F \left( I_\mathcal{C} \right) \otimes F \left( A \right) \ar[rr]^{\varphi_2(I_\mathcal{C},A)} && F \left( I_\mathcal{C} \otimes A \right) \ar[u]^{F( \lambda_A)}
}
\nonumber
\end{equation}
\item compatibility with the right unit holds:
\begin{equation}
\xymatrix{
F \left( A \right) \otimes I_\mathcal{D} \ar[rr]^{\rho_{F(A)}} \ar[d]^{\mathrm{id}_{F(A)} \otimes \varphi_0} && F \left( A \right) \\ 
F \left( A \right) \otimes F \left( I_\mathcal{C} \right) \ar[rr]^{\varphi_2(A,I_\mathcal{C})} && F \left( A \otimes I_\mathcal{C} \right) \ar[u]^{F( \rho_A)}
}
\nonumber
\end{equation}
\end{itemize}
\item Let $\left( F, \varphi_0, \varphi_2 \right)$, $\left( F', \varphi'_0, \varphi'_2 \right)$ be two monoidal functors. A \textit{monoidal natural transformation} between monoidal functors is a natural transformation $\eta: F \rightarrow F'$ such that:
\begin{itemize}
\item compatibility with the tensor unit holds:
\begin{equation}
\xymatrix{
& F \left( I_{\mathcal{C}} \right) \ar[dd]^{\eta_{I_{\mathcal{C}}}} \\
I_{\mathcal{D}} \ar[ur]^{\varphi_0} \ar[dr]^{\varphi'_0} & \\
& F' \left( I_{\mathcal{C}} \right)
}
\nonumber
\end{equation}
\item compatibility with the tensor product holds: for all pairs $\left( A,B \right)$, $A, B \in \mathrm{Ob} \left( \mathcal{C} \right)$
\begin{equation}
\xymatrix{
F \left( A \right) \otimes F \left( B \right) \ar[rr]^{\varphi_2(A,B)} \ar[d]^{\eta_A \otimes \eta_B} && F \left( A \otimes B \right) \ar[d]^{\eta_{A \otimes B}} \\ 
F' \left( A \right) \otimes F' \left( B \right) \ar[rr]^{\varphi'_2(A,B)} && F' \left( A \otimes B \right)
}
\nonumber
\end{equation}
\end{itemize}
\end{itemize}
\end{defn}

Thanks to this last definition, then one can define:

\begin{defn}
\textit{Monoidal natural isomorphisms} are invertible monoidal natural transformations. An \textit{equivalence of monoidal categories} is a pair of monoidal functors $F \colon \mathcal{C} \to \mathcal{D}$, $G \colon \mathcal{D} \to \mathcal{C}$ and monoidal natural isomorphisms $\eta \colon \mathrm{id}_{\mathcal{D}} \to F \circ G$, $\theta \colon G \circ F \to \mathrm{id}_\mathcal{C}$.
\end{defn}

Our objective for the rest of this section is to introduce modular categories. 

At this point, let us introduce the following kind of categories.

\begin{defn}
We say that a category $\mathcal{C}$ is an \textit{abelian category} if there is a zero object (that we will denote as $0$) and the morphisms possess various properties: every morphism set is an abelian group (i.e. for any two $A,B \in \mathrm{Ob} \left( \mathcal{C} \right)$, $\mathcal{C}\left( A,B \right)$ is an abelian group) and composition of morphisms is bilinear (i.e. for any morphisms $f,f' \in \mathcal{C} \left( A,B \right)$, $g,g' \in \mathcal{C} \left( B,C \right)$, $\left( f+f' \right)\circ \left( g+g' \right)=f \circ g+f' \circ g+f \circ g'+f' \circ g'$, where we denote as $+$ the operation of the abelian group); every finite set of objects has a biproduct (i.e. we can form finite direct sums and direct products); every morphism has a kernel and a cokernel; every monomorphism is the kernel of its cokernel, and every epimorphism is the cokernel of its kernel; and finally, every morphism $f$ can be written as the composition $f=h \circ g$ of a monomorphism $h$ and an epimorphism $g$.
\end{defn}

\begin{rem}
Notice here that in abelian categories we can have finite direct sums (of both objects and morphisms).
\end{rem}

Assume from now on $\mathcal{C}$ to be an abelian strict tensor category with ground ring $\mathbbm{C}$. We need to enrich it with quite a bit of structure.

\begin{defn}
A \textit{simple} object $U \in \mathrm{Ob} \left( \mathcal{C} \right)$ is an object satisfying that $\mathrm{End} \left( U \right)= \mathbbm{C} \mathrm{id}_U$\footnote{Actually this is the usual definition of an \textit{absolutely simple} object, but in any abelian category over an algebraically closed ground field the notions of simple and absolutely simple objects are equivalent.}. In particular, $I$ is automatically simple. A \textit{semisimple} category is then characterized by the property that every object is the direct sum of finitely many simple objects.
\end{defn}

Semisimplicity of a tensor category $\mathcal{C}$ implies in particular \textit{dominance} of $\mathcal{C}$. This means that there exists a family of $\lbrace U_i \rbrace_{i \in I}$ of simple objects with the following property: for any $A,B \in \mathrm{Ob} \left( \mathcal{C} \right)$ every morphism $f \in \mathrm{Hom} \left( A,B \right)$ can be decomposed into a finite sum $f=\sum\limits_r g_r \circ h_r$ with $h_r \in \mathrm{Hom} \left( A,U_i \right)$ and $g_r \in \mathrm{Hom} \left( U_i,B \right)$ for suitable members $U_i=U_{i \left( r \right)}$ (possibly with repetitions) of this family.

Assume $\mathcal{C}$ to be in addition semisimple. Next, we would like to supplement $\mathcal{C}$ with three additional ingredients: 
\begin{enumerate}
\item Dualities: the bicategorical adjoints for a bicategory with only one object (i.e. a monoidal category) are usually called \textit{duals}. A right adjoint is called a \textit{right dual}, and it is denoted with a left superscript $^\vee$\footnote{Resp. for left adjoints (left duals), denoted with a right superscript $^\vee$. As we will see in a second, in the category we want to construct there is automatically a left duality, so we only require existence of right duals.}.

\item Braiding: in a tensor category, a braiding consists of a family of isomorphisms $b_{A,B} \in \mathrm{Hom} \left( A \otimes B, B \otimes A \right)$, one for each pair $A$, $B \in \mathcal{C}$.
\item Twist: for each object $A \in \mathrm{Ob} \left( \mathcal{C} \right)$, a family of isomorphisms $\theta_A$.
\end{enumerate}

Of course, braiding, twist and dualities are subject to a number of consistency conditions. Namely, one has to impose:
\begin{itemize}
\item Equations \ref{bicatzorro} for the right duals, 
\item functoriality of the braiding,
\begin{equation}
b_{A,B} \circ \left( g \otimes f \right)=b_{A',B'} \circ \left( f \otimes g \right)
\nonumber
\end{equation}
and tensoriality,
\begin{equation}
\begin{split}
b_{A \otimes B,D} &=\left( b_{A,D} \otimes \mathrm{id}_B \right) \circ \left( \mathrm{id}_A \otimes b_{B,D} \right) \\
b_{A,B \otimes D} &=\left( \mathrm{id}_B \otimes b_{A \otimes D} \right) \circ \left( b_{A \otimes B} \otimes \mathrm{id}_D \right)
\nonumber
\end{split}
\end{equation}
for any $A,B,D,A',B' \in Ob \left( \mathcal{C}\right)$ and $f \colon A \to A'$, $g\colon B \to B'$,
\item functoriality of the twist,
\begin{equation}
\theta_B \circ f =f \circ \theta_A
\nonumber
\end{equation}
for any two objects $A,B \in Ob \left( \mathcal{C}\right)$ and $f:A \rightarrow B$, and

\item compatibility of the twist with duality,
\begin{equation}
\left( \theta_A \otimes \mathrm{id}_{A^\vee} \right) \circ \mathrm{coev}_A=\left( \mathrm{id}_A \otimes \theta_{A^\vee} \right) \circ \mathrm{coev}_A
\nonumber
\end{equation}
(for any $A \in Ob \left( \mathcal{C}\right)$) and with braiding,
\begin{equation}
\theta_{A \otimes B}=b_{B,A} \circ \left( \theta_B \otimes \theta_A \right) \circ b_{A,B}
\nonumber
\end{equation}

\end{itemize}

One can provide a name for such a category with these morphisms:
\begin{defn}
A strict tensor category with a duality, a braiding and a twisting satisfying the above specified compatibility conditions is called a \textit{ribbon category}.
\end{defn}

Note here that in a ribbon category there is automatically also a left duality. One can check this left duality coincides with the right duality not only on objects, but also on morphisms.  

It is also possible to define weaker notions than a ribbon category. If we simply have a rigid category (that means, a category where every object has a left and a right dual satisfying Equations \ref{bicatzorro} for the right duals and analogously for the left duals) which is in addition equipped with a natural monoidal isomorphism between the identity morphism and the double dual $\left( - \right)^{\vee\vee}$ then we call it \textit{pivotal} (note the analogy with the bicategorical definition). Define left and right traces of endomorphisms $f$:
\begin{equation}
\begin{split}
\mathrm{tr}_l \left( f \right) &= \mathrm{ev} \circ \left( \mathrm{id} \otimes f \right) \circ \widetilde{\mathrm{coev}} \\
\mathrm{tr}_r \left( f \right) &=\widetilde{ev} \circ \left( f \otimes \mathrm{id} \right) \circ \mathrm{coev}
\end{split}
\nonumber
\end{equation}
If these two notions of trace coincide, then we say the category is \textit{spherical}. If one then further introduces a braiding to a spherical category (satisfying the correct compatibility conditions), then we recover the notion of ribbon category again.

At this point we are finally in a position to state the definition of a modular category. 

\begin{defn}
A \textit{modular category} is a semisimple abelian ribbon category with ground field $\mathbbm{C}$ that has only a finite number of isomorphism classes of simple objects $\lbrace V_i \rbrace_{i \in I}$ and the $\vert I \vert \times \vert I \vert$-matrix with entries $S_{ij}=\mathrm{tr} \left( b_{V_j,V_i} \circ b_{V_i,V_j} \right) \in \mathrm{End} \left( I \right) \cong \mathbbm{C}$ (called the \textit{$S$-matrix}) is invertible over $\mathbbm{C}$.
\end{defn}

Modular categories play an important role in this thesis as they show up in the context of conformal field theory -as we will see in the next chapters.

To close this section, recall an extra definition we will need in later chapters:
\begin{defn}
Let $\mathbbm{k}$ be an algebraically closed field. A \textit{fusion category} is a rigid, semisimple tensor category whose morphism spaces are finite dimensional $\mathbbm{k}$-vector spaces, with finitely many isomorphism classes of simple objects and $\mathrm{End} \left( I \right)=\mathbbm{k}.\mathrm{id}_I$. 
\end{defn}

\section{Temperley-Lieb categories}
\label{sec:TLcats}

In this section we would like to briefly introduce a category which will be of special relevance in Chapter \ref{LGCFTTL}: the Temperley-Lieb category. Our main sources for this section are \cite{abramsky}, \cite{scott}, \cite[Chapter XII]{turaev} and the upcoming \cite{davydov}.

Let $I=\left[ 0,1 \right] \subset \mathbb{R}$ be the standard interval.

\begin{defn}
\begin{itemize}
\item A \textit{plane tangle} (with $n$ inputs and $m$ outputs) consists of a finite number of strings, arcs and circles in $\mathbb{R} \times I$, the end of the strings and/or arcs being the $n$ fixed points in the line $\mathbb{R} \times 0$ and $m$ fixed points in the line $\mathbb{R} \times 1$.

\item Two plane tangles $t$, $t'$ are \textit{isotopic} if there is a smooth deformation of the first embedding into the second.
\item An isotopy class of a plane tangle is called a \textit{plane tangle diagram}.
\end{itemize}
\end{defn}

We will organize these plane tangles in a category, that we will denote as $\mathcal{R}$. The objects are labeled by natural numbers $n \in \mathbb{Z}_{\geq 0}$, and given two objects $n,m$ the morphisms are given by $\mathcal{R} \left( n,m \right)$, the class of tangle diagrams with $n$ inputs and $m$ outputs. The composition is given by vertical concatenation of the plane tangle diagrams (which requires the choice of a homeomorphism $\mu \colon I \cup I \to I$) and it is associative (up to canonical isotopy). The identity morphism is the plane tangle diagram consisting of $n$ vertical strings.

In addition, one can check that this category is (strict) monoidal: given two objects $n$ and $m$, the tensor product acts on them as $n \otimes m=n+m$. For the morphisms, it is given by horizontal concatenation of plane tangle diagrams\footnote{This requires several choices of homeomorphisms, but we refer to the literature for further details.}. The monoidal unit is $0$.

Note here that the morphism sets of $\mathcal{R}$ are graded by the number of circles: $$\mathcal{R} \left( n,m \right)=\bigcup\limits_{l \geq 0} \mathcal{R}^l \left( n,m \right)$$
Denote by $\kappa \in \mathcal{R}^1 \left( 0,0 \right)$ the class of a plane tangle consisting of one circle. Using this $\kappa$, we define $\mathcal{TR}$ to be the quotient of $\mathcal{R}$ by the relation $\kappa \otimes 1=1 \otimes \kappa$.

Actually, we are interested in categories related to this one. 
\begin{defn}
\begin{itemize}
\item $\mathcal{TL}$ is the category with objects labeled by natural number with morphism spaces $\mathcal{TL} \left( n,m \right)=\mathbbm{k} \left[ \mathcal{TR} \left( n,m \right) \right]$, where $\mathbbm{k}$ is a field.
\item Denote by $\mathbbm{k} \left( \kappa \right)$ the field of rational functions in the variable $\kappa$. The \textit{generic Temperley-Lieb category} $\mathcal{TL}_{gen}$ is the localization of $\mathcal{TL}$, where we replace the morphism spaces of $\mathcal{TL}$ by the tensor product over $\mathbbm{k} \left[ \kappa \right]$ of the morphism spaces of $\mathcal{TL}$ with $\mathbbm{k} \left( \kappa \right)$. 
\end{itemize}
\end{defn}

The endomorphisms of the generic Temperley-Lieb category are precisely some well-known algebra: the \textit{generic Temperley-Lieb algebra} $TL_{gen} \left( n \right)=\mathcal{TL}_{gen} \left( n,n \right)$ is the unital, associative $\mathbbm{k} \left( \kappa \right)$-linear algebra with generators $e_1,\ldots,e_{n-1}$ satisfying:
\begin{equation}
\begin{split}
e_i \circ e_{i \pm 1} \circ e_i &=e_i \\
e_i \circ e_i &=\kappa e_i \\
e_i \circ e_j &=e_j \circ e_i \quad \quad \quad (\mid i-j \mid > 1)
\end{split}
\label{temperleyliebgenerators}
\end{equation}

It can be shown that generic Temperley-Lieb algebras are semi-simple. This means that $\mathcal{TL}_{gen}$ can be extended to a semi-simple category if we throw in images of projectors. A formal procedure of doing it is via idempotent complete categories.

\begin{defn}
An \textit{idempotent complete category} is a category where every idempotent splits. A morphism $e$ in a category $\mathcal{C}$ is \textit{idempotent} if it satisfies that $e^2=e$, and that \textit{splitting in $\mathcal{C}$} means that $\forall e: A \rightarrow A$ (where $A \in \mathrm{Ob} \left( \mathcal{C} \right)$) an idempotent morphism, there exists a $B \in \mathrm{Ob} \left( \mathcal{C} \right)$ and two morphisms in $\mathcal{C}$ $f: A \rightarrow B$, $g: B \rightarrow A$ such that $g \circ f=e$ and $f \circ g=\mathrm{id}$. 
\end{defn}

We denote the idempotent completion of the category $\mathcal{TL}_{gen}$ as $\mathcal{TL}_{\mathrm{ic}}$, and we will call it the \textit{Temperley-Lieb category}. Then, one can prove that:
\begin{prop}{\cite{davydov}}
$\mathcal{TL}_{\mathrm{ic}}$ is semi-simple.
\end{prop}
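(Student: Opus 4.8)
The plan is to combine the given semisimplicity of the generic Temperley-Lieb algebras $TL_{gen}(n)=\mathcal{TL}_{gen}(n,n)$ with the universal property of the idempotent completion. Recall that the objects of $\mathcal{TL}_{\mathrm{ic}}$ are pairs $(n,e)$ with $e\in TL_{gen}(n)$ an idempotent, that a morphism $(n,e)\to(m,f)$ is a morphism $\phi\colon n\to m$ in $\mathcal{TL}_{gen}$ satisfying $f\circ\phi\circ e=\phi$, and that by construction every idempotent of $\mathcal{TL}_{\mathrm{ic}}$ splits. In particular $\mathrm{End}_{\mathcal{TL}_{\mathrm{ic}}}\big((n,e)\big)=e\,TL_{gen}(n)\,e$, the corner algebra cut out by $e$. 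Note also that each morphism space is finite-dimensional over $\mathbbm{k}(\kappa)$, since $\mathcal{TR}(n,m)$ is the finite set of crossingless matchings between $n$ and $m$ points; this finiteness is what makes the idempotent-theoretic arguments below legitimate.

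With this in place, I would argue as follows. Every object $X=(n,e)$ of $\mathcal{TL}_{\mathrm{ic}}$ has endomorphism algebra $\mathrm{End}(X)=e\,TL_{gen}(n)\,e$, a corner of the semisimple algebra $TL_{gen}(n)$, and a corner $eAe$ of a semisimple algebra $A$ is again semisimple. Moreover $TL_{gen}(n)$ is \emph{split} over the rational function field $\mathbbm{k}(\kappa)$ --- its cell (standard) modules are already defined and absolutely irreducible over $\mathbbm{k}(\kappa)$ --- so $e\,TL_{gen}(n)\,e$ is split semisimple. Decomposing its unit into primitive orthogonal idempotents $e=\sum_j q_j$ and splitting these in the Karoubi envelope yields $X\cong\bigoplus_j(n,q_j)$, where each summand has endomorphism algebra $q_j\,TL_{gen}(n)\,q_j\cong\mathbbm{k}(\kappa)$ and is therefore a simple object. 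Hence every object of $\mathcal{TL}_{\mathrm{ic}}$ is a finite direct sum of simple objects, which is precisely the definition of semisimplicity. (One should not, however, claim \emph{finitely many} isomorphism classes of simples here; the generic category has infinitely many.)

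The main obstacle lies entirely in the input that each $TL_{gen}(n)$ is semisimple, and more precisely split semisimple over $\mathbbm{k}(\kappa)$; this is exactly the reason for passing to the fraction field $\mathbbm{k}(\kappa)$ in the definition of $\mathcal{TL}_{gen}$, since semisimplicity would fail at the specialisations of $\kappa$ corresponding to roots of unity. Granting this, the remaining steps --- semisimplicity of corner algebras, splitting of the induced idempotents in $\mathcal{TL}_{\mathrm{ic}}$, and the identification of the simple summands --- are formal. If one wants $\mathcal{TL}_{\mathrm{ic}}$ to carry honest biproducts, one reads it as the additive-and-idempotent completion, which changes none of the decompositions or simple objects above.
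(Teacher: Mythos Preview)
The paper does not actually supply a proof of this proposition; it merely records the statement and cites the forthcoming lecture notes \cite{davydov}. The only in-text preparation is the sentence immediately preceding it, namely that the generic Temperley--Lieb algebras $TL_{gen}(n)$ are semi-simple, which is exactly the input you take as given. So there is no proof in the paper to compare yours against.

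That said, your argument is correct and is the standard way one deduces semisimplicity of a Karoubi envelope from semisimplicity of the endomorphism algebras: identify $\mathrm{End}((n,e))$ with the corner $e\,TL_{gen}(n)\,e$, use that corners of (split) semisimple algebras are (split) semisimple, refine $e$ into primitive orthogonal idempotents, and split these in $\mathcal{TL}_{\mathrm{ic}}$ to exhibit every object as a finite direct sum of simples. Your remarks about split semisimplicity over $\mathbbm{k}(\kappa)$ and about finiteness of the hom spaces (via the finitely many crossingless matchings) are the right justifications at the right places, and your caveat about taking the additive-and-idempotent completion to have honest biproducts is appropriate given that the objects of $\mathcal{TL}_{gen}$ are just the natural numbers.
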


One can describe the set of simple objects as follows. In the generic Temperley-Lieb algebras there is a morphism of particular interest:
\begin{defn}
The \textit{Wenzl-Jones idempotent} or \textit{projector} (denoted $p_n$) is the morphism in $TL \left( n \right)$ defined via the relations:
\begin{equation}
\begin{split}
p_n &\neq 0 \\
p_n \circ p_n &=p_n \\
p_n \circ e_i= e_i \circ p_n &=0 \quad \quad \quad \forall i \in \lbrace 1,\ldots,n-1 \rbrace
\end{split}
\label{WenzlJones}
\end{equation}
\end{defn}
Note here that the Wenzl-Jones idempotent, characterized by Equation \ref{WenzlJones}, is unique (for a proof see e.g. \cite{scott}). 
If we define:
\begin{defn}
Let $\mathbbm{k} \left[ \kappa \right] \subset \mathbbm{k} \left[ q^{\pm 1} \right]$ be an extension where $\kappa=q+q^{-1}$. The \textit{quantum integer} $\left[ l \right]$ is the element in the ring defined as:
\begin{equation}
\left[ l \right]:= \frac{q^l-q^{-l}}{q-q^{-1}}
\nonumber
\end{equation}
for any $l \in \mathbb{N}$, where we call $q$ the \textit{quantum parameter}. If necessary, we will specify the quantum parameter at the quantum integers with a subscript, i.e. $\left[ l \right]_q$ instead of $\left[ l \right]$.
\end{defn}

$\left[ l \right] p_l$ will be called the \textit{unnormalized Wenzl-Jones idempotent}. The Wenzl-Jones projectors can be computed via the following recurrence formula:
\begin{prop}{(Wenzl recursive formula) \cite{wenzl}}
For $n \geq 1$, the Wenzl-Jones idempotent satisfies that
\begin{equation}
p_{n+1}=p_n -\frac{\left[ n \right]}{\left[ n+1 \right]} p_n \circ e_n \circ p_n
\nonumber
\end{equation}
(with $p_1=\mathrm{id}$).
\label{wenzlrecursiveformula}
\end{prop}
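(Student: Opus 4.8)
The plan is to combine the \emph{uniqueness} of the Wenzl-Jones idempotent (recorded just above the statement, citing \cite{scott}) with induction on $n$. View $p_n \in TL(n)$ inside $TL(n+1)$ via the strand-adding embedding, set
\[
\tilde{p}_{n+1} := p_n - \frac{[n]}{[n+1]}\, p_n \circ e_n \circ p_n \, ,
\]
and verify that $\tilde{p}_{n+1}$ satisfies the three defining relations \eqref{WenzlJones}; uniqueness then forces $\tilde{p}_{n+1} = p_{n+1}$. The base case $n=1$ is a direct check that $\mathrm{id} - \frac{1}{[2]} e_1$ is idempotent, is killed by $e_1$, and is nonzero (using $\kappa = [2]$).

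Two of the relations are easy. For $i \le n-1$ one has $p_n \circ e_i = e_i \circ p_n = 0$ by the inductive property of $p_n$, so every summand of $\tilde{p}_{n+1}\circ e_i$ and of $e_i \circ \tilde{p}_{n+1}$ vanishes at once. Nonvanishing is equally clean in the diagram basis: the identity tangle is the unique diagram of $TL(n+1)$ with $n{+}1$ through-strands, whereas every diagram occurring in $p_n \circ e_n \circ p_n$ contains the cup-cap $e_n$ and hence has strictly fewer through-strands; since $p_n = \mathrm{id} + (\text{lower through-strand terms})$, the coefficient of the identity in $\tilde{p}_{n+1}$ is $1 \neq 0$.

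The crux---and the only place the exact coefficient $\frac{[n]}{[n+1]}$ matters---is the annihilation $\tilde{p}_{n+1}\circ e_n = 0$ together with idempotency $\tilde{p}_{n+1}\circ\tilde{p}_{n+1} = \tilde{p}_{n+1}$. Both reduce to the single key lemma
\[
p_n \circ e_n \circ p_n \circ e_n = \frac{[n+1]}{[n]}\, p_n \circ e_n \, ,
\]
which I would prove using the recursion for $p_n$ itself (so the induction is really on the pair ``recursion $+$ lemma''). Expanding $p_n = p_{n-1} - \frac{[n-1]}{[n]} p_{n-1}\circ e_{n-1}\circ p_{n-1}$, commuting $p_{n-1}$ past $e_n$ (legitimate, as $p_{n-1}$ involves only $e_1,\ldots,e_{n-2}$), and applying the relations $e_n\circ e_n = \kappa e_n$ and $e_n \circ e_{n-1}\circ e_n = e_n$ from \eqref{temperleyliebgenerators} yields $e_n\circ p_n\circ e_n = \bigl(\kappa - \tfrac{[n-1]}{[n]}\bigr) p_{n-1}\circ e_n$; the quantum-integer identity $[2][n] = [n-1] + [n+1]$ (with $\kappa = [2]$) turns the prefactor into $\frac{[n+1]}{[n]}$, and left-multiplying by $p_n$ together with $p_n \circ p_{n-1} = p_n$ (immediate from the inductive recursion and $p_{n-1}^2 = p_{n-1}$) produces the lemma. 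Granting it, $\tilde{p}_{n+1}\circ e_n = p_n e_n - \frac{[n]}{[n+1]}\cdot\frac{[n+1]}{[n]} p_n e_n = 0$, while $e_n \circ \tilde{p}_{n+1} = 0$ follows by the diagram-reflection anti-automorphism (which fixes each $e_i$ and, by uniqueness, each $p_n$); finally, expanding $\tilde{p}_{n+1}\circ\tilde{p}_{n+1}$ and replacing $p_n e_n p_n e_n p_n$ via the lemma collapses the cross terms to exactly $-\frac{[n]}{[n+1]} p_n\circ e_n\circ p_n$, giving idempotency.

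I expect the main obstacle to be the key lemma. The delicate points are the commutation bookkeeping around $e_n$ and the realization that the ``leftover'' $p_{n-1}$ is harmless only after left-multiplication by $p_n$. The specific value $\frac{[n+1]}{[n]}$ is forced by $[2][n] = [n-1] + [n+1]$, so the whole cancellation hinges on that quantum-number relation paired with $e_n \circ e_{n-1} \circ e_n = e_n$; once the lemma is in hand, all remaining verifications are formal.
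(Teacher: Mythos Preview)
Your proof is correct and complete; the induction-plus-uniqueness strategy, the key lemma $p_n e_n p_n e_n = \tfrac{[n+1]}{[n]}\, p_n e_n$, and its derivation via $e_n p_n e_n = \tfrac{[n+1]}{[n]}\, p_{n-1} e_n$ are all standard and sound. Note, however, that the paper does not actually prove this proposition: it is stated with a citation to \cite{wenzl} and no argument is given (the subsequent Example merely illustrates the formula for $n\le 3$), so there is no paper proof to compare against---you have supplied what the paper outsources.
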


\begin{rem}
Notice here that $TL_{gen} \left( n \right)$ naturally embeds into $TL_{gen} \left( n+1 \right)$. This is easy to describe in the planar tangle description: one only has to add a vertical string to the right side of the $TL_{gen} \left( n \right)$ diagram to become one in $TL_{gen} \left( n+1 \right)$. For this reason, we slightly abused notation and wrote the Wenzl recursive formula as specified -with the terms on the right hand living in $TL_{gen} \left( n+1 \right)$ instead of in $TL_{gen} \left( n \right)$ as it may suggest.
\end{rem}

\begin{example}
Using Proposition \ref{wenzlrecursiveformula}, one can compute some of the Wenzl-Jones projectors for the smallest values of n, e.g.
\begin{equation}
\begin{split}
p_1 &= \mathrm{id} \\
p_2 &= \mathrm{id}-\frac{1}{\left[ 2 \right]} e_1 \\
p_3 &= \mathrm{id}- \frac{\left[ 2 \right]}{\left[ 3 \right]}\left( e_1+e_2 \right)+\frac{1}{\left[ 3 \right]} \left( e_2 \circ e_1+e_1 \circ e_2 \right)
\end{split}
\nonumber
\end{equation}
Let us check that they satisfy Eq. \ref{WenzlJones}. It is clear that they are not zero. Concerning composition of the Temperley-Lieb generators with $p_2$ and $p_3$,
\begin{equation}
\begin{split}
p_2 \circ e_1 &=e_1 - \frac{1}{\left[ 2 \right]} e_1 \circ e_1=0 \\
p_3 \circ e_1 &=e_1- \frac{\left[ 2 \right]}{\left[ 3 \right]} e_1 \circ e_1-\frac{\left[ 2 \right]}{\left[ 3 \right]} e_2 \circ e_1+\frac{1}{\left[ 3 \right]} e_2 \circ e_1 \circ e_1+\frac{1}{\left[3 \right]} e_1 \circ e_2 \circ e_1 =0 \\
p_3 \circ e_2 &=e_2- \frac{\left[ 2 \right]}{\left[ 3 \right]} e_1 \circ e_2-\frac{\left[ 2 \right]}{\left[ 3 \right]} e_2 \circ e_2+\frac{1}{\left[ 3 \right]} e_2 \circ e_1 \circ e_2+\frac{1}{\left[3 \right]} e_1 \circ e_2 \circ e_2 =0
\end{split}
\nonumber
\end{equation}
and analogously for $e_1 \circ p_2$, $e_1 \circ p_3$ and $e_2 \circ p_3$, using Eq. \ref{temperleyliebgenerators} and the identity $\left[ 3 \right]-\left[ 2 \right]^2+1=0$. Concerning idempotency, $p_1$ is clearly idempotent; for $p_2$ and $p_3$ this follows quite straightforward from the previous computation:
\begin{equation}
\begin{split}
p_2 \circ p_2 &=p_2 -\frac{1}{\left[ 2 \right]} p_2 \circ e_1=p_2
\end{split}
\nonumber
\end{equation}
Analogously for $p_3$:
\begin{equation}
\begin{split}
p_3 \circ p_3 &=p_3+\frac{\left[ 2 \right]}{\left[ 3 \right]^2} p_3 \circ \left( e_1+e_2 \right) +\frac{1}{\left[ 3 \right]} p_3 \circ \left( e_2 \circ e_1 +e_1 \circ e_2 \right)=p_3
\end{split}
\nonumber
\end{equation}
\end{example}

Consider the objects $T_i \in \mathcal{TL}_{\mathrm{ic}}$ ($i \in \mathbb{Z}_{\geq 0}$) defined as the images of the Wenzl-Jones projectors 
\begin{equation}
T_i=\mathrm{im} \left( p_i \right).
\end{equation}

Actually,
\begin{lem}{\cite{davydov}}
The set of simple objects of $\mathcal{TL}_{\mathrm{ic}}$ has the form $\lbrace T_i \vert i \geq 0 \rbrace$.
\end{lem}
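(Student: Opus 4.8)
The plan is to prove three things: each $T_i$ is simple, the $T_i$ are pairwise non-isomorphic, and together they exhaust the simple objects of $\mathcal{TL}_{\mathrm{ic}}$. Since $\mathcal{TL}_{\mathrm{ic}}$ is idempotent complete and, by the preceding proposition, semisimple, every object is a finite direct sum of simples, and every object is a retract of some $1^{\otimes n}$, which in $\mathcal{TL}_{gen}$ is just the object $n$; so it suffices to understand how each $n$ decomposes into the $T_i$. Throughout I would use the diagrammatic model: a basis diagram in $\mathcal{TL}(m,n)$ is a non-crossing matching, and its number of \emph{through-strands} (strands joining the bottom to the top boundary) is at most $\min(m,n)$. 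The key combinatorial fact is that any diagram with fewer than the maximal number of through-strands contains a turnback and hence factors through one of the generators $e_j$.

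First I would settle simplicity and distinctness, which are soft. By definition $\mathrm{End}_{\mathcal{TL}_{\mathrm{ic}}}(T_i) = p_i \circ TL_{gen}(i) \circ p_i$. Expanding any element as a $\mathbbm{k}(\kappa)$-combination of diagrams, every diagram other than the identity has a turnback and so is a composite involving some $e_j$; by the Wenzl-Jones relations $p_i \circ e_j = e_j \circ p_i = 0$ of Equation \ref{WenzlJones}, each such term is annihilated on one side by $p_i$. Hence $p_i \circ TL_{gen}(i) \circ p_i = \mathbbm{k}(\kappa)\, p_i$, which is a field, so $T_i$ is simple. For distinctness with $i \neq j$, I compute $\mathrm{Hom}_{\mathcal{TL}_{\mathrm{ic}}}(T_i,T_j) = p_j \circ \mathcal{TL}(i,j) \circ p_i$: if $i,j$ have opposite parity there are no matchings at all, and if $i < j$ with the same parity then any diagram in $\mathcal{TL}(i,j)$ has at most $i < j$ through-strands, hence a turnback on the $j$-boundary, and is killed by $p_j$; so this Hom-space vanishes and $T_i \not\cong T_j$.

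The exhaustiveness step is the crux. I would establish the fusion rule $T_1 \otimes T_n \cong T_{n+1} \oplus T_{n-1}$ for $n \ge 1$ directly from the Wenzl recursion of Proposition \ref{wenzlrecursiveformula}. Viewing $p_n$ inside $TL_{gen}(n+1)$ via the string-adding embedding, one has $p_n = p_{n+1} + \tfrac{[n]}{[n+1]}\, p_n \circ e_n \circ p_n$; writing $q := \tfrac{[n]}{[n+1]}\, p_n \circ e_n \circ p_n$, I would verify using $e_n \circ e_{n\pm1} \circ e_n = e_n$, $e_n \circ e_n = \kappa e_n$ and the quantum-integer identity $[n+1][n-1] = [n]^2 - 1$ that $q$ is idempotent, that $q \circ p_{n+1} = p_{n+1} \circ q = 0$, and that $q + p_{n+1} = p_n \otimes \mathrm{id}_1$. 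An analysis of $p_n \circ e_n \circ p_n$ then identifies $\mathrm{im}(q) \cong T_{n-1}$, giving $T_1 \otimes T_n \cong T_{n+1} \oplus T_{n-1}$. By induction on $n$ this decomposes every $n = 1^{\otimes n}$ as a direct sum of the $T_j$ with $0 \le j \le n$ and $j \equiv n \pmod 2$. Finally, any simple object is a retract of some $n$, and in a semisimple category a retract of a direct sum of simples is a sub-sum of those simples; hence every simple object is isomorphic to some $T_i$.

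The main obstacle I anticipate lies entirely in the fusion-rule bookkeeping of the third paragraph: confirming that $q$ is a genuine idempotent orthogonal to $p_{n+1}$, that the two pieces sum to exactly $p_n \otimes \mathrm{id}_1$ (so that no further summand is hidden), and that $\mathrm{im}(q)$ is really $T_{n-1}$ rather than some larger object. These are the computations that genuinely require the quantum-integer identities and the Temperley-Lieb relations \ref{temperleyliebgenerators}, in contrast to the purely formal turnback arguments that dispatch simplicity and distinctness.
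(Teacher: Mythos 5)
The paper does not prove this lemma at all: it is quoted from the (unpublished) reference \cite{davydov}, so there is no in-text argument to compare yours against. Your proof is the standard one and is correct as a sketch: the turnback argument gives $\mathrm{End}(T_i)=\kk(\kappa)\,p_i$ and $\mathrm{Hom}(T_i,T_j)=0$ for $i\neq j$, and the Wenzl recursion gives the orthogonal idempotent decomposition $p_n\otimes\mathrm{id}_1=p_{n+1}+q$ from which exhaustiveness follows by induction. You have also correctly located the only real work, namely the bookkeeping around $q=\tfrac{[n]}{[n+1]}p_n\circ e_n\circ p_n$; the identity that makes all three of your claims about $q$ go through at once is the partial-trace relation $e_n\circ p_n\circ e_n=\tfrac{[n+1]}{[n]}\,p_{n-1}\circ e_n$, together with the absorption $p_n\circ p_{n-1}=p_n$, and writing $e_n$ as a cap composed with a cup exhibits $\mathrm{im}(q)\cong T_{n-1}$ explicitly. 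The only hypotheses you use silently are that $[n+1]\neq 0$ for all $n$ (true in the generic, localized setting over $\kk(\kappa)$, and exactly what fails at roots of unity, where the quotient $\mathcal{T}_\kappa$ appears) and that one takes the additive as well as idempotent completion so that direct sums exist; both are harmless here but worth stating.
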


These $T_i$s help to describe some property of $\mathcal{TL}_{\mathrm{ic}}$ --but we need to first introduce some definitions.

\begin{defn}
\begin{itemize}
\item We say that a tensor category $\C$ is {\em freely generated by} $X\in \mathrm{Ob}\left(\C \right)$ together with a collection of morphisms $\{ f_j:X^{\ot n_j}\to X^{\ot m_j} \}$ making a collection of diagrams $D_s$ commutative if for any tensor category $\D$ the functor of taking values 
$$\Fun_\ot(\C,\D)\to \D',\qquad F\mapsto F(X)$$
is an equivalence. Here, $\Fun_\ot(\C,\D)$ is the category of tensor functors (with tensor natural transformations as morphisms). The target $\D'$ is the category with objects $(Y,\{g_j\})$, where $Y\in\D$ and the $g_j:Y^{\ot n_j}\to Y^{\ot m_j}$ make the collection of diagrams $D_s$, with $X$ replaced by $Y$
and $f_j$ by $g_j$, 
commutative in $\D$. Morphisms $(Y,\{g_j\})\to (Y',\{g'_j\})$ in $\D'$ are morphisms $Y\to Y'$ in $\D$ fitting into commutative squares with all $g_j,g'_j$.

\item We call an object $T$ of a tensor category $\C$ {\em self-dual} if it comes equipped with morphisms
$$
	n:I\to T \otimes T~~ ,\qquad u:T \otimes T \to I \ ,
$$ 
such that the diagrams 
$$\xymatrix{T \ar[rrr]^-{\id} \ar[d]_-{\lambda^{-1}_T} &&& T\\ I\ot T\ar[r]^-{n\ot \id} & (T\ot T)\ot T \ar[r]^-{\alpha_{T,T,T}^{-1}} & T\ot(T\ot T)\ar[r]^-{\id\ot u} & T\ot I \ar[u]_-{\rho_T}}$$
\beq\label{eq:zigzag}
\xymatrix{T \ar[rrr]^-{\id} \ar[d]_-{\rho^{-1}_T} &&& T\\ T\ot I\ar[r]^-{\id\ot n} & T\ot (T\ot T) \ar[r]^-{\alpha_{T,T,T}} & (T\ot T)\ot T\ar[r]^-{u\ot\id} & I\ot T \ar[u]_-{\lambda_T}} 
\eeq
commute. 
The scalar $\kappa\in \kk$ defined by the composition $\kappa\, \id=u\circ n \colon I\to I$ is called the (self-dual) {\em dimension} of $T$.
\end{itemize}
\end{defn}

Constructing such $n$ and $u$ maps for the $T_i$'s, it is possible to check that the dimension of the $T_n$ (which can also be computed as the trace of $p_n$) is $\mathrm{dim}(T_n) = [n+1]_q$. 

At this point, recall that $\mathcal{TL}_{gen}$ has a tensor product originally induced from $\mathcal{R}$. Actually, $\mathcal{TL}_{\mathrm{ic}}$ also has a tensor product, induced from $\mathcal{TL}_{gen}$. Then, we can state that:
\begin{prop}{\cite{davydov}}
$\mathcal{TL}_{\mathrm{ic}}$ is the $\mathbbm{k} \left( \kappa \right)$-linear tensor category freely generated by the self-dual object $T=T_1$.
\end{prop}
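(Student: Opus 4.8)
The plan is to verify the universal property of Definition directly, using the diagrammatic description of morphisms in $\mathcal{TL}_{gen}$, and then to transport it across the idempotent completion. Recall that $T=T_1$ is the generating object (the natural number $1$, since $p_1=\id$) and that its self-dual structure is the cup $n\colon I\to T\ot T$ and the cap $u\colon T\ot T\to I$, with loop value $u\circ n=\kappa\,\id_I$, i.e.\ $\dim(T_1)=[2]_q=\kappa$. I would first treat $\mathcal{TL}_{gen}$ and only afterwards the Karoubi envelope $\mathcal{TL}_{\mathrm{ic}}$.

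First I would establish a generators-and-relations presentation of $\mathcal{TL}_{gen}$ as a $\kk(\kappa)$-linear tensor category. Every morphism space $\mathcal{TL}_{gen}(n,m)$ is spanned over $\kk(\kappa)$ by planar tangle diagrams, and by slicing such a diagram into generic horizontal levels one writes it as a composite of tensor products of identity strings with a single cup $n$ or a single cap $u$; hence $T_1$ together with $n$ and $u$ tensor-generates all objects and all morphisms. The essential claim is that the only relations imposed are the two zigzag identities of \eqref{eq:zigzag} together with the circle-removal rule $u\circ n=\kappa\,\id_I$: two planar diagrams represent the same morphism precisely when related by a finite sequence of planar isotopies, and such isotopies are generated by the snake moves (sliding a cup past a cap) and by creation/annihilation of a closed loop, each loop contributing the scalar $\kappa$.

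Granting this presentation, the universal property for $\mathcal{TL}_{gen}$ is formal. Given a self-dual object $(Y,n_Y,u_Y)$ in a $\kk(\kappa)$-linear tensor category $\D$ whose self-dual dimension is the image of $\kappa$, I define a tensor functor $F$ by $F(n)=Y^{\ot n}$ and $n\mapsto n_Y$, $u\mapsto u_Y$, extended $\kk(\kappa)$-linearly and monoidally; this is well defined because the defining relations of $\mathcal{TL}_{gen}$ hold for $(Y,n_Y,u_Y)$ by the self-duality axioms and the prescribed loop value. This gives essential surjectivity of $F\mapsto F(T_1)$. For full faithfulness, a monoidal natural transformation $\eta\colon F\Rightarrow F'$ is determined by its single component $\eta_{T_1}\colon F(T_1)\to F'(T_1)$, since every object is a tensor power of $T_1$; compatibility of $\eta$ with $n$ and $u$ is exactly the condition that $\eta_{T_1}$ be a morphism of self-dual objects, and conversely every such morphism assembles into an $\eta$. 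Hence $\Fun_\ot(\mathcal{TL}_{gen},\D)\to\D'$ is an equivalence.

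Finally I would pass to $\mathcal{TL}_{\mathrm{ic}}$ via the universal property of the Karoubi envelope: for an idempotent-complete target $\D$, restriction along $\mathcal{TL}_{gen}\hookrightarrow\mathcal{TL}_{\mathrm{ic}}$ is an equivalence $\Fun_\ot(\mathcal{TL}_{\mathrm{ic}},\D)\to\Fun_\ot(\mathcal{TL}_{gen},\D)$. Indeed, a tensor functor on $\mathcal{TL}_{gen}$ extends uniquely up to isomorphism by sending $T_i=\mathrm{im}(p_i)$ to the image of the idempotent $F(p_i)$ on $Y^{\ot i}$, which splits since $\D$ is idempotent complete, and conversely a tensor functor on $\mathcal{TL}_{\mathrm{ic}}$ is recovered from its restriction because each of its objects is a retract of a tensor power of $T_1$. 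Composing with the previous step yields the claimed equivalence. The main obstacle is the completeness of the diagrammatic presentation in the second step, namely proving that planar isotopy of Temperley--Lieb diagrams is generated by the snake moves and loop evaluation alone, with no further hidden relations. This is the genuinely topological input, most cleanly obtained by a Morse-theoretic height-function argument tracking how generic horizontal slices change across critical points.
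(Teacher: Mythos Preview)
The paper does not supply its own proof of this proposition: it is stated with a citation to \cite{davydov} and no argument is given in the text. So there is nothing to compare your proof against line by line.

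Your approach is the standard one and is correct in outline: establish a generators-and-relations presentation of $\mathcal{TL}_{gen}$ by cups and caps subject only to the zigzag relations and the loop value $\kappa$, deduce the universal property for $\mathcal{TL}_{gen}$, and then transport it to $\mathcal{TL}_{\mathrm{ic}}$ via the universal property of the Karoubi envelope. You correctly identify the one nontrivial ingredient, namely that planar isotopy classes of Temperley--Lieb diagrams are generated by snake moves and loop removal with no further relations; this is indeed the substantive topological input, and a Morse/height-function argument is the usual way to handle it.

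One small point worth flagging: the paper's definition of ``freely generated'' quantifies over \emph{all} tensor categories $\D$, whereas your extension step from $\mathcal{TL}_{gen}$ to $\mathcal{TL}_{\mathrm{ic}}$ only goes through for idempotent-complete $\D$ (you need the images of the Wenzl--Jones projectors to exist in the target). This is really an imprecision in how the paper phrases the definition rather than a defect in your argument---for a non-idempotent-complete target the statement as literally written cannot hold, since a self-dual object in $\D$ need not yield a functor out of $\mathcal{TL}_{\mathrm{ic}}$---but it is worth being explicit that the universal property you prove is among $\kk(\kappa)$-linear idempotent-complete tensor categories.
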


Now choose $q \in \mathbbm{k}$. For $q$ a root of unity of order $> 2$, the last well-defined Wenzl--Jones projector is $p_{d-1}$, where $d$ is the order of $q$ if it is odd and half the order of $q$ if it is even. In this case the category $\mathcal{TL}_{\mathrm{ic}}$ has a maximal fusion quotient $\mathcal{T}_\kappa$ which can be defined as the quotient $$\mathcal{T}_\kappa ~:=~ \mathcal{TL}_{\mathrm{ic}}/\langle p_{d-1}\rangle$$ by the ideal of morphisms tensor generated by the Wenzl--Jones projector $p_{d-1}\in TL_{d-1}(\kappa)$, see \cite{eo}. 

\begin{thm}[\cite{gw}]
$\langle p_{d-1} \rangle$ is the unique tensor ideal in $\mathcal{TL}_{\mathrm{ic}}$.
\end{thm}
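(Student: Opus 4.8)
The plan is to classify all tensor ideals of $\mathcal{TL}_{\mathrm{ic}}$ by their effect on objects and then to locate $\langle p_{d-1}\rangle$ among them. Since $\mathcal{TL}_{\mathrm{ic}}$ is Krull--Schmidt and tensor generated by the self-dual object $T=T_1$, I would first reduce a tensor ideal $\mathcal I$ to its support $S_{\mathcal I}=\{\,T_n : \mathrm{id}_{T_n}\in\mathcal I\,\}$, where $T_n=\mathrm{im}(p_n)$ for $0\le n\le d-1$ and, for $n\ge d$, $T_n$ denotes the indecomposable summand of $T^{\otimes n}$ of highest index $n$ (the projector $p_n$ no longer exists, but the indecomposable still does). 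This set is closed under passing to direct summands of $T_n\otimes X$ for arbitrary $X$, and conversely every such thick set of indecomposables cuts out a tensor ideal; establishing this correspondence --- that the morphism ideal is generated by the identities of the objects it contains --- is the first technical point, and I would base it on the cellular, through-strand structure of the algebras $TL_n(\kappa)=\mathcal{TL}_{\mathrm{ic}}(n,n)$.

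The second step is a dimension computation. Using $\dim(T_n)=[n+1]_q$ together with $\kappa=q+q^{-1}$ and the vanishing $[d]=0$ at the chosen root of unity, one finds $\dim(T_n)>0$ for $0\le n\le d-2$ and $\dim(T_n)=0$ for all $n\ge d-1$; for the latter I would use the reflection identity $[2d-m]=-[m]$, which forces the Weyl factors of each higher indecomposable to cancel. Crucially every indecomposable has $\dim(T_n)\ge 0$, so any object of total dimension $0$ has only negligible summands. Consequently the negligible morphisms form a tensor ideal $\mathcal N$ whose support is exactly $\{T_n:n\ge d-1\}$, and $\mathcal N$ is proper because the unit $I=T_0$ has $\dim(I)=[1]=1\neq 0$.

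The heart of the argument is then two propagation computations with the fundamental object $T_1$. First, $\langle p_{d-1}\rangle=\mathcal N$: the inclusion $\langle p_{d-1}\rangle\subseteq\mathcal N$ is immediate from dimension-positivity (all summands of $T_{d-1}\otimes X$ are negligible), while the reverse inclusion follows by climbing up from $T_{d-1}$, using the wall fusion rule $T_{d-1}\otimes T_1=T_d$ and then $T_d\otimes T_1=T_{d-1}\oplus T_{d+1}$, and so on, to realise every $T_n$ with $n\ge d-1$ as a summand of $T_{d-1}\otimes T_1^{\otimes(n-d+1)}$. Second, maximality: if a tensor ideal $\mathcal I$ contains a non-negligible $T_n$ with $0\le n\le d-2$, then the naive fusion $T_n\otimes T_1=T_{n-1}\oplus T_{n+1}$ (valid away from the wall) lets me descend to $T_0=I$, whence $\mathrm{id}_X=\mathrm{id}_X\otimes\mathrm{id}_I\in\mathcal I$ for every $X$ and $\mathcal I$ is all of $\mathcal{TL}_{\mathrm{ic}}$. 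Therefore every proper tensor ideal has support contained in $\{T_n:n\ge d-1\}$, i.e. is contained in $\mathcal N=\langle p_{d-1}\rangle$; so $\langle p_{d-1}\rangle$ is the unique maximal proper tensor ideal, and the quotient by it is precisely the fusion category $\mathcal T_\kappa$.

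I expect the main obstacle to be the fusion rules at the critical index. The decomposition $T_{d-1}\otimes T_1=T_d$ as a single indecomposable (rather than a split $T_d\oplus T_{d-2}$) is exactly where the vanishing $[d]=0$ and the non-existence of $p_d$ enter, and it is what makes the walls one-directional; I would derive it from the recursion of Proposition \ref{wenzlrecursiveformula} together with the dimension constraint, and more generally control all $T_n\otimes T_m$ through the tilting-module combinatorics for $U_q(\mathfrak{su}(2))$ --- this bookkeeping is the substance of \cite{gw}. A secondary but genuine difficulty, because $\mathcal{TL}_{\mathrm{ic}}$ is no longer semisimple at the root of unity, is justifying that an arbitrary tensor ideal is determined by its object-support; this I would settle using the through-strand filtration, which shows that a nonzero morphism in an ideal always forces the identity of some indecomposable into the ideal.
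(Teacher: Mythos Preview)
The paper does not prove this statement at all: it is quoted as a result of Goodman--Wenzl \cite{gw} and used as a black box. There is therefore no ``paper's own proof'' to compare your proposal against. What the thesis actually needs, and uses immediately afterwards, is the reformulation ``any non-faithful tensor functor $\TL_{\mathrm{ic}}\to\D$ factors through $\T_\kappa$'', together with Corollary~\ref{ftl}.

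As for the content of your sketch: the overall strategy (identify the negligible ideal $\mathcal N$, show $\langle p_{d-1}\rangle=\mathcal N$ via dimensions and fusion propagation, and analyse supports of ideals) is indeed the line taken in \cite{gw}. One point to flag: your argument as written establishes that $\langle p_{d-1}\rangle$ is the unique \emph{maximal} proper tensor ideal, but the application in the thesis requires the other half of uniqueness, namely that every nonzero tensor ideal \emph{contains} $\langle p_{d-1}\rangle$ (this is what makes a non-faithful functor factor through $\T_\kappa$). For that you must also run your propagation downwards inside the negligible range: if some $T_n$ with $n\ge d-1$ lies in the ideal, descend along the tilting fusion rules to reach $T_{d-1}$. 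This descent step, and the claim that a nonzero morphism ideal forces the identity of some indecomposable into the ideal (your ``secondary difficulty''), are precisely the substantial parts of the Goodman--Wenzl argument and are not routine at the wall indices.
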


One can restate this result as follows: any non-faithful tensor functor $\mathcal{TL}_{\mathrm{ic}}\to\mathcal{D}$ (where $\mathcal{D}$ is a tensor category) factors through $\mathcal{T}_\kappa\to\D$. 
Thus we have the following.
\bth
A tensor functor from $\T_\kappa$ to a tensor category $\D$ is determined by a self-dual object of dimension $\kappa$ in $\D$ with vanishing Wenzl--Jones projector $p_{d-1}$. 
\eth

\begin{rem}
The condition on the Wenzl-Jones idempotent to vanish is a condition on the self-dual object $T$: $p_{d-1}$ is simply some combination of self-duality morphisms and tensor products of them, so it reduces to be only some constraint on $T$. 
\end{rem}

The next corollary provides an easy-to-use replacement for the vanishing
condition on the Wenzl--Jones projector.

Simple objects of $\T_\kappa$ are $T_i,\ i=0,...,d-2$
with $T_0=I, T_1=T$, and the tensor product with $T$ is $T\ot T_i\simeq
T_{i-1}\op T_{i+1}$ for $0<i<d-2$ and $T\ot T_{d-2}\simeq T_{d-3}$. Then,

\bco\lb{ftl}
Let $\D$ be a rigid fusion category with simple objects $S_i,\
i=0,...,d-2$ and the tensor product  $S_1\ot S_i\simeq S_{i-1}\op
S_{i+1}$ for $0<i<d-2$ and $S_1\ot S_{d-2}\simeq S_{d-3}$. A tensor
functor $\TL_{\mathrm{ic}} \to \D$ such that $T_i \mapsto S_i$ factors through
$\T_\kappa$.
\eco

\bpf
The non-faithfulness of the tensor functor is manifest since
$\TL_{\mathrm{ic}}(T\ot T_{d-2},T\ot T_{d-2})$ is 2-dimensional, while
$\D(S_1\ot S_{d-2},S_1\ot S_{d-2})$ is only 1-dimensional.
\epf

\chapter{Matrix factorizations}
\label{ch:mfs}

\section{Basic definitions}

In this chapter we will describe our object of main interest, matrix factorizations, and some of their related algebraic structures, as well as an overview of how they arise in the physics framework. Some basic bibliography for matrix factorizations we use are \cite{eisenbud,buchweitz,yoshinobuch,khovroz,yoshino}.

Let $\mathbbm{k}$ be a field, $\mathbf{x}=\lbrace x_1,\ldots,x_n \rbrace$ a finite set of variables, $S=\mathbbm{k} \left[ x_1,\ldots,x_n \right]$ a polynomial ring. Consider a polynomial $W \in S$.
\begin{defn}
\begin{itemize}
\item The \textit{Jacobi ideal} is an ideal in $S$ generated by the partial derivatives $\frac{\partial W}{\partial x_1},\ldots,\frac{\partial W}{\partial x_n}$. Denote it as: $$\mathrm{Jac} \left( W \right):= \langle \frac{\partial W}{\partial x_1},\ldots,\frac{\partial W}{\partial x_n} \rangle$$
\item The \textit{Jacobi ring} is the quotient ring $S / \mathrm{Jac} \left( W \right)$.
\item $W$ is called a \textit{potential} if the Jacobi ring is finite-dimensional.
\end{itemize}
\end{defn}

Depending on the literature, a potential can also be called \textit{superpotential}. Some other property we may require for the potential is that:

\begin{defn}
(For the case $\mathbbm{k}=\mathbbm{C}$) We say a polynomial is \textit{homogeneous} if there exist $\omega_1,\ldots,\omega_n \in \mathbb{Q}_{\geq 0}$ such that $$W \left( \lambda^{\omega_1}x_1,\ldots,\lambda^{\omega_n}x_n \right)=\lambda^2 W \left( x_1,\ldots,x_n \right)$$ for any $\lambda \in \mathbbm{C}^\times$.

\end{defn}

The most important definition of the present thesis is the following:
\begin{defn}
A \textit{matrix factorization} of a potential $W$ consists of a pair $\left( M, d^M \right)$ where
\begin{itemize}
\item $M$ is a $\mathbb{Z}_2$-graded free $S$-module;
\item $d^M: M \rightarrow M$ degree 1 $S$-linear endomorphism (the \textit{twisted differential}) such that $$d^M \circ d^M=W.\mathrm{id}_M.$$ (where the right hand side of the equation stands for the endomorphism $m \mapsto W.m$, $\forall m \in M$).
\end{itemize}
We may display the $\mathbb{Z}_2$-grading explicitly as $M=M_0 \oplus M_1$, $d^M=\left( \begin{matrix} 0 & d_1^M \\ d_0^M & 0 \end{matrix} \right)$ or graphically as:
\beq
M~:\quad
\xymatrix{M_1 \ar@/^10pt/[rr]^{d_1^M} && M_0 \ar@/^10pt/[ll]^{d_0^M}}\quad 
\nonumber
\eeq
\label{def:mf}
\end{defn}

\begin{example}
Let $S=\mathbbm{C} \left[ x \right]$ and $W=x^d$. One of the simplest matrix factorizations is given by $\left( \mathbbm{C} \left[ x \right]^{\oplus 2}, M_m \right)$ with $M_m=\left( \begin{matrix} 0 & x^m \\ x^{d-m} & 0 \end{matrix} \right)$ with $0 \leq m \leq d$.
\end{example}

If there is no risk of confusion, we will denote $\left( M,d^M \right)$ simply as $M$. We say that a matrix factorization is of \textit{finite rank} if its underlying free $S$-module is of finite rank. 

Given two matrix factorizations, we define a morphism between them as follows.

\begin{defn}
Given two matrix factorizations $\left( M,d^M \right)$, $\left( M',d^{M'} \right)$ of a potential $W$, a \textit{morphism of matrix factorizations} $f:\left( M,d^M \right) \rightarrow \left( M',d^{M'} \right)$ is a $S$-linear morphism.
\end{defn}

With this data we can construct the following categories of matrix factorizations. 
\begin{itemize}
\item $\MF_{S,W}$ is the category whose objects are matrix factorizations of $W$ and whose morphisms are morphisms of matrix factorizations. Notice that each morphism space of $\MF_{S,W}$ is a $\mathbb{Z}_2$-graded complex with differential: $$\delta f=d^{M'} \circ f- \left( -1 \right)^{|f|} f \circ d^M$$ where $\vert f \vert$ denotes the degree of $f$, for $f \in \MF_{S,W} \left( M, M' \right)$. One can check that it is a differential via double composition of $\delta$:
\begin{equation}
\begin{split}
\delta \circ \delta f &= d^{M'} \circ \delta f - \left( -1 \right)^{| \delta f|} \delta f \circ d^M  \\ &=d^{M'} \circ d^{M'} \circ f -\left( -1 \right)^{|f|} d^{M'} \circ f \circ d^M + \left( -1 \right)^{|f|} d^{M'} \circ f \circ d^M -f \circ d^M \circ d^M \\ &=W.\mathrm{id}_{M'} \circ f-f \circ W.\mathrm{id}_M .
\end{split}
\nonumber
\end{equation}
These last two terms cancel together because,
\begin{equation}
\begin{split}
W.\mathrm{id}_{M'} \circ f \left( m \right) &=W.f \left( m \right) \\
f \circ W.\mathrm{id}_M \left( m \right) &=f \left( W.m \right)=W.f\left( m \right)
\end{split}
\nonumber
\end{equation} 
for any $m \in M$.
\item $\mathrm{ZMF}_{S,W}$ is the category whose objects are the same as $\mathrm{MF}_{S,W}$ and whose morphisms, for any two objects $M$ and $N$, are those morphisms from $M$ to $M'$ which are of degree zero and which lie in the kernel of $\delta$: $$\ZMF_{S,W}(M,M') = \{ f : M \to M' | \text{ $f$ is $S$-linear of degree 0 and $\delta(f)=0$ } \} \ $$
\item $\HMF_{S,W}$ is the category whose objects are the same as those of $\ZMF_{S,W}$ and whose morphisms, given two objects $M$ and $M'$, are those of $\ZMF_{S,W}(M,M')$ mod those morphisms which are the image of the differential of morphisms of degree 1 from $M$ to $M'$:
\begin{equation}
	\HMF_{S,W}(M,M') = \ZMF_{S,W}(M,M') / \{  \delta(g) | \text{ $g : M \to M'$ is $S$-linear of degree 1 } \} \ 
	\nonumber
\end{equation}
\end{itemize}

\begin{example}
Following up with the previous example, if we have two matrix factorizations $\left( \mathbbm{C} \left[ x \right]^{\oplus 2}, M_m \right)$ and $\left( \mathbbm{C} \left[ x \right]^{\oplus 2}, M_l \right)$ of the potential $W=x^d$, the morphism space in $\mathrm{MF}_{S,W}$ between them is given by the set of $2 \times 2$ matrices with entries in $\mathbbm{C} \left[ x \right]$. $\ZMF \left( M_m,M_l \right)$ needs to distinguish two cases:
\begin{itemize}
\item If $l \geq m$, $\ZMF \left( M_m,M_l \right)=\lbrace \left( \begin{matrix}
a & 0 \\ 0 & a x^{l-m} \end{matrix} \right) \vert a \in \mathbbm{C} \left[ x \right] \rbrace$, and
\item If $l < m$, $\ZMF \left( M_m,M_l \right)=\lbrace \left( \begin{matrix}
a  x^{m-l} & 0 \\ 0 & a \end{matrix} \right) \vert a \in \mathbbm{C} \left[ x \right] \rbrace$
\end{itemize}
Finally, the morphism space in $\HMF_{S,W}$ between $M_m$ and $M_l$ is isomorphic to $\mathbbm{C} \left[ x \right] / \langle x^i \rangle$ where $i$ is the smallest element among the set $\lbrace l,m,d-l,d-m \rbrace$. 
\end{example}

Notice here that, from the definitions we just gave, the rank of the module of a matrix factorization can be either of finite or infinite. We use lower case letters ($\mathrm{mf}_{S,W}$, $\mathrm{zmf}_{S,W}$, $\mathrm{hmf}_{S,W}$) whenever we refer to the full subcategory of objects that are isomorphic, in their respective categories, to finite rank matrix factorizations. For $\mathrm{mf}_{S,W}$ and $\mathrm{zmf}_{S,W}$ this just means that one restricts to finite rank matrix factorizations. However, in $\mathrm{hmf}_{S,W}$ there are isomorphisms (in $\mathrm{hmf}$, so up to homotopy) between finite and infinite rank matrix factorizations.

Following up with the graphical display of the $\mathbb{Z}_2$-grading of a matrix factorization, we will often write morphisms $f \in \ZMF_{S,W}(M,M')$ (or representatives of classes in $\HMF_{S,W}(M,M')$) in a diagram as follows:
\beq
  \xymatrix{M_1 \ar@/^10pt/[rr]^{d_1^M} \ar[dd]_{f_1} && M_0 \ar@/^10pt/[ll]^{d_0^M}  \ar[dd]^{f_0} \\ \\
N_1 \ar@/^10pt/[rr]^{d_1^{N}} && N_0 \ar@/^10pt/[ll]^{d_0^{N}}
}
\nonumber
\eeq
That $f$ is in $\ZMF_{S,W}(M,M')$ is equivalent to $f_0$ and $f_1$ being $S$-linear maps such that the subdiagram with upward curved arrows commutes and that with downward curved arrows commutes: 
\beq 
f_0 \circ d_1^M = d_1^N \circ f_1 \quad , \quad 
f_1 \circ d_0^M = d_0^N \circ f_0 \ .
\nonumber
\eeq

Note also that in the definition we have considered left modules -a definition for right modules is analogous. In particular, we would like to use bimodules. Let $\left( S,W \right)$ and $\left( S',W' \right)$ be two pairs of rings and potentials, and consider an $S$-$S'$-bimodule -or, an $S \otimes_{\mathbbm{k}} S'$-left module\footnote{Actually, it would be more correct to write $S \otimes_\mathbbm{k} S'^{op}$, but as our rings are commutative, we omit this notation.}. Recall that an $S$-$S'$-bimodule is free if the corresponding $\left( S \otimes_\mathbbm{k} S' \right)$-left module is free. With this bimodule we construct a matrix factorization of the potential $W \otimes 1-1 \otimes W'$ \footnote{If not in risk of confusion, we may sometimes write $W-W'$ instead of $W \otimes 1-1 \otimes W'$.}, and in order to point out the difference between a matrix factorization whose module is a bimodule instead of a left module we will call it a \textit{matrix bifactorization}.

\begin{example}{\cite{brunrogg1}}
An example of a matrix bifactorization which will be later of particular interest is the following: let $\left( S,W \right)=\left( \mathbb{C} \left[ x \right],x^d \right)$, $\left( S',W' \right)=\left(\mathbbm{C} \left[ y \right],y^d \right)$. A \textit{permutation-type matrix bifactorization} of $W-W'=x^d-y^d$ is a matrix bifactorization of the shape $\left( \mathbbm{C} \left[ x,y \right]^{\oplus 2}, P_J \right)$ with:
\begin{equation}
P_J=\left( \begin{matrix}
0 & \prod\limits_{i \in J} \left( x-\eta^i y \right) \\ \prod\limits_{i \in \overline{J}=\lbrace 0,\ldots,d-1 \rbrace \setminus J} \left( x-\eta^i y \right) & 0
\end{matrix} \right)
\nonumber
\end{equation}
where $J \subset \lbrace 0,\ldots,d-1 \rbrace$ and $\eta=e^{\frac{2 \pi i}{d}}$ a primitive $d$-th root of unity.
\label{permtypembf}
\end{example}

We will denote categories of matrix bifactorizations as in the case of matrix factorizations but adding a $_{\mathrm{bi}}$ subscript: $\mathrm{MF}_{\mathrm{bi}; (S,W),(S'W')}$ (or simply $\MFbi$), and resp. $\mathrm{ZMF}_{\mathrm{bi}; (S,W),(S',W')}$ (or $\ZMFbi$) and $\mathrm{HMF}_{\mathrm{bi}; (S,W),(S'W')}$ (or $\HMFbi$). Analogously for $\mathrm{mf}_{\mathrm{bi}; (S,W),(S'W')}$ (or simply $\mfbi$), $\mathrm{zmf}_{\mathrm{bi}; (S,W),(S'W')}$ (or $\zmfbi$) and $\mathrm{hmf}_{\mathrm{bi}; (S,W),(S'W')}$ (or simply $\hmfbi$)

\subsection{The bicategory of Landau-Ginzburg models}

At this point, we would like to go one step further concerning categories of matrix factorizations. The purpose of this subsection is to construct a bicategory whose morphism categories are categories of matrix factorizations, which we will denote as $\mathcal{LG}$. We will take:
\begin{itemize}
\item Objects: pairs $\left( S,W \right)$ where $S$ is a polynomial ring over a fixed field $\mathbbm{k}$ with an arbitrary finite number of variables and $W \in S$ a potential.
\item 1- and 2-morphisms: for any two objects $\left( S,W \right)$, $\left( S',W' \right)$, the 1- and 2-morphisms between them are given by $\hmf_{\mathrm{bi};(S,W),(S',W')}^\omega$ \footnote{The superscript $^\omega$ means we take the idempotent completion of $\hmf_{\mathrm{bi}}$: $\mathrm{hmf}_{S,W}$ is not necessarily idempotent complete, but its full subcategory whose objects are those matrix factorizations which are direct summands of finite-rank matrix factorizations is. The reason for taking idempotent completions is that when composing 1-morphisms in $\hmf$ the resulting matrix factorization is not a priori finite-rank, only a summand in the homotopy category of something finite-rank. We thus obtain an object which doesn't belong to this category --so taking the idempotent completion we recover an object in $\hmf$. This seems to be the less technical approach to solve this problem, with the advantage that the formalism also applies to graded rings and graded matrix factorizations - the other approach would be working throughout with power series rings and completed tensor products. Some more details on idempotent complete categories can be found at \cite{neeman}, and for an example of $\mathrm{hmf}_{S,W}$ not being idempotent complete we refer to e.g. Example A.5 in \cite{kmvdb}.}.
\end{itemize}

Next we would like to describe the tensor product of matrix factorizations. Let $S_1$, $S_2$ and $S_3$ be three polynomial rings, $W_1 \in S_1$, $W_2 \in S_2$ and $W_3 \in S_3$ three potentials, and two matrix bifactorizations $\left( B, d^B \right)\in \mathrm{HMF}_{\mathrm{bi}; (S_1,W_1),(S_2,W_2)}$, $\left( B', d^{B'} \right) \in \mathrm{HMF}_{\mathrm{bi}; (S_2,W_2),(S_3,W_3)}$. From this, we define the \textit{tensor product matrix (bi)factorization} $$\left( B' \otimes B,d^{B' \otimes B} \right) \in \mathrm{HMF}_{\mathrm{bi}; (S_1,W_1),(S_3, W_3)}$$  in terms of its underlying $\left( S_3 \otimes_{\mathbbm{k}} S_1 \right)$-module 
\begin{equation}
\left( \left( B'_0 \otimes_{S_2} B_0 \right) \oplus \left( B'_1 \otimes_{S_2} B_1 \right) \right)\oplus \left( \left( B'_1 \otimes_{S_2} B_0 \right) \oplus \left( B'_0 \otimes_{S_2} B'_1 \right) \right)
\label{tensorproddirsum}
\end{equation}

with differential $d_{B' \otimes B}=d_{B'} \otimes \mathrm{id}_B+\mathrm{id}_{B'} \otimes d_B$.

\begin{rem}
Whenever $S_2 \neq \mathbbm{k}$ $B' \ot B$ is an infinite-rank matrix factorization over $S_3 \otimes_{\mathbbm{k}} S_1$. However, as it was proved in \cite{dm}, $B' \otimes B$ is (naturally isomorphic to) a direct summand of some finite-rank matrix factorizations in $\mathrm{hmf}_{\mathrm{bi}; (S_1,W_1),(S_3, W_3)}$. 
\end{rem}

Note here that we are dealing with tensor products of graded morphisms, and that we choose to make an explicit use of the Koszul sign rule. That means: for $b' \in B'$ and $b \in B$ we have $\left( \mathrm{id}_{B'} \ot d^B \right)\left( b' \ot b \right)=\left( -1 \right)^{|b'|} b' \ot d^B \left( b \right)$.
For this reason, $d^{B' \otimes B}$ has such a simple expression -otherwise it would be a bit more complicated. Let us prove that it is indeed a differential:
\begin{lem}
$d^{B' \otimes B} \circ d^{B' \otimes B}=W_3.\mathrm{id}_{B'} \otimes \mathrm{id}_B-\mathrm{id}_{B'} \otimes \mathrm{id}_B.W_1$
\end{lem}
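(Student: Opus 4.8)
The plan is to expand the square of $d^{B'\ot B}=d^{B'}\ot\id_B+\id_{B'}\ot d^B$ directly into four summands and keep careful track of the Koszul signs. First I would write
\[
d^{B'\ot B}\circ d^{B'\ot B}=(d^{B'}\ot\id_B)^2+(d^{B'}\ot\id_B)(\id_{B'}\ot d^B)+(\id_{B'}\ot d^B)(d^{B'}\ot\id_B)+(\id_{B'}\ot d^B)^2,
\]
and then apply the composition rule for tensor products of graded maps, $(f\ot g)\circ(f'\ot g')=(-1)^{|g|\,|f'|}(f\circ f')\ot(g\circ g')$, to each of the four terms.

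The key step is the cancellation of the two mixed terms. Since $|d^{B'}|=|d^B|=1$ while $|\id_{B'}|=|\id_B|=0$, the first mixed term carries sign $(-1)^{|\id_B|\,|\id_{B'}|}=+1$ and equals $d^{B'}\ot d^B$, whereas the second carries sign $(-1)^{|d^B|\,|d^{B'}|}=-1$ and equals $-\,d^{B'}\ot d^B$; hence the two add up to zero. The two remaining (diagonal) terms have trivial signs and simplify to $(d^{B'}\circ d^{B'})\ot\id_B+\id_{B'}\ot(d^B\circ d^B)$.

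Next I would insert the defining equations of the two factorizations, namely $d^{B'}\circ d^{B'}=(W_3\ot 1-1\ot W_2).\id_{B'}$ and $d^{B}\circ d^{B}=(W_2\ot 1-1\ot W_1).\id_{B}$, producing four scalar-multiplication terms. The real content of the lemma is then the cancellation of the two middle terms involving $W_2$: the term $-W_2$ acts through the right $S_2$-action on $B'$, while $+W_2$ acts through the left $S_2$-action on $B$, and because the underlying module is $B'\ot_{S_2}B$ these two actions are identified. After this cancellation only $W_3.\id_{B'}\ot\id_B-\id_{B'}\ot\id_B.W_1$ survives, which is exactly the claim.

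I expect the main obstacle to be purely bookkeeping rather than conceptual: getting the Koszul signs right so that the mixed terms cancel rather than add, and justifying that the two $W_2$ contributions genuinely agree on $B'\ot_{S_2}B$ (and therefore cancel) instead of merely looking alike. Everything else is routine substitution.
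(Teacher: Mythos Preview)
Your proposal is correct and follows essentially the same approach as the paper's proof: expand the square into four terms, use the Koszul sign rule to see that the mixed terms $d^{B'}\otimes d^B$ cancel, and then observe that the two $W_2$ contributions cancel because the tensor product is taken over $S_2$. The paper presents the computation slightly more tersely, but the structure and content are identical.
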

\begin{proof}
\begin{equation}
\begin{split}
d^{B' \otimes B} \circ d^{B' \otimes B} &=\left( d_{B'} \otimes \mathrm{id}_B+\mathrm{id}_{B'} \otimes d_B \right) \circ \left( d_{B'} \otimes \mathrm{id}_B+\mathrm{id}_{B'} \otimes d_B \right) \\ &=\left( d_{B'} \otimes \mathrm{id}_B \right) \circ \left( d_{B'} \otimes \mathrm{id}_B \right)+\left( d_{B'} \otimes \mathrm{id}_B \right) \circ \left( \mathrm{id}_{B'} \otimes d_B \right) \\ &+ \left( \mathrm{id}_{B'} \otimes d_B \right) \circ \left( d_{B'} \otimes \mathrm{id}_B \right)+\left( \mathrm{id}_{B'} \otimes d_B\right) \circ \left( \mathrm{id}_{B'} \otimes d_B \right) \\ 
&=W_3.\mathrm{id}_{B'} \ot \mathrm{id}_B - \mathrm{id}_{B'}.W_2 \otimes \mathrm{id}_B
+d^{B'} \otimes d^B \\ &-d^{B'} \otimes d^B+
\mathrm{id}_{B'} \otimes W_2.\mathrm{id}_B - \mathrm{id}_{B'} \otimes \mathrm{id}_B.W_1 \\ &=W_3.\mathrm{id}_{B'} \otimes \mathrm{id}_B-\mathrm{id}_{B'} \otimes \mathrm{id}_B.W_1
\end{split}
\nonumber
\end{equation} where the $W_2$ terms cancel with each other because we are tensoring over $S_2$. Because of this, we can regard the action with $W_2$ from the left side of the tensor product of identity maps as an action from the right side of the tensor product -hence, both terms cancel.
\end{proof}

The tensor product of morphisms of matrix factorizations is defined as follows: let the matrix bifactorizations $B_1,B_2 \in \mathrm{HMF}_{\mathrm{bi}; (S_1,W_1),(S_2,W_2)}$, $B'_1,B'_2\in \mathrm{HMF}_{\mathrm{bi}; (S_2,W_2),(S_3,W_3)}$, and two morphisms $f \colon B_1 \to B_2$ and $g \colon B'_1 \to B'_2$. We define the \textit{tensor product of morphisms of matrix factorizations}
$$g \otimes f \colon B'_1 \otimes B_1 \to B'_2 \otimes B_2$$ in $\mathrm{HMF}_{\mathrm{bi}; (S_1,W_1),(S_3,W_3)}$\footnote{It is easy to see that the tensor product of morphisms belongs to this category. Simply recall that (following the previous notation) for $g \otimes f \in \mathrm{HMF}_{\mathrm{bi}; (S_1,W_1),(S_3,W_3)}$, the differential of the morphism space is given by: $$\delta \left( h \right)=d^{B'_2 \otimes B_2} \circ \left( g \otimes f \right) -\left( -1 \right)^{|g \otimes f|} \left( g \otimes f \right) \circ d^{B'_1 \otimes B_1}.$$}

The above defined tensor product of matrix factorizations and morphisms of matrix factorizations will describe the composition of 1- and 2-morphisms in our bicategory.

We still have to specify some more data for our bicategory:
\begin{itemize}
\item[$\circ$] For $B \in \mathrm{HMF}_{\mathrm{bi}; (S_1,W_1),(S_2,W_2)},B' \in \mathrm{HMF}_{\mathrm{bi}; (S_2,W_2),(S_3,W_3)},B'' \in \mathrm{HMF}_{\mathrm{bi}; (S_3,W_3),(S_4,W_4)}$, the associator is the 2-isomorphism $$\alpha_{B'',B',B}: B'' \otimes \left( B' \otimes B \right) \rightarrow \left( B'' \otimes B' \right) \otimes B$$ which is given by the isomorphism of bimodules
$$ b'' \otimes \left(b' \otimes b  \right) \mapsto \left( b'' \otimes b' \right) \otimes b$$
for $b \in B$, $b' \in B'$ and $b'' \in B''$.
\item Concerning the unit 1-morphism, let us write $S^e=S \otimes_\mathbbm{k} S$ and $\tilde{W}=W \otimes 1-1 \otimes W \in S^e$. If $n$ is the number of variables in the ring of polynomials $S$, we fix $n$ formal symbols $\theta_i$ as a basis of $\left( S^e \right)^{\oplus n}$. Then the $S^e$-module underlying $I_W \in \mathrm{hmf}_{S^e,\tilde{W}}$ is the exterior algebra $$\Delta_W=\bigwedge \left( \bigoplus_{\substack{i=1}}^n S^e \theta_i \right)$$ on which the differential is given by $$d_{\Delta_W}=\sum\limits_{i=1}^n \left( \left( x_i-x'_i\right) \theta^*_i+\partial_{[i]}W \cdot \theta_i \wedge \left( - \right)\right)$$ where $\partial_{[i]}W=W \left( \left( x'_1,\ldots,x'_{i-1},x_i,\ldots,x_n \right)-W\left(x'_1,\ldots,x'_i,x_{i+1},\ldots,x_n \right) \right)/\left( x_i-x'_i \right)$ and $\theta_i^*$ is a derivation on $\Delta_W$ satisfying that $\theta^*_i \left( \theta_j \right)=\delta_{i,j}$. The endomorphisms of $I_W$ in $\mathrm{hmf}_{S^e,\tilde{W}}$ are given by $\mathrm{Jac} \left( W \right)$, see e.g. \cite{kapustinrozansky}.
\item The left and right unit isomorphisms on $B \in \mathrm{hmf}_{bi; (S_1,W_1),(S_2,W_2)}$, $\lambda_B \colon I_{W_1} \otimes B \to B$ and $\rho_B \colon B \otimes I_{W_2} \to B$ which are the composition of first projecting $I$ to its $\theta$-degree zero component and then using the multiplication in the rings $S_1$ and $S_2$ resp. More explicitly, set e.g. $S=\mathbbm{C} \left[ x \right]$. In components -- recall the direct sum decomposition of the components of the matrix factorizations from Definition \ref{def:mf} and Equation \ref{tensorproddirsum}--, $\lambda_M$ and $\rho_M$ look like:
$$\lambda_M=\left( \begin{matrix} L_{M_0} & 0 & 0 & 0 \\ 0 & 0 & 0 &L_{M_1}\end{matrix} \right), \quad \quad \rho_M=\left( \begin{matrix} R_{M_0} & 0 & 0 & 0 \\ 0 & 0 & R_{M_1} & 0 \end{matrix}\right)$$
where the maps $L_{M_i}$ and $R_{M_i}$ ($i \in \lbrace 0,1 \rbrace$) are defined, for a given $\mathbbm{C} \left[ x \right]$-$\mathbbm{C} \left[ x \right]$-bimodule $B$, as
\begin{equation}
\begin{split}
L_B \colon \mathbbm{C} \left[ x,y \right] \ot_{\mathbbm{C} \left[ x \right]} B & \to B \\
f \left( x,y \right) \ot b & \mapsto f \left( x,x \right).b \\
R_B \colon B \ot_{\mathbbm{C} \left[ x \right]} \mathbbm{C} \left[ x,y \right] & \to B \\
b \otimes f \left( x,y \right) & \mapsto b.f \left( x,x \right)
\end{split}
\label{leftrightactions}
\end{equation}
In $L_B$ $\mathbb{C} \left[ x \right]$ acts on $\mathbb{C} \left[ x,y \right]$ via multiplication by $y$ and in $R_B$ via multiplication by $x$.
\end{itemize}

\begin{rem}
$\alpha$ is an isomorphism of free modules, but $\lambda$ and $\rho$ are only invertible up to homotopy.
\end{rem}

Altogether, we have specified all the necessary data of the bicategory $\mathcal{LG}$ of Landau-Ginzburg models. The coherence axioms are indeed satisfied and this was checked in \cite{macnamee}, \cite{carqrunkel1}. Notice here this means that $\mathcal{LG} \left( \left( S,W \right),\left( S,W \right) \right)$, i.e. $\hmfbi$, is monoidal. However, the unit object in the category of $\bZ_2$-graded $S$-$S$-bimodules, the bimodule $S$ is not free as an $S \otimes_k S$-left module. As a consequence, the categories $\MFbi$ and $\ZMFbi$ are non-unital monoidal.

At this point, we would like to describe further structures inside $\mathcal{LG}$, namely adjunctions and duals. It was recently proved in \cite{carqmurfet} that:
\begin{thm}
Every 1-morphism in $\mathcal{LG}$ has both a left and a right adjoint. Specifically, if a 1-morphism is represented by a finite-rank matrix factorization $X$ of $V-W$, where $W \in S_1=\mathbbm{k} \left[x_1,\ldots,x_n \right]$ and $V \in S_2=\mathbbm{k} \left[ z_1,\ldots,z_m \right]$ are potentials, then
$$X^\dagger =S_1 \left[ n \right] \otimes_{S_1} X^\vee$$
$${^\dagger X}=X^\vee \otimes_{S_2} S_2 \left[m \right],$$
where $X^\vee:=\mathrm{Hom}_{\mathbbm{k} \left[ x_1,\ldots,x_n,z_1,\ldots,z_m \right]} \left( X,\mathbbm{k} \left[ x_1,\ldots,x_n,z_1,\ldots,z_m \right] \right)$ and $\left[ n \right]$ is the $n$-fold shift functor (which exchanges $d_0^X$ and $d_1^X$ $n$ times), are resp. the right and left adjoints of $X$ in $\mathcal{LG}$.

The evaluation and coevaluation morphisms act on the elements of the basis of $X$ $\lbrace e_i \rbrace$ and $\gamma \in \Delta_W$:
\begin{equation}
\begin{split}
\widetilde{\mathrm{ev}}_X \left( e_j \otimes e_i^* \right) &= \sum\limits_{l>0} \sum\limits_{a_1 < \ldots < a_l} \left( -1 \right)^{l+(n+1)|e_j|}\theta_{a_1} \ldots \theta_{a_l} \mathrm{Res} \left[ \frac{\lbrace \partial_{[a_l]}^{z,z'} d_X \ldots \partial_{[a_1]}^{z,z'} d_X \Lambda^{(x)} \rbrace_{ij} dx }{\partial_{x_1} W,\ldots,\partial_{x_n} W} \right] \\
\mathrm{ev}_X \left( e_i^* \otimes e_j \right) &= \sum\limits_{l>0} \sum\limits_{a_1 < \ldots < a_l} \left( -1 \right)^{\binom{l}{2}+ l|e_j|}\theta_{a_1} \ldots \theta_{a_l} \mathrm{Res} \left[ \frac{\lbrace \Lambda^{(z)} \partial_{[a_1]}^{x,x'} d_X \ldots \partial_{[a_l]}^{x,x'} d_X  \rbrace_{ij} dz }{\partial_{z_1} V,\ldots,\partial_{z_m} V} \right] \\
\widetilde{\mathrm{coev}}_X \left( \bar{\gamma} \right) &= \sum_{i,j} \left( -1 \right)^{(\bar{r}+1)|e_j|+s_n} \lbrace \partial_{[\bar{b}_{\bar{r}}]}^{x,x'} \left( d_X \right) \ldots \partial_{[\bar{b}_{\bar{1}}]}^{x,x'} \left( d_X \right) \rbrace_{ji} e_i^* \otimes e_j \\
\mathrm{coev}_X \left( \gamma \right) &= \sum_{i,j} \left( -1 \right)^{\binom{l}{2}+m r+s_m} \lbrace \partial_{[b_1]}^{z,z'} \left( d_X \right) \ldots \partial_{[b_r]}^{z,z'} \left( d_X \right) \rbrace_{ij} e_i \otimes e_j^*
\end{split}
\nonumber
\end{equation}
where $\Lambda^{(x)}:=\left( -1 \right)^n \partial_{x_1} d_X \ldots\partial_{x_n} d_X$, $\Lambda^{(z)}:=\left( -1 \right)^n \partial_{z_1} d_X \ldots\partial_{z_m} d_X$ and $b_i$, $\bar{b}_{\bar{j}}$ and $s_n$, $s_m \in \mathbb{Z}_2$ are uniquely determined by requiring that $b_1 < \ldots < b_r$, $\bar{b}_1 < \ldots < \bar{b}_{\bar{r}}$ and $\bar{\gamma} \theta_{\bar{b}_1} \ldots \theta_{\bar{b}_{\bar{r}}}=\left( -1 \right)^{s_n} \theta_1 \ldots \theta_n$ and $\gamma \theta_{b_1} \ldots \theta_{b_r}=\left( -1 \right)^{s_m} \theta_1 \ldots \theta_m$\footnote{A full, detailed description of these maps can be found in \cite{carqmurfet}}. 
\label{cmtheorem}
\footnote{For convenience in our computations in Chapter \ref{LGCFTTL}, we will use the formulas of the evaluation and coevaluation maps for one variable (i.e. $m=n=1$) from \cite{carqrunkel3}.}
\end{thm}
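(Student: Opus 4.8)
The plan is to produce explicit adjunction data and to verify the two snake identities of Equation \ref{bicatzorro} directly in $\hmfbi$, following \cite{carqmurfet}. By the evident symmetry between the two sides it is enough to carry out the construction of the right adjoint $X^\dagger$ together with the pair $\widetilde{\mathrm{ev}}_X,\widetilde{\mathrm{coev}}_X$ in full; the left adjoint ${}^\dagger X$ and its maps $\mathrm{ev}_X,\mathrm{coev}_X$ are then obtained by interchanging the roles of the two rings $S_1$ and $S_2$.

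First I would fix the candidate module. Writing $R=\kk[x_1,\dots,x_n,z_1,\dots,z_m]$, the $R$-dual $X^\vee=\Hom_R(X,R)$ equipped with the transposed differential represents a $1$-morphism in the reverse direction (a matrix factorization of $W-V$, up to the standard sign identification), and the $n$-fold shift fixes the parity needed for a degree-zero evaluation, giving $X^\dagger=S_1[n]\otimes_{S_1}X^\vee$. This is routine linear algebra over $R$. Next I would write down the four maps by the residue formulas in the statement. The composite $X\otimes X^\dagger$ is a matrix factorization over $S_2\otimes_\kk S_2$ of $V\otimes 1-1\otimes V$, and $\widetilde{\mathrm{ev}}_X$ maps it to the unit $I_V=\Delta_V$ by integrating out the intermediate ring $S_1$ through the Grothendieck residue $\mathrm{Res}[\,\cdot\,/\partial_{x_1}W,\dots,\partial_{x_n}W]$; dually $\widetilde{\mathrm{coev}}_X\colon I_W\Rightarrow X^\dagger\otimes X$ uses the residue over the $\partial_{z_j}V$. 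The crucial input here is that $W$ and $V$ are \emph{potentials}, so both Jacobi rings are finite-dimensional and these residues are defined; the iterated partial-difference operators $\partial_{[i]}d_X$ appearing in the formulas are exactly the Atiyah-class data that make the expressions natural.

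I would then verify that each of these maps is a genuine $2$-morphism, i.e. $S$-bilinear, of $\mathbb{Z}_2$-degree $0$, and closed with respect to $\delta$. Using the explicit differential $d_{\Delta_V}$, which is the sum of a contraction term $\theta_i^*$ and a wedge term $\partial_{[i]}V\cdot\theta_i\wedge(-)$, together with the Koszul sign rule, closedness reduces to a bookkeeping identity relating $\partial_{[i]}d_X$ to $d_X$; invariance of the residue modulo the Jacobi ideal then guarantees that the maps descend to $\hmfbi$.

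Finally I would check Equation \ref{bicatzorro}. Here the associator is the trivial reassociation of tensor factors and the unit isomorphisms $\lambda,\rho$ are the ``set $y=x$'' multiplication maps $L_B,R_B$ of Equation \ref{leftrightactions}, so after these identifications each snake composite becomes an endomorphism of $X$ (respectively $X^\dagger$) consisting of one insertion of $\widetilde{\mathrm{coev}}_X$ followed by a residue contraction from $\widetilde{\mathrm{ev}}_X$, and the claim is that it is homotopic to the identity. I expect \textbf{this to be the main obstacle}: it rests on the transformation (duality) law for Grothendieck residues together with the non-degeneracy of the residue pairing on $\End_{\hmf}(I_V)\cong S_2/\mathrm{Jac}(V)$ recorded in \cite{kapustinrozansky}, and one must either exhibit an explicit nullhomotopy or run a homological-perturbation argument to pass from the finite-rank model to the direct summand supplied by \cite{dm}. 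The sign bookkeeping carried by the exponents $s_n,s_m,\binom{l}{2}$ in the statement is where errors are most likely to hide, so I would first settle the degenerate one-sided cases $S_1=\kk$ and $S_2=\kk$ (ordinary Serre/Kapustin--Li duality for matrix factorizations), and only then treat the general bimodule case by the same residue calculus.
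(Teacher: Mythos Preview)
The paper does not give its own proof of this theorem: immediately after the statement it records that ``these formulas \dots\ arose by approaching the problem of finding the adjoints using the so-called \emph{associative Atiyah classes}'' and that ``a complete description and proofs \dots\ are given in \cite{carqmurfet}''. Your proposal is precisely a sketch of that same argument from \cite{carqmurfet} --- candidate dual module, residue-based (co)evaluation maps built from the partial-difference/Atiyah-class operators $\partial_{[i]}d_X$, closedness, and finally the snake identities via residue duality --- so there is nothing to compare: you are reconstructing the cited proof rather than diverging from it.
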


These formulas from Theorem \ref{cmtheorem} arose by approaching the problem of finding the adjoints using the so-called \textit{associative Atiyah classes}. A complete description and proofs of Theorem \ref{cmtheorem} and Proposition \ref{pivotalityLG} are given in \cite{carqmurfet}. In addition, this approach also served to find an expression for the inverses of the unit morphisms:

\begin{thm}
Following the notation of Theorem \ref{cmtheorem}, the unit morphisms have the following inverses:
\begin{equation}
\begin{split}
\lambda_X^{-1} \left( e_i \right) &=\sum\limits_{l>0} \sum\limits_{a_1 < \ldots < a_l} \sum_j \theta_{a_1} \ldots \theta_{a_l} \lbrace \partial_{[a_l]}^{z,z'} d_X \ldots \partial_{[a_1]}^{z,z'} d_X \rbrace_{ji} \otimes e_j \\
\rho_X^{-1} \left( e_i \right) &=\sum\limits_{l>0} \sum\limits_{a_1 < \ldots < a_l} \sum_j \left( -1 \right)^{\binom{l}{2}+ l|e_j|} e_j \otimes \lbrace \partial_{[a_1]}^{x,x'} d_X \ldots \partial_{[a_l]}^{x,x'} d_X \rbrace_{ji} \theta_{a_1} \ldots \theta_{a_l}
\end{split}
\nonumber
\end{equation}
\end{thm}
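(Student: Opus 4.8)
The plan is to verify directly that the right-hand side of the stated formula, which I will call $g$, is a degree-zero morphism in $\ZMF$ and a one-sided inverse of $\lambda_X$ on the nose; invertibility of $\lambda_X$ in $\hmf$ (noted in the Remark after the construction of $\mathcal{LG}$) then promotes $g$ to the genuine two-sided inverse in the homotopy category. Thus I would prove three statements for $\lambda_X^{-1}$, and the mirror-image statements for $\rho_X^{-1}$: (i) $g$ is $S$-linear of $\mathbb{Z}_2$-degree $0$; (ii) $\delta(g)=0$, i.e. $g$ intertwines the differentials on $X$ and on $I\otimes X$; and (iii) $\lambda_X\circ g=\id_X$. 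From (i)--(iii), $g$ is a right inverse of $\lambda_X$ in $\hmf$, and since $\lambda_X$ is already invertible there it must coincide with $\lambda_X^{-1}$.

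Statements (i) and (iii) are the easy ones. For (i), the string $\theta_{a_1}\cdots\theta_{a_l}$ carries exterior (hence $\mathbb{Z}_2$-) degree $l$, while each finite-difference operator $\partial_{[a]}^{z,z'}d_X$ is odd, so the matrix entry $\{\partial_{[a_l]}^{z,z'}d_X\cdots\partial_{[a_1]}^{z,z'}d_X\}_{ji}$ also has parity $l$; the two contributions cancel modulo $2$ and $g$ is even. For (iii), I use the description of $\lambda_X$ as the projection of $I\otimes X$ onto its $\theta$-degree-zero part followed by ring multiplication: this projection kills every summand of $g(e_i)$ of positive order in $\theta$ and keeps only the lowest term, in which the product of $\theta$'s is empty and the product of finite differences is the identity matrix $\delta_{ji}$; that term is the evident section $e_i\mapsto 1\otimes e_i$, so $\lambda_X\circ g=\id_X$.

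The real work is statement (ii), and I expect it to be the main obstacle. Writing $d_{I\otimes X}=d_I\otimes\id_X+\id_I\otimes d_X$ with the Koszul sign convention, and $d_I=\sum_a\big((z_a-z_a')\theta_a^{*}+\partial_{[a]}V\cdot\theta_a\wedge(-)\big)$, one must expand $d_{I\otimes X}\circ g-g\circ d_X$ and show it vanishes. The two structural inputs are the telescoping identity for finite differences,
\[
\textstyle\sum_a (z_a-z_a')\,\partial_{[a]}^{z,z'}d_X \;=\; d_X|_{z}-d_X|_{z'}\ ,
\]
and the finite difference of the matrix-factorization equation $d_X\circ d_X=(V-W)\,\id_X$, which yields $d_X|_z\circ\partial_{[a]}^{z,z'}d_X+\partial_{[a]}^{z,z'}d_X\circ d_X|_{z'}=\partial_{[a]}V\cdot\id_X$. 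The contraction terms $(z_a-z_a')\theta_a^{*}$ in $d_I$, acting on $\theta_{a_1}\cdots\theta_{a_l}$, reduce the $\theta$-length by one and, through the first identity, telescope against the $\id_I\otimes d_X$ term and against $g\circ d_X$; the wedge terms $\partial_{[a]}V\cdot\theta_a\wedge(-)$ raise the $\theta$-length by one and are absorbed using the second identity. The difficulty is purely combinatorial bookkeeping: one has to organize the double sum over subsets $a_1<\cdots<a_l$, track the signs $(-1)^{k-1}$ produced when $\theta_a^{*}$ passes a string of $\theta$'s together with the Koszul signs from commuting $d_X$ through that string, and check that the length-raising and length-lowering contributions cancel in each fixed $\theta$-degree. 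I would carry this out by induction on $l$, equivalently by comparing coefficients of each monomial $\theta_{b_1}\cdots\theta_{b_k}$.

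Finally, the claim for $\rho_X^{-1}$ follows by the same argument with the roles of source and target interchanged: one replaces the $z$-variables and potential $V$ by the $x$-variables and potential $W$, places the $\theta$-string and the finite differences on the right of the tensor factor, and picks up the reordering sign $(-1)^{\binom{l}{2}+l|e_j|}$ recorded in the formula when the odd operators are moved past $e_j$ and past one another. With the chain-map property and $\rho_X\circ\rho_X^{-1}=\id_X$ in hand, invertibility of $\rho_X$ in $\hmf$ again identifies the formula with $\rho_X^{-1}$.
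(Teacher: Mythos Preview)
Your approach is sound, but note that the paper itself offers no argument for this statement: it simply records the formulas and defers the proof to \cite{carqmurfet}, where they are obtained as part of the Atiyah-class formalism. What you propose is therefore not a variant of the paper's proof but an independent, elementary verification --- check that the candidate $g$ is a closed degree-zero map with $\lambda_X\circ g=\id_X$, then invoke the known homotopy-invertibility of $\lambda_X$. This is a perfectly legitimate and more self-contained route; the cited reference instead builds $\lambda_X^{-1}$ conceptually (via the homological perturbation/Atiyah-class machinery that produces all the adjunction data at once), which explains the uniform shape of the formulas but is heavier to set up.

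Two small points of caution. First, for step~(iii) to give $\lambda_X\circ g=\id_X$ you need the $l=0$ summand (empty product of $\theta$'s and of finite differences), so the sum must start at $l\ge 0$; the ``$l>0$'' in the displayed formula should be read as $l\ge 0$, as in \cite{carqmurfet}. Second, in step~(ii) your ``finite-difference Leibniz'' identity is not literally $d_X|_z\cdot\partial_{[a]}d_X+\partial_{[a]}d_X\cdot d_X|_{z'}=\partial_{[a]}V$: with the paper's convention $\partial_{[a]}$ compares the states $(z_1',\dots,z_{a-1}',z_a,z_{a+1},\dots)$ and $(z_1',\dots,z_a',z_{a+1},\dots)$, so the two copies of $d_X$ flanking $\partial_{[a]}d_X$ are evaluated at these \emph{intermediate} points, not at the pure endpoints $z$ and $z'$. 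This is exactly the ``combinatorial bookkeeping'' you flag, and it is the reason the finite differences in the formula appear in the specific order $\partial_{[a_l]}\cdots\partial_{[a_1]}$ for $a_1<\cdots<a_l$: the telescoping only closes up when the operators are nested in that staircase pattern. Your induction on $l$ will work once you track these intermediate evaluations carefully.
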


With adjoints and unit inverses, one can check the following proposition.
\begin{prop}
Let $X$, $Y$ be matrix factorizations of $V \left( z_1,\ldots,z_m \right)-W \left( x_1,\ldots,x_n \right)$ and $U \left( y_1,\ldots,y_p \right)-V \left( z_1,\ldots,z_m \right)$ resp., and $\varphi: X \rightarrow Y$ a morphism (${^\dagger \varphi}: {^\dagger Y} \rightarrow {^\dagger X}$). Then,
\begin{enumerate}
\item $\left( \varphi \otimes 1_{{^\dagger X}} \right) \circ \mathrm{coev}_X=\left( 1_Y \otimes  {^\dagger\varphi} \right) \circ \mathrm{coev}_Y$,
\item $\mathrm{ev}_Y \circ \left( 1_{^\dagger Y} \otimes \varphi \right)=\mathrm{ev}_X \circ \left( ^\dagger \varphi \otimes 1_X \right)$,
\item ${^\dagger \varphi}=\lambda_{{^\dagger X}} \circ \left( \mathrm{ev}_Y \otimes 1_{{^\dagger X}} \right) \circ \left( 1_{{^\dagger Y}} \otimes \varphi 1_{{^\dagger X}} \right) \circ \left( 1_{{^\dagger Y}} \otimes \mathrm{coev}_X \right) \circ \rho^{-1}_{{^\dagger Y}}$
\end{enumerate}
and similarly for $\widetilde{\mathrm{ev}}$ and $\widetilde{\mathrm{coev}}$. Also,
\begin{equation}
\begin{split}
\left( \mathrm{ev}_{(Y \otimes X)^\dagger} \otimes \mathrm{id}_{X^\dagger} \otimes \mathrm{id}_{Y^\dagger} \right) \circ \left( \mathrm{id}_{(Y \otimes X)^\dagger} \otimes \mathrm{coev}_X \otimes \mathrm{coev}_Y \right)\\
\simeq  \left( \mathrm{id}_{X^\dagger} \otimes \mathrm{id}_{Y^\dagger} \otimes \widetilde{\mathrm{ev}}_{(Y \otimes X)^\dagger} \right) \circ \left( \widetilde{\mathrm{coev}}_{X^\dagger} \otimes \widetilde{\mathrm{coev}}_{Y^\dagger} \otimes \mathrm{id}_{(Y \otimes X)^\dagger} \right)
\end{split}
\nonumber
\end{equation}
(where $\simeq$ hides some signs which are explained in detail in \cite[Section 7]{carqmurfet}) and similarly for the left dual.
\label{pivotalityLG}
\end{prop}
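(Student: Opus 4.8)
The plan is to treat the whole proposition as a formal consequence of the adjunction axioms (the Zorro moves of Equation \ref{bicatzorro}), which hold in $\mathcal{LG}$ by Theorem \ref{cmtheorem}, together with the pivotal structure recorded just before the statement; the only genuinely $\mathcal{LG}$-specific input will be the bookkeeping of Koszul signs and the fact that $\lambda$ and $\rho$ are invertible only up to homotopy. Concretely, I would regard the third identity as the definition of the \emph{mate} ${}^\dagger\varphi$ of a $2$-morphism $\varphi$ between parallel $1$-morphisms $X,Y$, taken with respect to the two adjunctions $({}^\dagger X, X)$ and $({}^\dagger Y, Y)$: one inserts $\mathrm{coev}_X$ to replace ${}^\dagger Y$ by ${}^\dagger Y \ot X \ot {}^\dagger X$, applies $\varphi$ in the middle factor to produce ${}^\dagger Y \ot Y \ot {}^\dagger X$, and then contracts with $\mathrm{ev}_Y$. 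Uniqueness of adjoints (stated after the adjunction definition) guarantees this agrees with the functorially induced ${}^\dagger\varphi$. I would first check that the right-hand side of the third identity is a well-defined degree-zero morphism in $\hmfbi$, i.e. that it is $\delta$-closed with homotopy class independent of representatives, which is immediate since $\mathrm{ev}$, $\mathrm{coev}$, $\lambda^{-1}$, $\rho^{-1}$ are all morphisms in the homotopy category.

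Next I would derive the first two identities from the mate formula purely diagrammatically. For the first, I substitute the mate formula into $(1_Y \ot {}^\dagger\varphi)\circ \mathrm{coev}_Y$ and slide $\mathrm{coev}_X$ past $\varphi$ using naturality, so that the resulting ${}^\dagger Y \ot Y$ strand meets $\mathrm{coev}_Y$; applying the first snake identity of Equation \ref{bicatzorro} for the pair $({}^\dagger Y, Y)$ collapses this to $(\varphi \ot 1_{{}^\dagger X})\circ \mathrm{coev}_X$. The second identity is the mirror computation: feeding the mate into $\mathrm{ev}_Y \circ (1_{{}^\dagger Y}\ot \varphi)$ and using the second snake identity for $({}^\dagger Y, Y)$ reduces it to $\mathrm{ev}_X \circ ({}^\dagger\varphi \ot 1_X)$. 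Both steps are the standard mate manipulations and use only the associators, unitors, and the two Zorro moves.

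For the $\widetilde{\mathrm{ev}}$, $\widetilde{\mathrm{coev}}$ analogues I would rerun the argument with the second adjunction, $X$ left adjoint to $X^\dagger$, converting the tilde data into $\mathrm{ev}_{X^\dagger}$, $\mathrm{coev}_{X^\dagger}$ through the pivotal isomorphisms $\delta$; the naturality and monoidality of $\{\delta_f\}$ (from \cite{carqrunkel3}) is exactly what lets the same snake manipulations go through. Finally, the last displayed equation is the assertion that $(-)^\dagger$ is (anti)monoidal, i.e. that $(Y \ot X)^\dagger$ with the composite (co)evaluations is again an adjoint of $Y \ot X$. I would build an adjunction for the composite $Y \ot X$ out of those for $X$ and $Y$, taking $X^\dagger \ot Y^\dagger$ as the adjoint and nesting $\mathrm{coev}_X,\mathrm{coev}_Y$ (resp. the tilde data) as on the two sides of the equation, and then invoke uniqueness of adjoints to identify it with $(Y\ot X)^\dagger$; the displayed identity is precisely the composite snake equation for this nested adjunction.

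The main obstacle is not the diagrammatics, which are formal, but reconciling them with the explicit residue formulas of Theorem \ref{cmtheorem}. Two points need care. First, every tensor factor is $\bZ_2$-graded and the (co)evaluations are defined using the Koszul sign rule, so the sign-free abstract snake moves only match the concrete maps after one tracks the shift functors $[n],[m]$ and the signs $s_n,s_m$ of Theorem \ref{cmtheorem}; this discrepancy is exactly the source of the $\simeq$ (rather than $=$) in the last equation. Second, since $\lambda_B$ and $\rho_B$ are invertible only up to homotopy, every identity above is an identity in $\hmfbi$, so one must verify that the relevant homotopies are compatible with tensoring. I would control the sign bookkeeping by specializing to the one-variable formulas of \cite{carqrunkel3} (as announced in the footnote to Theorem \ref{cmtheorem}), deferring the general multivariable signs to the detailed computation in \cite{carqmurfet}.
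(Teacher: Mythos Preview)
The paper does not prove this proposition; it simply states that a complete proof is given in \cite{carqmurfet}, where the explicit residue formulas of Theorem~\ref{cmtheorem} are verified to satisfy the zig-zag identities and the pivotal compatibility by direct computation with associative Atiyah classes. Your outline is correct and is essentially the only way such a proof can go once those zig-zag identities are in hand: items (1)--(3) are precisely the standard mate calculus in a bicategory with adjoints, and the last displayed equation is the statement that nested adjunctions for $X$ and $Y$ furnish an adjoint for $Y\ot X$, hence agree with $(Y\ot X)^\dagger$ up to the canonical isomorphism coming from uniqueness of adjoints. You also correctly isolate the only $\mathcal{LG}$-specific content, namely the Koszul signs from the shifts $[n],[m]$ in $X^\dagger,{}^\dagger X$ and the fact that the unitors are homotopy inverses; this is exactly what produces the $\simeq$ rather than $=$ and is the part that genuinely requires the explicit calculations of \cite[Sect.~7]{carqmurfet}.

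One small slip: in your derivation of item (2), after substituting the mate formula for ${}^\dagger\varphi$ into $\mathrm{ev}_X\circ({}^\dagger\varphi\ot 1_X)$, the snake identity that collapses the expression is the one for the pair $({}^\dagger X,X)$ (the $\mathrm{coev}_X$ from the mate meets the external $\mathrm{ev}_X$), not for $({}^\dagger Y,Y)$ as you wrote. The argument for item (1) is as you state.
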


Thanks to these results, we can also provide an explicit description of the quantum dimension associated to a matrix factorization:

\begin{prop}
Let $V \left( x_1,\ldots,x_m \right)$ and $W \left( y_1,\ldots,y_n \right)$ be two potentials and $X$ a matrix factorization of $W-V$. Then, the left quantum dimension is: $$\mathrm{qdim}_l \left( X \right)=\left( -1 \right)^{\binom{m+1}{2}} \mathrm{Res} \left[ \frac{\mathrm{str} \left( \partial_{x_1} d^X \ldots \partial_{x_m} d^X \partial_{y_1} d^X \ldots \partial_{y_n} d^X \right) d\mathbf{y}}{\partial_{y_1} W,\ldots,\partial_{y_n} W} \right]$$
and the right quantum dimension is:
$$\mathrm{qdim}_r \left( X \right)=\left( -1 \right)^{\binom{n+1}{2}} \mathrm{Res} \left[ \frac{\mathrm{str} \left( \partial_{x_1} d^X \ldots \partial_{x_m} d^X \partial_{y_1} d^X \ldots \partial_{y_n} d^X \right) d\mathbf{x}}{\partial_{x_1} V,\ldots,\partial_{x_m} V} \right]$$
\label{qdims}
\end{prop}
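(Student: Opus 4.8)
The plan is to compute both quantum dimensions directly from their categorical definition by inserting the explicit formulas for the adjunction data. Recall that $\mathrm{qdim}_l(X) = \mathcal{D}_l(X)(\iota_B)$, where $\mathcal{D}_l(X) = \mathrm{ev}_X \odot [1_{X^\dagger} \circ (\lambda_X \odot ((-)\circ 1_X) \odot \lambda_X^{-1})] \odot \widetilde{\mathrm{coev}}_X$. Evaluating at $\iota_B$, the inner operator in square brackets acts as the identity on $X$ (it is the conjugation of $\iota_B$ by the unit isomorphism $\lambda_X$), so the whole expression collapses to the left trace $\mathrm{tr}_l(\mathrm{id}_X) = \mathrm{ev}_X \odot (1_{X^\dagger} \circ \mathrm{id}_X) \odot \widetilde{\mathrm{coev}}_X$ of the identity morphism. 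Hence everything reduces to composing the two maps $\widetilde{\mathrm{coev}}_X$ and $\mathrm{ev}_X$ supplied by Theorem \ref{cmtheorem}.

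First I would substitute the formula for $\widetilde{\mathrm{coev}}_X$ evaluated on the generator $1 \in \mathrm{Jac}(V) = \mathrm{End}(1_B)$: this yields a finite sum over the basis $\{e_i\}$ of $X$ of terms of the form $\{\partial^{x,x'}_{[\,\cdot\,]}(d_X)\cdots\}_{ji}\, e_i^* \otimes e_j$, carrying the signs recorded in that theorem. Next I would apply $\mathrm{ev}_X$ to each summand $e_i^* \otimes e_j$. The residue in $\mathrm{ev}_X$ is taken with respect to $\partial_{y_1}W, \ldots, \partial_{y_n}W$ and the form $d\mathbf{y}$, which is precisely the residue appearing in the claimed formula for $\mathrm{qdim}_l(X)$. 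The double sum over $i$ and $j$ then contracts the matrix indices of the two products of derivatives: schematically $\sum_{i,j}\{A\}_{ji}\{B\}_{ij} = \mathrm{tr}(AB)$, and the Koszul signs attached to the $\mathbb{Z}_2$-grading of $e_j$ in the two formulas promote this ordinary trace to the supertrace $\mathrm{str}$. Collecting the products $\partial_{x_1}d^X \cdots \partial_{x_m}d^X$ (from $\widetilde{\mathrm{coev}}$, via $\Lambda^{(x)}$) and $\partial_{y_1}d^X \cdots \partial_{y_n}d^X$ (from $\mathrm{ev}$) under a single supertrace and inside a single residue produces the asserted expression, up to the overall prefactor. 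The computation of $\mathrm{qdim}_r(X)$ is entirely parallel, using instead $\mathrm{coev}_X$, $\widetilde{\mathrm{ev}}_X$ and $\rho_X^{-1}$; here the roles of the $x$- and $y$-variables are exchanged, the residue is taken against $\partial_{x_1}V, \ldots, \partial_{x_m}V$ with the form $d\mathbf{x}$, and the prefactor becomes $(-1)^{\binom{n+1}{2}}$. The pivotality of $\mathcal{LG}$ (Proposition \ref{pivotalityLG}) ensures that the same supertrace $\mathrm{str}(\partial_{x_1}d^X \cdots \partial_{x_m}d^X\,\partial_{y_1}d^X \cdots \partial_{y_n}d^X)$ occurs in both formulas.

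The main obstacle is the sign bookkeeping. The formulas of Theorem \ref{cmtheorem} carry several families of signs --- the combinatorial factors $(-1)^{\binom{l}{2}}$, the grading-dependent Koszul signs $(-1)^{l|e_j|}$, and the reordering signs $s_n, s_m$ arising from permuting the $\theta$-monomials --- and the delicate point is to check that after the full contraction only the top $\theta$-degree terms survive (the lower ones being annihilated when $\lambda_X$ projects onto the $\theta$-degree-zero component, or vanishing under the residue), and that the surviving signs assemble exactly into the single prefactor $(-1)^{\binom{m+1}{2}}$ together with the grading signs needed to convert $\mathrm{tr}$ into $\mathrm{str}$. Verifying this collapse, rather than the overall structure of the argument, is where the genuine work lies; the detailed check is carried out in \cite{carqmurfet}.
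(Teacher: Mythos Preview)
Your approach is correct and matches the paper's. In fact, the paper does not give an independent proof of this proposition at all: it is stated immediately after the explicit adjunction maps of Theorem~\ref{cmtheorem} as a direct consequence, with the actual computation deferred to \cite{carqmurfet}, exactly as you do in your final sentence.
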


\section{On Landau-Ginzburg models}

Landau-Ginzburg models are a topic which by itself is of high interest in the physics literature. Initially a model to describe superconductivity \cite{ginzburglandau,abrikosov}, in 1988 Vafa and Warner proposed a generalization to $\left( 2,2 \right)$--supersymmetric theories in 2 spacetime dimensions in \cite{vafawarner}. This was the beginning of a fertile research (see e.g. \cite{greene,witten,gaiotto}) which continues to this date. They in addition play an important role in the active field of research of mirror symmetry and algebraic geometry \cite{kapustinli,orlov}.

In this section we would like to introduce them. We will give an overview of how to construct the action of a Landau-Ginzburg model the way it is performed in the physics literature, and hence in this section we will not be mathematically rigorous. We will also explain the relation between matrix factorizations and Landau-Ginzburg models. We follow and mix together the expositions of \cite{bhls0305}, \cite{brunrogg1}, \cite{brunrogg2} and \cite{mirrorsymmetrybook}.

Let $\Sigma$ and $X$ be two manifolds, which are called the \textit{worldsheet} and the \textit{target manifold} resp. and which are Riemannian in general. 

We are going to choose the worldsheet to be $\Sigma=\mathbb{R}^2$, and the target manifold to be $X=\mathbbm{C}^n$. Denote the coordinates of the worldsheet as $\left( x^0,x^1 \right)$, they commute: $\left[ x_0,x_1 \right]=0$. We will say these coordinates are \textit{bosonic} and functions of bosonic coordinates $\phi \left( x_0,x_1 \right)$ will be called \textit{bosonic fields}.

We will extend the worldsheet by adding to the bosonic coordinates four coordinates $\theta^+$, $\theta^-$, $\bar{\theta}^+$ and $\bar{\theta}^-$. They are related to each other via complex conjugation, $\left[ \theta^\pm \right]^*=\bar{\theta}^\pm$ \footnote{With the $\pm$ supercripts we mean right-moving or left-moving under a Lorentz transformation, i.e. it acts on bosonic and fermionic coordinates as:
\begin{equation}
\begin{split}
\left( \begin{matrix} x^0 \\ x^1 \end{matrix} \right) & \mapsto \left( \begin{matrix} \cosh\gamma & \sinh\gamma \\ \sinh\gamma & \cosh\gamma \end{matrix} \right) \left( \begin{matrix} x^0 \\ x^1 \end{matrix} \right) \\
\theta^{\pm} & \mapsto \mathrm{e}^{\pm \gamma/2} \theta^\pm \\
\bar{\theta}^{\pm} & \mapsto \mathrm{e}^{\pm \gamma/2} \bar{\theta}^{\pm}
\end{split}
\nonumber
\end{equation} for $\gamma \in \left( 1,\infty \right)$.}.
These new coordinates square to zero $\left( \theta^+ \right)^2=0=\left( \theta^- \right)^2$ (and analogously for their complex conjugate) and they anticommute\footnote{Recall that anticommutators satisfy $\lbrace a,b \rbrace=\lbrace b,a \rbrace$, for any $a$,$b$ bosonic (or fermionic) coordinates.} with each other:
\begin{equation}
\lbrace \theta^{\alpha},\theta^{\beta} \rbrace=0, \quad \quad \lbrace \theta^{\alpha}, \bar{\theta}^{\beta} \rbrace=0, \quad \quad \lbrace \bar{\theta}^{\alpha},\bar{\theta}^{\beta} \rbrace=0
\nonumber
\end{equation}
with $\alpha,\beta \in \lbrace +,- \rbrace$ (any possible combination of values of $\alpha$ and $\beta$ is allowed).
These anticommuting coordinates will be called \textit{fermionic coordinates} and functions of them will be called \textit{fermionic fields}. If the action of a model depending on bosonic and fermionic fields is invariant under supersymmetric transformations we call it a \textit{supersymmetric sigma model}.

\begin{rem}
With respect to the partial derivatives with respect to the bosonic and fermionic variables, it will be useful for us to recall that:
\begin{itemize}
\item For each pair of fermionic coordinates, the partial derivatives with respect to them anticommute the same way as the respective coordinates do, that means for example $\lbrace \frac{\partial}{\partial \theta^{\pm}}, \frac{\partial}{\partial \bar{\theta}^{\pm}}\rbrace=0$.
\item The partial derivatives with respect of a bosonic coordinate and a fermionic coordinate commute, e.g. $\left[ \frac{\partial}{\partial x^0}, \frac{\partial}{\partial \theta^+} \right]=0$.
\item The anticommutator of a fermionic coordinate with its partial derivative is equal to one, e.g. $\lbrace \theta^+,\frac{\partial}{\partial \theta^+} \rbrace=1$.
\item In general, for any two fermionic coordinates $F_1$ and $F_2$ and a bosonic coordinate $B$, it holds
\begin{equation}
\lbrace F_1,F_2 B \rbrace=\lbrace F_1,F_2 \rbrace B-F_2 \left[ F_1,B \right]
\nonumber
\end{equation}
\end{itemize}
\label{Carlo}
\end{rem}

Given all these, the $\left( 2,2 \right)$\textit{-superspace} is the space with coordinates $x_0,x_1, \theta^{\pm}, \bar{\theta}^{\pm}$ as described above, and we will denote it with $\Sigma^{sup}$. The \textit{superfields} $\Phi \left( x_0,x_1, \theta^{\pm}, \bar{\theta}^{\pm} \right) \in \Sigma^{sup}$ are functions defined on the superspace. 

We will denote the complex conjugate of a given superfield $\Phi$ as $\bar{\Phi}$.

Consider the change of bosonic coordinates $x^{\pm}=x_0 \pm x_1$. Let us introduce some differential operators in this superspace, the so-called \textit{supercharges}:
\begin{equation}
\begin{split}
\mathcal{Q}_{\pm} &=\frac{\partial}{\partial \theta^{\pm}} +i \bar{\theta}^{\pm} \partial_{\pm} \\
\bar{\mathcal{Q}}_{\pm} &=-\frac{\partial}{\partial \bar{\theta}^{\pm}} -i \theta^{\pm} \partial_{\pm}
\nonumber
\end{split}
\end{equation}
where $\partial_{\pm}=\frac{1}{2}\left( \frac{\partial}{\partial x_0} \pm \frac{\partial}{\partial x_1}\right)$. The supercharges satisfy: 
\begin{equation}
\lbrace \mathcal{Q}_{\pm}, \bar{\mathcal{Q}}_{\pm} \rbrace=-2i \partial_{\pm}
\label{superchargesparenthesis}
\end{equation}
with all the other anticommutators vanishing. Actually, this is easy to see --let's take for example $\lbrace \mathcal{Q}_+ , \bar{\mathcal{Q}}_+ \rbrace$:
\begin{equation}
\begin{split}
\lbrace \mathcal{Q}_+ , \bar{\mathcal{Q}}_+ \rbrace &=- \lbrace \frac{\partial}{\partial \theta^+},\frac{\partial}{\partial \bar{\theta}^+} \rbrace-i \lbrace \frac{\partial}{\partial \theta^+},\theta^+ \partial_+ \rbrace -i \lbrace \bar{\theta}^+ \partial_+,\frac{\partial}{\partial \bar{\theta}^+} \rbrace+\lbrace \bar{\theta}^+ \partial_+,\theta^+ \partial_+ \rbrace \\ &=-2i \partial_+
\end{split}
\nonumber
\end{equation} where we have used the commutators and anticommutators detailed in Remark (\ref{Carlo}).

Furthermore, let us introduce another set of differential operators which we will call the \textit{covariant derivatives}:
\begin{equation}
\begin{split}
\mathcal{D}_{\pm} &=\mathcal{Q}_{\pm}-2i\bar{\theta}^{\pm} \partial_{\pm}  \\
\bar{\mathcal{D}}_{\pm}&=\bar{\mathcal{Q}}_{\pm}+2i\theta^{\pm} \partial_{\pm}
\end{split}
\label{covdersupchar}
\end{equation} or more explicitly,
\begin{equation}
\begin{split}
\mathcal{D}_{\pm} &=\frac{\partial}{\partial \theta^{\pm}}-i\bar{\theta}^{\pm} \partial_{\pm} \\
\bar{\mathcal{D}}_{\pm} &= -\frac{\partial}{\partial \bar{\theta}^{\pm}} +i\theta^{\pm} \partial_{\pm}
\nonumber
\end{split}
\end{equation}
Similarly to Eq. \ref{superchargesparenthesis}, $\mathcal{D}_{\pm}$, $\bar{\mathcal{D}}_{\pm}$ anticommute with the supercharges and satisfy
\begin{equation}
\lbrace \mathcal{D}_{\pm},\bar{\mathcal{D}}_{\pm} \rbrace=2i \partial_{\pm}
\nonumber
\end{equation}
with all other anticommutators vanishing. Let us compute some cases as an instance. Take for example the case $\lbrace \mathcal{D}_+, \mathcal{Q}_+ \rbrace$:
\begin{equation}
\begin{split}
\lbrace \mathcal{D}_+, \mathcal{Q}_+ \rbrace &= \lbrace \frac{\partial}{\partial \theta^+}, \frac{\partial}{\partial \theta^+} \rbrace +i \lbrace \frac{\partial}{\partial \theta^+},\bar{\theta}^+ \partial_+ \rbrace -i \lbrace \bar{\theta}^+ \partial_+,\frac{\partial}{\partial \theta^+} \rbrace+\lbrace \bar{\theta}^+ \partial_+,\bar{\theta}^+ \partial_+ \rbrace \\ &=0
\nonumber
\end{split}
\end{equation}
Concerning the anticommutativity of the covariant derivatives, take $\lbrace \mathcal{D}_+,\bar{\mathcal{D}}_+ \rbrace$ for example:
\begin{equation}
\begin{split}
\lbrace \mathcal{D}_+,\bar{\mathcal{D}}_+ \rbrace &= - \lbrace \frac{\partial}{\partial \theta^+},\frac{\partial}{\partial \bar{\theta}^+} \rbrace+ i \lbrace \frac{\partial}{\partial \theta^+},\theta^+ \partial_+ \rbrace +i \lbrace \bar{\theta}^+ \partial_+,\frac{\partial}{\partial \bar{\theta}^+}\rbrace+\lbrace \bar{\theta}^+ \partial_+,\theta^+ \partial_+ \rbrace \\ &=2i \partial_+
\nonumber
\end{split}
\end{equation}

Among all possible superfields, there is a special kind which will be important for us. A \textit{chiral superfield} is a superfield which satisfies:
\begin{equation}
\bar{\mathcal{D}}_{\pm} \Phi=0
\label{defnchiral}
\end{equation}
In case a superfield satisfies:
\begin{equation}
\mathcal{D}_{\pm} \bar{\Phi}=0
\nonumber
\end{equation}
then it is called an \textit{anti-chiral superfield}.

A result concerning products of chiral superfields which we give without proof is that if $\Phi_1$ and $\Phi_2$ are chiral superfields, then their product $\Phi_1 \Phi_2$ is also a chiral superfield. This will be specially useful for us to construct an action.

Consider the following change of variables: $y^{\pm}=x^{\pm}-i\theta^{\pm} \bar{\theta}^{\pm}$, which rearranges the coordinates dependence so we can write chiral superfields as $\Phi=\Phi \left( y^{\pm}, \theta^{\pm} \right)$.

Furthermore, we can describe a general chiral superfield in terms of fermionic and bosonic fields:
\begin{equation}
\begin{split}
\Phi \left( y^{\pm},\theta^{\pm} \right) &=\phi \left( y^{\pm} \right)+\theta^+ \psi'_+ \left( y^{\pm} \right) + \theta^- \psi'_-\left( y^{\pm} \right) +\theta^+ \theta^- F' \left( y^{\pm} \right) \\
&= \phi-i \theta^+ \bar{\theta}^+ \partial_+ \phi-i \theta^-\bar{\theta}^- \partial_- \phi-\theta^+ \theta^- \bar{\theta}^- \bar{\theta}^+ \partial_+ \partial_- \phi \\ &+\theta^+ \psi_+ -i \theta^+ \theta^- \bar{\theta}^- \partial_- \psi_+ + \theta^- \psi_- -i\theta^+ \theta^-\bar{\theta}^+ \partial_+ \psi_- + \theta^+ \theta^- F
\label{chiralsuperfield}
\end{split}
\end{equation}

We can see this as follows. Consider the chiral superfield $\Phi \left( y^{\pm},\theta^{\pm} \right)$. We can make a Taylor expansion around $\theta^{\pm}=0$; due to the dependence of $y^\pm$ on $\theta^{\pm}$ it is notation-wise convenient to rewrite $\Phi \left( y^{\pm},\theta^{\pm} \right)=\Xi \left( x^\pm,\theta^\pm \right)$. Then, taking into account that the fermionic variables square to zero:
\begin{equation}
\begin{split}
\Xi \left( x^{\pm}, \theta^{\pm} \right)= \Xi \left( x^{\pm}, 0 \right)+\theta^+ \frac{\partial \Xi \left( x^{\pm},0 \right)}{\partial \theta^+}+\theta^- \frac{\partial \Xi \left( x^{\pm},0 \right)}{\partial \theta^-}+\theta^+ \theta^- \frac{\partial^2 \Xi \left( x^{\pm},0 \right)}{\partial \theta^+ \partial \theta^-}
\end{split}
\nonumber
\end{equation}
Redefine: 
\begin{equation}
\begin{split}
\phi &:=\Xi \left( x^{\pm}, 0 \right)=\Phi \left( y^{\pm}, 0 \right) \\
\psi'_\pm &:=\frac{\partial \Xi \left( x^{\pm},0 \right)}{\partial \theta^\pm} \\
F' &:=\frac{\partial^2 \Xi \left( x^{\pm},0 \right)}{\partial \theta^+ \partial \theta^-}
\end{split}
\nonumber
\end{equation}
We obtain the first line of Eq. \ref{chiralsuperfield}. Then, acting via partial derivation and using Remark (\ref{Carlo}) we can see more explicitly that these fields are of the shape:
\begin{equation}
\begin{split}
\psi'_\pm &=\frac{\partial \Phi \left( y^{\pm},0 \right)}{\partial y^\pm} \frac{\partial y^\pm}{\partial \theta^\pm}+\frac{\partial \Phi \left( y^{\pm},0 \right)}{\partial \theta^\pm}=-i \bar{\theta}^\pm \partial_\pm \Phi \left( y^{\pm}, 0 \right) +\frac{\partial \Phi \left( y^{\pm},0 \right)}{\partial \theta^\pm} \\ &=-i\bar{\theta}^\pm \partial_\pm \phi +\psi_\pm \\
F' &=\frac{\partial \psi_-}{\partial \theta^+}=-\bar{\theta}^-\bar{\theta}^+ \partial_+ \partial_- \phi-\bar{\theta}^- \partial_- \psi_+-i\bar{\theta}^+ \partial_+ \psi_- + F
\end{split}
\nonumber
\end{equation}
where we have redefined $\psi_{\pm}=\frac{\partial \Phi \left( y^{\pm},0 \right)}{\partial \theta^\pm}$ and $F=\frac{\partial^2 \Phi \left( y^{\pm},0 \right)}{\partial \theta^+ \partial \theta^-}$. Plugging together everything, we recover the second line of \ref{chiralsuperfield}.

$\phi$ is a bosonic field, and $\psi_{\pm}$ are fermionic fields. $F$ is called the \textit{auxiliary field}.

With all these ingredients, a \textit{2-dimensional $\left(2,2 \right)$-supersymmetric Landau-Ginzburg model} is a supersymmetric model whose worldsheet is the 2-dimensional $\left( 2,2 \right)$-superspace $\Sigma^{sup}$, whose target manifold is $\mathbbm{C}^n$\footnote{In the physics literature one sometimes takes a complex Riemannian manifold instead of $\mathbbm{C}^n$. For simplicity reasons we follow our choice.} and it is characterized by the fact that its action contains a potential term specified by a non-trivial holomorphic function called \textit{potential} $W:\mathbbm{C}^n \rightarrow \mathbbm{C}$.

In order to build an action for our Landau-Ginzburg model, we first aim to construct action functionals of superfields. We require them to be invariant under the transformation:
\begin{equation}
\delta= \epsilon_+ \mathcal{Q}_- -\epsilon_- \mathcal{Q}_+ -\bar{\epsilon}_+ \bar{\mathcal{Q}}_- +\bar{\epsilon}_- \bar{\mathcal{Q}}_+
\label{delta}
\end{equation}
(which we will call the \textit{variation}) where $\epsilon_+$ and $\epsilon_-$ are complex fermionic parameters. To begin with, notice that the variations of the fields $\phi$, $\psi_{\pm}$ take the form:
\begin{equation}
\begin{split}
&\delta \phi=\epsilon_+ \psi_- - \epsilon_- \psi_+ \quad \quad \quad \quad \delta \bar{\phi}=-\bar{\epsilon}_+ \bar{\psi}_- + \bar{\epsilon}_- \bar{\psi}_+ \\
&\delta \psi_+=2i \bar{\epsilon}_- \partial_+ \phi + \epsilon_+ F \quad \quad \quad \delta \bar{\psi}_+=-2i \epsilon_- \partial_+ \phi+\bar{\epsilon}_+ \bar{F} \\
&\delta \psi_-=-2i \bar{\epsilon}_+ \partial_- \phi + \epsilon_- F \quad \quad \delta \bar{\psi}_-=2i \epsilon_+ \partial_- \bar{\phi}+\bar{\epsilon}_- \bar{F}
\end{split}
\end{equation}

One gets these variations as follows. Applying the variation (\ref{delta}) to the chiral superfield, we get 4 terms for each supercharge. For the first two supercharges, we apply simply partial derivation and the Taylor expansion of our chiral superfield (\ref{chiralsuperfield}):
\begin{equation}
\begin{split}
\mathcal{Q}_- \Phi \left( y^\pm,\theta^\pm \right) &=\left( \frac{\partial}{\partial \theta^-}+i \bar{\theta}^- \frac{\partial}{\partial x^-} \right) \Phi \left( y^\pm,\theta^\pm \right) =\frac{\partial \Phi \left( y^\pm,\theta^\pm \right)}{\partial \theta^-} \\&=\frac{\partial}{\partial \theta^-} \left( \Phi \left( y^\pm,0 \right)+\theta^+ \psi_++\theta^- \psi_- +\theta^+ \theta^- F \right)=\psi_- +\theta^+ F
\end{split}
\nonumber
\end{equation}
and analogously for $\mathcal{Q}_+ \Phi \left( y^\pm, \theta^\pm \right)$, which is equal to $\psi_+-\theta^-F$. On the other hand, for the conjugated supercharges, we use the chiral superfield condition (\ref{defnchiral}) and Remark \ref{covdersupchar}:
\begin{equation}
\begin{split}
\bar{\mathcal{Q}}_- \Phi \left( y^\pm, \theta^\pm \right) &=\left( \bar{\mathcal{D}}_- -2i \theta^- \partial_- \right) \Phi \left( y^\pm, \theta^\pm \right) =-2i\theta^- \partial_- \Phi \left( y^\pm, \theta^\pm \right) \\ \bar{\mathcal{Q}}_+ \Phi \left( y^\pm, \theta^\pm \right) &=\left( \bar{\mathcal{D}}_+ -2i \theta^+ \partial_+ \right)\Phi \left( y^\pm, \theta^\pm \right)=-2i \theta^+ \partial_+ \Phi \left( y^\pm, \theta^\pm \right)
\end{split}
\nonumber
\end{equation}
Putting together terms,
\begin{equation}
\begin{split}
\delta \Phi \left( y^\pm, \theta^\pm \right) &=\left( \epsilon_+ \psi_- - \epsilon_- \psi_+ \right)+\theta^+ \left( \epsilon_+ F -\bar{\epsilon}_- 2i \theta^+ \partial_+ \Phi \left( y^\pm, \theta^\pm \right) \right) \\ &+\theta^- \left( \epsilon_- F +2i \bar{\epsilon}_+ \partial_- \Phi \left( y^\pm, \theta^\pm \right)\right)
\end{split}
\nonumber
\end{equation}
we can recognize the variations of $\phi$ and $\psi_\pm$.

We would like to construct some supersymmetric bulk action for this model-and for this, we need our action to be invariant under the variation $\delta$. As we would like to also integrate over fermionic coordinates, we need to specify that, for a general fermionic coordinate $\theta$,
\begin{equation}
\begin{split}
\int d \theta &=0 \\
\int \theta d \theta &=1
\end{split}
\nonumber
\end{equation}

Then, in general, for a superfield $\mathcal{F}_i$, let us first consider the functional: $$S_D=\int_\Sigma d^2 xd \theta^+ d \theta^- d \bar{\theta}^- d \bar{\theta}^+ K \left( \mathcal{F}_i \right)$$ where $K \left( \mathcal{F}_i \right)$ is an arbitrary differentiable function of the $\mathcal{F}_i$'s (usually called the \textit{K{\"a}hler potential}). Under the variation, this functional actually satisfies that $$\delta S_D=0$$
One can check this as follows. We have:
$$\delta S_D=\int_\Sigma d^2 xd \theta^+ d \theta^- d \bar{\theta}^- d \bar{\theta}^+ \left( \epsilon_+ \mathcal{Q}_- -\epsilon_- \mathcal{Q}_+ -\bar{\epsilon}_+ \bar{\mathcal{Q}}_- +\bar{\epsilon}_- \bar{\mathcal{Q}}_+ \right)K \left( \mathcal{F}_i \right)$$
The integration over $d^4 \theta$ is only nonzero if we have some coefficient $\theta^+ \theta^- \bar{\theta}^+ \bar{\theta}^-$ in front of us. For example, concerning the first supercharge, the first term vanishes since the integrand does not have $\theta^-$ (because of the derivative $\partial/\partial \theta^-$). The second term is a total derivative and vanishes after integration over $d^2x$. The others work the same.

$S_D$ is called the \textit{$D$-term} (the \textit{kinetic term}). We will choose for simplicity reasons that $K \left( \Phi,\bar{\Phi} \right)=\bar{\Phi}\Phi$: a K{\"a}hler potential of this shape is invariant under any assignment of $R$-charges to $\Phi$, and hence the $D$-term is invariant under vector and axial $R$-symmetries. Hence, for the $D$-term, we have
\begin{equation}
S_D=\int_{\Sigma} d^2x d^4 \theta \bar{\Phi}\Phi
\nonumber
\end{equation}
Recall that if $\Phi_1$ and $\Phi_2$ are chiral superfields, then their product $\Phi_1 \Phi_2$ is also a chiral superfield and that $\Phi$ has the $\theta$-expansion (\ref{chiralsuperfield}). The integration over $d^4 \theta$ amounts to extracting the coefficient of $\theta^4=\theta^+ \theta^- \bar{\theta}^- \bar{\theta}^+$ in the $\theta$-expansion of the product. Via direct computation, it is easy to see that:
\begin{equation}
\begin{split}
\bar{\Phi}\Phi \vert_{\theta^4} &=-\bar{\phi} \partial_+ \partial_- \phi+\partial_+ \bar{\phi} \partial_- \phi+\partial_- \bar{\phi} \partial_+ \phi-\partial_+ \partial_- \bar{\phi} \phi + \bar{\psi}_+ \partial_- \psi_+ -i \partial_- \bar{\psi}_+ \psi_+ \\ &+i \bar{\psi}_- \partial_+ \psi_- -i\partial_+ \bar{\psi}_- \psi_- + \vert F \vert^2
\end{split}
\nonumber
\end{equation}
Acting via partial integration, we finally get that:
\begin{equation}
S_{D}=\int_{\Sigma} d^2x \left( \vert \partial_0 \phi \vert^2- \vert \partial_1 \phi \vert^2 + \frac{i}{2} \bar{\psi}_+ \left( \partial_0 -\partial_1 \right) \psi_+ +\frac{i}{2} \bar{\psi}_- \left( \partial_0 +\partial_1 \right) \psi_- + \vert F \vert^2 \right)
\nonumber
\end{equation}

Apart from a functional of all the considered superfields, we can consider (for supersymmetry reasons) a functional only with the potential, of the form: $$S_F=\int_\Sigma d^2x d \theta^- d\theta^+ W \left( \Phi_i \right) \vert_{\bar{\theta}^{\pm}=0}+c.c.$$
Again, this functional satisfies that $\delta S_F=0$:
$$\delta S_F=\int_\Sigma d^2x d \theta^- d\theta^+ \left( \epsilon_+ \mathcal{Q}_- -\epsilon_- \mathcal{Q}_+ -\bar{\epsilon}_+ \bar{\mathcal{Q}}_- +\bar{\epsilon}_- \bar{\mathcal{Q}}_+ \right)W \left( \Phi_i \right) \vert_{\bar{\theta}^{\pm}=0}$$
For the first two supercharges, the integral has the form $$\pm \int_{\Sigma} d^2xd \theta^- d\theta^+ \epsilon_{\pm} \left( \frac{\partial}{\partial \theta^{\mp}} +i \bar{\theta}^{\mp} \partial_{\mp} \right) W \left( \Phi_i \right) \vert_{\bar{\theta}^{\pm}=0}$$
The first term vanishes for the standard reason, and the second because we set $\bar{\theta}^{\pm}=0$. For the other two supercharges, first recall (\ref{covdersupchar})and then, the integral has the form:
$$\mp \int_{\Sigma} d^2x d \theta^- d \theta^+ \bar{\epsilon}_{\pm} \left( \bar{\mathcal{D}}_{\pm} -2i \theta^{\pm} \partial_{\pm} \right) W \left( \Phi_i \right) \vert_{\bar{\theta}^{\pm}=0}$$ The first term vanishes as we are considering chiral superfields. Note that $W \left( \Phi_i \right)$ is a holomorphic function and it doesn't contain $\bar{\Phi}_i$. The second term vanishes because it is a total derivative in the bosonic variables.

We are going to call $S_F$ the \textit{$F$-term} (the \textit{potential term}). It will then be given by:
\begin{equation}
S_F=\int_{\Sigma}  d^2x d^2 \theta  W \left( \Phi \right) +\mathrm{c.c.}
\nonumber
\end{equation}
Again the integration over $d^2 \theta$ amounts to extracting the coefficient of $\theta^2= \theta^+ \theta^-$ in the $\theta$-expansion of $W \left( \Phi \right)$: $$W \left( \Phi \right)=W \vert_{\theta^{\pm}=0}+\theta^+ \frac{\partial W}{\partial \theta^+}+\theta^- \frac{\partial W}{\partial \theta^-} +\theta^+ \theta^- \frac{\partial^2 W}{\partial \theta^+ \partial \theta^-}$$
I.e. the coefficient we are looking for is:
\begin{equation}
W \vert_{\theta^2}=\frac{\partial^2 W}{\partial \theta^+ \partial \theta^-} =\frac{\partial^2 W}{\partial \Phi^2} \frac{\partial \Phi}{\partial \theta^+} \frac{\partial \Phi}{\partial \theta^-}+\frac{\partial W}{\partial \Phi} \frac{\partial^2 \Phi}{\partial \theta^+ \partial \theta^-}=-W'' \psi^+ \psi^-+W' F
\nonumber
\end{equation}
And analogously for the complex conjugate term of the potential term. Hence,
\begin{equation}
S_F=\int_{\Sigma}  d^2x \frac{1}{2}\left[-W'' \psi^+ \psi^-+W' F-\bar{W}'' \bar{\psi}^+ \bar{\psi}^-+\bar{W}' \bar{F} \right]
\nonumber
\end{equation}
Summing kinetic and potential terms together, we finally obtain the bulk action of a Landau-Ginzburg model:
\begin{equation}
\begin{split}
S_{\mathrm{bulk}} &=\int_{\Sigma} d^2x ( \vert \partial_0 \phi \vert^2- \vert \partial_1 \phi \vert^2 + \frac{i}{2} \bar{\psi}_+ \left( \partial_0 -\partial_1 \right) \psi_+ +\frac{i}{2} \bar{\psi}_- \left( \partial_0 +\partial_1 \right) \psi_- \\ &-\frac{1}{4} \vert W' \vert^2-\frac{1}{2} W'' \psi^+ \psi^- -\frac{1}{2}\bar{W}'' \bar{\psi}^+ \bar{\psi}^- +\vert F+\frac{1}{2} \bar{W}' \vert^2 )
\end{split}
\label{bulkaction}
\end{equation}
where the last term can be eliminated if we solve the equation of motion $F=-\frac{1}{2}\bar{W}'$.

\subsection{Matrix factorizations in Landau-Ginzburg models}

So far, we have formulated our theory on a worldsheet without boundary. The introduction of boundaries breaks the translation symmetry normal to the boundary, and hence, half of the supersymmetries. This will give rise to some interesting phenomena which will trigger the emergence of matrix factorizations.

Whenever we consider boundaries, there are two types of supersymmetry one can impose: \textit{B-type supersymmetry}, which preserves the supercharge $\mathcal{Q}_B=\mathcal{Q}_++\mathcal{Q}_-$ and its conjugate $\bar{\mathcal{Q}}_B$ everywhere in the theory; and \textit{A-type supersymmetry}, which preserves the combination $\mathcal{Q}_A=\mathcal{Q}_++\bar{\mathcal{Q}}_-$ and its conjugate $\bar{\mathcal{Q}}_A$.

We choose the worldsheet $\Sigma$ to be the strip with coordinates $\left( x^0,x^1 \right) \in \left( \mathbb{R}, \left[ 0,\pi \right] \right)$. We will focus our attention to $B$-type supersymmetry. In terms of the parameters $\epsilon_{\pm}$ one can describe $B$-type supersymmetry by setting $\epsilon=\epsilon_+=-\epsilon_-$.

In this new setting we can recombine the fermions as:
\begin{equation}
\begin{split}
\eta &=\psi_+ +\psi_- \\
\vartheta &=\psi_- - \psi_+
\label{fermionsrecomb}
\end{split}
\end{equation}
and the variation (\ref{delta}) becomes:
\begin{equation}
\delta=\epsilon_+ \left( \bar{\mathcal{Q}}_- + \bar{\mathcal{Q}}_+ \right) -\bar{\epsilon}_+ \left( \mathcal{Q}_- + \mathcal{Q}_+ \right)=\epsilon \bar{\mathcal{Q}}-\bar{\epsilon} \mathcal{Q}
\nonumber
\end{equation}
Hence, in the new variables (\ref{fermionsrecomb}),
\begin{equation}
\begin{split}
&\delta \phi=\epsilon \eta \quad \quad \quad \quad \quad\quad \delta \bar{\phi}=-\bar{\epsilon} \bar{\eta} \\
&\delta \eta=-2i \bar{\epsilon} \partial_0 \phi \quad \quad \quad \quad \delta \bar{\eta}=2i \epsilon \partial_0 \bar{\phi} \\
&\delta \vartheta=2i \bar{\epsilon} \partial_1 \phi + \epsilon \bar{W}' \quad \quad \delta \bar{\vartheta}=2i \epsilon \partial_1 \bar{\phi}+\bar{\epsilon} W'
\end{split}
\label{transformationsBsusy}
\end{equation}

The boundary superspace is spanned by the coordinate $\theta^0$ and its complex conjugate $\bar{\theta^0}$ which is defined as $\theta^0=\frac{1}{2} \left( \theta^+ + \theta^- \right)$. Hence, $$\frac{1}{2} \frac{\partial}{\partial \theta^0}=\frac{\partial}{\partial \theta^+}+\frac{\partial}{\partial \theta^-}$$
and the supercharges and covariant derivatives become 
\begin{equation}
\begin{split}
\bar{\mathcal{Q}} &=\frac{1}{2} \frac{\partial}{\partial \theta^0}+i\bar{\theta}^0 \frac{\partial}{\partial x_0} \quad \quad \quad \mathcal{Q} =-\frac{1}{2} \frac{\partial}{\partial \bar{\theta}^0}-i\theta^0 \frac{\partial}{\partial x_0} \\ \mathcal{D} &=\frac{1}{2} \frac{\partial}{\partial \theta^0}-i\bar{\theta}^0 \frac{\partial}{\partial x_0} \quad \quad \quad \bar{\mathcal{D}} =-\frac{1}{2} \frac{\partial}{\partial \bar{\theta}^0}+i\theta^0 \frac{\partial}{\partial x_0}
\end{split}
\nonumber
\end{equation}
After imposing B-type supersymmetry, one can collect the fields of the chiral superfield $\Phi$ into a bosonic and a fermionic multiplet $\Phi' \left( y^0, \theta^0 \right)$ and $\Theta' \left( y^0,\theta^0, \bar{\theta}^0 \right)$ resp., where $y^0=x^0-i\theta^0 \bar{\theta}^0$. On the one hand, the bosonic multiplet looks like $$\Phi' \left( y^0,\theta^0 \right)=\phi \left( y^0 \right)+\theta^0 \eta \left( y^0 \right)$$ which comes from Taylor expanding $\Phi \left( y^\pm, \theta^\pm \right)$ around $\theta^\pm=0$ and then changing coordinates ($y^\pm \to y^0$ and $\theta^\pm \to \theta^0$). On the other hand, the fermionic multiplet looks like $$\Theta' \left( y^0,\theta^0,\bar{\theta}^0 \right)=\theta \left( y^0 \right)-2\theta^0 F \left( y^0 \right) +2i \bar{\theta}^0 \left[ \partial_1 \phi \left( y^0 \right) +\theta^0 \partial_1 \eta \left( y^0 \right) \right]$$ and which comes from Taylor expanding $\Phi \left( y^\pm,\theta^\pm,\bar{\theta}^\pm \right)$ around $\bar{\theta}^\pm=0$ and changing coordinates again (as in the bosonic case).

\begin{rem}
\begin{itemize}
\item The bosonic multiplet is chiral, $\mathcal{D} \Phi'=0$.
\item The fermionic one is not chiral, and rather satisfies $\mathcal{D} \Theta'=-2i \partial_1 \Phi'$.
\end{itemize}
\end{rem}

Back to the Lagrangian, we would like to construct the necessary boundary terms in order to get a fully supersymmetric action. If we set the potential to zero, the B-type supersymmetry variation of the action gives rise to a surface term which can be compensated by
\begin{equation}
S_{\partial \Sigma}=\frac{i}{4} \int dx^0 \left[ \bar{\vartheta} \eta -\bar{\eta} \vartheta \right]\vert^\pi_0
\nonumber
\end{equation}

But if we turn on the potential, there is an extra term coming from the variation of the bulk action (\ref{bulkaction}) plus the boundary action:
\begin{equation}
\delta \left( S_{F,B-susy}+S_{\partial \Sigma} \right)=\frac{i}{2} \int dx^0 \left[ \epsilon \bar{\eta} \bar{W}'+\bar{\epsilon} \eta W' \right] \vert^\pi_0
\label{boundaryextraterm}
\end{equation}
which cannot be compensated by any boundary action as the transformations (\ref{transformationsBsusy}) do not provide the necessary terms.

In order to force our model to be supersymmetric, we introduce the following ansatz.

\begin{ansatz}
Introduce a boundary fermionic superfield $\Pi$ which is not chiral (it rather satisfies $D \Pi=E \left( \bar{\Phi}' \right)$) and has the expansion  $$\Pi \left( y^0,\theta^0,\bar{\theta}^0 \right)=\pi \left( y^0 \right)+\theta^0 l\left( y^0 \right)-\bar{\theta}^0 \left[ E \left( \phi \right)+\theta^0 \eta \left( y^0 \right)E'\left( \phi \right)\right]$$ The component fields of $\Pi$ transform as:
\begin{equation}
\begin{split}
\delta \pi &=\epsilon l-\bar{\epsilon} E \\
\delta l &=-2i\bar{\epsilon} \partial_0 \pi+\bar{\epsilon}\eta E' \\
\delta \bar{\pi} &=\bar{\epsilon}\bar{l}-\epsilon \bar{E} \\
\delta \bar{l} &=-2i\epsilon \partial_0 \bar{\pi}-\epsilon \bar{\eta} \bar{E'}
\end{split}
\nonumber
\end{equation}
\end{ansatz}

Define a fermionic field $J$ via
\begin{equation}
l=-i \bar{J}
\label{defJ}
\end{equation}
Using \ref{defJ} and making an analogous construction as we did for the bulk action, we can build two terms for the boundary action,
\begin{equation}
\begin{split}
S_{\partial \Sigma} &= -\frac{1}{2} \int dx^0 d^2 \theta \bar{\Pi} \Pi \vert_0^\pi-\frac{i}{2} \int dx^0 d\theta \Pi J \left( \Phi \right)_{\bar{\theta}=0} \vert_0^\pi+c.c. \\ &=\int dx^0 \left[ i \bar{\pi} \partial_0 \pi -\frac{1}{2} \bar{J}J-\frac{1}{2} \bar{E}E+\frac{i}{2} \pi \eta J'+\frac{i}{2} \bar{\pi} \eta \bar{J}' -\frac{1}{2} \bar{\pi} \eta E'+\frac{1}{2} \pi \bar{\eta} \bar{E}' \right] \vert_0^\pi
\end{split}
\nonumber
\end{equation}
Equation \ref{defJ} also makes the variation of the boundary fermion become:
\begin{equation}
\begin{split}
\delta \pi &=-i \epsilon \bar{J}-\bar{\epsilon}E  \\ \delta \bar{\pi} &=i \bar{\epsilon} J - \epsilon \bar{E}
\end{split}
\nonumber
\end{equation}

The boundary action is not supersymmetric but rather generates:
\begin{equation}
\delta S_{\partial \Sigma}=-\frac{i}{2} \int dx^0 \left[ \epsilon \bar{\eta} \left( \bar{E}\bar{J} \right)'+\bar{\epsilon} \eta \left( EJ \right)' \right]\vert_0^\pi
\nonumber
\end{equation}

But this expression is exactly what we need to compensate (\ref{boundaryextraterm}), if
\begin{equation}
\makebox{W=EJ}
\label{matrixfactorizationcond}
\end{equation}
Or also, $W=J E$ as $E$ and $J$ commute. Any pair $\left( E,J \right)$ that satisfies this condition makes the theory supersymmetric. One may also want to introduce not only one fermionic superfield but $m$ such fields $\Pi_m$, together with the associated $E_i$'s and $J_i$'s. The action of our theory will be then supersymmetry-preserving if $E_i$ and $J_i$ assemble into $\left( 2^{m-1} \times 2^{m-1} \right)$-matrices $\tilde{E}$ and $\tilde{J}$ that satisfy the factorization condition: $$\tilde{E} \tilde{J}=\tilde{J} \tilde{E}=W.\mathbbm{1}_{2^{m-1} \times 2^{m-1}}.$$
If we define $D=\left( \begin{matrix} 0 & E \\J & 0 \end{matrix} \right)$, we can rewrite Eq. \ref{matrixfactorizationcond} as:
$$D^2= W. \mathbbm{1}_{2^m \times 2^m}$$
Then, in this shape, we can recognize $E$ and $J$ as the twisted differentials of a matrix factorization of the potential $W$, and thus interpret the Lagrangian description of boundary conditions in Landau-Ginzburg models in terms of matrix factorizations.

The interpretation of this condition as a matrix factorization was first suggested by Kontsevich \cite{kontsevich}, and later developed in several works by \cite{douglas,bhls0305,kapustinli,lazaroiu,carqueville}, within the context of (open topological) string theory. Kontsevich's realization opened the doors for applications in this active field of physics of the mathematical background already independently developed by the mathematics community.

To finish this section, let us briefly remark that we have seen the rise of matrix factorizations when we introduce boundaries, and one may wonder what is the situation for defects in Landau-Ginzburg models. Recall that, given two Landau-Ginzburg models characterized by two potentials $W_1$ and $W_2$ resp., a \textit{defect} between two Landau-Ginzburg models is a codimension 1 interface between them. 

If our defect is for instance over the real line, so one of our Landau-Ginzburg models (say $W_1$) is on the upper half complex plane and the other ($W_2$) on the lower, we can perform the so-called \textit{folding trick}: fold the lower half complex plane onto the upper. The defect can then be understood as a boundary, and our Landau-Ginzburg model will be now parametrized by $W_1-W_2$ (the minus for $W_2$ is because of the different relative orientations of the boundary). Hence, again, we can understand defects in terms of matrix bifactorizations and apply all our mathematical machinery to them. For more details on this discussion we refer to \cite{brunrogg1}.

To conclude, we recapitulate the mathematical realization of Landau-Ginzburg models described along this Section in Table \ref{tab1}.
\begin{table}
\begin{center}
\begin{tabular}{c|c}
Physical entities & Mathematical interpretation \\ \hline
Landau-Ginzburg model & $\left( S,W \right)$ \\
B-type boundaries & Matrix factorization of $W$ \\ 
B-type defects & Matrix bifactorization of $W_1-W_2$ \\ \hline
\end{tabular}
\caption{Mathematical interpretation of Landau-Ginzburg models}
\label{tab1}
\end{center}
\end{table}

\chapter{The Landau-Ginzburg/conformal field theory correspondence}
\label{ch:LGCFT}

In this chapter, we would like to motivate the topic of the present thesis: the Landau-Ginzburg/conformal field theory correspondence (from now on, LG/CFT). We will review some features of conformal field theory (CFT) and then explain the relation between defects in Landau-Ginzburg models and defects in CFT. Our main sources for this chapter will be \cite{mooreseiberg,tft1,tft2,tft3,tft4,tft5,dkr,icmfrs,carqrunkel2}.

\section{The 2-categorical approach to conformal field theories}
\label{functorialapproach}

We will think of a conformal field theory as being defined by its correlation functions. By a \textit{full CFT} (as opposed to a chiral CFT) we mean a collection of single-valued functions (the correlators) which satisfy the so-called Ward identities, and are compatible with the operator product expansion. An important case is that of those full CFTs which are rational. Recall here that by rational we mean a CFT whose symmetries are described by a \textit{rational vertex algebra} (recall that vertex algebra is a generalization of a commutative algebra) \footnote{We will not need to work out with the inner structure of these algebras for our results, nor for motivating the LG/CFT correspondence. Thus, for a precise definition of vertex algebras and rationality we will refer to the foundational papers \cite{bpz,bocherds} and also \cite{barron1,barron2,kac,frenkel,vanekeren}.}. This particular kind of CFTs is so well-understood thanks to the fact that rational vertex algebras have a finite number of (isomorphism classes of) simple representations and any representation is a finite direct sum of the simple ones --facts that simplify their study.

Full CFTs can be described in a very concise way and have been intensively studied, among others, in \cite{mooreseiberg,tft1,tft2,tft3,tft4,tft5}. We are going to summarize the most important result for our purposes.

To begin with, we need to introduce some necessary definitions.
\begin{defn}
\begin{itemize}
\item A \textit{Frobenius algebra} in a (strict) monoidal category $\mathcal{C}$ is an object that is both an algebra and a coalgebra and for which the product and coproduct are related by $$\left( \mathrm{id}_A \otimes m \right) \circ \left( \Delta \otimes \mathrm{id}_A \right) = \Delta \circ m = \left( m \otimes \mathrm{id}_A \right) \circ \left( \mathrm{id}_A \otimes \Delta \right)$$
\item A \textit{$\Delta$-separable algebra} in a tensor category is an object that is both an algebra and a coalgebra and that in addition satisfies $m \circ \Delta=\mathrm{id}_A$. By misuse of language, we will refer to this property simply as \textit{separable}. If in addition it satisfies that $\epsilon \circ \eta=\beta_1 \mathrm{id}_{\mathbbm{1}}$ (for non-zero $\beta_1 \in \mathbbm{k}$), then we say it is a \textit{special algebra}.
\item A \textit{symmetric algebra} in a pivotal category is an algebra object $\left( A,m,\eta \right)$ together with a morphism $\epsilon \in \mathrm{Hom} \left( A,\mathbbm{1} \right)$ such that the two morphisms $\Phi_1,\Phi_2 \in \mathrm{Hom} \left( A,A^\vee \right)$ defined as
\begin{equation}
\begin{split}
\Phi_1 &:= \left[ \left( \epsilon \circ m \right) \otimes \mathrm{id}_{A^\vee} \right] \circ \left( \mathrm{id}_A \otimes \mathrm{coev}_A \right) \\
\Phi_2 &:=\left[ \mathrm{id}_{A^\vee} \otimes \left( \epsilon \circ m \right) \right] \circ \left( \widetilde{\mathrm{coev}}_A \otimes \mathrm{id}_A \right)
\nonumber
\end{split}
\end{equation}
are equal.
\end{itemize}
\end{defn}

In addition, we also need define an analogous concept to that of modules over an algebra but in the bicategorical setting (we follow here the notation of Section \ref{sec:catth}). 
\begin{defn}
Let $\mathcal{B}$ be a bicategory and $\left( A,\mu,\eta \right)$ an algebra object in $\mathcal{B} \left( a,a \right)$ for some $a \in \mathrm{Ob} \left( \mathcal{B} \right)$.
\begin{itemize}
\item A \textit{left $A$-module} is a 1-morphism $X \in \mathcal{B}\left( b,a \right)$ for some $b \in \mathrm{Ob} \left( \mathcal{B} \right)$, together with a left action of $A$ $\rho_{AX} \colon A \circ X \to X$ \footnote{One should not confuse the action of a module with the right unit isomorphism of Definition \ref{bicat}.} compatible with the algebra multiplication and counit:
\begin{equation}
\begin{split}
\rho_{AX} \odot \left( \mu \ast \mathrm{id}_X \right) &=\rho_{AX} \odot \left( \mathrm{id}_A \ast \rho_{AX} \right) \\
\rho_{AX} \odot \left( \eta \ast \mathrm{id}_X \right) &= \mathrm{id}_X .
\nonumber
\end{split}
\end{equation}
An analogous definition follows for \textit{right $A$-modules}, and in this case we will denote the right action as $\rho_{XA} \colon X \circ A \to X$.
\item A 2-morphism $\phi \colon X \to Y$ between two left $A$-modules is called a \textit{module map} if it is compatible with both the left actions of $A$ on $X$ and $Y$ resp., that means,
$$\phi \odot \rho_{AX}=\rho_{AY} \odot \left(  \mathrm{id}_A \ast \phi \right).$$

\item Let $A \in \mathcal{B}\left( a,a \right)$ and $B \in \mathcal{B} \left( b,b \right)$ be two algebras. An \textit{$B$-$A$-bimodule} is a 1-morphism $X \in \mathcal{B} \left( a,b \right)$ that is simultaneously a right $A$-module and a left $B$-module, together with the condition that both actions have to be compatible: $$\rho_{XA} \odot \left( \rho_{BX} \ast \mathrm{id}_A \right)=\rho_{BX} \odot \left( \mathrm{id}_B \ast \rho_{XA} \right).$$
\item Given two $B$-$A$-bimodules $X$, $Y$, a 2-morphism $\phi \colon X \to Y$ is called a \textit{bimodule map} if it is both a map of left and right modules. 

\end{itemize}
\end{defn}

Now let us define a tensor product between these modules. Let $A \in \mathcal{B} \left( a,a \right)$ be an algebra as before, and let $X \in \mathcal{B} \left( a,b \right)$, $Y \in \mathcal{B} \left( c,a \right)$ be right and left $A$-modules, resp.

\begin{defn}
The \textit{tensor product of $X$ and $Y$ over $A$}, that we will denote as $X \circ_A Y \in \mathcal{B} \left( c,b \right)$, is defined to be the coequalizer of $r=\rho_{XA} \ast 1_Y$ and $l=1_X \ast \rho_{AY}$ \footnote{This means, $X \circ_A Y$ is equipped with a map $\vartheta \colon X \circ Y \to X \circ_A Y$ with $\vartheta \odot l=\vartheta \odot r$ such that for all $\phi \colon X \circ Y \to Z$ with $\phi \odot l=\phi \odot r$ there is a unique map $\zeta \colon X \circ_A Y \to Z$ with $\zeta \odot \vartheta=\phi$.}.
\end{defn}

At this point we state an important result concerning full CFTs.
\begin{thm}{\cite{tft1}}
A full CFT can be fixed with only a tuple $\left( \mathcal{V}, A \right)$ where:
\begin{itemize}
\item $\mathcal{V}$ is a rational vertex algebra which encodes the chiral symmetry of a rational CFT, and
\item $A$ is a symmetric special Frobenius algebra $A$ in the representation category $\mathcal{C}=\mathrm{Rep} \left( \mathcal{V} \right)$.
\end{itemize}
\end{thm}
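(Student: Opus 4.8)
The plan is to turn the pair $(\mathcal{V}, A)$ into an honest system of correlators by feeding the algebra $A$ into the three-dimensional topological field theory attached to $\mathcal{C} = \mathrm{Rep}(\mathcal{V})$, and then to check that the correlators so produced satisfy every defining constraint of a full CFT; conversely, to recover such an algebra from an abstract consistent system of correlators. First I would use rationality of $\mathcal{V}$, which (as recorded in the introduction) makes $\mathcal{C}$ a modular tensor category: finitely many simples, semisimplicity, and an invertible $S$-matrix. This is exactly the input for the Reshetikhin--Turaev construction, which yields a TFT functor $Z$ assigning a space of conformal blocks to each extended surface and a linear map to each decorated three-cobordism.

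Given a worldsheet $\Sigma$ carrying bulk and boundary field insertions, I would form its orientation double $\hat{\Sigma}$ and the associated \emph{connecting manifold} $M_\Sigma$ with $\partial M_\Sigma = \hat{\Sigma}$. The correlator is then defined as the value of $Z$ on $M_\Sigma$ equipped with a ribbon graph: insertion points carry the $\mathcal{V}$-representations labelling the fields, while a dual triangulation of $\Sigma$ is drawn inside $M_\Sigma$ with each edge coloured by $A$ and each trivalent vertex coloured by the multiplication $m$ or comultiplication $\Delta$. Reading $Z(M_\Sigma, \text{graph})$ as a vector in the appropriate block space produces a candidate correlator, and single-valuedness is automatic because everything is assembled on the double.

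The core of the argument, and the step I expect to be the main obstacle, is proving that this assignment is both well-defined and consistent. Well-definedness means independence of the chosen triangulation: I would translate the elementary moves relating triangulations (the $2$--$2$ flip and the $1$--$3$ / bubble move) into algebraic identities on $A$, so that the Frobenius relation governs the flip, the $\Delta$-separability $m \circ \Delta = \mathrm{id}_A$ governs the bubble, and the specialness normalisation $\epsilon \circ \eta = \beta_1\, \mathrm{id}_{\mathbbm{1}}$ fixes the overall scaling; each structural axiom on $A$ thus corresponds to exactly one topological invariance. Consistency as a CFT then means the correlators obey the Ward identities, which hold by construction since the outputs lie in spaces of conformal blocks, and that they factorise correctly under sewing and transform correctly under mapping class group elements. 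Here the symmetry of the Frobenius algebra is what guarantees compatibility with orientation reversal and with the modular data, delivering the sewing constraints and OPE associativity. This is a long sequence of ribbon-graph manipulations inside three-manifolds, each reduced to an identity in $\mathcal{C}$, and it is the genuinely laborious part carried out across the cited series.

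Finally, for the converse I would extract $A$ intrinsically from a putative consistent CFT. Fixing one elementary boundary condition, the space of open-string (boundary) fields on it inherits a multiplication from the boundary OPE, and the sewing constraints force this object to be a symmetric special Frobenius algebra in $\mathcal{C}$; the bulk correlators are then reconstructed from $A$ by the formula above, with two algebras giving the same CFT precisely when they are Morita equivalent. The delicate points I anticipate here are establishing nondegeneracy of the boundary two-point pairing, which supplies the Frobenius form, and verifying that the TFT-reconstructed correlators reproduce the original ones, so that the pair $(\mathcal{V}, A)$ really loses no information relative to the full collection of correlators.
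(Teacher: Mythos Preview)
The paper does not give a proof of this statement at all: it is quoted as a result from the literature (the citation \cite{tft1} and the surrounding series \cite{tft2,tft3,tft4,tft5,Fjelstad:2006aw}), and the thesis immediately moves on to use it. So there is nothing in the paper to compare your argument against.

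That said, your sketch is an accurate high-level account of how the proof in the cited references actually proceeds: the Reshetikhin--Turaev TFT for the modular category $\mathrm{Rep}(\mathcal{V})$, the connecting three-manifold over the double $\hat\Sigma$, the $A$-coloured ribbon graph along a dual triangulation, triangulation-independence from the Frobenius and $\Delta$-separable axioms, and the verification of modular invariance and factorisation (carried out in \cite{tft5}). The converse direction you describe --- recovering $A$ from the boundary OPE on a fixed boundary condition, with uniqueness up to Morita equivalence --- is the content of \cite{Fjelstad:2006aw} rather than \cite{tft1} itself. One small caveat: the role of \emph{symmetry} of $A$ is not quite ``compatibility with orientation reversal'' but rather what makes the two possible ribbon orientations at a trivalent vertex agree, i.e.\ it ensures the two Frobenius isomorphisms $A\to A^\vee$ coincide so that the ribbon graph is well-defined independently of local choices.
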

This representation category $\mathcal{C}=\mathrm{Rep} \left( \mathcal{V} \right)$ is $\mathbbm{C}$-linear and abelian by definition. For vertex algebras satisfying certain conditions\footnote{For more information on these conditions, we refer to \cite[Theorem 4.6]{huang}.}, $\mathrm{Rep} \left( \mathcal{V} \right)$ is a modular tensor category.

Then, given a modular category and all the possible full CFTs one can construct from it, we can define the following bicategory.

\begin{defn}
Let $\mathcal{M}$ be a modular tensor category. The bicategory $\mathrm{Frob}_\mathcal{M}$ is the bicategory with:
\begin{itemize}
\item Objects: special symmetric Frobenius algebras in $\mathrm{Ob} \left( \mathcal{M} \right)$;
\item Morphism category: given two special symmetric Frobenius algebras $A,B \in \mathrm{Ob} \left( \mathcal{M} \right)$, the 1- and 2-morphisms are given by the category of $A-B-$bimodules. 
\end{itemize}

\end{defn}

It is possible to attribute a physical interpretation to the morphisms of $\mathrm{Frob}_\mathcal{M}$, that we specify in Table \ref{tab2}.
\begin{table}
\begin{center}
\begin{tabular}{c|c}
$\mathrm{Frob}_\mathcal{M}$ & CFT interpretation \\ \hline
$\mathrm{Hom}\left( I,I \right)$ & Tensor category of chiral data \\
$\mathrm{Hom} \left( I,A \right)$ & Bondary conditions for the full CFT labelled by A \\
$\mathrm{Hom} \left( A,A \right)$ & Topological defects in the full CFT associated to $A$ \\
$\mathrm{Hom} \left( A,A' \right)$ & Topological defect lines which separate \\
& two different CFTs sharing the same chiral data \\ \hline
\end{tabular}
\caption{Mathematical interpretation of chiral rational CFTs}
\label{tab2}
\end{center}
\end{table}

An important proposition concerning the structure of this category is the following:

\begin{prop}{\cite{dkr}}
$\mathrm{Frob}_\mathcal{M}$ is pivotal and has adjoints.
\label{Frobbicatadj}
\end{prop}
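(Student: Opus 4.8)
The plan is to exhibit explicit left and right adjoints for every $1$-morphism, check the snake identities \ref{bicatzorro}, and then identify the two adjoints to obtain pivotality, deducing everything from the rigid and spherical structure that $\mathcal{M}$ carries because it is modular. Let $X$ be a $1$-morphism, i.e.\ an $A$-$B$-bimodule object of $\mathcal{M}$. First I would endow the dual object $X^\vee \in \mathcal{M}$ with a $B$-$A$-bimodule structure by transporting the $A$- and $B$-actions on $X$ along the duality of $\mathcal{M}$ (precomposing each action with the appropriate $\mathrm{coev}$ and postcomposing with $\mathrm{ev}$ of $\mathcal{M}$, exactly as duals are built in module categories). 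The candidate adjoint is then $X^\dagger := X^\vee$ equipped with these transported actions, and the evaluation $\mathrm{ev}\colon X^\dagger \circ_B X \Rightarrow A$ and coevaluation $\mathrm{coev}\colon B \Rightarrow X \circ_A X^\dagger$ (where $A$, $B$ denote the identity $1$-morphisms) would be assembled from the duality maps of $\mathcal{M}$, pre- and post-composed with the multiplication, comultiplication, unit and counit of the Frobenius algebras, so that the resulting $2$-morphisms respect the bimodule structure.

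The crucial technical point is that composition of $1$-morphisms is a coequalizer, the tensor product $\circ_A$ over an algebra, so I must verify that the candidate (co)evaluation maps descend to and interact correctly with these coequalizers. This is precisely where $\Delta$-separability enters: for a special symmetric Frobenius algebra the idempotent $\Delta \circ m$ splits, exhibiting $X \circ_A Y$ as a retract of $X \circ Y$ and providing an explicit section of the coequalizer map $\vartheta$. Using this section I would rewrite both sides of the zigzag equations \ref{bicatzorro} as morphisms in $\mathcal{M}$, whereupon they reduce to the Frobenius relation together with the separability identity $m \circ \Delta = \mathrm{id}_A$; these I would then confirm by a string-diagram computation. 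The same construction applied to the left dual ${}^\vee X$ yields the left adjoint ${}^\dagger X$, so that $\mathrm{Frob}_\mathcal{M}$ has both adjoints.

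For pivotality I would use that $\mathcal{M}$, being ribbon, is spherical, so its left and right dualities agree on objects and morphisms via the twist. Transporting this along the construction above, the underlying objects of ${}^\dagger X$ and $X^\dagger$ are both (isomorphic to) $X^\vee$, and the \emph{symmetry} of the Frobenius algebras $A$ and $B$ is exactly what forces the two transported bimodule structures to coincide, giving ${}^\dagger X \cong X^\dagger$ as required before a bicategory can be pivotal. The natural monoidal double-dual isomorphism $\{\delta_f\}$ then descends from the pivotal (spherical) structure of $\mathcal{M}$, and I would check the two large coherence hexagons in the definition of a pivotal bicategory by unwinding each into the corresponding spherical and Frobenius identities in $\mathcal{M}$ via the separability section.

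The genuinely laborious part, and the step I expect to be the main obstacle, is the verification of these pivotal coherence diagrams: although each reduces in principle to a statement in $\mathcal{M}$, tracking the associators, the (homotopy) unit isomorphisms, and the coequalizer sections through the displayed diagrams is delicate bookkeeping. Conceptually, the sharpest point is confirming that sphericity of $\mathcal{M}$ together with symmetry of the Frobenius algebras suffices to identify the left and right bimodule adjoints \emph{compatibly with all structure}; separability handles the coequalizers, but it is the hypotheses on $\mathcal{M}$ and on $A,B$ that are genuinely needed for the pivotal identification itself.
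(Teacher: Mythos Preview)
The paper does not give a proof of this proposition; it is stated as a citation to \cite{dkr} and used as input. There is therefore nothing in the paper to compare your argument against.

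That said, your outline is correct and is essentially the standard proof one finds in the cited reference: pass to the rigid dual $X^\vee$ in $\mathcal{M}$, transport the bimodule actions, build the adjunction maps from the duality of $\mathcal{M}$ together with the Frobenius structure maps, use $\Delta$-separability to realise $X\circ_A Y$ as a retract of $X\otimes Y$ so that the zigzag identities reduce to the Frobenius and separability relations, and finally invoke symmetry of the algebras together with the pivotal (spherical) structure of the ribbon category $\mathcal{M}$ to identify left and right adjoints and obtain the pivotal structure on $\mathrm{Frob}_\mathcal{M}$. Your identification of which hypothesis does which job (separability for the coequalizers, symmetry for ${}^\dagger X\cong X^\dagger$, sphericity of $\mathcal{M}$ for the double-dual isomorphism) is accurate.
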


Thus we have learnt that the data of a family of full rational CFTs based on the same chiral data are described by a bicategory, and use this description for our purposes. As we already mentioned, topological defects in a full CFT are described by categories of $A$-$A$-bimodules. As we have seen in the previous chapter, (B-type supersymmetry preserving) defects in Landau-Ginzburg models are indeed described by categories of matrix factorizations. Hence, a legitimate question at this point is if one could relate these two in a systematic way. This kind of relation was already noticed in the physics community and quite some works were dedicated to the study this topic. In the next subsection we review some of the most interesting results.

\section{On the Landau-Ginzburg/conformal field theory correspondence}
\label{sec:LGCFTcorrespondence}

Between the late 80's and early 90's a remarkable amount of physics literature focused in the similarities exhibited between Landau-Ginzburg models and $N=2$ superconformal field theories in two dimensions. The chiral symmetries of $N=2$ superconformal field theories are described by the so-called $N=2$ superconformal algebra:

\begin{defn}
The \textit{$N = 2$ Lie superalgebra} (also called \textit{$N=2$ superconformal algebra}) is a super-vector space with the basis of the even central element $c$, even $L_n, J_n$ and odd $G^{\pm}_r$ elements, labelled by integers $n\in\mathbb{Z}$ and half integers $r\in\frac{1}{2}+\mathbb{Z}$ for the \textit{Neveu-Schwarz} $N = 2$ Lie super-algebra $\mathfrak{ns}$ (and by integers $r\in\mathbb{Z}$ for the \textit{Ramond} $N = 2$ Lie super-algebra $\mathfrak{r}$), with brackets
$$\begin{array}{lcl}
[L_m,L_n] & = & (m - n)L_{m+n} + \frac{c}{12}(m^3 - m)\delta_{m+n,0}, \\ \\
{[J_m,J_n]} & = & \frac{c}{3}m\delta_{m+n,0},\\ \\ 
{[ L_n , G^\pm_r]} & = & (\frac{m}{2} - r)G^\pm_{m+r} , \\ \\
{[ J_m , G_r^\pm]} & = & \pm G^\pm_{m+r}, \\ \\
{[ L_m,J_n]} & = & -nJ_{m+n}, \\ \\ 
\{G^-_r, G^+_s\} & = & 2L_{r+s} - (r-s)J_{r+s} + \frac{c}{3}(r^2 - \frac{1}{4})\delta_{r+s,0}.
\end{array}$$
\end{defn}

There are indeed many results indicating that Landau-Ginzburg models and $N=2$ superconformal field theories are connected by a deeper reason. Let us give a short overview (not necessarily in chronological order) of some of them of particular relevance:

\begin{itemize}
\item In \cite{howewest1}, Howe and West show how to compute the fixed point of the Landau-Ginzburg Hamiltonian, the conformal weights of the chiral operators and their OPEs, as well as the central charge. In these computations, performed via the so-called $\epsilon$ expansion \footnote{This strategy was not suitable for application to general (non-homogeneous) $N=2$ potentials, an issue which was solved later in \cite{howewest3} with the Parisi approach.}, an essential role was played by the $N=2$ non-renormalization theorem. Happily, these results totally agree with the corresponding $N=2$ minimal model. Further, in \cite{howewest2}, they compute the two- and three-point functions for $N=2$ minimal models and the chiral correlators for fields whose $U \left( 1 \right)$ charges sum to 1.
\item Martinec \cite{martinec} provided a more algebraic geometric approach to this relation, linking $N=2$ Landau-Ginzburg models and conformal field theories on a K{\"a}hler manifold whose first Chern class vanishes via algebraic varieties defined by the potential in the Landau-Ginzburg model (after previous work by Gepner \cite{gepner}). In \cite{greene}, one can further find a way to represent a large class of Calabi-Yau manifolds in terms of renormalization fixed points of Landau-Ginzburg models.

\item Renormalization group flow provides a map from certain boundary conditions and defects lines in the LG model to certain conformal boundary conditions and defects lines in the CFT, see e.\,g.~\cite{bhls0305, brunrogg1}. In general, charges and correlators of fields in the LG model vary along the flow. However, by $\mathcal N=2$ supersymmetry the charges and correlators in a subsector of the LG model consisting of chiral primary fields are preserved and can be directly compared to their CFT equivalents.

\item In \cite{lerchevafawarner}, the comparison between CFT and LG models is pushed one step forward. There, Lerche, Vafa and Warner compare the ring of chiral fields of CFTs and the Jacobi ring of Landau-Ginzburg models. In addition, they show they also need to be finite-dimensional.
\item In \cite{vafawarner}, Vafa and Warner state the relation between Landau-Ginzburg models and conformal field theories as follows:

\begin{res}

The infrared fixed point of a Landau-Ginzburg model with potential $W$ is a conformal field theory with central charge $c_W$, where
$$c_W:=\sum\limits_{i=1}^n \left( 1-\vert x_i \vert \right)$$
(which is related to the central charge of the Virasoso algebra of the CFT via $c_{\mathrm{Vir}}=3 c_W$) with $\vert x_i \vert \in \mathbb{Q}$ the degrees associated to each variable $x_i$.

\end{res}

And in addition they begin a classification program of conformal theories with the help of singularity theory as described e.g. in \cite{arnold}.
\item Actually, this classification program continued in works by Gannon \cite{gannon} and Gray \cite{gray}, and before these two, in \cite{cecottivafa} -in this last case, within the frame of the classification of $N=2$ supersymmetric theories.
\end{itemize}

All this evidence suggests some kind of relation between modules over vertex operator superalgebras, which model topological defects between two conformal field theories and matrix factorizations, describing defects between Landau-Ginzburg models, as one may already have noticed when comparing Tables \ref{tab1} and \ref{tab2}.

To close up this chapter, let us strongly remark that although it is a clear conjecture that at the level of bulks the subspace of chiral primary bulk fields (for the CFT side) should correspond to the Jacobi ring of the associated Landau-Ginzburg model, up to date there is no clear mathematical conjecture of the Landau-Ginzburg/CFT correspondence at the level of boundaries and defects.

\newpage

\chapter[$N=2$ minimal conformal field theories and matrix bifactorizations of $x^d$]{$\boldsymbol{N{=}2}$ minimal conformal field theories and matrix bifactorizations of $\boldsymbol{x^d}$}

\label{LGCFTTL}

This chapter contains the joint work \cite{drcr} with Alexei Davydov and Ingo Runkel. We describe a tensor equivalence between two categories: one of representations associated to the $N=2$ superconformal algebra and another of permutation-type matrix factorizations.

Let $\kk$ be an algebraically closed field (which can be assumed to be the field $\bC$ of complex numbers).

\section[Categories of representations for $N=2$ minimal super vertex operator algebras]{Categories of representations for $\boldsymbol{N{=}2}$ minimal\newline super vertex operator algebras}

\subsection[Representations of $N=2$ minimal super vertex operator algebras]{Representations of $\boldsymbol{N=2}$ minimal super vertex operator algebras}

Let $V(N{=}2,d)$ be the super vertex operator algebra of the $N=2$ minimal model of central charge $c=\frac{3(d-2)}{d}$, where $d\in\bZ_{\ge 2}$, see \cite{ad1} and e.g.\ \cite{dv,Eholzer:1996zi,Adamovic:1998} for more on $N=2$ superconformal algebras.
Its bosonic part $V(N{=}2,d)_0$ can be identified with the coset $(\widehat{\s\uu(2)}_{d-2}\oplus \widehat{\uu(1)}_4)/\widehat{\uu(1)}_{2d}$ \cite{dv} (see \cite{Carpi:2012va} for a proof in the framework of conformal nets).

Accordingly, the category $\C(N{=}2,d)$ of representations of $V(N{=}2,d)_0$ can be realised as the category of local modules over a commutative algebra $A$ in the product
\begin{align}
\E &= \Rep(\widehat{\s\uu(2)}_{d-2})\  \boxtimes\ \overline{\Rep(\widehat{\uu(1)}_{2d})} \boxtimes\ \Rep(\widehat{\uu(1)}_4) \nonumber \\ 
& = \C(\s\uu(2),{d{-}2})  \boxtimes\ \C(\bZ_{2d},q^{-1}_{2d}) \boxtimes\ \C(\bZ_4,q_4)\ ,
\label{eq:E-def}
\end{align}
see \cite{Frohlich:2003hg}.
Here, for a ribbon category $\C$ the notation $\overline\C$ stands for the tensor category $\C$ with the opposite braiding and ribbon twist. 
The category $\C(\s\uu(2),d{-}2)= \Rep(\widehat{\s\uu(2)}_{d-2})$ is the category of integrable highest weight representations of the affine $\s\uu(2)$ at level $d-2$.
Its simple objects $[l]$ are labelled by $l=0,...,d-2$ and have lowest conformal weight $h_l=\frac{l(l+2)}{4d}$. Their dimensions are $\mathrm{dim}[l] = \frac{\eta^{l+1}-\eta^{-l-1}}{\eta-\eta^{-1}}$
with $\eta = e^{ 2 \pi i /d }$ and their ribbon twists are $\theta_l = e^{2\pi i h_l} \, \id_{[l]}$. The fusion rule of $\C(\s\uu(2),d{-}2)$ is 
$$[k] \otimes [l] ~\simeq~ \bigoplus_{m=|k-l|~\mathrm{step}\,2}^{\min(k+l,2d-4-k-l)} [m] \ .$$
The category $\Rep(\widehat{\uu(1)}_{2d})$ of representations of the vertex operator algebra for $\uu(1)$, rationally extended by two fields of weight $d$, is a pointed fusion category (a fusion category with a group fusion rule) with group $G$ of isomorphism classes of simple objects given by $\bZ_{2d}$. Braided monoidal structures on pointed fusion categories require $G$ to be abelian and
are classified by quadratic functions $q:G\to\bC^*$ \cite{js}. The ribbon twist of $\C(G,q)$ is $\theta_a = q(a)\,\id$. The $q_m$ appearing in \eqref{eq:E-def} are defined as
$q_m:\bZ_m\to\bC^*$ with $q_m(r) = e^\frac{\pi ir^2}{m}$ and $m$ even.

We can label simple objects of $\E$ by $[l,r,s]$, where $l \in \{0,...,d-2\}$, $r \in \bZ_{2d}$ and $s \in \bZ_4$. 
The ribbon twist for $\E$ is given by $\theta_{[l,r,s]} = e^{2 \pi i h_{l,r,s}} \,\id$ with
$$h_{l,r,s} \equiv \frac{l(l+2)}{4d} + \frac{s^2}{8} - \frac{r^2}{4d} \mod \bZ\ .$$ 
The underlying object of the algebra $A$ 
in the product \eqref{eq:E-def}
is $[0,0,0]\oplus[d{-}2,d,2]$. Note that $[d{-}2,d,2]$ is an invertible object of order 2 and ribbon twist 1, so that $[0,0,0]\oplus[d{-}2,d,2]$ has a uniquely defined commutative separable algebra structure. The tensor product with $[d{-}2,d,2]$ has the form
$$[d{-}2,d,2] \otimes [l,r,s] ~\simeq~ [d{-}2{-}l,r{+}d,s{+}2].$$
In particular no simple objects are fixed by tensoring with $[d{-}2,d,2]$ and hence all simple $A$-modules are free:
\beq\lb{moa}
A\otimes[l,r,s] ~\simeq~ A\otimes [d{-}2{-}l,r{+}d,s{+}2] ~\simeq~ [l,r,s]\ \op\ [d{-}2{-}l,r{+}d,s{+}2]\ .
\eeq

Recall that a simple $A$-module is {\em local} if all its simple constituents have the same 
ribbon twist (see \cite{Pareigis,Kirillov:2001ti} and \cite[Cor.\,3.18]{Frohlich:2003hm}).
Thus local $A$-modules correspond to $[l,r,s]$ with even $l+r+s$:
$$h_{d-2-l,r+d,s+2} - h_{l,r,s} = \frac{(d{-}2{-}l)(d{-}l)-l(l{+}2)}{4d} + \frac{(s{+}2)^2-s^2}{8} - \frac{(r{+}d)^2-r^2}{4d} 
=\frac{s-l-r}{2}\ .$$

The fermionic part $V(N{=}2,d)_1$ of $V(N{=}2,d)$ corresponds to the $A$-module 
$$A \otimes [0,0,2] ~\simeq~ [0,0,2]\ \op\ [d-2,d,0]$$ 
so that the simple objects of the NS (R) sector of $\C(N{=}2,d)$ are $A\ot[l,r,s]$ with even (odd) $s$:
$$h_{l,r,s+2} - h_{l,r,s} - h_{0,0,2} = \frac{(s+2)^2-s^2-4}{8} = \frac{s}{2}\ .$$
Denote by $\C(N{=}2,d)_{NS}$ the full subcategory of $\C(N{=}2,d)$ consisting of NS objects, i.e.\ with simple objects of the form $A\ot[l,r,s]$ with even $s$. 
By \eqref{moa} any simple object in $\C(N{=}2,d)_{NS}$ can be written as 
\beq\label{eq:[l,r]-def} 
[l,r] := A\ot[l,r,0] 
\quad \text{with}
\quad l \in \{0,1,\dots,d-2\} ~,~~ r \in \bZ_{2d} ~,~~ l+r \text{ even} \ .
\eeq

\subsection[The structure of $\C(N{=}2,d)_{NS}$ for odd $d$]{The structure of $\boldsymbol{\C(N{=}2,d)_{NS}}$ for odd $\boldsymbol{d}$}

Note that direct sums of objects $[l,r,s]$ with even $l+r+s$ form a ribbon fusion subcategory $\E_{even}$ of $\E$.
It can be characterised as the M\"uger centralizer of $[d{-}2,d,2]$ in $\E$. Recall that the M\"uger centralizer of a subcategory $\D\subset\C$ in a ribbon fusion category is $\{X\in\C\ |\ \theta_{X\ot Y} = \theta_X\ot\theta_Y,\ \forall Y\in\D\}$ \cite{mueger}. 

The induction functor $A\ot-:\E\to {_A\E}$ is a faithful tensor functor. Its restriction to $\E_{even}$ is in addition ribbon, so that
$$\E_{even}~ \xrightarrow{A \ot -} ~ {_A\E}_{even} = {_A\E}^{loc} = \C(N{=}2,d)$$
is a faithful ribbon tensor functor.
For odd $d$ 
the object $[1,d,0]$ lies in $\E_{even}$ and tensor generates a subcategory 
of $\E_{even}$ with simple objects $[l,dl,0],\ l=0,...,d-2$ and the fusion with $[1,d,0]$ given by
\beq\label{eq:E-fusion}
[1,d,0]\otimes [l,dl,0] \simeq 
\begin{cases}  [l{-}1,d(l{-}1),0]\ \op\ [l{+}1,d(l{+}1),0]  &; ~~ 1\leq l < d-2 \\  { [d{-}3,d(d{-}3),0] }
 & ; ~~ l = d-2\end{cases}
 \eeq
Since the last entry in $[l,dl,0]$ is zero,
the restriction of the induction functor $A\ot-$ to this subcategory is fully faithful. Denote by $\T$ its image in $\C(N{=}2,d)$.

The invertible object $[0,2,0]$ belongs to the M\"uger centralizer of $[1,d,0]$ in $\E_{even}$:
$$\exp 2 \pi i \big(h_{1,d+2,0} - h_{1,d,0} - h_{0,2,0}\big) = \exp 2 \pi i\big(\tfrac{(d+2)^2-d^2-4}{4d}\big) = 1 \ .$$
It tensor generates a pointed subcategory $\V$ in $\E_{even}$ equivalent to $\C(\bZ_d,q^{-2}_d)$. The restriction of the induction functor $A\ot-$ to this subcategory is fully faithful.

For $d$ odd, $[1,d] \in \C(N{=}2,d)_{NS}$ and it is straightforward to see that $\C(N{=}2,d)_{NS}$ is tensor generated by $[1,d]$ and $[0,2]$ (recall the notation \eqref{eq:[l,r]-def}). Furthermore, the intersection of the subcategories tensor generated by $[1,d]$ and by $[0,2]$ is trivial. 
Since (the associated bicharacter of) $q^{-2}_d$ is non-degenerate the subcategory $\V$ is non-degenerate as a braided category. 
Hence by M\"uger's centralizer theorem \cite[Prop.\,4.1]{mueger} $\C(N{=}2,d)_{NS}\simeq \T\boxtimes\V$ as ribbon fusion categories.

Finally, we will show that as a tensor category and for odd $d$, $\C(\bZ_d,q^{-2}_{d})$ is equivalent to the category $\V(\bZ_d)$ of $\bZ_d$-graded vector spaces with the trivial associator. The quadratic form $q^{-2}_{d} \in Q(\bZ_d,\bC^*)$ determines the braided tensor structure on $\C(\bZ_d,q^{-2}_{d})$ via the canonical isomorphism from $Q(\bZ_d,\bC^*)$ to the third abelian group cohomology $H^3_{ab}(\bZ_d,\bC^*)$ \cite{js}. The associator on $\C(\bZ_d,q^{-2}_{d})$, i.e.\ the structure as a tensor category, is determined by the image under the homomorphism $H^3_{ab}(\bZ_d,\bC^*)\to H^3(\bZ_d,\bC^*)$. For $d$ odd, this homomorphism is trivial, hence the associator on $\C(\bZ_d,q^{-2}_{d})$ is trivial.

The above discussion is summarized in the following statement.

\bpr\lb{ddn}
For an odd $d$ there is an equivalence of 
	braided
fusion categories $$\C(N{=}2,d)_{NS}\ \simeq\ \T\boxtimes\V(\bZ_d)\ .$$
\epr

\subsection{Universal properties}\label{sec:univ}

Recall the universal property of Temperley-Lieb categories described in Section \ref{sec:TLcats}. Here, we describe a universal property of $\C(N{=}2,d)_{NS}$ for odd $d$ as a tensor category.
	This description makes use of group actions on tensor categories and equivariant objects, which we review in Appendix \ref{sec:equiv+pointed}. In the following proposition, a pointed subcategory of a tensor category $\D$ with underlying group $\bZ_d$ acts by conjugation, and  $\D^{\bZ_d}$ denotes the corresponding tensor category of equivariant objects.

\bth\lb{upn}
Let $d$ be odd. A tensor functor $F:\C(N{=}2,d)_{NS}\to\D$ is determined by 
\begin{itemize}
\item a tensor functor $\V(\bZ_d)\to \D$, 
\item a self-dual object $T=F([1,d])$ in the category $\D^{\bZ_d}$ of quantum dimension $\dim(T)=2\cos\big(\frac{\pi}{d}\big)$ such that the induced functor $\TL_{2\cos(\frac{\pi}{d})}\to \D^{\bZ_d}$ is not faithful.
\end{itemize}
\eth
\bpf
By Proposition \ref{ddn}, the category $\C(N{=}2,d)_{NS}$ is tensor equivalent to the Deligne product $\T\boxtimes\V(\bZ_d).$
By Theorem \ref{ciz}, a tensor functor $F:\T\boxtimes\V(\bZ_d)\to\D$ is determined by a tensor functor $\V(\bZ_d)\to \D$ and a tensor functor $\T\to \D^{\bZ_d}$.

The dimension of $[1,d]\in\T$ (which coincides with the dimension of $[1,0,0]$ in $\E$)
is equal to $2\cos\big(\frac{\pi}{d}\big)$.
The fusion rules of $\T$ (see \eqref{eq:E-fusion}) show that it is freely generated as a tensor category by $[1,d]$, and that the Wenzl--Jones projector $p_{d-1}$ vanishes (Corollary \ref{ftl}). By semi-simplicity, it follows that $\TL_{2\cos(\frac{\pi}{d})}\to \T$ descends to a tensor equivalence $\T_{2\cos(\frac{\pi}{d})}\to \T$. Consequently,
a tensor functor $\T\to \D^{\bZ_d}$ is determined by a self-dual object $T=F([1,d])$ in the category $\D^{\bZ_d}$ with quantum dimension $\dim(T)=2\cos\big(\frac{\pi}{d}\big)$ and such that the induced functor $\TL_{2\cos(\frac{\pi}{d})}\to \D^{\bZ_d}$ is not faithful.
\epf

\section{On matrix factorizations}\label{sec:mf}

\subsection{Categories of matrix factorizations and tensor products}\label{sec:cat-mf}

Recall the facts and notation in Chapter \ref{ch:mfs} for matrix (bi)factorizations. From here on and for the remainder Section \ref{sec:mf} we fix
$$
	S = \bC[x] \quad , \qquad W=x^d \quad , \quad \text{where} \quad
	d \in \bZ \quad , \quad d \ge 2 \ .
$$
For calculations it will often be convenient to describe $\bC[x]$-$\bC[x]$-bimodules as $\bC[x,y]$-left modules $M$. Here, the left action of $p \in \bC[x]$ is by acting on $M$ with $p(x)$ and the right action by acting with $p(y)$. We will employ this tool without further mention.

The tensor unit in $\HMFbi$ is
$$
I ~:\quad\xymatrix{\bC[x,y] \ar@/^10pt/[rrr]^{d_1 = x-y} &&& \bC[x,y] \ar@/^10pt/[lll]^{d_0 = \frac{x^d-y^d}{x-y}}} \quad .
$$
The left and right unit isomorphisms $\lambda_M : I \otimes M \to M$ and $\rho_M : M \otimes I \to M$ are given by
$$\xymatrix{
I\ot M \ar[dd]_{\lambda_M} && 
{\begin{array}{c} {I_1 \otimes M_0} \\  {\oplus} \\ {I_0 \otimes M_1} \end{array} }
\ar[dd]_{(0\ L_{M_1})} \ar@/^10pt/[rrrr]^{\left(\begin{array}{rr} \scriptstyle{(x-y)\ot\id} & {\scriptstyle{1\ot d_1^M}} \\ {\scriptstyle{-1\ot d_0^M}} &  \scriptstyle{\frac{x^d-y^d}{x-y}\ot\id} \end{array}\right)} &&&& 
{\begin{array}{c} {I_0 \otimes M_0} \\  {\oplus} \\ {I_1 \otimes M_1} \end{array} }
\ar[dd]^{(L_{M_0}\ 0)} \ar@/^10pt/[llll]^{\left(\begin{array}{rr}  \scriptstyle{\frac{x^d-y^d}{x-y}\ot\id} & {\scriptstyle{-1\ot d_1^M}} \\ {\scriptstyle{1\ot d_0^M}} & \scriptstyle{(x-y)\ot\id} \end{array}\right)}
 \\ \\ 
M  & & M_1 \ar@/^10pt/[rrrr]^{d_1^M}  &&&& M_0 \ar@/^10pt/[llll]^{d_0^M} 
}$$
\beq\label{eq:unit-isos}
\xymatrix{
M\ot I \ar[dd]_{\rho_M} && 
{\begin{array}{c} {M_1 \otimes I_0} \\  {\oplus} \\ {M_0 \otimes I_1} \end{array} }
\ar[dd]_{(R_{M_1}\ 0)} \ar@/^10pt/[rrrr]^{\left(\begin{array}{rr} {\scriptstyle{d_1^M}\ot 1} & \scriptstyle{\id\ot(x-y)}  \\  \scriptstyle{-\id\ot\frac{x^d-y^d}{x-y}} & {\scriptstyle{d_0^M\ot 1}}  \end{array}\right)} &&&& 
{\begin{array}{c} {M_0 \otimes I_0} \\  {\oplus} \\ {M_1 \otimes I_1} \end{array} }
\ar[dd]^{(R_{M_0}\ 0)} \ar@/^10pt/[llll]^{\left(\begin{array}{rr}  {\scriptstyle{d_0^M\ot 1}} & \scriptstyle{-\id\ot(x-y)} \\  \scriptstyle{\id\ot\frac{x^d-y^d}{x-y}} & {\scriptstyle{d_1^M\ot 1}} \end{array}\right)}
 \\ \\
M  & & M_1 \ar@/^10pt/[rrrr]^{d_1^M}  &&&& M_0 \ar@/^10pt/[llll]^{d_0^M} 
}\eeq	
The maps $L$ and $R$ were, for a given $\bC[x]$-$\bC[x]$-bimodule $N$, already defined in Eq. \ref{leftrightactions}.
It is easy to verify that $\lambda_M$ and $\rho_M$ are in $\ZMFbi$. As we have previously mentioned, they have homotopy inverses, see \cite{carqrunkel1}.

\medskip

Finite rank factorizations in $\HMFbi$ have right duals \cite{carqrunkel3,carqmurfet}.
We will only need explicit duals of matrix factorizations $M \in \HMFbi$ for which $M_0$ and $M_1$ are of rank 1. In this case we have \cite{carqrunkel3}:
$$
M~:\quad
\xymatrix{\bC[x,y] \ar@/^10pt/[rr]^{d_1(x,y)} && \bC[x,y] \ar@/^10pt/[ll]^{d_0(x,y)}}
\qquad \leadsto \qquad
M^+~:\quad
\xymatrix{\bC[x,y] \ar@/^10pt/[rr]^{d_1^{M^+} := -d_1(y,x)} &&\bC[x,y] \ar@/^10pt/[ll]^{d_0^{M^+} := d_0(y,x)}}\quad 
$$
Note that $I^+ = I$. Since the corresponding duality maps play an important role in our construction, we take some time to recall their explicit form and some properties from \cite{carqrunkel3}. The coevaluation  $\mathrm{coev}_M : I \to M \otimes M^+$ is the simpler of the two,
$$
\xymatrix{
I \ar[ddd] & & \bC[x,z] \ar@/^10pt/[rrrr]^{x-z} \ar[ddd]_{\left(\begin{array}{c} \scriptstyle{1} \\ \scriptstyle{1} \end{array}\right)} &&&& \bC[x,z] \ar@/^10pt/[llll]^{\frac{x^d-z^d}{x-z}} \ar[ddd]^{\left(\begin{array}{r} \frac{d_1(x,y)-d_1(z,y)}{x-z} \\ \frac{d_0(x,y)-d_0(z,y)}{x-z} \end{array}\right)} \\ \\ \\
M \otimes M^+ && \bC[x,y,z]^{\op 2} \ar@/^10pt/[rrrr]^{\left(\begin{array}{rr} \scriptstyle{d_1(x,y)} & \scriptstyle{-d_1(z,y)} \\ \scriptstyle{-d_0(z,y)} &  \scriptstyle{d_0(x,y)} \end{array}\right)} &&&& \bC[x,y,z]^{\op 2} \ar@/^10pt/[llll]^{\left(\begin{array}{rr} \scriptstyle{d_0(x,y)} & \scriptstyle{d_1(z,y)} \\ \scriptstyle{d_0(z,y)} & \scriptstyle{d_1(x,y)} \end{array}\right)}
}
$$
Here the left and the right bottom instances of $\bC[x,y,z]^{\op 2}$ correspond to 
$$(M \otimes M^+)_1 = {\begin{array}{c} {M_1 \otimes M^+_0} \\  {\oplus} \\ {M_0 \otimes M^+_1} \end{array} }\qquad ,
\qquad (M \otimes M^+)_0 = {\begin{array}{c} {M_0 \otimes M^+_0} \\  {\oplus} \\ {M_1 \otimes M^+_1} \end{array} }
\quad , 
$$ resp.
It is immediate that this is indeed a morphism in $\ZMFbi$. The evaluation $\mathrm{ev}_M : M^+ \otimes M \to I$ takes the form
$$
\xymatrix{
M^+ \otimes M \ar[ddd] && \bC[x,y,z]^{\op 2} \ar[ddd]_{(B_M\ C_M)} \ar@/^10pt/[rrrr]^{\left(\begin{array}{rr} \scriptstyle{-d_1(y,x)} & \scriptstyle{d_1(y,z)} \\ \scriptstyle{-d_0(y,z)} &  \scriptstyle{d_0(y,x)} \end{array}\right)} &&&& \bC[x,y,z]^{\op 2} \ar[ddd]^{(A_M\ 0)} \ar@/^10pt/[llll]^{\left(\begin{array}{rr} \scriptstyle{d_0(y,x)} & \scriptstyle{-d_1(y,z)} \\ \scriptstyle{d_0(y,z)} &  \scriptstyle{-d_1(y,x)} \end{array}\right)}
 \\ \\ \\
I  & & \bC[x,z] \ar@/^10pt/[rrrr]^{x-z}  &&&& \bC[x,z] \ar@/^10pt/[llll]^{\frac{x^d-z^d}{x-z}} 
}
$$
Here the left and the right top instances of $\bC[x,y,z]^{\op 2}$ correspond to 
$$(M^+ \otimes M)_1 = {\begin{array}{c} {M^+_1 \otimes M_0} \\  {\oplus} \\ {M^+_0 \otimes M_1} \end{array} }\qquad,\qquad (M^+ \otimes M)_0 = {\begin{array}{c} {M^+_0 \otimes M_0} \\  {\oplus} \\ {M^+_1 \otimes M_1} \end{array} }\quad,$$ resp.
The $\bC[x,z]$-module maps $A_M, B_M, C_M$ are defined as follows. The map $C_M$ is simply minus the projection onto terms independent of $y$: $C_M(y^m) = -\delta_{m,0}$. For $A_M$ and $B_M$ we introduce the auxiliary function
$$
	\mathcal{G}_M(f) = \frac{1}{2 \pi i} \oint \frac{x-z-y}{y \, d_1(y,z)} \, f(x,y,z) dy 
	\qquad , \quad f \in \bC[x,y,z] \ .
$$
The contour integration is along a counter-clockwise circular contour enclosing all poles. It is not immediately evident but still true that $\mathcal{G}_M(f)$ is a polynomial. One way to see this is to rewrite $\mathcal{G}_M(f) = \frac{1}{2 \pi i} \oint \frac{x-z-y}{y (y^d-z^d)} \, d_0(y,z)f(x,y,z) dy$ and to expand $(y^d-z^d)^{-1} = \sum_{m=0}^\infty (z/y)^m$. In this way one can rewrite the integrand as a formal Laurent series in $y$ whose coefficients are polynomials in $x,z$. The contour integration picks out the coefficient of $y^{-1}$. 

We will need two further properties of $\mathcal{G}_M$:
\beq\label{eq:G-properties}
	\mathcal{G}_M\big(d_1(y,z) \, y^m\big) = (x-z) \delta_{m,0}
		\quad , \quad
	\mathcal{G}_M\big(d_1(y,x) \, f(x,y,z)\big) \in (x-z) \bC[x,z] \ .
\eeq
The first property is clear. For the second property, let $g(x,z) := \mathcal{G}_M\big(d_1(y,x) \, f(x,y,z)\big)$. The condition $g(z,z)=0$ is then immediate from the first property. 

We can now give the maps $A_M$ and $B_M$:
$$
	A_M(f) = - \mathcal{G}_M(f)
	\quad , \quad
	B_M(f) = \frac{\mathcal{G}_M\big(d_1(y,x)f(x,y,z)\big)}{x-z} \ .
$$
To verify that  $\mathrm{ev}_M \in \ZMFbi(M^+ \otimes M,I)$, it suffices to check $(\mathrm{ev}_M)_0 \circ d^{M^+ \otimes M}_1 = d^I_1 \circ (\mathrm{ev}_M)_1$ on $(y^m,y^n)$ for all $m,n \ge 0$. This is straightforward using \eqref{eq:G-properties}:
\begin{align*}
(\mathrm{ev}_M)_0 \circ d^{M^+ \otimes M}_1(y^m,y^n) &= A_M\big( -d_1(y,x) y^m + d_1(y,z) y^n \big)
=  \mathcal{G}_M(d_1(y,x)y^m) - (x-z) \delta_{n,0} \ ,
\nonumber\\
d^I_1 \circ (\mathrm{ev}_M)_1(y^m,y^n) &= (x-z)(B_M(y^m) + C_M(y^n)) = \mathcal{G}_M(d_1(y,x)y^m) - (x-z) \delta_{n,0} \ .
\nonumber
\end{align*}

The zig-zag identities for $\mathrm{ev}_M$ and $\mathrm{coev}_M$ are verified in \cite[Thm.\,2.5]{carqrunkel3}.

\subsection{Permutation-type matrix bifactorizations}\label{sec:permutation-mf}

In this Subsection we review some facts on permutation-type matrix bifactorizations (recall Definition \ref{permtypembf}). For a subset $S \subset \mathbb{Z}_d$ we will write $\overline{S}=\mathbb{Z}_d \setminus S$.

The bifactorizations $P_\emptyset$ and $P_{\{0,1,\dots,d-1\}}$ are isomorphic to the zero object in $\HMFbi$. The remaining $P_S$ are non-zero and mutually distinct.
To see this, in the following remark we recall a useful tool from $\cite{khovroz}$.

\bre\label{rem:H(M)}
Given a matrix bifactorization $(M,d)$, we obtain a $\bZ_2$-graded complex by considering the differential $\bar d$ on $M / \langle x,y \rangle M$. Since $x^d-y^d \in \langle x,y \rangle$, $\bar d$ is indeed a differential. Denote by $H(M)$ the homology of this complex. Then \cite[Prop.\,8]{khovroz} states that $f \in \HMFbi(M,N)$ is an isomorphism in $\HMFbi$ if and only if the induced map $H(f) : H(M) \to H(N)$ is an isomorphism of $\bC$-vector spaces.
\ere

\ble
Let $R,S \subset \bZ_d$ be nonempty proper subsets. The permutation-type matrix bifactorizations $P_R$ and $P_S$ are non-zero, and they are isomorphic in $\HMFbi$ if and only if $R=S$.
\ele

\bpf
For a non-empty proper subset $S$, the matrix factorization $P_S$ is reduced, that is, the differential $\bar d$ induced on the quotient $P_S/\langle x,y\rangle P_S$ is zero. Thus $H(P_S) \simeq \bC \oplus \bC$. It follows that $f \in \ZMFbi(P_S,P_R)$ is an isomorphism in $\HMFbi$ if and only if $f_0$ and $f_1$ contain a non-zero constant term. Writing out the condition that $f$ is a cycle shows that this is possible only for $R=S$.
\epf

We will mostly be concerned with a special subset of permutation-type bifactorizations, namely those with consecutive index sets. For $a \in \bZ_d$ and $\lambda \in \{0,1,2,\dots,d-2\}$ we write
$$
	P_{a:\lambda} := P_{\{a,a+1,\dots,a+\lambda\}} \ 
$$
We define $\Pd$ to be the full subcategory of $\HMFbi$ consisting of objects isomorphic (in $\HMFbi)$ to finite direct sums of the $P_{a:\lambda}$. A key input in our construction is the following result established in \cite[Sect.\,6.1]{brunrogg1}.

\bth\label{thm:br-decomp}
$\Pd$ is closed under taking tensor products. Explicitly, for $\lambda,\mu \in \{0,\dots,d-2\}$,
$$
	P_{m:\lambda}  \otimes P_{n:\mu} \simeq \bigoplus_{\nu=|\lambda-\mu|~\mathrm{step}\,2}^{\min(\lambda+\mu,2d-4-\lambda-\mu)} P_{m+n-\frac12(\lambda+\mu-\nu):\nu} \quad 
$$
\eth

For the dual of a permutation-type matrix bifactorizations one finds $(P_S)^+ \simeq P_{-S}$. Explicitly:
\beq\label{eq:PS+P-S-iso}
\xymatrix{
 P_{-S} \ar[dd] &&&& \bC[x,y] \ar@/^10pt/[rrr]^{\prod\limits_{j \in S}(x-\eta^{-j} y)} \ar[dd]_{(-1)^{|S|+1}\prod_{j\in S}\eta^{-j}} &&&\bC[x,y] \ar@/^10pt/[lll]^{\prod\limits_{j \in \overline S}(x-\eta^{-j} y)}  \ar[dd]^{1} \\ \\
(P_S)^+ &&&& \bC[x,y] \ar@/^10pt/[rrr]^{-\prod\limits_{j \in S}(y-\eta^{j}x)} &&& \bC[x,y] \ar@/^10pt/[lll]^{\prod\limits_{j \in \overline S}(y-\eta^{j}x)}
}
\eeq
The self-dual permutation-type matrix bifactorizations of the form $P_{a:\lambda}$ therefore have to satisfy $2a \equiv -\lambda \mod d$. Depending on the parity of $d$, one finds:
\begin{itemize}
\item $d$ even: $\lambda$ must be even and $a \equiv \frac\lambda2 \mod d$ or $a \equiv \frac{\lambda+d}2 \mod d$,
\item $d$ odd: $\lambda$ can be arbitrary and $a \equiv \frac{d-1}2 \lambda \mod d$.
\end{itemize}

\subsection[A tensor functor from $\bZ_d$ to $\Pd$]{A tensor functor from $\boldsymbol{\bZ_d}$ to $\boldsymbol{\Pd}$}\label{sec:Zd_to_Pd}

Consider the algebra automorphism $\sigma$ of $\bC[x]$ which acts on $x$ as $\sigma(x) = \eta x$. It leaves the potential $x^d$ invariant and generates the group of algebra automorphisms with this property. We get a group isomorphism
$$
	\bZ_d ~\longrightarrow~ \mathrm{Aut}(\bC[x] \text{ with $x^d$ fixed})
	\quad , \quad k \mapsto \sigma^k \ .
$$
Given a matrix bifactorization $M \in \MFbi$ and $a,b \in \bZ_d$, we denote by ${}_aM_b$ the matrix bifactorization whose underlying $\bC[x]$-bimodule is equal to $M$ as a $\bZ_2$-graded $\bC$-vector space, but has twisted left/right actions ($p \in \bC[x]$, $m \in M$):
$$
	(p,m) \mapsto \sigma^{-a}(p).m \quad , \quad
	(m,p) \mapsto m.\sigma^{b}(p) \ ,
$$
where the dots denotes the left/right action on the original bimodule $M$. Since $\bZ_d$ is abelian, we get a left action even if we were to omit the minus sign in $\sigma^{-a}$, but we include it to match the conventions of \cite[Sect.\,7.1]{carqrunkel2}.
For permutation-type matrix bifactorizations we have isomorphisms:
\beq\label{eq:PS-with-twisted-action-iso-to-PS}
  \xymatrix{
P_{S-a-b}  \ar[dd]_{s_{a,b}} && \bC[x,y] \ar@/^10pt/[rrr]^{\prod\limits_{j \in S}(x-\eta^{j-a-b} y)}  \ar[dd]_{\eta^{-|S|a} \,\cdot\, \sigma^{-a} \otimes \sigma^{b} } &&& \bC[x,y] \ar@/^10pt/[lll]^{\prod\limits_{j \in \overline S}(x-\eta^{j-a-b} y)}  \ar[dd]^{ \sigma^{-a} \otimes \sigma^{b}}
  \\ \\
 {}_a(P_S)_b && {}_a(\bC[x,y])_b \ar@/^10pt/[rrr]^{\prod\limits_{j \in S}(x-\eta^j y)} &&& {}_a(\bC[x,y])_b \ar@/^10pt/[lll]^{\prod\limits_{j \in \overline S}(x-\eta^j y)}  
}
\eeq
Here, $\sigma^{-a} \otimes \sigma^{b}$ is the automorphism of $\bC[x,y]$ which acts as $x \mapsto \eta^{-a} x$ and $y \mapsto \eta^{b}y$. 

The following lemma is straightforward.

\ble\label{lem:a()b-functor}
For all $a,b \in \bZ_d$, 
${}_a(-)_{b}$ defines an auto-equivalence of $\HMFbi$ and of $\Pd$. If $b=-a$, this auto-equivalence is 
tensor with ${}_a(M \otimes N)_{-a} = {}_aM_{-a} \otimes {}_aN_{-a}$ and $s_{a,-a} : I \to {}_aI_{-a}$.
\ele

Consider the objects ${}_aI \in \HMFbi$ for $a \in \bZ_d$. Applying the functor ${}_a(-)$ to the unit isomorphism $\lambda_{{}_bI} : I \otimes {}_bI \to {}_bI$ gives the isomorphism
\beq\label{eq:mu-morph}
	\mu_{a,b}  := {}_a(\lambda_{{}_bI}) ~:~ {}_aI \otimes {}_bI \to 
	{}_{a+b}I \ .
\eeq
By $\underline{\bZ_d}$ we mean the monoidal category whose set of objects is $\bZ_d$, whose set of morphisms consists only of the identity morphisms, and whose tensor product functor is the group operation (i.e.\ addition), see Appendix \ref{ceo}. 

\bpr\label{prop:chi-functor}
$\chi : \underline{\bZ_d} \to \Pd$, $\chi(a) = {}_aI$, together with 
$\mu_{a,b} : \chi(a) \otimes \chi(b) \to \chi(a+b)$, 
defines a tensor functor.
\epr

\bpf
First note that by \eqref{eq:PS-with-twisted-action-iso-to-PS}, ${}_aI \simeq P_{\{-a\}}$, so that indeed $\chi(a) \in \Pd$.
It is shown in \cite[Prop.\,7.1]{carqrunkel3} that the $\mu_{a,b}$ satisfy the associativity condition
$$
	\mu_{a,b+c} \circ (\id_{{}_aI} \otimes \mu_{b,c}) = \mu_{a+b,c} \circ (\mu_{a,b} \otimes \id_{{}_cI}) 
	\qquad \text{for all} \quad a,b,c \in \bZ_d \ .
$$
This amounts to the hexagon condition for the coherence isomorphisms $\mu_{a,b}$.
\epf

We can now construct two tensor functors $\underline{\bZ_d} \to \mathrm{Aut}_\otimes(\Pd)$. The first functor takes $a \in \bZ_d$ to ${}_a(-)_{-a}$; we denote this functor by $A$. This functor is strictly tensor: $A(0)=\Id$ and $A(a) \circ A(b) = A(a+b)$.

The second functor is the adjoint action of $\chi$; we denote it by $\Ad_\chi$. Given $a\in\bZ_d$, on objects the functor $\Ad_\chi(a)$ acts as $M \mapsto \chi(a) \otimes M \otimes \chi(-a)$. The morphism $f : M \to N$ gets mapped to $\id_{\chi(a)} \otimes f \otimes \id_{\chi(-a)}$. The isomorphisms $\mu_{-a,a} : \chi(-a) \otimes \chi(a) \to \chi(0) = I$ give the tensor structure on $\Ad_\chi(a)$. 
So far we saw that for all $a \in \bZ_d$, $\Ad_\chi(a) \in  \mathrm{Aut}_\otimes(\Pd)$. Next we need the coherence isomorphisms $\Ad_\chi(a) \circ \Ad_\chi(b) \to \Ad_\chi(a+b)$. These are simply given by $\mu_{a,b} \otimes (-) \otimes \mu_{-b,-a}$. 

The following lemma will simplify the construction of $\bZ_d$-equivariant structures below.

\ble\label{lem:A-iso-Ad}
$A$ and $\Ad_\chi$ are naturally isomorphic as tensor functors.
\ele

\bpf
We need to provide a natural monoidal isomorphism $\alpha : \Ad_\chi \to A$. That is, for each $a \in \bZ_d$ we need to give a natural monoidal isomorphism $\alpha(a) : \Ad_\chi(a) \to A(a)$, such that the diagram
\beq\label{eq:alpha-mon}
	\xymatrix{ 
	\Ad_\chi(a) \circ \Ad_\chi(b) \ar[d]_-{\alpha(a) \circ \alpha(b)} \ar[r]^-{\mu_*} & \Ad_\chi(a+b) \ar[d]^-{\alpha(a+b)}
	\\
	 A(a) \circ A(b)  \ar@{=}[r] & A(a+b)
	}
\eeq
commutes, where $\mu_* := \mu_{a,b} \otimes (-) \otimes \mu_{-b,-a}$. Define
$$
	\alpha(a)_M 
	:=
	\big[ {}_aI \otimes M \otimes {}_{-a}I 
		\xrightarrow{ {}_a(\lambda_M) \otimes (s^{-1}_{-a,a})_{-a} }
		{}_aM \otimes I_{-a}
		\xrightarrow{ {}_a( \rho_M )_{-a} }
		{}_aM_{-a}
	\big] \ .
$$

\noindent
{\em $\alpha(a)$ is tensor:} we need to verify commutativity of
$$
	\xymatrix{ 
	\Ad_\chi(a)(M) \otimes \Ad_\chi(a)(N) \ar[d]_-{\alpha(a)_M \otimes \alpha(a)_N} \ar[r]^-{\sim} & \Ad_\chi(a)(M \otimes N) \ar[d]^-{\alpha(a)_{M \otimes N}}
	\\
	 A(a)(M) \otimes A(a)(N)  \ar@{=}[r] & A(a)(M \otimes N)
	}
$$
where the top isomorphism is $\id \otimes \mu_{-a,a} \otimes \id$. Commutativity of this diagram is a straightforward calculation if one notes the following facts: $M \otimes {}_{-a}I = M_{-a} \otimes I$ and $M_{-a} \otimes {}_aN = M \otimes N$ (equal as matrix factorizations, not just isomorphic), and
$$
	\big[ M \otimes {}_{-a} I \xrightarrow{\id \otimes (s^{-1}_{-a,a})_{-a}} M \otimes I_{-a} \xrightarrow{ (\rho_M)_a} M_{-a} \big] ~=~ \big[ M_{-a} \otimes I \xrightarrow{ \rho_{M_{-a}} } \big] \ .
$$

\noindent
{\em $\alpha$ satisfies \eqref{eq:alpha-mon}:}
One way to see this is to act on elements. The unit isomorphisms \eqref{eq:unit-isos} are non-zero only on summands in the tensor products involving $I_0$, in which case they act as
$$
	\lambda_M ~:~ p(x,y) \otimes m \mapsto p(x,x).m
	\quad , \quad
	\rho_M ~:~ m \otimes p(x,y)\mapsto m.p(x,x) \ .
$$
One verifies that the top and bottom path in \eqref{eq:alpha-mon} amount to mapping
$$
	u(x,y) \otimes v(x,y) \otimes m \otimes p(x,y) \otimes q(x,y)
	~\in~ ({}_aI)_0 \otimes ({}_bI)_0 \otimes M \otimes ({}_{-b}I)_0 \otimes ({}_{-a}I)_0
$$
to
$$
	\big\{ \sigma^{-b}(u(x,x))  \, v(x,x) \big\} \,.\,m\,.\, \big\{\sigma^{-b}(p(x,x)) \, \sigma^{-a-b}(q(x,x)) \big\}
	~ \in ~ {}_{a+b}M_{-a-b} \ .
$$
\epf

\subsection[A functor from $\TL_\kappa$ to $\bZ_d$-equivariant objects in $\Pd$]{A functor from $\boldsymbol{\TL_\kappa}$ to $\boldsymbol{\bZ_d}$-equivariant objects in $\boldsymbol{\Pd}$}\label{sec:TL-to-PdZd}

We write $\Pd^{\bZ_d}$ for the category of $\bZ_d$-equivariant objects in $\Pd$, where the $\bZ_d$ action is given by the functor $A$ defined in the previous section. The definition and properties of categories of equivariant objects are recalled in Appendix \ref{ceo}.

By Theorem \ref{upn}, our aim now is to find a tensor functor
$$
	F : \TL_\kappa \to  \Pd^{\bZ_d} \ .
$$ 
According to Section \ref{sec:TLcats}, to construct a functor out of $\TL_\kappa$, we need to give a self dual object, duality maps, and compute the resulting constant $\kappa$. We will proceed as follows:
\begin{enumerate}
\item Give a self dual object $T \in \Pd$.
\item Give duality maps $u,n$, show they satisfy the zig-zag identities \eqref{eq:zigzag}, and compute $\kappa$.
\item Put a $\bZ_d$-equivariant structure on $T$ and show that the maps $u,n$ are $\bZ_d$-equivariant.
\end{enumerate}

\noindent
{\em Step 1:}
we listed self-dual objects of the from $P_{a:\lambda}$ at the end of Section \ref{sec:permutation-mf}. 
By Theorem \ref{thm:br-decomp}, there are only two choices which match the tensor products required by Corollary \ref{ftl}. In both cases, $d$ is odd, and either $\lambda=1$, $a = (d-1)/2$ or $\lambda=d-3$, $a = (d-3)(d-1)/2$. Both choices can be used in the construction below; we will work with the first option:
$$
	\text{$d$ odd}
	\quad , \qquad T := P_{\frac{d-1}2:1} = P_{\{\frac{d-1}2,\frac{d+1}2\}} \ .
$$	
Explicitly,
$$
T ~:\quad \xymatrix{\bC[x,y] \ar@/^10pt/[rrr]^{K(x,y)} &&& \bC[x,y] \ar@/^10pt/[lll]^{\frac{x^d-y^d}{K(x,y)}}}\quad ,
$$
where
$$
K(x,y)=\left( x-\eta^{\frac{d-1}{2}}y \right)\left( x-\eta^{\frac{d+1}{2}}y \right)=x^2+y^2+\kappa xy,\qquad \kappa = -(\eta^{\frac{d-1}{2}} + \eta^{\frac{d+1}{2}}) = 2 \cos  \tfrac{\pi}{d}  \ .
$$
Writing $\kappa$ for the coefficient of $xy$ will be justified below, where we will find it to be the parameter in $\TL_\kappa$.

\medskip
\noindent
{\em Step 2:}
denote the isomorphism given in \eqref{eq:PS+P-S-iso} by $t : T \to T^+$, $t = (\id,-\id)$. Define maps $u : T \otimes T \to I$ and $n : I \to T \otimes T$ via
\beq\label{eq:u-n-def}
	u = \big[ T \otimes T \xrightarrow{t \otimes \id} T^+ \otimes T \xrightarrow{\mathrm{ev}_T} I  \big]
	\quad , \quad
	n = \big[ I \xrightarrow{\mathrm{coev}_T} T \otimes T^+ \xrightarrow{\id \otimes t^{-1}} T \otimes T \big] \ .
\eeq
{}From this one computes $u \circ n = \kappa$. For example,
$$
	u_0 \circ n_0 = A_T(x+z+\kappa y) = \kappa \ .
$$
Together with the zig-zag identities for $\mathrm{ev}_T$ and $\mathrm{coev}_T$ established in \cite{carqrunkel3} we have proved:

\bpr
$u$ and $n$ are morphisms in $\ZMFbi$. They satisfy the zig-zag identities in $\HMFbi$, as well as $n \circ u = \kappa$.
\epr

\noindent
{\em Step 3:}
we can make the $P_S$ $\bZ_d$-equivariant via
\beq\label{eq:PS-Zd-equiv-structure}
	\tau_{S;a} : P_S \to {}_{a}(P_S)_{-a}
	\quad , \qquad
	\tau_{S;a} = \eta^{\frac{d+1}2 \,a (|S|-1)} \, s_{a,-a} \ ,
\eeq
where $s_{a,-a}$ was given in \eqref{eq:PS-with-twisted-action-iso-to-PS}. 
These maps satisfy ${}_a(\tau_{S;b})_{-a} \circ \tau_{S;a} = \tau_{S;a+b}$, as required (cf.\ Appendix \ref{ceo}).
Note that on $I=P_{\{0\}}$, the above $\bZ_d$-equivariant structure is just $s_{a,-a} : I \to {}_aI_{-a}$, in agreement with the one on the tensor unit of $\Pd^{\bZ_d}$ as prescribed by Lemma \ref{lem:a()b-functor} and Proposition \ref{prop:CG-is-tensor}.

\ble\label{lem:ev-coev-t-Zd-equiv}
The maps $\mathrm{ev}_{P_S}$ and $\mathrm{coev}_{P_S}$ composed with the isomorphism $P_{-S} \simeq (P_S)^+$ from \eqref{eq:PS+P-S-iso} are $\bZ_d$-equivariant.
\ele

\bpf
For $\mathrm{coev}$ we need to check commutativity of
$$
\xymatrix{ I  \ar[rr]^-{\mathrm{coev}_{P_S}} \ar[d]_-{s_{a,-a}} && P_S \otimes (P_S)^+ \ar[r]^-{\sim} & P_S \otimes P_{-S} \ar[d]^-{\tau_{S;a} \otimes \tau_{-S;a}}
\\
{}_aI_{-a} \ar[rr]^-{{}_a(\mathrm{coev}_{P_S})_{-a}} && {}_a(P_S \otimes (P_S)^+)_{-a} \ar[r]^-{\sim} & 
\begin{minipage}{9em} 
${}_a(P_S)_{-a} \otimes {}_a(P_{-S})_{-a}$ \newline
$=\, {}_a(P_S \otimes P_{-S})_{-a}$   
\end{minipage}
}
$$
which follows straightforwardly by composing the various maps. The corresponding diagram for $\mathrm{ev}$ is checked analogously.
\epf

\bco
$u$ and $n$ are $\bZ_d$-equivariant morphisms.
\eco

According to Section \ref{sec:univ},
at this point we proved the existence of the tensor functor $\TL_\kappa \to  \Pd^{\bZ_d}$. To describe its image and to show that it annihilates the non-trivial tensor ideal in $\TL_\kappa$, we need to introduce a graded version of the above construction.

\subsection{Graded matrix factorizations}

There are several variants of graded matrix factorizations, see e.g.\ \cite{khovroz,Hori:2004ja,Wu09,carqrunkel1}.
The following one is convenient for our purposes. 
We take the grading group to be $\bC$, which  is natural from the relation to the R-charge in conformal field theory, but other groups are equally possible. For example, to construct the tensor equivalence in Theorem \ref{thm:main}, the grading group $d^{-1} \bZ$ is sufficient.

\bde
Let $S$ be a $\bC$-graded $\kk$-algebra such that $W \in S$ has degree 2. A {\em $\bC$-graded matrix factorization} of $W$ over $S$ is a matrix factorization $(M,d)$ of $W$ over $S$ such that the $S$ action on $M$ is compatible with the $\bC$-grading and $d$ has $\bC$-degree 1. That is, if $q(s)$ (resp.\ $q(m)$) denotes the $\bC$-degree of a homogeneous element of $S$ (resp.\ $M$), then $q(s.m) = q(s)+q(m)$ and $q(d(m)) = q(m)+1$.
\ede

In analogy with Section \ref{sec:cat-mf} we define 
$\MFgr_{S,W}$, $\ZMFgr_{S,W}$ and $\HMFgr_{S,W}$
to have $\bC$-graded matrix factorizations as objects and only $\bC$-degree zero morphisms. For example, 
\begin{align*}
	\HMFgr_{S,W}(M,N) ~=~ & \big\{ f \in \ZMF_{S,W}(M,N) \,| \text{ $f$ has $\bC$-degree $0$ }\big\} 
	\nonumber \\
	& ~ /~ \big\{  \delta(g)\, \big| \text{ $g : M \to N$ is $S$-linear, $\bZ_2$-odd and of $\bC$-degree $-1$ } \big\} \ .
	\nonumber
\end{align*}
The same definitions apply to matrix bifactorizations, giving categories $\MFgr_{\mathrm{bi};S,W}$, etc. Under tensor products, the $\bC$-degree is additive. 

\medskip

We will again restrict our attention to the case $S = \bC[x]$ and $W = x^d$, so that $q(x) = \frac2d$.

\medskip

As an example, let us describe all $\bC$-gradings on permutation-type matrix bifactorizations. The $\bC$-grading on $\bC[x,y]$ is fixed by choosing the degree of 1. Let thus $\bC[x,y]\{\alpha\}$ be the graded $\bC[x]$-$\bC[x]$-bimodule with $q(1) = \alpha$. The possible $\bC$-gradings on $P_S$ are
$$
P_{S}\{\alpha\} ~:\quad\xymatrix{\bC[x,y]\{\alpha+\frac2d|S| -1\}  \ar@/^10pt/[rrr]^{d_1 = \prod\limits_{j \in S}(x-\eta^j y)} &&& \bC[x,y]\{\alpha\}\ar@/^10pt/[lll]^{d_0 = \prod\limits_{j \in \overline S}( x-\eta^j y)}} \quad .
$$
The unit isomorphism $\lambda_M$ given in \eqref{eq:unit-isos} above becomes a morphism in $\HMFbigr$ precisely if the unit object is $\bC$-graded as
$$
	I = P_{\{0\}}\{0\} \ .
$$
To see this note that $x^m y^n \in I_0 = \bC[x,y]$ will act as a degree $2(m+n)/d$ map on $M$. With this charge assignment for $I$, $\HMFbigr$ is tensor.

Next we work out the grading on $M^+$ for $M$ with $M_0$ and $M_1$ of rank 1. One first convinces oneself that for a homogeneous $p \in \bC[x,y,z]$ we have $\mathrm{deg}(A_M(p)) = \mathrm{deg}(p) - \mathrm{deg}(d_1^M(x,y))+1$, where $\mathrm{deg}$ denotes the polynomial degree. So if $M_0 = \bC[x,y]\{\alpha\}$, for $A_M$ to give a $\bC$-degree 0 map, we need $M^+_0 = \bC[x,y]\{-\alpha+\frac2d(1-\mathrm{deg}(d^M_1))\}$ (cf.\ \cite[Sect.\,2.2.4]{carqrunkel3}). This forces the $\bC$-grading to be
\begin{align*}
M~&:~
\xymatrix{\bC[x,y]\{\alpha+\frac2d\mathrm{deg}(d_1)-1\} \ar@/^10pt/[rr]^{d_1(x,y)} && \bC[x,y]\{\alpha\} 
\ar@/^10pt/[ll]^{d_0(x,y)}}
\nonumber\\
\leadsto\ M^+~&:~
\xymatrix{\bC[x,y]\{-\alpha-1+\frac2d\} \ar@/^10pt/[rr]^{d_1^{M^+} := -d_1(y,x)} &&\bC[x,y]\{-\alpha+\frac2d(1-\mathrm{deg}(d_1))\} \ar@/^10pt/[ll]^{d_0^{M^+} := d_0(y,x)}}\quad .
\end{align*}
One can check that $\mathrm{ev}$ and $\mathrm{coev}$ are indeed degree 0 maps with respect to these gradings. Note that we have $I^+=I$ also as graded matrix bifactorizations.

In the next section we will be interested in the $P_S\{\alpha\}$ with $\alpha = \frac{1-|S|}d$. We abbreviate these as $\hat P_S$. This subset of graded permutation-type matrix bifactorizations 
is closed under taking duals:
$$
	(\hat P_S)^+ \simeq \hat P_{-S}
	\qquad , \quad \text{where} \quad \hat P_S = P_S\big\{\tfrac{1-|S|}d\big\} \ .
$$
An explicit isomorphism is again given by \eqref{eq:PS+P-S-iso}, which is easily checked to have $\bC$-degree 0.

The next two lemmas show that the $\hat P_S$ generate (under direct sums) a semi-simple subcategory of $\HMFbigr$.

\ble
$\ZMFbigr(\hat P_{R}, \hat P_{S})$ is $\bC \id$ if $R=S$ and $0$ else.
\ele

\bpf
Write $\alpha = \frac{1-|R|}d$ and $\beta = \frac{1-|S|}d$, such that $\hat P_R = P_R\{\alpha\}$ and $\hat P_S = P_S\{\beta\}$. 
The morphism space $\ZMFbi(P_R,P_S)$ is given by all $(p,q)$ with $p,q \in \bC[x,y]$ such that $p \cdot d^{P_R}_1 = d^{P_S}_1 \cdot q$. For $(p,q)$ to be also in $\ZMFbigr(P_R\{\alpha\},P_S\{\beta\})$, we need  $p,q$ to be homogeneous and $\alpha = \beta + \frac2d\deg(p)$ and $\alpha + \frac2d |R|-1 = \beta + \frac2d |S|-1 + \frac2d\deg(q)$. This simplifies to $2 \deg(p) = |S|-|R|$ and $2 \deg(q) = |R|-|S|$, which is possible only for $|R|=|S|$, in which case $p,q$ are constants.
Finally, the condition $p \cdot d^{P_R}_1 = d^{P_S}_1 \cdot q$ has non-zero constant solutions only if $R=S$.
\epf

\ble \label{lem:hmfgrPSPR}
$\hat P_\emptyset$ and $\hat P_{\bZ_d}$ are zero objects in $\HMFbigr$.
For $R,S \neq \emptyset,\bZ_d$ we have $\HMFbigr(\hat P_{R}, \hat P_{S})=\ZMFbigr(\hat P_{R}, \hat P_{S})$.
\ele

\bpf
That $\hat P_\emptyset$ and $\hat P_{\bZ_d}$ are zero objects in $\HMFbigr$ follows since one component of the twisted differential is $1$, and hence there is a contracting homotopy for the identity morphism. 

Let now $R,S$ be nonempty proper subsets of $\bZ_d$.
For the second part of the statement one checks that there are no $\bZ_2$-odd morphisms of $\bC$-degree $-1$ from $\hat P_R$ to $\hat P_S$. For example, a $\bC$-degree $-1$ map $\psi_0 : (\hat P_{R})_0 \to (\hat P_{S})_1$ has to satisfy
$$
  \frac{1+|S|}{d}-1+\frac{2\,\mathrm{deg}(\psi_0(x,y))}{d} - \frac{1-|R|}{d} = -1 \ ,
$$
where $\mathrm{deg}(\psi_0)$ is the polynomial degree of $\psi_0(x,y)$. Thus $\mathrm{deg}(\psi_0) =- \frac{|S|+|R|}2$, and $\psi_0$ can be non-zero only if $|R|=|S|=0$. An analogous computation for $\psi_1$ shows $\mathrm{deg}(\psi_1) = \frac{|S|+|R|}2 - d$, and so $\psi_1$ can be non-zero only if $|R|=|S|=d$.
\epf

We now focus on the graded matrix factorizations $\hat P_{a:\lambda}$, i.e.\ the $P_S\{\alpha\}$ with consecutive set $S = \{a,a{+}1,\dots,a{+}\lambda\}$ and $\alpha = - \lambda/d$. We define
$$
	\Pdgr = \big\langle \hat P_{a:\lambda} \,\big|\, a \in \bZ_d, \lambda \in \{0,\dots,d-2\} \big
	\rangle_\oplus
	~\subset~ \HMFbigr \ ,
$$
i.e.\ the full subcategory of $\HMFbigr$ consisting of objects isomorphic, in $\HMFbigr$, to finite direct sums of the $\hat P_{a:\lambda}$.

Then we need to check whether the decomposition of tensor products in Theorem \ref{thm:br-decomp} carries over to the graded case. This could be done by adapting the method used in \cite{brunrogg1}, which works in the stable category of $\bC[x,y]/\langle x^d-y^d \rangle$ modules. We give a related but different proof by providing explicit $\bC$-charge 0 embeddings of the direct summands in the decomposition of $\hat P_{a:1} \otimes \hat P_{b:\lambda}$ and proving that they give an isomorphism via Remark \ref{rem:H(M)}. This is done in Appendix \ref{app:graded-tensor}.

\bth\label{thm:Pdgr-tensor-closed}
The category $\Pdgr$ is semi-simple with simple objects $\hat P_{a:\lambda}$, $a \in \bZ_d$ and $\lambda \in \{0,1,\dots,d-2\}$. It is closed under tensor products and the direct sum decomposition of $\hat P_{m:\lambda} \otimes \hat P_{n:\nu}$ in $\HMFbigr$ is as in Theorem \ref{thm:br-decomp}.
\eth

\subsection[A functor from $\TL_\kappa$ to $\bZ_d$-equivariant objects in $\Pdgr$]{A functor from $\boldsymbol{\TL_\kappa}$ to $\boldsymbol{\bZ_d}$-equivariant objects in $\boldsymbol{\Pdgr}$}

The morphisms $\mu_{a,b}$ in \eqref{eq:mu-morph} have $\bC$-degree 0. The functor $\chi$ in Proposition \ref{prop:chi-functor} therefore also defines a tensor functor
$$
	\chi : \underline{\bZ_d} \longrightarrow \Pdgr \ .
$$
As in Section \ref{sec:Zd_to_Pd} we obtain two tensor functors $A , \Ad_\chi : \underline{\bZ_d} \to \mathrm{Aut}_\otimes(\Pdgr)$. The natural monoidal isomorphism $A \to \Ad_\chi$ established in Lemma \ref{lem:A-iso-Ad} uses only $\bC$-degree 0 morphisms. 

\medskip

Next we follow the three steps in Section \ref{sec:TL-to-PdZd} and verify that they carry over to the $\bC$-graded setting. Consider the self-dual object $\hat T \in \Pdgr$. The duality maps $n$ and $u$ from \eqref{eq:u-n-def} are of $\bC$-degree 0 since $t$, $\mathrm{ev}_T$, $\mathrm{coev}_T$ are. The maps $\tau$ from \eqref{eq:PS-Zd-equiv-structure} are equally of $\bC$-degree 0 and hence equip $\hat T$ with a $\bZ_d$-equivariant structure. The proof of Lemma \ref{lem:ev-coev-t-Zd-equiv} still applies and shows that $u$ and $n$ are $\bZ_d$-equivariant morphisms in $\HMFbigr$.

\medskip

By Section \ref{sec:TLcats} the data $\hat T$, $\tau$, $u$ and $n$ determine a tensor functor
$$
	F : \TL_\kappa \to  (\Pdgr)^{\bZ_d} \ .
$$ 
Here we used that $\hat T \in \Pdgr$ and that by Theorem \ref{thm:Pdgr-tensor-closed}, $\Pdgr$ is a full tensor subcategory of $\HMFbigr$.

\bth\label{thm:main}
There is a tensor equivalence $G : \C(N{=}2,d)_{NS} \to \Pdgr$ such that $G([l,l+2m]) \simeq \hat P_{m:l}$.
\eth

\bpf 
Corollary \ref{ftl} and the tensor product established in Theorem \ref{thm:Pdgr-tensor-closed} show that $F$ is not faithful and induces a fully faithful embedding $\tilde F:\T_\kappa \to  (\Pdgr)^{\bZ_d}$. By Theorem \ref{upn} the embedding $\tilde F$ gives rise to the functor $G: \C(N{=}2,d)_{NS}\to \Pdgr$.
The functor $G$ is fully faithful (it sends simple objects to simple objects) and surjective on (simple) objects.
Thus $G$ is an equivalence.

Recall that the $\bZ_d$-action on $\Pdgr$ is such that $a \in \bZ_d$ gets mapped to ${}_aI \cong P_{\{-a\}}$, and that $\tilde F$ maps $T \in \T_\kappa$ to $\hat P_{\frac{d-1}2:1} \in \Pdgr$. We choose the monoidal embedding $\underline \bZ_d \to \C(N{=}2,d)$ as $a \mapsto [0,-2a]$ (to avoid this minus sign, one can define $\chi$ in Proposition \ref{prop:chi-functor} as $\chi(a) = {}_{-a}I$, resulting in lots of minus signs in other places). The induced tensor functor $G$ obeys $G([1,d]) = \hat P_{\frac{d-1}2:1}$ and $G([0,2a]) = P_{\{a\}}$.
\epf

\bre
Note that one can replace $\eta$ with any other primitive $d$'th root
of unity $\eta^l$ (here $l$ is coprime to $d$).
In particular replacing $\eta$ with $\eta^l$ in \eqref{permtypembf} gives another
matrix bifactorization,
$P_S(\eta^l)$.
It is not hard to see that $P_{\{\frac{d-1}2,\frac{d+1}2\}}(\eta^l)$ is a
self-dual object of dimension $\kappa_l=2 \cos\tfrac{\pi l}{d}$
and defines a fully faithful embedding $\T_{\kappa_l}\to \HMFbi$.
Its image is additively generated by the direct summands in tensor powers of $P_{\{\frac{d-1}2,\frac{d+1}2\}}(\eta^l)$ and can be computed explicitly from Theorem \ref{thm:Pdgr-tensor-closed} with $\eta^l$ in place of $\eta$. This is an instance of the action of a Galois group on categories of matrix factorization, see \cite[Rem.\,2.9]{crcr} for a related discussion.
\ere

\newpage
\chapter{Orbifold equivalence}
\label{ch:orbeq}

\section{Orbifold completion}

In this section we will briefly recall the highlights of \cite{carqrunkel2}, where a proof of all the results we present can be found in detail. This paper presents a description in bicategorical terms of a generalization of the orbifolding procedure\footnote{By orbifolding, we mean the following procedure, which is standard in (quantum) field theory, string theory, algebraic geometry and representation theory. Consider a theory (let us be slightly vague in this regard) together with a symmetry group. Gauging this symmetry amount to restricting to the invariant sectors while simultaneously adding new twisted sectors. In this way, the orbifold theory is constructed from the original one, and it often inherits nice properties from the symmetry group.}, especially for the case of topological field theories and with special attention to the case of 2 dimensions. They develop a general framework which takes a bicategory $\mathcal{B}$ as an input and returns its so-called orbifold completion $\mathcal{B}_{orb}$. This completion supports the construction of new equivalences of 1-morphisms of our interest. Then, they focus on the example of the bicategory of Landau-Ginzburg models and apply there this framework.

Let $\mathcal{B}$ a bicategory whose morphism categories are idempotent complete.

\begin{defn}
The \textit{equivariant completion of} $\mathcal{B}$, denoted $\mathcal{B}_{eq}$, is the bicategory with:
\begin{itemize}
\item Objects: pairs $\left( a,A \right)$ where $a \in \mathrm{Ob}\left(\mathcal{B}\right)$ and $A \in \mathcal{B} \left( a,a \right)$ is a separable Frobenius algebra;
\item 1-morphisms: given two objects $\left( a,A \right)$ and $\left( b,B \right)$, a 1-morphism $X \colon \left( a,A \right) \to \left( b,B \right) \in \mathcal{B} \left( a,b \right)$ is a $B$-$A$-bimodule;
\item 2-morphisms: 2-morphisms in $\mathcal{B}$ that are bimodule maps;
\item Composition: let $X \colon \left( a,A \right) \to \left( b,B \right)$ and $Y \colon \left( b,B \right) \to \left( c,C \right)$ be two 1-morphisms; their composition is given by the tensor product $X \otimes_B Y \colon \left( a,A \right) \to \left( c,C \right)$. Composition of 2-morphisms is that of $\mathcal{B}$. The associator in $\mathcal{B}_{eq}$ is the one induced from $\mathcal{B}$ as well.
\item Unit 1-morphism: for $\left( a,A \right) \in \mathcal{B}_{eq}$ is $A$. The left and right unit action on $X \colon \left( a,A \right) \to \left( b,B \right)$ is given by the left and right action on the corresponding bimodule.

\end{itemize}
\end{defn}

\begin{rem}
Note here that $\mathcal{B}$ fully embeds in $\mathcal{B}_{eq}$. In addition, this bicategory also satisfies that $\left( \mathcal{B}_{eq} \right)_{eq} \cong \mathcal{B}_{eq}$.
\end{rem}

Note here that in general, the tensor product over a given algebra $A$ may not exist, but in the following lemma we provide an existence criterion which suffices for our purposes.

\begin{lem}
Suppose that $A$ is separable Frobenius. Then $X \otimes_A Y$ exists for all modules $X,Y$.
\end{lem}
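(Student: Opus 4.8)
The plan is to realize $X\otimes_A Y$ (the coequalizer $X\circ_A Y$ of $l=\iota_X\ast\rho_{AY}$ and $r=\rho_{XA}\ast\iota_Y$) as the splitting of an explicit idempotent $2$-endomorphism of $X\circ Y$ built out of the separable Frobenius structure. Since the morphism categories of $\mathcal{B}$ are idempotent complete, every idempotent $2$-endomorphism of $X\circ Y$ splits, so the whole content of the lemma is to write down the correct idempotent and to check that its splitting enjoys the universal property of the coequalizer.

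Write $(A,\mu,\eta,\Delta,\epsilon)$ for the separable Frobenius structure and let $\omega:=\Delta\odot\eta:1_a\Rightarrow A\circ A$ be the associated separability element. I would define the candidate projector
$$P:=(\rho_{XA}\ast\rho_{AY})\odot(\iota_X\ast\omega\ast\iota_Y)\colon X\circ Y\Longrightarrow X\circ Y,$$
feeding the first copy of $A$ produced by $\omega$ into the right action on $X$ and the second into the left action on $Y$ (I suppress the associators and unit isomorphisms, which I address below). The first key step is to prove $P\odot P=P$. Here I would use the two defining properties of $\omega$: its $A$-$A$-bilinearity $(\mu\ast\iota_A)\odot(\iota_A\ast\omega)=(\iota_A\ast\mu)\odot(\omega\ast\iota_A)$, which is exactly the statement that $\Delta$ is a bimodule map and hence a consequence of the Frobenius relation, and its normalization $\mu\odot\omega=\eta$, which is separability. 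Bilinearity lets one slide the $A$ emitted by the inner $\omega$ through the outer one; the module axioms then merge the two adjacent $A$-strands into a single $\mu$, and normalization together with the unit axiom collapses the result back to a single $P$. This is the computational heart of the argument and the step in which all four algebra axioms enter.

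Idempotent completeness then yields an object $X\otimes_A Y$ together with $\vartheta\colon X\circ Y\Rightarrow X\otimes_A Y$ and $\jmath\colon X\otimes_A Y\Rightarrow X\circ Y$ satisfying $\vartheta\odot\jmath=\iota_{X\otimes_A Y}$ and $\jmath\odot\vartheta=P$; in particular $\vartheta\odot P=\vartheta$. I would next verify the two halves of the universal property. That $\vartheta$ coequalizes $l$ and $r$ follows from $\vartheta=\vartheta\odot P$ together with the identity $P\odot l=P\odot r$, which itself reduces to the Frobenius relation plus associativity of the two actions (the inserted $\Delta$ absorbs the extra $A$ coming from either side). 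For the universal property proper, the crucial observation is that any $\phi\colon X\circ Y\Rightarrow Z$ with $\phi\odot l=\phi\odot r$ automatically satisfies $\phi\odot P=\phi$: one uses $\phi\odot l=\phi\odot r$ to transport the $A$ emitted by $\omega$ from the $X$-action to the $Y$-action, whereupon $\mu\odot\Delta=\iota_A$ and the (co)unit axioms contract $\omega$ to the identity. Granting this, $\zeta:=\phi\odot\jmath$ gives the required factorization, since $\zeta\odot\vartheta=\phi\odot\jmath\odot\vartheta=\phi\odot P=\phi$, and it is the unique such morphism because $\vartheta$ is split epic (if $\zeta'\odot\vartheta=\phi$ then $\zeta'=\zeta'\odot\vartheta\odot\jmath=\phi\odot\jmath=\zeta$).

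The main obstacle I anticipate is organizational rather than conceptual: propagating the coherence data of the bicategory, the associators $\alpha$ and the unit isomorphisms $\lambda,\rho$, through the idempotency computation and through the identity $\phi\odot P=\phi$ without sign or bracketing errors. I would neutralize this at the outset by invoking the coherence theorem to replace $\mathcal{B}$ by a biequivalent strict $2$-category, so that all the manipulations above become the standard string-diagram slides for a separable Frobenius algebra, and then transport the resulting object $X\otimes_A Y$ and its structure maps back along the biequivalence.
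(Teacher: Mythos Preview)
Your approach is correct and is exactly the standard argument. Note, however, that the paper does not actually supply its own proof of this lemma: it is stated as part of the review of the orbifold completion framework from \cite{carqrunkel2}, and the proof you have written is essentially the one found there. The idempotent you construct, its splitting via idempotent completeness of the morphism categories, and the verification of the coequalizer universal property through the separable Frobenius axioms is the canonical route; your handling of coherence via strictification is also the usual move. There is nothing to compare against in the thesis itself beyond the bare statement.
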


Before we continue, we demand in addition that $\mathcal{B}$ has adjoints satisfying $^\dagger X=X^\dagger$ for all 1-morphisms $X$, and that $\mathcal{B}$ is pivotal. With a bit of work, it is possible to prove the following theorem in $\mathcal{B}_{eq}$.
\begin{thm}
Let $X \in \mathcal{B}\left( a,b \right)$ have invertible right quantum dimension. Then $A:=X^\dagger \otimes X$ is a symmetric separable Frobenius algebra in $\mathcal{B} \left( a,a \right)$ and $X \colon \left( a,A \right) \leftrightarrows \left( b,I_b \right)\colon X^\dagger$ is an adjoint equivalence in $\mathcal{B}_{eq}$. 
\label{adjointequivalence}
\end{thm}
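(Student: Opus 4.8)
The plan is to build every structure map from the adjunction $2$-morphisms of $X$ and reduce each axiom to the snake identities \eqref{bicatzorro} together with pivotality. Write $X\colon a\to b$, so that $X^\dagger={}^\dagger X\colon b\to a$ and we have $\mathrm{ev}_X\colon X^\dagger\otimes X\Rightarrow 1_a$, $\mathrm{coev}_X\colon 1_b\Rightarrow X\otimes X^\dagger$, $\widetilde{\mathrm{ev}}_X\colon X\otimes X^\dagger\Rightarrow 1_b$ and $\widetilde{\mathrm{coev}}_X\colon 1_a\Rightarrow X^\dagger\otimes X$. On $A=X^\dagger\otimes X$ I would take the unit $\eta=\widetilde{\mathrm{coev}}_X$ and the multiplication $\mu=\mathrm{id}_{X^\dagger}\ast\widetilde{\mathrm{ev}}_X\ast\mathrm{id}_X$, contracting the inner $X\otimes X^\dagger$; dually the comultiplication is $\mathrm{id}_{X^\dagger}\ast\mathrm{coev}_X\ast\mathrm{id}_X$ and the counit is $\mathrm{ev}_X$. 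The associativity and unit axioms for $(\mu,\eta)$ are then literally the two snake identities for the adjoint pair $(\widetilde{\mathrm{ev}}_X,\widetilde{\mathrm{coev}}_X)$ (plus the interchange law for associativity), and coassociativity and counitality are the same identities read for $(\mathrm{ev}_X,\mathrm{coev}_X)$; the Frobenius relation follows because both $(\mathrm{id}_A\ast\mu)\odot(\Delta\ast\mathrm{id}_A)$ and $(\mu\ast\mathrm{id}_A)\odot(\mathrm{id}_A\ast\Delta)$ reduce, by interchange, to the same $Z$-shaped composite of $\mathrm{ev}$'s and $\mathrm{coev}$'s.

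The one place where the hypothesis enters is separability. Computing $\mu\odot\Delta$ collapses the inserted $\mathrm{coev}_X$ against $\widetilde{\mathrm{ev}}_X$, producing the scalar $2$-endomorphism $\widetilde{\mathrm{ev}}_X\odot\mathrm{coev}_X=\mathrm{qdim}_r(X)\in\mathrm{End}(1_b)$ whiskered into the middle of $A$. Since $\mathrm{qdim}_r(X)$ is invertible by assumption, I would rescale the comultiplication by $\mathrm{qdim}_r(X)^{-1}$ and compensate in the counit by the factor $\mathrm{qdim}_r(X)$ (i.e.\ replace $\mathrm{ev}_X$ by $\mathrm{ev}_X\odot(\mathrm{id}_{X^\dagger}\ast\mathrm{qdim}_r(X)\ast\mathrm{id}_X)$). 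With this normalization $\mu\odot\Delta=\mathrm{id}_A$ while the counit axiom still holds, the two factors of $\mathrm{qdim}_r(X)^{\pm1}$ cancelling against the bare snake. Symmetry of $A$, i.e.\ the equality $\Phi_1=\Phi_2$ in the definition of a symmetric algebra, I would deduce from pivotality: under the identification $A^\dagger\cong A$ the two candidate maps are the left and right Frobenius pairings built from $\mathrm{ev}_X$, and their coincidence is exactly the cyclicity encoded in the two pivotality diagrams (Proposition \ref{pivotalityLG} in the Landau-Ginzburg case).

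For the second statement I would first record the bimodule structures: $X$ carries a right $A$-action $\widetilde{\mathrm{ev}}_X\ast\mathrm{id}_X\colon X\otimes A\to X$ and $X^\dagger$ a left $A$-action $\mathrm{id}_{X^\dagger}\ast\widetilde{\mathrm{ev}}_X$, so that $X\colon(a,A)\to(b,I_b)$ and $X^\dagger\colon(b,I_b)\to(a,A)$ are $1$-morphisms in $\mathcal{B}_{eq}$. One composite is immediate: since $I_b=1_b$ is the tensor unit, $X^\dagger\otimes_{I_b}X=X^\dagger\otimes X=A$ is literally the unit $1$-morphism of $(a,A)$, so this leg of the equivalence is the identity. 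The whole content is therefore the isomorphism $X\otimes_A X^\dagger\cong I_b$. By the preceding lemma the relative tensor product exists because $A$ is separable Frobenius, and it is the splitting of the idempotent on $X\otimes X^\dagger$ built from the $A$-actions and $\Delta$. I would define $\overline{\widetilde{\mathrm{ev}}}_X\colon X\otimes_A X^\dagger\to I_b$ as the descent of $\widetilde{\mathrm{ev}}_X$ (which is $A$-balanced by associativity of the action) and, in the other direction, the descent of $\mathrm{qdim}_r(X)^{-1}\cdot\mathrm{coev}_X$. One composite is $\widetilde{\mathrm{ev}}_X\odot\mathrm{coev}_X\cdot\mathrm{qdim}_r(X)^{-1}=\mathrm{id}_{I_b}$, and the other is the identity on $X\otimes_A X^\dagger$ precisely because separability identifies the splitting idempotent with the image of $\Delta\odot\mu$.

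The hard part will be this last isomorphism and its packaging: showing that the descended $\widetilde{\mathrm{ev}}_X$ and (rescaled) $\mathrm{coev}_X$ are genuinely mutually inverse as maps between the \emph{coequalizers}, that the resulting iso is one of $I_b$-$I_b$-bimodules, and that together with the identity $X^\dagger\otimes_{I_b}X=A$ it satisfies the two adjunction zig-zags of \eqref{bicatzorro} in $\mathcal{B}_{eq}$ --- which is what upgrades the adjunction $X\dashv X^\dagger$ to an \emph{adjoint} equivalence. All of this is bookkeeping with the coequalizer universal property and the separability idempotent, but it is the step most prone to sign and coherence errors; the conceptual input, invertibility of $\mathrm{qdim}_r(X)$, has already been used to make one of the two composites the identity.
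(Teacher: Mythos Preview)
The paper does not actually prove this theorem: it is stated as a result quoted from \cite{carqrunkel2} (``With a bit of work, it is possible to prove the following theorem in $\mathcal{B}_{eq}$''), and no argument is given in the thesis itself. So there is no proof in the paper to compare against.

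That said, your sketch is the standard argument one finds in \cite{carqrunkel2}. The algebra/coalgebra structure you write down, the reduction of (co)associativity and the Frobenius relation to the snake identities and interchange, the role of $\mathrm{qdim}_r(X)^{-1}$ as the normalization factor making $\mu\odot\Delta=\mathrm{id}_A$, and the identification of $X\otimes_A X^\dagger\cong I_b$ via the descended $\widetilde{\mathrm{ev}}_X$ and the rescaled $\mathrm{coev}_X$ are all exactly how the cited reference proceeds. Your identification $\widetilde{\mathrm{ev}}_X\odot\mathrm{coev}_X=\mathrm{qdim}_r(X)\in\mathrm{End}(1_b)$ matches the paper's convention that for $X\colon a\to b$ the right quantum dimension lives in $\mathrm{End}(1_b)$, so the hypothesis is used precisely where you say. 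The symmetry step is also correctly attributed to pivotality. Your self-assessment of where the bookkeeping lies (the coequalizer argument for the second composite and the zig-zag for the adjoint equivalence) is accurate; nothing conceptual is missing.
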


Note here that $A$ not only needs to be separable but also symmetric. Then, introduce the following related bicategory.
\begin{defn}
The \textit{orbifold completion $\mathcal{B}_{orb}$ of $\mathcal{B}$} is the full subbicategory of $\mathcal{B}_{eq}$ whose objects are pairs $\left( a,A \right)$ where $a \in \mathrm{Ob}\left(\mathcal{B}\right)$ and $A$ a separable symmetric Frobenius algebra. We refer to the objects of this subbicategory as \textit{generalized orbifolds}.  
\end{defn}

Notice here that $\mathcal{B} \subset \mathcal{B}_{orb} \subset \mathcal{B}_{eq}$. The orbifold completion $\mathcal{B}_{orb}$ is pivotal as well, and the same results for $\mathcal{B}_{eq}$ hold for this case: one can show that $\left( \mathcal{B}_{orb} \right)_{orb} \cong \mathcal{B}_{orb}$, and in addition, an analogous theorem of \ref{adjointequivalence} holds for this subbicategory. It is precisely this result what will be most useful for us. To finish this review, let us focus our attention on the special example of $\mathcal{LG}$. In this bicategory, one can state the following proposition as a consequence of Theorem \ref{adjointequivalence}. 

\begin{prop}
Let $X \colon \left( R,W \right) \to \left( S,V \right)$ (with $R=\mathbbm{k} \left[ x_1,\ldots,x_n \right]$, $S=\mathbbm{k} \left[ z_1,\ldots,z_m \right]$) with invertible right quantum dimension in $\mathcal{LG}$. Then $m=n$ mod 2, $^\dagger X \cong X^\dagger$ and $$X^\dagger \otimes \left( - \right) \colon \mathrm{hmf}_{S,V}^\omega \cong \mathrm{mod} \left( X^\dagger \otimes X \right) \colon X \otimes \left( - \right)$$
where by $\mathrm{mod} \left( X^\dagger \otimes X \right)$ we mean the category of $X^\dagger \otimes X$-modules.
\label{adjointequivalenceformfs}
\end{prop}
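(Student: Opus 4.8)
The plan is to recognise Proposition \ref{adjointequivalenceformfs} as the specialisation to $\mathcal{LG}$ of the general orbifold-completion Theorem \ref{adjointequivalence}, so that the substance of the statement follows once two facts particular to $\mathcal{LG}$ are established, namely the parity condition $m \equiv n \bmod 2$ and the isomorphism $^\dagger X \cong X^\dagger$, and once the standing hypotheses of the orbifold framework are verified for $\mathcal{LG}$. I would organise the argument in three stages: first prove $m \equiv n \bmod 2$; then deduce $^\dagger X \cong X^\dagger$ and check that $\mathcal{LG}$ is a pivotal bicategory with adjoints and idempotent-complete hom-categories, so that Theorem \ref{adjointequivalence} applies; and finally translate the resulting adjoint equivalence in $\mathcal{LG}_{eq}$ into the concrete equivalence of module categories implemented by $X^\dagger \otimes (-)$ and $X \otimes (-)$.

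For the parity statement I would invoke the explicit residue formula for the right quantum dimension in Proposition \ref{qdims}. Its numerator is the supertrace of a product of $m+n$ operators of the form $\partial_\bullet d^X$, each of which is $\mathbb{Z}_2$-odd since $d^X$ is. Hence the product is $\mathbb{Z}_2$-even precisely when $m+n$ is even, and the supertrace of a $\mathbb{Z}_2$-odd endomorphism vanishes. Thus if $m+n$ were odd then $\mathrm{str}(\cdots)$, and with it $\mathrm{qdim}_r(X)$, would vanish, contradicting invertibility; this forces $m \equiv n \bmod 2$.

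Next I would feed $m \equiv n \bmod 2$ into Theorem \ref{cmtheorem}, which gives $X^\dagger = R[n]\otimes_R X^\vee$ and $^\dagger X = X^\vee \otimes_S S[m]$. Since the shift functor $[k]$ depends only on $k \bmod 2$ (an even shift restores the original $\mathbb{Z}_2$-grading), matching parity yields $[n]\cong[m]$ and hence $^\dagger X \cong X^\dagger$. Pivotality of $\mathcal{LG}$ is Proposition \ref{pivotalityLG}, the existence of adjoints is Theorem \ref{cmtheorem}, and idempotent-completeness of the hom-categories holds by construction, the $1$-morphism categories being the $\hmf^\omega_{\mathrm{bi}}$. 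The delicate point is that the orbifold framework demands $^\dagger X \cong X^\dagger$ for all $1$-morphisms, whereas in $\mathcal{LG}$ this holds only between objects whose variable counts agree mod $2$; I would resolve this by passing to the full sub-bicategory of $\mathcal{LG}$ on objects of a fixed parity, which contains both $(R,W)$ and $(S,V)$ by the previous stage, and on which the framework applies verbatim. Theorem \ref{adjointequivalence} then yields that $A := X^\dagger \otimes X$ is a symmetric separable Frobenius algebra in $\mathcal{LG}\big((R,W),(R,W)\big)$ and that $X : \big((R,W),A\big) \leftrightarrows \big((S,V),I_{(S,V)}\big) : X^\dagger$ is an adjoint equivalence in $\mathcal{LG}_{eq}$.

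Finally I would unwind this adjoint equivalence. Separability of $A$ guarantees, via the existence lemma for $\otimes_A$ recorded in the orbifold section, that the relative tensor products below are defined. The adjoint equivalence encodes the isomorphisms $X^\dagger \otimes X \cong A$ (the unit of $\big((R,W),A\big)$) and $X \otimes_A X^\dagger \cong I_{(S,V)}$ (the unit of $\big((S,V),I_{(S,V)}\big)$). Writing $G = X^\dagger \otimes (-)$ and $G' = X \otimes_A (-)$, one computes $G'G(Y) = (X\otimes_A X^\dagger)\otimes Y \cong Y$ and $GG'(M) = (X^\dagger \otimes X)\otimes_A M = A \otimes_A M \cong M$, so $G$ and $G'$ are mutually inverse equivalences between $\hmf^\omega_{S,V}$ and $\mathrm{mod}(X^\dagger \otimes X)$. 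I expect the main obstacle to be exactly this last bookkeeping: keeping straight which one-sided module structure $X^\dagger \otimes Y$ carries and over which algebra each tensor product is taken, so that the innocent-looking functors $X^\dagger \otimes (-)$ and $X \otimes (-)$ are genuinely the restrictions of pre/post-composition with the equivalence $1$-morphisms in $\mathcal{LG}_{eq}$. Everything else is a direct application of the cited results.
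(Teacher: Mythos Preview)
Your proposal is correct and follows exactly the route the paper indicates: the proposition is stated there simply ``as a consequence of Theorem \ref{adjointequivalence}'' with all details deferred to \cite{carqrunkel2}, and you have supplied precisely those details --- the parity argument via the supertrace in Proposition \ref{qdims} (which the paper uses verbatim elsewhere, in the proof of Proposition \ref{prop:necessary}), the identification $^\dagger X \cong X^\dagger$ from the explicit adjoints in Theorem \ref{cmtheorem}, and the unwinding of the adjoint equivalence in $\mathcal{LG}_{eq}$ into the module-category equivalence. Your care about restricting to a fixed-parity sub-bicategory is a genuine subtlety that the paper's terse treatment glosses over.
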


There is some kind of polynomials which we will pay careful attention to: that of simple singularities. They have an ADE classification (\cite{arnold}), and as we have mentioned in \ref{sec:LGCFTcorrespondence}, it was conjectured in the physics literature (\cite{martinec,vafawarner, howewest1,howewest2,howewest3}) that Landau-Ginzburg models with these potentials correspond to $N=2$ minimal conformal field theories. These rational CFTs are known to be (generalized) orbifolds of each other (\cite{gray,ffrs}), hence it makes sense to conjecture some equivalences of categories relating these potentials. This was a starting point which inspired the next section.

\section{Orbifold equivalent potentials}
\label{sec:introductionpart1}

This section contains the paper \cite{crcr}, where our purpose was two-fold. Firstly, we defined an equivalence relation on the set $\mathcal{P}_{\mathbbm{k}}$ of all potentials, where the number of variables in the polynomial ring is allowed to vary, but where the field $\mathbbm{k}$ is kept fixed. Secondly, in the case $\mathbbm{k}=\mathbbm{C}$ we listed all equivalence classes in the restricted set of simple singularities, leading to new equivalences of categories predicted by the LG/CFT correspondence. Let us first give a flavour of the highlights of this joint work with Nils Carqueville and Ingo Runkel.

The equivalence relation between potentials is defined by the existence of a matrix factorization with certain properties. Let us write $x$ for the sequence of variables $x_1,\ldots,x_m$.

\begin{defthm}\label{defthm:MF-orbequiv}
We say that two homogeneous potentials $V \left( x_1,\ldots,x_m \right)$, $W\left( y_1,\ldots,y_n \right) \in \mathcal{P}_{\mathbbm{k}}$ are \textsl{orbifold equivalent}, $V \sim W$, if there exists a finite-rank graded matrix factorization of $W-V$ for which the left and right quantum dimension are non-zero. This defines an equivalence relation on $\mathcal{P}_{\mathbbm{k}}$.
\end{defthm}

We shall prove the above statement, as well as Propositions~\ref{prop:sum+Kn"orrer} and~\ref{prop:MF-summand}, in Subsection~\ref{sec:proofs}. The name `orbifold equivalence' has its roots in the study of orbifolds via defects in two-dimensional quantum field theories \cite{ffrs, dkr, carqrunkel2}.

Recall here that for ungraded matrix factorizations the quantum dimensions can be polynomials, but in the graded case the quantum dimensions are just numbers: 

\begin{lem}\label{lem:dimink}
Quantum dimensions of graded matrix factorizations take value in~$\mathbbm{k}$. 
\end{lem}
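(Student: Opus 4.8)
The plan is to show that each quantum dimension is, by construction, a $\bC$-degree-zero endomorphism of the graded tensor unit, and then to observe that the only such endomorphisms are scalars.

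First I would recall that the left and right quantum dimensions of $X$ are the images of the identity $2$-morphism on the unit $1$-morphism under the operators $\mathcal{D}_l(X)$ and $\mathcal{D}_r(X)$ introduced in Section~\ref{sec:catth}. These operators are assembled purely from the adjunction data $\mathrm{ev}_X,\widetilde{\mathrm{coev}}_X$ (respectively $\widetilde{\mathrm{ev}}_X,\mathrm{coev}_X$), from the unit isomorphisms $\lambda,\rho$ and their inverses, and from identity morphisms. In the graded bicategory $\HMFbigr$ each of these structure morphisms has $\bC$-degree $0$: this was verified for $\lambda,\rho$ and for the rank-one $\mathrm{ev},\mathrm{coev}$ in Section~\ref{sec:cat-mf}, and the same degree bookkeeping applied to the explicit Atiyah-class expressions of Theorem~\ref{cmtheorem} yields it for the finite-rank, multivariable adjunction maps. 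Hence $\mathcal{D}_l(X)$ and $\mathcal{D}_r(X)$ preserve the $\bC$-grading on morphism spaces, and since they are evaluated on the degree-zero identity of a unit, their outputs $\mathrm{qdim}_l(X)$ and $\mathrm{qdim}_r(X)$ are degree-zero endomorphisms of the respective graded units $I_V$ and $I_W$.

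It then remains to identify these graded endomorphism rings. Ungraded, $\End(I_V)\cong\mathrm{Jac}(V)$, and multiplication by a homogeneous $p\in\mathrm{Jac}(V)$ acts as a morphism whose $\bC$-degree equals the degree $q(p)$ of $p$; so the degree-zero endomorphisms are exactly $(\mathrm{Jac}\,V)_0$. Since $V$ is a homogeneous potential whose variables all carry degree strictly between $0$ and $2$, the Jacobi ideal is generated in strictly positive degree, so $\mathrm{Jac}(V)$ is non-negatively graded with $(\mathrm{Jac}\,V)_0=\mathbbm{k}$. The same applies to $W$, and therefore $\mathrm{qdim}_l(X),\,\mathrm{qdim}_r(X)\in\mathbbm{k}$.

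The only genuine work is the claim that the graded adjunction maps carry $\bC$-degree $0$ in full generality, which is the step I would write out carefully from the residue formulas of Theorem~\ref{cmtheorem}; this is the main obstacle. As a consistency check one can instead argue directly from Proposition~\ref{qdims}: the supertrace in the numerator is $\bC$-homogeneous and the Grothendieck residue of a homogeneous integrand is again homogeneous, so a short degree count shows $\mathrm{qdim}_l(X)$ to be a homogeneous element of $\mathrm{Jac}(V)$ of degree $c_V-c_W$ (and symmetrically for the right dimension). Combined with the degree-zero statement above, this not only reproduces the conclusion when $c_V=c_W$ but also forces the quantum dimension to vanish whenever $c_V\neq c_W$, so that a nonzero quantum dimension, as demanded in Definition/Theorem~\ref{defthm:MF-orbequiv}, compels the two central charges to coincide.
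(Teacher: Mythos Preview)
Your argument is correct and follows the same route as the paper: both show that the adjunction maps are $\bC$-degree zero, conclude that the quantum dimensions are degree-zero endomorphisms of the tensor unit, and identify these with~$\mathbbm{k}$ (the paper phrases this as ``variables have positive degrees and every closed endomorphism of the unit factorization is homotopy equivalent to a polynomial''); both then mention the direct degree count in the residue formula of Proposition~\ref{qdims} as an alternative. Your additional remark that the degree count yields a homogeneous element of degree $c_V-c_W$, forcing $c_V=c_W$ for nonzero quantum dimension, is exactly the content of Proposition~\ref{prop:necessary}(ii), which the paper defers to \cite{carqrunkel2}.
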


\begin{proof}
By construction the adjunction maps of \cite{carqmurfet} have degree zero for any graded matrix factorization~$X$. Hence $\mathrm{qdim}_l(X)$ and $\mathrm{qdim}_r(X)$ must be in~$\mathbbm{k}$ since the variables have positive degrees and every closed endomorphism of the unit factorization is homotopy equivalent to a polynomial. Alternatively, one may simply count degrees in the explicit formulas in~\ref{qdims}. 
\end{proof}

As a consequence of this lemma, in the graded case the statements ``$X$ has non-zero quantum dimensions'' and ``$X$ has invertible quantum dimensions'' are equivalent.

Two basic properties of orbifold equivalences are:

\begin{prop}\label{prop:sum+Kn"orrer}
Let $V(x),W(y),U(z) \in \mathcal{P}_{\mathbbm{k}}$.
\begin{enumerate}
\item (\textsl{Compatibility with external sums}) If $V \sim W$, then $V+U \sim W+U$. 
\item (\textsl{Kn\"orrer periodicity}) $W \sim W + u^2 + v^2$.
\end{enumerate}
\end{prop}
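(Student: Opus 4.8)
The plan is to prove both parts by the same device: exhibit an explicit graded matrix factorization of the relevant potential difference as an \emph{external} (over $\mathbbm{k}$) tensor product of a known factorization with invertible quantum dimension and a unit factorization, and then invoke multiplicativity of quantum dimensions under this external product. The one computational input I would isolate as a lemma is: if $X \in \hmf_{\mathrm{bi};(S_1,W_1),(S_1',V_1)}$ and $Y \in \hmf_{\mathrm{bi};(S_2,W_2),(S_2',V_2)}$ are finite-rank graded matrix factorizations in disjoint sets of variables, then their external tensor product $X \boxtimes Y$ (underlying module $X \otimes_{\mathbbm{k}} Y$, differential $d^X \otimes \id + \id \otimes d^Y$ with Koszul signs) is a graded matrix factorization of $(W_1 + W_2) - (V_1 + V_2)$ satisfying $\mathrm{qdim}_{l/r}(X \boxtimes Y) = \pm\,\mathrm{qdim}_{l/r}(X)\cdot\mathrm{qdim}_{l/r}(Y)$. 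This follows from the residue formula of Proposition~\ref{qdims}: each partial derivative $\partial_\bullet d^{X \boxtimes Y}$ acts on a single tensor factor, so the product inside the supertrace factorizes, $\mathrm{str}$ is multiplicative on graded tensor products, and the Grothendieck residue over the disjoint blocks of target (resp.\ source) variables splits as a product of residues. Since I only ever need non-vanishing, the undetermined overall sign is harmless.

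For part~(1), given $V \sim W$ I would pick a finite-rank graded matrix factorization $X$ of $W - V$ with $\mathrm{qdim}_{l}(X),\,\mathrm{qdim}_r(X) \neq 0$ and set $X' := X \boxtimes I_U$, where $I_U$ is the unit $1$-morphism on $(\mathbbm{k}[z],U)$; this is a graded matrix factorization of $(W + U) - (V + U)$. Since the unit factorization has $\mathrm{qdim}_{l}(I_U) = \mathrm{qdim}_r(I_U) = 1$, the lemma gives $\mathrm{qdim}_{l/r}(X') = \pm\,\mathrm{qdim}_{l/r}(X) \neq 0$. Finiteness of $\mathrm{Jac}(V{+}U) \cong \mathrm{Jac}(V) \otimes_{\mathbbm{k}} \mathrm{Jac}(U)$ and homogeneity of $V{+}U$ (combine the two weight systems) confirm $V{+}U,\,W{+}U \in \mathcal{P}_{\mathbbm{k}}$, so $X'$ witnesses $V{+}U \sim W{+}U$.

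For part~(2), I would take the rank-one Koszul factorization $Q$ of $u^2 + v^2 = (u + iv)(u - iv)$ over $\mathbbm{k}[u,v]$ (with $i^2 = -1 \in \mathbbm{k}$), viewed as a $1$-morphism $(\mathbbm{k},0) \to (\mathbbm{k}[u,v],\,u^2{+}v^2)$, and form $I_W \boxtimes Q$, a graded matrix factorization of $(W + u^2 + v^2) - W$. The only thing to verify is that both quantum dimensions of $Q$ are non-zero. With $d^Q = \left(\begin{smallmatrix} 0 & u-iv \\ u+iv & 0\end{smallmatrix}\right)$ one gets $\mathrm{str}(\partial_u d^Q\,\partial_v d^Q) = 2i$, whence Proposition~\ref{qdims} yields $\mathrm{qdim}_{l}(Q) = \mathrm{Res}\big[\tfrac{2i\,du\,dv}{2u,\,2v}\big] = \tfrac{i}{2} \neq 0$, and $\mathrm{qdim}_r(Q)$ is likewise a non-zero multiple of $i$; by Lemma~\ref{lem:dimink} both lie in $\mathbbm{k}$. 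Multiplicativity against $\mathrm{qdim}_{l/r}(I_W) = 1$ then gives non-zero quantum dimensions for $I_W \boxtimes Q$, establishing $W \sim W + u^2 + v^2$.

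The main obstacle is the multiplicativity lemma: one must check carefully that the Koszul signs in $d^{X \boxtimes Y}$ and the reordering of the $\partial_\bullet d$ factors inside the supertrace contribute only an overall sign, and that the residue genuinely factorizes across the two disjoint variable blocks. A secondary point is the degenerate case of Proposition~\ref{qdims} in which one side has no variables (as for $Q$, whose source is $(\mathbbm{k},0)$): there the empty Grothendieck residue must be read as evaluation, i.e.\ reduction modulo the maximal ideal, so that $\mathrm{qdim}_r(Q)$ is obtained directly from the constant supertrace. Everything else — finiteness of the Jacobi rings of sums, homogeneity of sums, and the fact that external products of graded factorizations are graded — is routine.
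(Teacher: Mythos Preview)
Your argument is correct and matches the paper's approach: the paper's proof of part~(i) invokes exactly the multiplicativity (up to sign) of quantum dimensions under the external tensor product $\otimes_{\mathbbm{k}}$ that you spell out as your lemma, and for part~(ii) the paper simply cites \cite[Sect.~7.2]{carqrunkel2}, whose content is essentially your $I_W \boxtimes Q$ construction. Your explicit treatment of part~(ii) does implicitly use $i\in\mathbbm{k}$ to factor $u^2+v^2$, but since the graded setting of Definition/Theorem~\ref{defthm:MF-orbequiv} is effectively over $\mathbbm{C}$ this is not a gap.
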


The following proposition gives some simple necessary conditions for any two potentials to be orbifold equivalent.

\begin{prop}\label{prop:necessary}
Suppose $V,W \in \mathcal{P}_{\mathbbm{k}}$ are orbifold equivalent. Then
\begin{enumerate}
\item $m - n$ is even, where $V \in \mathbbm{k}[x_1,\dots,x_m]$ and $W \in \mathbbm{k}[y_1,\dots,y_n]$.
\item $c_V = c_W$.
\end{enumerate}
\end{prop}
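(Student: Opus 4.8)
The plan is to read both statements directly off the explicit residue formulas for the quantum dimensions in Proposition~\ref{qdims}. By Definition/Theorem~\ref{defthm:MF-orbequiv} an orbifold equivalence $V\sim W$ is witnessed by a finite-rank \emph{graded} matrix factorization $X$ of $W-V$ over $\mathbbm{k}[x_1,\dots,x_m,y_1,\dots,y_n]$ with $\mathrm{qdim}_l(X)\neq 0$ and $\mathrm{qdim}_r(X)\neq 0$; by Lemma~\ref{lem:dimink} these are elements of $\mathbbm{k}$, i.e.\ homogeneous of $\mathbb{C}$-degree $0$. I would extract part~(1) from a $\mathbb{Z}_2$-parity analysis of the supertrace and part~(2) from a $\mathbb{C}$-degree count, the degree-$0$ constraint being the engine in both cases.

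For part~(1) I argue by parity. The twisted differential $d^X$ is $\mathbb{Z}_2$-odd, hence so is each $\partial_{x_i}d^X$ and each $\partial_{y_j}d^X$. The operator inside the supertrace in the formula for $\mathrm{qdim}_l(X)$ is the product $\partial_{x_1}d^X\cdots\partial_{x_m}d^X\,\partial_{y_1}d^X\cdots\partial_{y_n}d^X$ of $m+n$ odd endomorphisms, which is $\mathbb{Z}_2$-even exactly when $m+n$ is even. Since the supertrace of a $\mathbb{Z}_2$-odd endomorphism vanishes, $\mathrm{qdim}_l(X)\neq 0$ forces $m+n$ to be even, equivalently $m-n\in 2\mathbb{Z}$.

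For part~(2) I count $\mathbb{C}$-degrees. Normalising $\deg(W-V)=2$, the graded differential $d^X$ has degree $1$, so $\partial_{x_i}d^X$ and $\partial_{y_j}d^X$ have degrees $1-|x_i|$ and $1-|y_j|$; hence the supertrace $g:=\mathrm{str}(\partial_{x_1}d^X\cdots\partial_{y_n}d^X)$ is homogeneous of degree $\sum_i(1-|x_i|)+\sum_j(1-|y_j|)=c_V+c_W$. On the other hand, the Grothendieck residue $\mathrm{Res}\big[\,g\,d\mathbf y/(\partial_{y_1}W,\dots,\partial_{y_n}W)\,\big]$ of a homogeneous $g$ is non-zero only when $\deg(g)$ equals the socle degree of the Milnor algebra $\mathbbm{k}[\mathbf y]/(\partial_{y_1}W,\dots,\partial_{y_n}W)$, which for a homogeneous potential of degree $2$ is $\sum_j\big((2-|y_j|)-|y_j|\big)=\sum_j(2-2|y_j|)=2c_W$ (as one checks on the model case $W=y^d$, where the residue singles out $y^{d-2}$). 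Since $\mathrm{qdim}_l(X)\neq 0$, the two degrees must coincide, $c_V+c_W=2c_W$, giving $c_V=c_W$. Running the identical argument on $\mathrm{qdim}_r(X)$, with the residue taken in $\mathbf x$, yields $\deg(g)=2c_V$ and hence the same conclusion, a useful consistency check.

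The bulk of the work is routine bookkeeping; the one genuinely delicate point, and the step I expect to demand the most care, is making ``non-zero homogeneous residue $\Rightarrow$ matching degree'' rigorous. Concretely I would need to verify that $d^X$ really carries $\mathbb{C}$-degree $1$ on a general graded $X$, that degree is multiplicative along the composite and survives the supertrace, and that the residue of Proposition~\ref{qdims} pairs nondegenerately in complementary degrees with socle degree $2c_W$. Rather than re-derive the grading of the residue in full generality, I would fix all conventions once on the one-variable Koszul factorization and then invoke multiplicativity of the residue across the variables, so that the socle-degree statement reduces to the standard fact for graded complete intersections.
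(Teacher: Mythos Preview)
Your treatment of part~(i) is exactly the paper's: the product of $m+n$ odd endomorphisms is odd when $m+n$ is odd, and the supertrace of an odd operator vanishes.

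For part~(ii) the paper does not give a proof at all; it simply cites \cite[Sect.\,6.2]{carqrunkel2}. Your degree count via the residue formula is the standard argument and is correct in outline. The one imprecision is the sentence ``the Grothendieck residue of a homogeneous $g$ is non-zero only when $\deg(g)$ equals the socle degree'': as written this is a statement about $g\in\mathbbm{k}[y]$, but here $g=\mathrm{str}(\cdots)\in\mathbbm{k}[x,y]$, and the $y$-residue lands in $\mathbbm{k}[x]$ rather than in $\mathbbm{k}$. The clean fix is the one your setup already allows: the residue is $\mathbbm{k}[x]$-linear and homogeneous of degree $-2c_W$, so it sends a total-degree-$(c_V+c_W)$ element to something homogeneous of degree $c_V-c_W$ in $\mathbbm{k}[x]$; since the result is nonzero, $c_V-c_W\ge 0$, and running the same count on $\mathrm{qdim}_r(X)$ gives $c_W-c_V\ge 0$. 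This avoids any appeal to Lemma~\ref{lem:dimink} and closes exactly the gap you flagged as delicate.
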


Part (i) is trivial since the supertrace in Proposition \ref{qdims} is zero for an odd matrix. Part (ii) is proved in \cite[Sect.\,6.2]{carqrunkel2}. 

The converse of Proposition~\ref{prop:necessary} is expected to be false. For example, consider the family of potentials $x_1^{\,3} + x_2^{\,3} + x_3^{\,3} + c \, x_1 x_2 x_3$ with $c \in \mathbbm{C}$ which all have central charge~3 and whose zero locus is an elliptic curve in $\mathbbm{C}\mathbb{P}^2$. In analogy with \cite{fgrs0705.3129, bbr1205.4647} we expect the potentials for different values of~$c$ to be orbifold equivalent iff the complex structure parameters of the corresponding curves are related by some $\operatorname{GL}(2,\mathbb{Q})$ transformation, resulting in infinitely many equivalence classes for these potentials.

\medskip

If $V,W \in \mathcal{P}_{\mathbbm{k}}$ are orbifold equivalent, then the corresponding categories of matrix factorizations are closely related. Namely, let $X = (X,d_X)$ be a finite-rank graded matrix factorization of $W(y) - V(x)$ with non-zero quantum dimensions. Let further $M = (M,d_M)$ be a graded matrix factorization of $V(x)$. Then their tensor product $X \otimes M := (X \otimes_{k[x]} M , d_X \otimes 1 + 1 \otimes d_M)$ is a graded matrix factorization of $W(y)$ (which is necessarily of infinite rank, but still equivalent to a finite-rank factorization, for $M \neq 0$). 

\begin{prop}\label{prop:MF-summand}
Suppose that $V(x)$, $W(y) \in \mathcal{P}_{\mathbbm{k}}$ are orbifold equivalent and let $X \in \mathrm{hmf}^{\mathrm{gr}}_{\mathbbm{k}[x,y],W-V}$ have non-zero quantum dimensions. Then every matrix factorization in $\mathrm{hmf}^{\mathrm{gr}}_{\mathbbm{k}[y],W}$ occurs as a direct summand of $X \otimes M$ for some $M \in \mathrm{hmf}^{\mathrm{gr}}_{\mathbbm{k}[x],V}$.
This remains true if `$\mathrm{hmf}^{\mathrm{gr}}$' is replaced by `$\mathrm{hmf}$' everywhere. 
\end{prop}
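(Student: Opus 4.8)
The plan is to prove the sharper assertion that a single explicit choice of $M$ works and that it exhibits $N$ as a \emph{retract}, hence a direct summand. Since $X$ is a graded factorization with non-zero quantum dimensions, Lemma~\ref{lem:dimink} and the remark following it tell us that $q := \mathrm{qdim}_r(X)$ lies in $\mathbbm{k}^\times$; this invertibility of a single scalar is the only way the orbifold-equivalence hypothesis (Definition/Theorem~\ref{defthm:MF-orbequiv}) enters. First I would fix $N \in \mathrm{hmf}^{\mathrm{gr}}_{\mathbbm{k}[y],W}$ and set $M := X^\dagger \otimes N$. Reading off source and target of the right adjoint $X^\dagger \colon (\mathbbm{k}[y],W) \to (\mathbbm{k}[x],V)$ from Theorem~\ref{cmtheorem}, the composite $X^\dagger \otimes N$ is a graded factorization of $V$, so $M \in \mathrm{hmf}^{\mathrm{gr}}_{\mathbbm{k}[x],V}$, and the associator of $\mathcal{LG}$ (an honest isomorphism of free modules) gives $X \otimes M \cong (X \otimes X^\dagger) \otimes N$.

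The heart of the argument is an explicit splitting built from the adjunction maps. Using the pivotal identification $\phi \colon {}^\dagger X \xrightarrow{\sim} X^\dagger$ in $\mathcal{LG}$ (Proposition~\ref{pivotalityLG}), put $\mathrm{coev}'_X := (\mathrm{id}_X \otimes \phi) \circ \mathrm{coev}_X \colon I_W \to X \otimes X^\dagger$ and recall $\widetilde{\mathrm{ev}}_X \colon X \otimes X^\dagger \to I_W$. I would then define, using the (homotopy) unit isomorphism $\lambda_N$,
$$ i := (\mathrm{coev}'_X \otimes \mathrm{id}_N) \circ \lambda_N^{-1} \colon N \to (X \otimes X^\dagger)\otimes N, \qquad p := \lambda_N \circ (\widetilde{\mathrm{ev}}_X \otimes \mathrm{id}_N) \colon (X \otimes X^\dagger) \otimes N \to N . $$
Then $p \circ i = \lambda_N \circ \big( (\widetilde{\mathrm{ev}}_X \circ \mathrm{coev}'_X) \otimes \mathrm{id}_N \big) \circ \lambda_N^{-1}$, and by the very definition of the operator $\mathcal{D}_r$ from Section~\ref{sec:catth} (evaluated on $\iota$, cf.\ the explicit formula in Proposition~\ref{qdims}) the closed loop $\widetilde{\mathrm{ev}}_X \circ \mathrm{coev}'_X$ equals $q \cdot \mathrm{id}_{I_W}$. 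Hence $p \circ i = q \cdot \mathrm{id}_N$, so $q^{-1} p$ is a retraction of $i$ and $e := q^{-1}\, i \circ p$ is an idempotent on $X \otimes M$ with image isomorphic to $N$.

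To conclude I would invoke idempotent completeness of the morphism categories of $\mathcal{LG}$ (the very reason for passing to $\hmf$): the idempotent $e$ splits, exhibiting $N$ as a direct summand of $X \otimes M$. For the graded statement one checks that every map used has $\bC$-degree $0$: the maps $\mathrm{coev}_X$, $\widetilde{\mathrm{ev}}_X$, the pivotal $\phi$ and $\lambda_N$ are degree $0$ by construction (this is exactly the input to Lemma~\ref{lem:dimink}), and $q^{-1}\in\mathbbm{k}$ is degree $0$, so the whole splitting lives in $\mathrm{hmf}^{\mathrm{gr}}_{\mathbbm{k}[y],W}$. The ungraded claim follows from the identical construction after forgetting the $\bC$-grading: the scalar $q\in\mathbbm{k}^\times$ is unchanged, the adjunction maps of Theorem~\ref{cmtheorem} still exist, and the same retract argument splits any $N \in \mathrm{hmf}_{\mathbbm{k}[y],W}$. (Conceptually this is just the fact, underlying Theorem~\ref{adjointequivalence} and Proposition~\ref{adjointequivalenceformfs}, that invertibility of the quantum dimension makes $X^\dagger \otimes X$ a separable Frobenius algebra, so that $\mathrm{id}$ is a retract of $X \otimes X^\dagger \otimes (-)$.)

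The step I expect to be the main obstacle is the bookkeeping needed to identify the loop $\widetilde{\mathrm{ev}}_X \circ \mathrm{coev}'_X \in \End(I_W)$ precisely with the scalar $\mathrm{qdim}_r(X)$, rather than with some other endomorphism of $I_W$: this requires matching the explicit evaluation and coevaluation maps and the pivotal isomorphism $\phi$ against the definition of $\mathcal{D}_r$, carefully tracking the unit isomorphisms $\lambda,\rho$ built into that operator. Once this identification is secured the remainder is formal. A reassuring safeguard is that the definition of orbifold equivalence forces \emph{both} quantum dimensions to be non-zero, so the required invertibility is guaranteed no matter how the conventions fall out; and the mirror-image construction with $\mathrm{ev}_X$, $\widetilde{\mathrm{coev}}_X$ and $\mathrm{qdim}_l(X)$ would symmetrically split any $V$-factorization off $X^\dagger \otimes X \otimes (-)$.
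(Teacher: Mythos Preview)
Your proof is correct and follows essentially the same route as the paper: the paper derives this proposition from Proposition~\ref{prop:all-is-retract} (which, specialised with one of the two 1-morphisms taken to be the identity, gives exactly your retract $N \hookrightarrow X \otimes (X^\dagger \otimes N)$ via the adjunction maps and the invertible quantum dimension) together with Remark~\ref{rem:all-direct-summand} (idempotent completeness upgrades the retract to a direct summand). Your explicit choice $M = X^\dagger \otimes N$ and your handling of the graded versus ungraded cases match the paper's argument; your caution about pinning down which loop computes $\mathrm{qdim}_r$ versus $\mathrm{qdim}_l$ is well placed but, as you note, harmless since both are assumed invertible.
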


\subsection{Orbifold equivalence for simple singularities}\label{subsec:orbeqSimSin}

Now we take $\mathbbm{k}=\mathbbm{C}$ and consider the subset of potentials which define simple singularities.\footnote{%
As we shall see in Section~\ref{subsec:simple-sing}, particularly in Remark~\ref{rem:galE6}, we can also work over the cyclotomic field $\mathbbm{k} = \mathbb{Q} (\zeta)$ for an appropriate root of unity~$\zeta$.} 
These fall into an ADE classification and can be taken to be the following elements of $\mathbbm{C}[x_1,x_2]$ (see e.\,g.~\cite[Prop.\,8.5]{yoshinobuch}):
\begin{align}
W^{(\mathrm{A}_{d-1})} &= x_1^{\,d} + x_2^{\,2} & c&=1-\tfrac{2}{d} &(d \geqslant 2)&
\nonumber\\
W^{(\mathrm{D}_{d+1})} &=  x_1^{\,d} + x_1 x_2^{\,2} & c&=1-\tfrac{2}{2d} &(d \geqslant 3)&
\nonumber\\
W^{(\mathrm{E}_6)} &= x_1^{\,3} + x_2^{\,4} & c&=1-\tfrac{2}{12}
\label{eq:simple-sing-ADE}
\\
W^{(\mathrm{E}_7)} &= x_1^{\,3} + x_1 x_2^{\,3} & c&=1-\tfrac{2}{18}
\nonumber\\
W^{(\mathrm{E}_8)} &= x_1^{\,3} + x_2^{\,5} & c&=1- \tfrac{2}{30}
\nonumber
\end{align}
From Proposition~\ref{prop:necessary} we know that for two potentials to be orbifold equivalent, their central charges have to agree. The preimages of the central charge function on the potentials in~\eqref{eq:simple-sing-ADE} are precisely the sets
\begin{align}
&\big\{ W^{(\mathrm{A}_{d-1})} \big\} \quad  \text{for $d$ odd},
\nonumber \\
&\big\{ W^{(\mathrm{A}_{d-1})} , W^{(\mathrm{D}_{d/2+1})}\big\} \quad \text{for $d$ even and $d \notin \big\{12,18,30\big\}$},
\label{eq:equiv-classes}\\
&\big\{ W^{(\mathrm{A}_{11})},W^{(\mathrm{D}_{7})},W^{(\mathrm{E}_6)} \big\} \, , \quad \big\{ W^{(\mathrm{A}_{17})},W^{(\mathrm{D}_{10})},W^{(\mathrm{E}_7)} \big\} \, , \quad \big\{ W^{(\mathrm{A}_{29})},W^{(\mathrm{D}_{16})},W^{(\mathrm{E}_8)} \big\} \, .
\nonumber 
\end{align}
In fact, this list exhausts the relevant orbifold equivalences: 

\begin{thm}\label{thm:ADEorbifolds}
The orbifold equivalence classes of the potentials~\eqref{eq:simple-sing-ADE} are precisely those listed in~\eqref{eq:equiv-classes}.
\end{thm}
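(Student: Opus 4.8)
The plan is to establish the two opposite inclusions: first that orbifold equivalence cannot identify potentials across the sets in~\eqref{eq:equiv-classes}, and second that each of those sets is genuinely a single equivalence class. For the first (the ``upper bound''), the only tool needed is Proposition~\ref{prop:necessary}(ii): $V\sim W$ forces $c_V=c_W$. Hence every orbifold equivalence class among the potentials~\eqref{eq:simple-sing-ADE} is contained in a level set of the central-charge function, and I would simply invoke the elementary computation already recorded in the text preceding~\eqref{eq:equiv-classes} to the effect that these level sets are exactly the sets~\eqref{eq:equiv-classes}. Concretely this rests on the injectivity of $d\mapsto 1-\tfrac2d$ on the A-series and of $d\mapsto 1-\tfrac1d$ on the D-series, the single coincidence $1-\tfrac{2}{d_A}=1-\tfrac{1}{d_D}\iff d_A=2d_D$, and the matching of the three exceptional values $\tfrac56,\tfrac89,\tfrac{14}{15}$ (with the usual low-rank identifications understood). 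In particular, each $\{W^{(\mathrm A_{d-1})}\}$ with $d$ odd is a singleton level set and is therefore automatically a class by reflexivity of~$\sim$ (Definition/Theorem~\ref{defthm:MF-orbequiv}).

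For the ``lower bound'' I must produce the generating equivalences realising each listed set as one class. I would treat the A--D family first: for even $d$, writing $n=d/2$, I need a finite-rank graded matrix factorization of $W^{(\mathrm D_{n+1})}-W^{(\mathrm A_{2n-1})} = (u^{n}+uv^2)-(x^{2n}+y^2)$ with non-zero left and right quantum dimensions. This equivalence is the classical statement that $\mathrm D_{n+1}$ is the $\bZ_2$-orbifold of $\mathrm A_{2n-1}$ under $x\mapsto -x$, so the plan is to write down the defect implementing this honest group action and to verify, uniformly in~$n$, via the residue formula of Proposition~\ref{qdims} that both quantum dimensions are non-zero (and hence invertible, by Lemma~\ref{lem:dimink}). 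Since $\sim$ is an equivalence relation, this settles every two-element class at once.

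The exceptional equivalences $W^{(\mathrm A_{11})}\sim W^{(\mathrm E_6)}$, $W^{(\mathrm A_{17})}\sim W^{(\mathrm E_7)}$, $W^{(\mathrm A_{29})}\sim W^{(\mathrm E_8)}$ require explicit matrix factorizations of the respective differences, e.g.\ of $x_1^{\,3}+x_2^{\,4}-z^{12}$ for $\mathrm E_6$ versus $\mathrm A_{11}$. Here I would first use Proposition~\ref{prop:sum+Kn"orrer} (compatibility with external sums together with Kn\"orrer periodicity) to normalise variable counts and to strip off or absorb square terms, then exhibit a small-rank graded factorization found by ansatz or computer algebra, and finally compute its quantum dimensions through Proposition~\ref{qdims} to check non-vanishing. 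Transitivity of $\sim$ then closes the three-element classes: for $d\in\{12,18,30\}$ one combines the A--D family equivalence with the corresponding A--E equivalence to conclude $W^{(\mathrm D_{d/2+1})}\sim W^{(\mathrm A_{d-1})}\sim W^{(\mathrm E)}$.

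The main obstacle is precisely the construction of the exceptional factorizations and the proof that their quantum dimensions do not vanish. In contrast to the A--D case there is no underlying honest symmetry group, so the factorizations are ``generalised'' orbifold data that must be guessed and then certified directly; there is no a priori reason for the residue in Proposition~\ref{qdims} to be non-zero, and it is exactly the simultaneous non-vanishing of \emph{both} the left and the right quantum dimension that has to be secured. I would also keep in mind the cyclotomic subtlety flagged in the footnote to Subsection~\ref{subsec:orbeqSimSin}, namely that one may need to work over $\mathbb{Q}(\zeta)$ for a suitable root of unity~$\zeta$ in order for the $\mathrm E_6$ factorization to take a workable form.
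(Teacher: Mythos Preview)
Your outline is correct and matches the paper's structure exactly: the upper bound is Proposition~\ref{prop:necessary}(ii) together with the listed level sets of~$c$, and the lower bound is obtained by exhibiting, for each of $W^{(\mathrm A_{2d-1})}\sim W^{(\mathrm D_{d+1})}$ and $W^{(\mathrm A_{11,17,29})}\sim W^{(\mathrm E_{6,7,8})}$, an explicit finite-rank graded matrix factorization with non-zero quantum dimensions, after which transitivity closes the three-element classes. Where the paper adds content beyond your sketch is a concrete, uniform ansatz for producing these factorizations: one starts from a low-rank factorization $X_0$ of $W^{(T)}(x,y)-v^2$ (using only the square variable of the A-type potential), writes down the most general homogeneous $u$-dependent perturbation of the entries of $d_{X_0}$, reduces parameters by graded similarity, and then solves the polynomial conditions forcing the deformed map to square to $(W^{(T)}-u^d-v^2)\cdot\mathrm{id}$; the quantum dimensions are computed from Proposition~\ref{qdims} and checked non-zero case by case (rank~$2$ suffices for $\mathrm D$, $\mathrm E_6$, $\mathrm E_7$, rank~$4$ for $\mathrm E_8$). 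One small correction to your plan: invoking Kn\"orrer periodicity to ``strip off square terms'' is not useful here, since both potentials already have two variables and Kn\"orrer only adds or removes squares in \emph{pairs}; rather, the single $v^2$ in $W^{(\mathrm A)}$ is exactly what provides the easy starting factorization $X_0$ that the deformation scheme builds on.
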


This is our main result (together with Corollary \ref{cor:Etypemod} below and Proposition \ref{adjointequivalenceformfs}), which we prove in Section~\ref{subsec:simple-sing} by explicitly constructing graded matrix factorizations~$X$ with non-zero quantum dimensions for the equivalences
$W^{(\mathrm{A}_{2d-1})} \sim  W^{(\mathrm{D}_{d+1})}$ (already given in \cite{carqrunkel2}), as well as $W^{(\mathrm{A}_{11})} \sim W^{(\mathrm{E}_6)}$, $W^{(\mathrm{A}_{17})} \sim W^{(\mathrm{E}_7)}$ and $W^{(\mathrm{A}_{29})} \sim W^{(\mathrm{E}_8)}$.

\medskip

Proposition~\ref{prop:MF-summand} and Theorem~\ref{thm:ADEorbifolds} can be strengthened to equivalences of categories by invoking the general theory of equivariant completion -see Proposition \ref{adjointequivalenceformfs}.

In Section~\ref{subsec:simple-sing} we will explicitly compute $X^\dagger \otimes X$ for the matrix factorizations~$X$ giving rise to the orbifold equivalences of simple singularities. For those involving E-type singularities we find that $X^\dagger \otimes X$ decomposes into sums of well-known matrix factorizations: 

\begin{cor}\label{cor:Etypemod}
We have 
\begin{align}
\mathrm{hmf}^{\mathrm{gr}}_{\mathbbm{C}[x], V^{(\mathrm{E}_6)}}
& \cong 
\mathrm{mod} \! \big( P_{\{0\}} \oplus P_{\{-3,-2,\ldots,3\}} \big) \, , 
\nonumber
\\
\mathrm{hmf}^{\mathrm{gr}}_{\mathbbm{C}[x], V^{(\mathrm{E}_7)}} 
& \cong 
\mathrm{mod} \! \big( P_{\{0\}} \oplus P_{\{-4,-3,\ldots,4\}} \oplus P_{\{-8,-7,\ldots,8\}} \big) \, , 
\label{eq:hmfEmod}
\\
\mathrm{hmf}^{\mathrm{gr}}_{\mathbbm{C}[x], V^{(\mathrm{E}_8)}} 
& \cong 
\mathrm{mod} \! \big( P_{\{0\}} \oplus P_{\{-5,-4,\ldots,5\}} \oplus P_{\{-9,-8,\ldots,9\}} \oplus P_{\{-14,-13,\ldots,14\}} \big) 
\nonumber
\end{align}
where the rank-one matrix factorizations $P_S$ are those defined in Example \ref{permtypembf}. 
\end{cor}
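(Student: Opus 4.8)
The plan is to obtain the three equivalences as direct consequences of Proposition~\ref{adjointequivalenceformfs}, once the orbifold equivalences $W^{(\mathrm{A}_{11})} \sim W^{(\mathrm{E}_6)}$, $W^{(\mathrm{A}_{17})} \sim W^{(\mathrm{E}_7)}$ and $W^{(\mathrm{A}_{29})} \sim W^{(\mathrm{E}_8)}$ of Theorem~\ref{thm:ADEorbifolds} are in hand. For each of the three E-type cases the proof of Theorem~\ref{thm:ADEorbifolds} (in Section~\ref{subsec:simple-sing}) exhibits an explicit finite-rank graded matrix factorization $X$ of $V^{(\mathrm{E})} - W^{(\mathrm{A})}$ with non-zero, hence invertible (Lemma~\ref{lem:dimink}), quantum dimensions. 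Feeding this $X$ into Proposition~\ref{adjointequivalenceformfs} with target $(\mathbb{C}[x], V^{(\mathrm{E})})$ gives at once a tensor equivalence $\mathrm{hmf}^{\mathrm{gr}}_{\mathbb{C}[x], V^{(\mathrm{E})}} \cong \mathrm{mod}(X^\dagger \otimes X)$, with $X^\dagger \otimes X$ a symmetric separable Frobenius algebra in the $1$-endomorphism category of the A-type object. Thus the only remaining task---and the whole content of the corollary---is to identify the \emph{underlying bifactorization} of $X^\dagger \otimes X$ with the stated direct sum of permutation-type factorizations $P_S$, since an isomorphism of the underlying object (transporting the algebra structure) does not change the module category.

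To make contact with the $P_S$ of Example~\ref{permtypembf}, which factorize $x^d - y^d$ over $\mathbb{C}[x,y]$ (with $d=12,18,30$ for $\mathrm{E}_6,\mathrm{E}_7,\mathrm{E}_8$), I would first move $X^\dagger \otimes X$ into $\mathrm{hmf}^{\mathrm{gr}}_{\mathrm{bi}}$ of $x^d - y^d$. Since the A-type potential carries an extra square, I would use Knörrer periodicity (Proposition~\ref{prop:sum+Kn"orrer}) to eliminate the square variables on both sides, leaving a graded matrix bifactorization of $x^d - y^d$. I would then compute $X^\dagger$ explicitly from the dual formula recalled in Section~\ref{sec:cat-mf} (equivalently Theorem~\ref{cmtheorem}) and form the composite $X^\dagger \otimes X$ as a tensor product over the intermediate polynomial ring. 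This composite is a priori of infinite rank, so I would reduce it to a finite-rank representative by splitting off contractible summands.

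The decomposition into the specific consecutive-index sets would then be carried out with the homology criterion of Remark~\ref{rem:H(M)}: a morphism in $\mathrm{HMF}_{\mathrm{bi}}$ is an isomorphism exactly when it is one on the homology $H(-)$ of the reduced complex. The $\mathbb{C}$-grading severely restricts the admissible summands---the degrees of the generators of $X^\dagger \otimes X$ fix the possible cardinalities $|S|$ and shifts---and the precise index sets are then pinned down by matching $H(X^\dagger \otimes X)$ with $\bigoplus_S H(P_S)$ together with their graded dimensions. As a running consistency check I would use multiplicativity of the quantum dimension, $\mathrm{qdim}(X^\dagger \otimes X) = \mathrm{qdim}(X^\dagger)\,\mathrm{qdim}(X)$, which must equal $\sum_S \mathrm{qdim}(P_S)$ evaluated via Proposition~\ref{qdims}. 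The main obstacle is exactly this explicit reduction and splitting: the tensor product yields a bulky factorization, and isolating the handful of consecutive-index $P_S$ listed in the corollary is the genuine computational core, with the gradings and the homology criterion providing the leverage that makes the bookkeeping finite and decisive.
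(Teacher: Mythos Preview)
Your overall strategy is correct and matches the paper's: apply Proposition~\ref{adjointequivalenceformfs} to the explicit $X$ produced in the proof of Theorem~\ref{thm:ADEorbifolds}, then identify the underlying object of $X^\dagger\otimes X$ as the stated direct sum of $P_S$'s (after stripping the Kn\"orrer factor $I_{v^2}$). The equivalence $\mathrm{hmf}^{\mathrm{gr}}_{\mathbb{C}[x],V^{(\mathrm{E})}}\cong\mathrm{mod}(X^\dagger\otimes X)$ is then immediate.

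Where you diverge from the paper is in the identification step, and this is also where your argument has a gap. The paper does \emph{not} use the homology criterion of Remark~\ref{rem:H(M)} here. Instead it proceeds by direct computation: the infinite-rank $X^\dagger_{u'}\otimes X_u$ is reduced to a finite-rank representative $A'\otimes_{\mathbb{C}} I_{v^2}$ using the Dyckerhoff--Murfet pushforward as implemented in~\cite{khovhom}, and then an explicit isomorphism $A'\to P_{\{0\}}\oplus P_{\{-3,\dots,3\}}$ (and analogously for $\mathrm{E}_7,\mathrm{E}_8$) is exhibited by row and column manipulations, with the transformation matrix written down explicitly. Your proposed route via $H(-)$ does not by itself pin down the index sets: for every nonempty proper $S\subset\mathbb{Z}_d$ one has $H(P_S)\cong\mathbb{C}\oplus\mathbb{C}$, so matching homology dimensions only counts the number of indecomposable summands. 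Graded dimensions constrain $|S|$, but not which consecutive window $\{a,\dots,a+|S|-1\}$ occurs; that information has to come from the actual twisted differential, which is why the paper resorts to computing $A'$ explicitly and exhibiting the isomorphism.

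One further point you do not mention: the decomposition into the \emph{specific} sets listed in the corollary holds only for the distinguished root $t=t_{\mathrm{cft}}$ of the parameter equation for $X_u$. For the other Galois-conjugate roots, $X^\dagger\otimes X$ decomposes into different (non-consecutive) $P_S$'s, as worked out in Remarks~\ref{rem:galE6}, \ref{rem:galE7}, \ref{rem:galE8}. All choices are Morita equivalent, so the module category is unaffected, but the object-level statement of the corollary requires the correct choice of $t$.
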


\begin{rem}\label{rem:intro} 
\begin{enumerate}
\item 
The construction of the equivalence relation can be repeated in a number of slightly modified settings. For example, one can work with ungraded matrix factorizations, or one can work over any commutative ring~$\mathbbm{k}$ as in \cite{carqmurfet}. Or, instead of using a $\mathbb{Q}$-grading, one can consider matrix factorizations with R-charge in the sense of \cite{carqrunkel3} (which is more general). 

The setting used in this paper was chosen to be on the one hand as simple as possible and on the other hand to be strong enough for us to be able to prove the decomposition of simple singularities into equivalence classes \eqref{eq:equiv-classes} -- hence the homogeneous potentials and the $\mathbb{Q}$-grading. 
(In the ungraded setting we do not know how to exclude the existence of equivalences beyond those in~\eqref{eq:equiv-classes}.)
\item
The decompositions~\eqref{eq:equiv-classes} and~\eqref{eq:hmfEmod} are expected from two-dimensional rational conformal field theory.
\item \label{quiver}
It was shown in \cite{kst0511155} that for a simple singularity $V\in \mathbbm{C}[x]$, $\mathrm{hmf}^{\mathrm{gr}}_{\mathbbm{C}[x],V}$ is equivalent to $\mathbb{D}^{\textrm{b}}(\mathrm{Rep}\mathbbm{C} Q)$, where~$Q$ is (any choice of) the associated Dynkin quiver. Thus by Theorem~\ref{thm:ADEorbifolds} and \cite{carqrunkel2} the derived representation theory of ADE quivers enjoys orbifold equivalences analogous to~\eqref{eq:hmfEmod}.\footnote{%
Such a relation between A- and D-type quivers was already proven in \cite{ReitenRiedtmann} by different methods.} 
The monoids $X^\dagger \otimes X \cong P_{\{0\}} \oplus \ldots$ translate into functors on $\mathbb{D}^{\textrm{b}}(\mathrm{Rep}\mathbbm{C} Q)$ whose actions on simple objects are easily computable. 
\end{enumerate}
\end{rem}

\subsection{Proofs}\label{sec:proofs}

In this subsection we provide proofs of the results summarised above.

The discussion for simple singularities can also be viewed as showcasing methods that may prove useful for constructing further orbifold equivalences between potentials.

\subsubsection{Pivotal bicategories}\label{subsec:pivotalbicat}

We will use the concepts and notation from Subsection \ref{sec:catth}. The notion of orbifold equivalence actually makes sense between objects in any pivotal bicategory.

\begin{defn}\label{def:orb-equiv}
Let~$\mathcal{B}$ be a pivotal bicategory.
Two objects $a,b\in\mathcal{B}$ are \textsl{orbifold equivalent}, $a\sim b$, if there is an $X\in\mathcal{B}(a,b)$ such that
\begin{enumerate}
\item $X$ has invertible left and right quantum dimensions, and
\item $\mathcal{D}_l(X)$ and $\mathcal{D}_r(X)$ map invertible quantum dimensions to automorphisms. 
\end{enumerate}
\end{defn}

Condition (ii) is needed for orbifold equivalence in this general setting to be an equivalence relation (see Theorem~\ref{thm:orb-equiv}). 
This condition may be hard to check directly, as it requires knowing all invertible elements in $\mathrm{End}(I_{a})$ and $\mathrm{End}(I_{b})$ which arise as quantum dimension of some 1-morphism. 
A special case where condition (ii) is already implied by condition (i) is if~$\mathcal{B}$ is such that every Hom-category $\mathcal{B}(a,b)$ is $R$-linear for some fixed commutative ring~$R$, and such that the left and right quantum dimensions of every 1-morphism $X \in \mathcal{B}(a,b)$ are in $R\cdot 1_{I_a}$ and $R\cdot 1_{I_b}$, resp. In this case one may think of the left/right quantum dimension as an element of $R$, and one can check that $\mathrm{qdim}_{\mathrm{l/r}}(X \otimes Y) = \mathrm{qdim}_{\mathrm{l/r}}(X) \cdot \dim_{\mathrm{l/r}}(Y)$, where the product on the right is in $R$. Graded finite-rank matrix factorizations are an example of this $R$-linear setting (see Lemma~\ref{lem:dimink} below).

\begin{thm}\label{thm:orb-equiv}
Orbifold equivalence is an equivalence relation. 
\end{thm}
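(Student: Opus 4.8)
The plan is to verify the three defining properties of an equivalence relation for $\sim$, the only substantial inputs being the multiplicativity of the quantum-dimension operators, $\mathcal{D}_l(X)\odot\mathcal{D}_l(Y)=\mathcal{D}_l(Y\circ X)$ and $\mathcal{D}_r(Y)\odot\mathcal{D}_r(X)=\mathcal{D}_r(Y\circ X)$, together with the pivotal identities relating a $1$-morphism to its adjoint. For \textbf{reflexivity} I would take $X=I_a\in\mathcal{B}(a,a)$: directly from the definition one finds $\mathcal{D}_l(I_a)=\id_{\mathrm{End}(I_a)}=\mathcal{D}_r(I_a)$, so $\mathrm{qdim}_l(I_a)=\mathrm{qdim}_r(I_a)=\iota_a$ is invertible (condition (i)), and both operators are the identity map, which sends every invertible quantum dimension to itself and hence to an automorphism (condition (ii)). Thus $a\sim a$.

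For \textbf{symmetry}, given $X\in\mathcal{B}(a,b)$ witnessing $a\sim b$, I would propose $X^\dagger\in\mathcal{B}(b,a)$ as the witness for $b\sim a$. Since $\mathcal{B}$ is pivotal with ${}^\dagger X=X^\dagger$, the left and right quantum-dimension operators of $X$ and $X^\dagger$ are exchanged, $\mathcal{D}_l(X^\dagger)=\mathcal{D}_r(X)$ and $\mathcal{D}_r(X^\dagger)=\mathcal{D}_l(X)$; this is precisely where pivotality enters, through the coherence of the pivotal isomorphisms and the adjunction formulas of Theorem \ref{cmtheorem} and Proposition \ref{pivotalityLG}. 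Conditions (i) and (ii) for $X^\dagger$ are then immediate restatements of the same conditions for $X$ with the roles of left and right interchanged, so $b\sim a$.

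The main work is \textbf{transitivity}. Suppose $X\in\mathcal{B}(a,b)$ and $Y\in\mathcal{B}(b,c)$ witness $a\sim b$ and $b\sim c$; I would show that $Y\circ X\in\mathcal{B}(a,c)$ witnesses $a\sim c$. Condition (i) is quick: multiplicativity gives $\mathrm{qdim}_l(Y\circ X)=\mathcal{D}_l(X)\bigl(\mathcal{D}_l(Y)(\iota_c)\bigr)=\mathcal{D}_l(X)\bigl(\mathrm{qdim}_l(Y)\bigr)$, and since $\mathrm{qdim}_l(Y)$ is an invertible quantum dimension, condition (ii) for $X$ forces this to be an automorphism, hence invertible; the right quantum dimension is handled symmetrically via $\mathcal{D}_r(Y\circ X)=\mathcal{D}_r(Y)\odot\mathcal{D}_r(X)$. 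The delicate point, which I expect to be the main obstacle, is condition (ii) for $Y\circ X$, and here the key observation is that the operators $\mathcal{D}_l,\mathcal{D}_r$ send quantum dimensions to quantum dimensions: for $Z\in\mathcal{B}(c,e)$ one has $\mathcal{D}_l(Y)\bigl(\mathrm{qdim}_l(Z)\bigr)=\mathrm{qdim}_l(Z\circ Y)$, and analogously $\mathcal{D}_r(X)\bigl(\mathrm{qdim}_r(Z)\bigr)=\mathrm{qdim}_r(X\circ Z)$, both by multiplicativity.

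Concretely, starting from an invertible left quantum dimension $\phi=\mathrm{qdim}_l(Z)\in\mathrm{End}(I_c)$, condition (ii) for $Y$ makes $\mathcal{D}_l(Y)(\phi)=\mathrm{qdim}_l(Z\circ Y)$ an automorphism, and being also a quantum dimension it is an \emph{invertible} quantum dimension in $\mathrm{End}(I_b)$; condition (ii) for $X$ then makes $\mathcal{D}_l(X)\bigl(\mathcal{D}_l(Y)(\phi)\bigr)=\mathcal{D}_l(Y\circ X)(\phi)$ an automorphism. The same chaining, with the order of $X$ and $Y$ reversed, handles $\mathcal{D}_r(Y\circ X)$ on right quantum dimensions. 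This two-step argument is exactly why condition (ii) is built into Definition \ref{def:orb-equiv}: without it one could not guarantee that the intermediate element $\mathcal{D}_l(Y)(\phi)$ lands among the invertible quantum dimensions on which the second operator $\mathcal{D}_l(X)$ is controlled, and transitivity would fail. Once these three verifications are assembled, the statement follows.
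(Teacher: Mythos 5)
Your proof is correct and follows essentially the same route as the paper's: reflexivity via $I_a$, symmetry via $X^\dagger$ using $\mathcal{D}_{l/r}(X^\dagger)=\mathcal{D}_{r/l}(X)$, and transitivity via $Y\circ X$ by chaining multiplicativity of the quantum-dimension operators with condition (ii), including the key observation that the intermediate element $\mathcal{D}_l(Y)(\mathrm{qdim}_l(Z))=\mathrm{qdim}_l(Z\circ Y)$ is itself an invertible quantum dimension so that condition (ii) for $X$ applies. The only cosmetic difference is that the paper spells out the unpacking of condition (ii) into its left and right versions (ii-l)/(ii-r) and carries out the transitivity check on the right quantum dimension while you do the left, but the two verifications are mirror images of each other.
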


\begin{proof}
To see reflexivity $a\sim a$ one can take $X = I_a$, for which $\mathcal{D}_l(I_a) = 1_{I_a} = \mathcal{D}_r(I_a)$. 

For symmetry and transitivity it is helpful to expand condition (ii) of the equivalence relation in more detail: 
\begin{enumerate}
\item[(ii-r)] 
for every $e\in\mathcal{B}$ and every $Z \in \mathcal{B}(e,a)$ such that $\mathrm{qdim}_r(Z)$ is invertible in $\mathrm{End}(I_a)$, we have that $\mathcal{D}_r(X) (\mathrm{qdim}_r(Z))$ is invertible in $\mathrm{End}(I_b)$; 
\item[(ii-l)] 
for every $f\in\mathcal{B}$ and every $Z' \in \mathcal{B}(b,f)$ such that $\mathrm{qdim}_l(Z')$ is invertible in $\mathrm{End}(I_b)$, we have that $\mathcal{D}_l(X) (\mathrm{qdim}_l(Z'))$ is invertible in $\mathrm{End}(I_a)$.
\end{enumerate}
Of course, since (ii-r) holds for all $Z$, it also holds for all $Z^\dagger$, and hence $\mathcal{D}_r(X)$ also maps invertible left quantum dimensions to automorphisms. The same applies to (ii-l).

Symmetry $a\sim b \Leftrightarrow b\sim a$ follows by replacing $X$ by $X^\dagger$. That $X^\dagger$ satisfies (i) follows from that $\mathrm{qdim}_l (X^\dagger)=\mathrm{qdim}_r(X)$. To see (ii-r) for $X^\dagger$, use $\mathcal{D}_r(X^\dagger)=\mathcal{D}_l(X)$ and that (ii-l) holds for $X$. Condition (ii-l) follows analogously.

To check transitivity, consider $X\in\mathcal{B}(a,b)$ and $Y\in\mathcal{B}(b,c)$ satisfying conditions (i) and (ii). Then $\mathrm{qdim}_r(Y \otimes X) = \mathcal{D}_r(Y) (\mathrm{qdim}_r(X))$ which is invertible by~(i) for~$X$ and (ii-r) for $Y$. Dito for $\mathrm{qdim}_l$. Next let $Z \in \mathcal{B}(e,a)$ be as in (ii-r) above. Then $\mathcal{D}_r(Y \otimes X) (\mathrm{qdim}_r(Z)) = \mathcal{D}_r(Y) (\mathrm{qdim}_r(X \otimes Z))$. Now $\mathrm{qdim}_r(X \otimes Z)$ is invertible by (ii-r) for $X$ and therefore $\mathcal{D}_r(Y) (\mathrm{qdim}_r(X \otimes Z))$ is invertible by (ii-r) for $Y$. That $Y \otimes X$ satisfies (ii-l) is seen similarly.
\end{proof}

We conclude our brief general discussion with an implication of the existence of orbifold equivalences in the setting of retracts and idempotent splittings. 
An object~$S$ in some category is called a \textsl{retract} of an object~$U$ if there are morphisms $e : S \to U$ and $r : U \to S$ such that $r \circ e = 1_S$. In particular, $e$ is mono and so~$S$ is a subobject of~$U$.

\begin{prop}\label{prop:all-is-retract}
Let $a,b,c,d \in \mathcal{B}$ and let $X \in \mathcal{B}(a,b)$, $Y \in \mathcal{B}(c,d)$ have invertible left quantum dimensions. 
Every~$Z \in \mathcal{B}(a,c)$ is a retract of $Y^\dagger \otimes F \otimes X$ for some suitable ($Z$-dependent) $F \in \mathcal{B}(b,d)$.
\end{prop}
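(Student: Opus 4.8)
The plan is to build the retract explicitly from the (co)evaluation maps of $X$ and $Y$, using the invertibility of the left quantum dimensions only at the very last step, for a rescaling. First I would guess the auxiliary $1$-morphism: set
\[
	F := Y \otimes Z \otimes {}^{\dagger}X \ \in\ \mathcal{B}(b,d)\,,
\]
which has the correct type since ${}^{\dagger}X : b \to a$, $Z : a \to c$, $Y : c \to d$. After reassociating, $Y^\dagger \otimes F \otimes X$ becomes $Y^\dagger \otimes Y \otimes Z \otimes {}^{\dagger}X \otimes X$, a $1$-morphism in $\mathcal{B}(a,c)$ of the same type as $Z$. Throughout I use pivotality to identify ${}^{\dagger}X = X^\dagger$ and ${}^{\dagger}Y = Y^\dagger$, so that the relevant adjunction maps are composable.

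Next I would produce the two comparison $2$-morphisms by inserting units and counits. Define $e : Z \Rightarrow Y^\dagger \otimes Y \otimes Z \otimes {}^{\dagger}X \otimes X$ by whiskering $Z$ on the outer ($c$) side with $\widetilde{\mathrm{coev}}_Y : 1_c \Rightarrow Y^\dagger \circ Y$ and on the inner ($a$) side with $\widetilde{\mathrm{coev}}_X : 1_a \Rightarrow X^\dagger \circ X = {}^{\dagger}X \circ X$, i.e.\ $e = \big(\widetilde{\mathrm{coev}}_Y \ast \iota_Z \ast \widetilde{\mathrm{coev}}_X\big)$ composed with the unitors $\lambda_Z^{-1}, \rho_Z^{-1}$ needed to keep everything well-typed. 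Dually define $\tilde r : Y^\dagger \otimes Y \otimes Z \otimes {}^{\dagger}X \otimes X \Rightarrow Z$ by whiskering with $\mathrm{ev}_Y : Y^\dagger \circ Y \Rightarrow 1_c$ and $\mathrm{ev}_X : {}^{\dagger}X \circ X \Rightarrow 1_a$, i.e.\ $\tilde r = \big(\mathrm{ev}_Y \ast \iota_Z \ast \mathrm{ev}_X\big)$ together with $\lambda_Z, \rho_Z$.

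The heart of the argument is the computation of $\tilde r \odot e$. By the interchange law the outer and inner insertions are independent, and since $\iota_Z \odot \iota_Z = \iota_Z$ one obtains
\[
	\tilde r \odot e \ =\ \big(\mathrm{ev}_Y \odot \widetilde{\mathrm{coev}}_Y\big)\ \ast\ \iota_Z\ \ast\ \big(\mathrm{ev}_X \odot \widetilde{\mathrm{coev}}_X\big)\,.
\]
Reading off the definition $\mathcal{D}_l(f) = \mathrm{ev}_f \odot [\,\cdots\,] \odot \widetilde{\mathrm{coev}}_f$ evaluated on the identity, the bracketed middle term is the identity, so $\mathrm{qdim}_l(X) = \mathrm{ev}_X \odot \widetilde{\mathrm{coev}}_X$ and likewise for $Y$. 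Hence $\tilde r \odot e = \mathrm{qdim}_l(Y) \ast \iota_Z \ast \mathrm{qdim}_l(X) =: \gamma$. Since $\mathrm{qdim}_l(X) \in \mathrm{End}(1_a)$ and $\mathrm{qdim}_l(Y) \in \mathrm{End}(1_c)$ are invertible by hypothesis, functoriality of horizontal composition shows that $\gamma$ is an automorphism of $Z$, with two-sided inverse $\mathrm{qdim}_l(Y)^{-1} \ast \iota_Z \ast \mathrm{qdim}_l(X)^{-1}$. Setting $r := \gamma^{-1} \odot \tilde r$ then gives $r \odot e = \iota_Z$, which exhibits $Z$ as a retract of $Y^\dagger \otimes F \otimes X$ with the chosen $F$.

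I expect the main obstacle to be purely in the coherence bookkeeping rather than in any genuine mathematical difficulty: verifying that $\tilde r \odot e$ really factors as the displayed horizontal composite requires careful use of the interchange law and of the associativity and unit coherence of Definition~\ref{bicat}, and the matching of the insertion maps with the precise form of $\mathcal{D}_l$ (in particular the unitors $\lambda_f$ occurring there). In a strictified bicategory this becomes an immediate string-diagram computation, and the only substantive inputs beyond formal manipulation are the identification $\mathrm{qdim}_l = \mathrm{ev} \odot \widetilde{\mathrm{coev}}$ and the invertibility assumption, which enters solely through the final rescaling by $\gamma^{-1}$. Note that condition (ii) of orbifold equivalence is not needed here, consistent with the hypothesis requiring only invertible left quantum dimensions.
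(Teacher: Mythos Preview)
Your proposal is correct and follows essentially the same route as the paper: the paper also takes $F = Y \otimes Z \otimes X^\dagger$ (which equals your ${}^{\dagger}X$ by the standing pivotality assumption), defines $e = \widetilde{\mathrm{coev}}_Y \otimes 1_Z \otimes \widetilde{\mathrm{coev}}_X$ and $r = (\mathrm{qdim}_l(Y)^{-1} \otimes 1_Z \otimes \mathrm{qdim}_l(X)^{-1}) \circ (\mathrm{ev}_Y \otimes 1_Z \otimes \mathrm{ev}_X)$, and concludes $r \circ e = 1_Z$. Your only cosmetic difference is separating out the unrescaled $\tilde r$ before dividing by the quantum dimensions; the underlying argument is identical.
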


\begin{proof}
We set $F = Y \otimes Z \otimes X^\dagger$ and define the maps $e : Z \to Y^\dagger \otimes F \otimes X$ and $r : Y^\dagger \otimes F \otimes X \to Z$  by
\begin{align}
e &= \widetilde{\mathrm{coev}}_Y \otimes 1_Z \otimes\widetilde{\mathrm{coev}}_X \, ,
\nonumber \\
r &= \big(\mathrm{qdim}_l(Y)^{-1} \otimes 1_Z \otimes \mathrm{qdim}_l(X)^{-1}\big) \circ \big(\mathrm{ev}_Y \otimes 1_Z \otimes \mathrm{ev}_X \big) \, .
\end{align}
Clearly, $r \circ e = 1_Z$.
\end{proof}

\begin{rem}\label{rem:all-direct-summand}
If the Hom-categories of~$\mathcal{B}$ are additive and if idempotent 2-morphisms split, we can improve on Proposition~\ref{prop:all-is-retract} slightly: instead of~$Z$ just being a retract, it now even occurs as a direct summand of $Y^\dagger \otimes F \otimes X$ for a suitable~$F$. To see this, take the maps $e,r$ from the proof, note that $p = e \circ r$ is an idempotent endomorphism of $Y^\dagger \otimes Y \otimes Z \otimes X^\dagger \otimes X$, and consider the decomposition $1 = p + (1-p)$ into orthogonal idempotents.
\end{rem}

\medskip

\medskip

We may now proceed to prove the results advertised in Section~\ref{sec:introductionpart1}. 

\begin{proof}[Proof of Theorem~\ref{defthm:MF-orbequiv}]
This is a corollary of \cite[Prop.\,8.5]{carqmurfet}: reflexivity and symmetry follow analogously to Theorem~\ref{thm:orb-equiv}, and transitivity follows from the fact that the quantum dimensions~\ref{qdims} are manifestly multiplicative up to a sign. 
\end{proof}

\begin{proof}[Proof of Proposition~\ref{prop:sum+Kn"orrer}]
From~\ref{qdims} it is clear that quantum dimensions are multiplicative (up to a sign) also for the external tensor product $\otimes_{\mathbbm{k}}$, showing part~(i). 
Part~(ii) is proven in \cite[Sect.\,7.2]{carqrunkel2}. 
\end{proof}

\begin{proof}[Proof of Proposition~\ref{prop:MF-summand}]
This is a direct consequence of the proof of Proposition~\ref{prop:all-is-retract} and Remark~\ref{rem:all-direct-summand} as we are dealing with idempotent complete matrix factorization categories. 
\end{proof}

\subsubsection{Simple singularities}\label{subsec:simple-sing}

Let $W(y_1,y_2)$ be one of the potentials $W^{(\mathrm{D}_{d/2+1})}, W^{(\mathrm{E}_6)}, W^{(\mathrm{E}_7)}, W^{(\mathrm{E}_8)}$ in~\eqref{eq:simple-sing-ADE}, and let $V(x_1,x_2) = W^{(\mathrm{A}_{d-1})} = x_1^d + x_2^2$ be the corresponding A-type potential of the same central charge as~$W$. To avoid too many indices, we  rename
\beq
	x_1 \leadsto u \, , \quad
	x_2 \leadsto v \, , \quad
	y_1 \leadsto x \, , \quad
	y_2 \leadsto y \, .
\eeq

In this section we will give a finite-rank graded matrix factorization of $W(x,y)-V(u,v)$ of non-zero left and right quantum dimension in each case, thus proving the results collected in Section~\ref{subsec:orbeqSimSin}. These matrix factorizations have all been constructed along the following lines:
\begin{enumerate}
\item Pick a matrix factorization $X_0$ of $W(x,y) - v^2$ that is of low rank.
\item Thinking of~$u$ as a deformation parameter, add to each entry of the matrix $d_{X_0}$ the most general homogeneous polynomial of the form $u \cdot p(u,v,x,y)$ with the same total degree as the given entry. Let $d_{X_u}$ be the resulting matrix. 
\item Reduce the number of free parameters in the polynomials~$p$ by absorbing some of them via a similarity transformation $d_{X_u} \mapsto \Phi \circ d_{X_u} \circ \Phi^{-1}$.
\item Try to find a set of parameters (the remaining coefficients in the polynomials~$p$) such that $d_{X_u} \circ d_{X_u} = (W-V) \cdot \mathrm{id}_{X_u}$.
\end{enumerate}
Choosing a low-rank starting point in~(i) and reducing the number of parameters via (iii) only serves to simplify the problem in~(iv). We will work through these steps for $W=W^{(\mathrm{D}_{d/2+1})}$ and $W=W^{(\mathrm{E}_6)}$ in some detail, while our discussion will be briefer for $W^{(\mathrm{E}_7)}$ and $W^{(\mathrm{E}_8)}$.

\subsubsection*{$\boldsymbol{W^{(\mathrm{D}_{d/2+1})} \sim W^{(\mathrm{A}_{d-1})}}$}

We set $b=d/2$, so that $W = x^b + x y^2$ and $V = u^{2b}+v^2$, and we write 
$R = \mathbbm{C}[u,v,x,y]$. 
As the starting point in step (i) we choose $X_0 = (X , d_{X_0})$ with $\mathbb{Z}_2$-graded $R$-module $X = R^2 \oplus R^2$ and twisted differential
\beq\label{eq:D-calc-X0}
	d_{X_0} = \begin{pmatrix} 0 & d_{X_0}^1 \\ d_{X_0}^0 & 0 \end{pmatrix}
	\quad \text{with} \quad
	d_{X_0}^1 = \begin{pmatrix} x & v \\ v & x^{b-1} + y^2 \end{pmatrix}
	.
\eeq
Since $\det(d_{X_0}^1) = x^b + x y^2 - v^2$, the component $d_{X_0}^0$ is determined to be the adjunct matrix, $d_{X_0}^0  = (d_{X_0}^1)^\#$. Recall that the adjunct $M^\#$ of an invertible matrix~$M$ has entries which are polynomial in those of~$M$ and satisfies $M^{-1} = (\det M)^{-1} \cdot M^\#$.

The deformed matrix factorization $X_u$ has the same underlying $R$-module $R^2 \oplus R^2$. 
For step (ii) we need to pick the most general homogeneous deformation of $d_{X_0}^1$, which is
\beq\label{eq:D-calc-ansatz}
	d_{X_u}^1 = \begin{pmatrix} 
	x + u \, p_{11} & v  + u \, p_{12}  \\ v  + u \, p_{21}  & x^{b-1} + y^2  + u \, p_{22} 
	\end{pmatrix}
	\quad \text{where} \quad p_{ij} \in \mathbbm{C}[u,v,x,y] \, .
\eeq
The degrees of the variables are $|u|=1/b$, $|v|=1$, $|x|=2/b$, and $|y| = 1-1/b$. Hence the total degrees of the $p_{ij}$ have to be
\beq
	|p_{11}| = \tfrac1b	\, , \quad
	|p_{12}| = 1-\tfrac1b	\, , \quad
	|p_{21}| = 1-\tfrac1b	\, , \quad
	|p_{22}| = 2-\tfrac3b	 \, .
\eeq
From this it follows that $p_{11} = a_1 u$ for some $a_1 \in \mathbbm{C}$, but the remaining $p_{ij}$ will contain of the order~$b$ many free coefficients. 

Moving to step (iii), we will now use degree-preserving row and column operations on $d_{X_u}^1$ to reduce the number of free coefficients. By applying the inverse operations to $d_{X_u}^0$ one produces in this way an isomorphic matrix factorization. The most general such row and column manipulations turn out to be
\beq\label{eq:D-calc-ansatz-reduced}
	\begin{pmatrix} 1 & 0 \\ f & 1 \end{pmatrix} d_{X_u}^1 \begin{pmatrix} 1 & g \\ 0 & 1 \end{pmatrix}
	= \begin{pmatrix}
	x + a_1 u^2 & v  + u \, p_{12}  + g (x + a_1 u^2)  \\ v  + u \, p_{21}  + f (x + a_1 u^2) & *
	\end{pmatrix}
\eeq
where $|f|=|g|=1-2/b$. We see that $f,g$ can be used to remove any $x$-dependence from $p_{12}$ and $p_{21}$, and we arrive at the following reduced ansatz: $d_{X_u}^1$ of the form \eqref{eq:D-calc-ansatz} with
\begin{align}
	p_{11} &= a_1 u \, , &
	p_{12} &= a_2 y + a_3 u^{b-1} \, , \nonumber \\
	p_{21} &= a_4 y + a_5 u^{b-1} \, ,  &
	p_{22} &= q_1  v + q_2  y + q_3  \, ,
\end{align}
where $a_i \in \mathbbm{C}$ and $q_i \in \mathbbm{C}[u,x]$ with degrees $|q_1| = 1-3/b$, $|q_2| = 1-2/b$, and $|q_3|=2-3/b$. 

Step (iv) amounts to the tedious task of trying to find conditions such that $d_{X_u} \circ d_{X_u} = (W-V) \cdot 1$.
The second component of the twisted differential is uniquely determined to be $d_{X_u}^0  = q / \det(d_{X_u}^1) \cdot (d_{X_u}^1)^\#$ with $q=x^b + x y^2 - u^{2b} - v^2$, and we need to find values of the deformation parameters so that this matrix has polynomial entries. Since $q \in \mathbbm{C}[x,y,u,v]$ is irreducible, either $q$ is a factor of $\det(d_{X_u}^1)$, or $\det(d_{X_u}^1)$ has to cancel against the entries of $(d_{X_u}^1)^\#$. Degree considerations show that the latter is not possible, and in fact $\det(d_{X_u}^1)$ equals~$q$ up to a multiplicative constant. By rescaling $d_{X_u}^1$ if necessary, without restriction of generality we can impose $\det(d_{X_u}^1) = q$.
In solving this condition, one is lead to distinguish between two cases, $a_2=0$ and $a_2 \neq 0$. Setting $a_2=0$ produces a solution with zero left and right quantum dimension. On the other hand, keeping $s := a_2 \neq 0$ forces
\beq \label{eq:D-calc-solution}
	d_{X_u}^1 = \begin{pmatrix} 
	x - (su)^2 & v  + y(s u)  \\ v  - y(s u)  & \frac{x^b - (su)^{2b}}{x - (su)^2} + y^2 
	\end{pmatrix}
	\quad \text{where} \quad s^{2b} = 1 \, .
\eeq
For $s=1$ this is the solution given in \cite[Sect.\,7.3]{carqrunkel2}. The quantum dimensions are  
\beq
	\mathrm{qdim}_l(X_u) = -2 s
	\, , \quad
	\mathrm{qdim}_r(X_u) = - s^{-1} \, .
\eeq

What remains to be done is to check that there exists a $\mathbb{Q}$-grading on $X_u = (X, d_{X_u})$ such that $d_{X_u}$ has $\mathbb{Q}$-degree 1. It is in fact easy to write down all such gradings. The $\mathbb{Q}$-grading on the ring~$R$ is fixed by $|1|=0$, with the variables $u,v,x,y$ having degrees as stated below \eqref{eq:D-calc-ansatz}. Writing $R[\alpha]$, $\alpha \in \mathbb{Q}$, for~$R$ with $|1|=\alpha$, the possible gradings on~$X$ are
\begin{equation}
X^0 = R[\alpha] \oplus R[\alpha-1+\tfrac2b] \, , \quad
	X^1 = R[\alpha-1+\tfrac2b] \oplus R[\alpha]
	\, , \quad \alpha \in \mathbb{Q} \, .
\end{equation}

This completes the proof that the potentials $W^{(\mathrm{A}_{d-1})}$ and $W^{(\mathrm{D}_{d/2+1})}$ are orbifold equivalent. We can summarise the above discussion as follows.

\begin{lem}\label{lem:D-calc-summary}
Let~$Y$ be a rank-two $\mathbb{Q}$-graded matrix factorization of $x^b + x y^2 - u^{2b} - v^2$. Suppose that~(i) $Y$ has non-zero quantum dimensions and~(ii) when setting~$u$ to zero, $Y$ is equal to $X_0$ in~\eqref{eq:D-calc-X0}. Then~$Y$ is isomorphic in $\mathrm{hmf}^{\mathrm{gr}}_{\mathbbm{C}[u,v,x,y],x^b + x y^2 - u^{2b} - v^2}$ to $X_u$ in~\eqref{eq:D-calc-solution} for some choice of~$s$.
\end{lem}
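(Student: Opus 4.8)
The plan is to retrace the four-step construction preceding the lemma, but to read each step as a rigidity statement that pins down $Y$ up to graded isomorphism rather than merely producing one solution. First I would use hypothesis~(ii): since $Y|_{u=0} = X_0$, I may take the underlying $\mathbb{Z}_2$-graded module of $Y$ to be $R^2 \oplus R^2$ and write its twisted differential as in \eqref{eq:D-calc-ansatz}, where the deformation entries $u\,p_{ij}$ are $u$-divisible and vanish at $u=0$. Because $Y$ is $\mathbb{Q}$-graded with $d_Y$ of degree~$1$ and reduces to $X_0$, the grading on $X$ is forced to be one of the shifts listed after \eqref{eq:D-calc-solution}; this fixes the total degrees $|p_{ij}|$ and hence constrains each $p_{ij}$ to the finite list of homogeneous monomials recorded in the text, in particular $p_{11} = a_1 u$.

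Second, I would implement the degree-preserving row and column operations of \eqref{eq:D-calc-ansatz-reduced}. The matrices $\left(\begin{smallmatrix} 1 & 0 \\ f & 1 \end{smallmatrix}\right)$ and $\left(\begin{smallmatrix} 1 & g \\ 0 & 1 \end{smallmatrix}\right)$ are invertible over $R$ and homogeneous of degree $0$, so conjugating $d_Y$ by them (acting by the inverses on $d_Y^0$) yields an isomorphic graded matrix factorization, in fact a strict isomorphism in $\mathrm{zmf}^{\mathrm{gr}}_{\mathrm{bi}}$ and a fortiori in $\mathrm{hmf}^{\mathrm{gr}}_{\mathrm{bi}}$. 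Choosing $f,g$ to absorb the $x$-dependence of $p_{12},p_{21}$ brings $Y$, up to isomorphism, into the reduced ansatz with the $p_{ij}$ as displayed before step~(iv). The point to verify carefully is that these are the most general degree-preserving operations, so that no genuine solution is discarded in the reduction.

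Third, I would impose $d_Y \circ d_Y = (W-V)\cdot\mathrm{id}$. Its off-diagonal block forces $d_Y^0 = q\,(d_Y^1)^\#/\det(d_Y^1)$ with $q = x^b + xy^2 - u^{2b} - v^2$; polynomiality of $d_Y^0$ together with the irreducibility of $q$ and a degree count forces $\det(d_Y^1) = c\,q$ for a nonzero constant $c$, which I may normalise to $c=1$ by rescaling $d_Y^1$. Solving this determinant condition in the remaining coefficients splits into the branch $a_2 = 0$ and the branch $s := a_2 \neq 0$. The first branch yields vanishing $\mathrm{qdim}_l$ and $\mathrm{qdim}_r$ by the residue formulas in~\ref{qdims}, and is therefore excluded by hypothesis~(i); the second branch rigidifies completely, forcing $s^{2b} = 1$ and exactly the differential \eqref{eq:D-calc-solution}, so that $Y \cong X_u$ for this value of $s$.

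I expect the main obstacle to be this last step: showing that once $a_2 \neq 0$ the equation $\det(d_Y^1) = q$ determines all remaining parameters $a_i$ and $q_i$ uniquely and reproduces precisely \eqref{eq:D-calc-solution}, rather than some larger family. This is an explicit but delicate polynomial elimination whose bookkeeping grows with~$b$; organising it by matching homogeneous components with respect to the $\mathbb{Q}$-grading, and using the already-computed quantum dimensions $\mathrm{qdim}_l(X_u) = -2s$, $\mathrm{qdim}_r(X_u) = -s^{-1}$ to discard the $a_2=0$ locus at once, is what keeps the argument finite and uniform in~$b$.
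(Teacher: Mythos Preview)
Your proposal is correct and follows essentially the same approach as the paper: the lemma is presented there as a summary of the preceding four-step discussion, and you have faithfully retraced that discussion, including the use of the grading to constrain the $p_{ij}$, the similarity transformation~\eqref{eq:D-calc-ansatz-reduced} to reach the reduced ansatz, the irreducibility-and-degree argument forcing $\det(d_Y^1)=q$, and the case split on $a_2$ with hypothesis~(i) ruling out $a_2=0$. The tedious polynomial elimination you flag as the main obstacle is exactly what the paper refers to as ``the tedious task'' in step~(iv).
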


Let us analyze the matrix factorization $X_u$ a bit further. 
First we recall the definition of the permutation-matrix factorizations~$P_S$ from \ref{permtypembf}. The left and right quantum dimensions of a permutation-type matrix factorization $P_J$ were computed in \cite[Sect.\,3.3]{carqrunkel3} to be
\begin{equation}
  \mathrm{qdim}_l(P_J) = \sum_{l \in J} \zeta_d^l 
  \, , \quad 
  \mathrm{qdim}_r(P_J) = \sum_{l \in J} \zeta_d^{-l} \, . 
\end{equation}

The automorphism~$\sigma$ of $\mathbbm{C}[u]$ determined by $\sigma(u) = \zeta_d u$ leaves the potential $u^d$ invariant. 
Given a matrix factorization~$X$ of $W - u^d$ for some $W \in \mathbbm{C}[x]$, twisting the $\mathbbm{C}[u,x]$-action on~$X$ by $\sigma^l$ (with $\sigma$ extended to $\mathbbm{C}[u,x]$ via $\sigma(x_i)=x_i$) results again in a matrix factorization of $W - u^d$ which we denote by $X_{\sigma^l}$. It follows from \cite[Lem.\,2.10]{carqrunkel1} that
\begin{equation}
\label{eq:P_{-k}-action}
	X_{\sigma^l} \cong X \otimes P_{\{-l\}} 
\end{equation}
in $\mathrm{hmf}^{\mathrm{gr}}_{\mathbbm{C}[u,x], W-u^d}$. 

\begin{rem}\label{rem:D-calc-s-choice}
The observation \eqref{eq:P_{-k}-action} together with the classification given in Lemma \ref{lem:D-calc-summary} explains the $2b$-th root of unity~$s$ appearing as a parameter in \eqref{eq:D-calc-solution}. To wit, given any 1-morphism $P : V \to V$ of non-zero quantum dimensions, the composition $X_u \otimes P : V \to W$ also has non-zero quantum dimensions. Consider the choice $P = P_{\{l\}} \otimes_{\mathbbm{C}} I_{v^2}$, where $I_{v^2}$ the unit 1-morphism for the potential~$v^2$ with twisted differential $(\begin{smallmatrix} 0 & v'-v \\ v'+v & 0 \end{smallmatrix})$. 
Then~$P$ is a 1-endomorphism of $u^{2b}+v^2$ and $X_u \otimes P$ is again a rank-two factorization satisfying the conditions in Lemma~\ref{lem:D-calc-summary}. Hence $X_u \otimes P$ must be isomorphic to~$X_u$ for a possibly different choice of~$s$. But different choices of~$s$ precisely amount to twisting the $u$-action by some power of the automorphism~$\sigma$.
\end{rem}

It was checked in \cite[Sect.\,7.3]{carqrunkel2} that for $d\in \{ 2,3,\ldots,10 \}$, and up to a trivial factor of the unit $I_{v^2}$ the monoid $X^\dagger_{u'} \otimes X_u$ is isomorphic to
\beq\label{eq:PPinDcase}
P_{\{0\}} \oplus P_{\{0,1,\ldots,d-1\} \backslash \{\frac{d}{2}\}} \, . 
\eeq
It is straightforward to compute an explicit basis for the endomorphisms of~$X_u$ in the homotopy category, e.\,g.~by using the Singular code of \cite{cdr}. We have done so for small values of~$d$, with the result 
\beq\label{eq:EndXDcase}
\mathrm{End}
(X_u) 
\cong 
\Big(
\bigoplus_{i=0}^{d-2} \mathbbm{C}_{i\cdot \frac{2}{d}}
\Big)
\oplus 
\mathbbm{C}_{\frac{d-2}{d}}
\eeq
where~$\mathbbm{C}_{j}$ denotes the one-dimensional subspace of maps of $\mathbb{Q}$-degree~$j$. 
Both~\eqref{eq:PPinDcase} and~\eqref{eq:EndXDcase} are expected to be the correct expressions for all~$d$.

\subsubsection*{$\boldsymbol{W^{(\mathrm{E}_6)} \sim W^{(\mathrm{A}_{11})}}$}

Our starting point in step~(i) now is the matrix factorization $X_0 = (R^2 \oplus R^2 , d_{X_0})$ of $x^3 + y^4 - v^2$ 
with $R = \mathbbm{C}[u,v,x,y]$ and twisted differential
\beq\label{eq:E6-calc-X0}
	d_{X_0}^1 = \begin{pmatrix} y^2-v & -x \\ x^2 & y^2+v \end{pmatrix}
	 , \quad
	d_{X_0}^0  = (d_{X_0}^1)^\#
	\, .
\eeq
This is one of the six indecomposable objects of $\mathrm{hmf}^{\mathrm{gr}}_{\mathbbm{C}[v,x,y],W^{(\mathrm{E}_6)} - v^2}$ listed in \cite[Sect.\,5]{kst0511155}. The variable degrees are
\beq
	|u| = \tfrac16
	\, , \quad
	|v| = \tfrac66
	\, , \quad
	|y| = \tfrac36
	\, , \quad
	|x| = \tfrac46 
	\, .
\eeq

Carrying out steps~(ii) and (iii) leads to the possibility
\beq
	d_{X_u}^1 = \begin{pmatrix} 
	y^2-v + a_1 x u^2 + a_2 u^6 & 
	-x + a_3 y u + a_4 u^4 \\ 
	x^2 + a_5 y x u +  a_6 x u^4 + a_7 v u^2 + q & 
	y^2+v + a_8 x u^2 + a_9 u^6
	\end{pmatrix} 
\eeq
for the reduced ansatz, where $a_i \in \mathbbm{C}$ and $q \in \mathbbm{C}[u,y]$ with $|q| = 8/6$. Of course a different choice of similarity transformation may give a different (but isomorphic) reduced ansatz. Here, the row and column manipulations were used to absorb the terms $y u^3$ in the diagonal entries.

By the same argument as used in the D-case, we now need to solve 
the condition $\det(d_{X_u}^1)= x^3 + y^4 - u^{12} - v^2$ under the extra constraint that $X_u$ has non-zero quantum dimensions. This leads to 
\begin{align} \label{eq:E6-calc-solution}
d_{X_u}^1 = \big(\begin{smallmatrix} a&b \\ c & d \end{smallmatrix}\big) \quad \text{with} \quad 
	a &= y^2-v + \tfrac12 x (s u)^2 + \tfrac{2t+1}8  (s u)^6   \, ,
	\nonumber \\
	b &= -x + y (s u) + \tfrac{t+1}4  (s u)^4   \, ,
	\nonumber \\
	c &= x^2 + y x (s u) +   \tfrac{t}4 x (su)^4 + \tfrac{2t+1}4 y (su)^5
	 - \tfrac{9t+5}{48} (su)^8   \, ,
	\nonumber \\
	d &= y^2+v + \tfrac12 x (s u)^2 + \tfrac{2t+1}8 (s u)^6 \, ,
\end{align}
and $d_{X_0}^0  = (d_{X_0}^1)^\#$. Here, $s$ and~$t$ can be any solution of
\beq\label{eq:stE6}
	t^2  = \tfrac13
	\, , \quad	
	s^{12} = - 576 \, (26 \, t -15) \, .
\eeq
Note that~$s$ can be modified by a 12-th root of unity -- the interpretation of this is as in Remark~\ref{rem:D-calc-s-choice}.

The quantum dimensions of $X_u$ are
\begin{equation}
	\mathrm{qdim}_l(X_u) = s
	\, , \quad
	\mathrm{qdim}_r(X_u) = 3 \, (1 - t) s^{-1}
	 \, ,
\end{equation}

and all possible $\mathbb{Q}$-gradings on $X_u$ are again easily found: the underlying $\mathbb{Z}_2$-graded $R$-module $X = X_0 \oplus X_1$ has components with $\mathbb{Q}$-grading
\begin{equation}
	X^0 = R[\alpha] \oplus R[\alpha-\tfrac13] \, , \quad
	X^1 = R[\alpha] \oplus R[\alpha-\tfrac13]
	\, , \quad \alpha \in \mathbb{Q} .
\end{equation}

As in the D-case, we can summarise the above as:

\begin{lem}\label{lem:E6-calc-summary}
Let~$Y$ be a rank-two $\mathbb{Q}$-graded matrix factorization of $x^3 + y^4 - u^{12} - v^2$. Suppose that (i)~$Y$ has non-zero quantum dimensions and (ii) when setting~$u$ to zero, $Y$ is equal to $X_0$ in~\eqref{eq:E6-calc-X0}. Then~$Y$ is isomorphic in $\mathrm{hmf}^{\mathrm{gr}}_{\mathbbm{C}[u,v,x,y],x^3 + y^4 - u^{12} - v^2}$ to $X_u$ in~\eqref{eq:E6-calc-solution} for some solution $s,t$ of~\eqref{eq:stE6}.
\end{lem}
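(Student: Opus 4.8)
The plan is to read the explicit construction (steps (ii)--(iv) above) as a classification statement: every $Y$ satisfying the hypotheses is forced, up to isomorphism in $\mathrm{hmf}^{\mathrm{gr}}$, into the family \eqref{eq:E6-calc-solution}. First I would pin down the shape of $d_Y^1$. Since $Y$ is a rank-two $\mathbb{Q}$-graded matrix factorization that agrees with $X_0$ of \eqref{eq:E6-calc-X0} upon setting $u=0$, the $\mathbb{Q}$-grading on the underlying module $R^2 \oplus R^2$ must coincide with the one carried by $X_0$ (the grading is discrete and is pinned down by the $u=0$ specialization). This fixes the $\mathbb{Q}$-degree of each matrix entry. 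Homogeneity of $d_Y^1$ together with $d_Y^1|_{u=0}=d_{X_0}^1$ then forces each entry to be the corresponding entry of $d_{X_0}^1$ plus the most general homogeneous polynomial divisible by $u$ of the prescribed degree --- precisely the deformed ansatz of step (ii).

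Next I would carry out the degree-preserving row and column operations of step (iii). These are realized by conjugating $d_Y$ with a $\mathbb{Z}_2$-even, $\mathbb{Q}$-degree-zero automorphism of $R^2\oplus R^2$, hence produce a matrix factorization isomorphic to $Y$ in $\mathrm{zmf}^{\mathrm{gr}}$, and a fortiori in $\mathrm{hmf}^{\mathrm{gr}}$. So without loss of generality $Y$ may be assumed to be in reduced form, leaving only the finitely many coefficients $a_i$ and the polynomial $q\in\mathbbm{C}[u,y]$. The factorization condition $d_Y\circ d_Y=(W-V)\cdot\mathrm{id}$ is equivalent to requiring $d_Y^0 = (x^3+y^4-u^{12}-v^2)\,(d_Y^1)^\#/\det(d_Y^1)$ to have polynomial entries. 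Since $x^3+y^4-u^{12}-v^2$ is irreducible in $R$, and a degree count (exactly as in the D-case) excludes cancellation between $\det(d_Y^1)$ and the entries of the adjunct $(d_Y^1)^\#$, the determinant must equal the potential up to a nonzero constant; rescaling $d_Y^1$ normalizes this to $\det(d_Y^1)=x^3+y^4-u^{12}-v^2$.

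Finally I would solve this single polynomial identity for the coefficients $a_i$ and $q$, imposing in addition that $\mathrm{qdim}_l(Y)$ and $\mathrm{qdim}_r(Y)$ (computed from \ref{qdims}) be nonzero. Expanding $\det(d_Y^1)$ and matching coefficients yields a finite system whose solution set splits into branches: the branch with $a_2=0$ gives vanishing quantum dimensions and is discarded, while the remaining branch is exactly \eqref{eq:E6-calc-solution} with $s,t$ constrained by \eqref{eq:stE6}. The main obstacle is this last elimination: it is a sizeable but finite computation (carried out with the Singular code of \cite{cdr}) to verify that no solution with nonvanishing quantum dimension exists outside the stated family, and that each such solution is genuinely isomorphic --- not merely formally similar --- to some $X_u$. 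When an isomorphism needs to be certified rather than read off from the normal form, it can be detected by the homology criterion of Remark \ref{rem:H(M)}, since both factorizations are reduced and are therefore distinguished by $H(-)$.
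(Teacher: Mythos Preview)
Your proposal is correct and follows essentially the same approach as the paper: the lemma is stated in the paper as a summary of the preceding steps (ii)--(iv), and you have simply read that construction back as a classification argument, exactly as intended. One small slip: you refer to ``the branch with $a_2=0$'' as the one yielding zero quantum dimensions, but that labeling belongs to the D-case; in the $\mathrm{E}_6$ reduced ansatz the parameter that must be nonzero is $a_3$ (it becomes $s$ in \eqref{eq:E6-calc-solution}, and $\mathrm{qdim}_l(X_u)=s$).
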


\medskip

Computing the endomorphism of~$X_u$ in the homotopy category one finds the 16-dimensional space 
\begin{equation}\label{eq:EndhmfX-E6}
\mathrm{End}
(X_u) 
\cong 
\Big(
\bigoplus_{i=0}^{10} \mathbbm{C}_{i\cdot \frac{2}{12}}
\Big)
\oplus 
\Big(
\bigoplus_{i=3}^{7} \mathbbm{C}_{i\cdot \frac{2}{12}}
\Big) \, .
\end{equation}

As mentioned in Section~\ref{subsec:orbeqSimSin} and explained in detail in \cite{carqrunkel2}, a matrix factorization~$X$ of $W(y)-V(x)$ with invertible quantum dimensions allows us to describe \textsl{all} matrix factorizations of~$W$ in terms of modules over $A := X^\dagger \otimes X \in \mathrm{hmf}^{\mathrm{gr}}_{\mathbbm{k}[x,x'], V(x)-V(x')}$. The matrix $d_{X^\dagger \otimes X}$ also depends on the $y$-variables, and hence the matrix factorization~$A$ is of infinite rank. However, by the results of \cite{dm} it is homotopy equivalent (and thus isomorphic in $\mathrm{hmf}^{\mathrm{gr}}_{k[x,x'], V(x)-V(x')}$) to a finite-rank matrix factorization. 

The construction of this finite-rank factorization and the explicit homotopy equivalence can be implemented on a computer; this was done in \cite{khovhom}, where it was used to compute Khovanov-Rozansky link invariants. In our present situation we can use this implementation to find that $X_{u'}^\dagger \otimes X_u$ is equivalent to the matrix factorization $A'\otimes_\mathbbm{k} I_{v^2}$, where the twisted differential of~$A'$ is represented by a 4-by-4 matrix~$a'$ with nonzero entries
\begin{align}
a'_{13}& = (\tfrac{76}{3}-44 t) u^8+(\tfrac{136}{3}-80 t) u' u^7+(44-76 t) u'^2 u^6+(\tfrac{152}{3}-88 t) u'^3 u^5 \nonumber \\ 
& \quad +(\tfrac{208}{3}-120 t) u'^4 u^4+(\tfrac{152}{3}-88 t) u'^5 u^3+(20 t-12) u'^6 u^2 \nonumber \\ 
& \quad +(88 t-\tfrac{152}{3}) u'^7 u+(52 t-\tfrac{92}{3}) u'^8 \nonumber \, , \\
a'_{14} & = (224-384 t) u^5 +(832-1440 t) u' u^4+(1440-2496 t) u'^2 u^3 \nonumber \\ 
& \quad +(1440-2496 t) u'^3 u^2+(832-1440 t) u'^4 u+(224-384 t) u'^5 \nonumber \, , \\
a'_{23} & =  (\tfrac{5}{3}-\tfrac{19 t}{6}) u^{11}+(\tfrac{49}{6}-\tfrac{43 t}{3}) u' u^{10}+(\tfrac{287}{18}-28 t) u'^2 u^9+(\tfrac{361}{18}-35 t) u'^3 u^8 \nonumber \\ 
& \quad +(\tfrac{208}{9}-\tfrac{241 t}{6}) u'^4 u^7+(\tfrac{475}{18}-46 t) u'^5 u^6+(\tfrac{64}{3}-\tfrac{223 t}{6}) u'^6 u^5+(\tfrac{23}{6}-7 t) u'^7 u^4 \nonumber \\ 
& \quad +(\tfrac{65 t}{3}-\tfrac{227}{18}) u'^8 u^3+(\tfrac{74 t}{3}-\tfrac{259}{18}) u'^9 u^2+(\tfrac{65 t}{6}-\tfrac{58}{9}) u'^{10} u+(\tfrac{5 t}{3}-\tfrac{19}{18}) u'^{11} \nonumber \, , \\
a'_{24}& =  (16-28 t) u^8+(104-180 t) u' u^7+(\tfrac{908}{3}-524 t) u'^2 u^6+(524-908 t) u'^3 u^5 \nonumber \\ 
& \quad +(\tfrac{1780}{3}-1028 t) u'^4 u^4+(448-776 t) u'^5 u^3+(\tfrac{652}{3}-376 t) u'^6 u^2 \nonumber \\ 
& \quad +(60-104 t) u'^7 u+(\tfrac{20}{3}-12 t) u'^8 \nonumber \, , \\
a'_{31}& = \tfrac{3 t u^4}{4}-\tfrac{3 u' u^3}{4}+\tfrac{u'^2 u^2}{4}+\tfrac{3 u'^3 u}{4}+(-\tfrac{3 t}{4}-\tfrac{1}{4}) u'^4 \nonumber \, , \\
a'_{32}& =  6 u-6 u'\nonumber \, , \\
a'_{41}& =  (-\tfrac{13 t}{32}-\tfrac{3}{16}) u^7+(\tfrac{7 t}{32} +\tfrac{1}{8}) u' u^6+(-\tfrac{13 t}{32}-\tfrac{31}{96}) u'^2 u^5+(\tfrac{3 t}{32}+\tfrac{19}{96}) u'^3 u^4 \nonumber \\ 
& \quad +(\tfrac{t}{8}+\tfrac{1}{48}) u'^4 u^3+(\tfrac{t}{16}-\tfrac{1}{12}) u'^5 u^2+(\tfrac{t}{8}+\tfrac{11}{96}) u'^6 u+(\tfrac{3 t}{16}+\tfrac{13}{96}) u'^7 \nonumber \, , \\
a'_{42}& = (\tfrac{3 t}{4}-\tfrac{1}{4}) u^4+2 u' u^3-3 u'^2 u^2+\tfrac{5 u'^3 u}{2}+(-\tfrac{3 t}{4}-\tfrac{5}{4}) u'^4 
\, .
\end{align}
This is a graded matrix factorization with grading:
\begin{equation}
	X^0 = R[\alpha] \oplus R[\alpha-\tfrac12] \, , \quad
	X^1 = R[\alpha+\tfrac13] \oplus R[\alpha-\tfrac16]
	\, , \quad \alpha \in \mathbb{Q} .
\end{equation}

In anticipation of the comparison to conformal field theory in Subsection \ref{sec:CFTcomparison} we single out one of the roots of \eqref{eq:stE6},
\beq
	t_{\text{cft}} = -1/\sqrt{3} \, .
\eeq
Choosing $t=t_\text{cft}$ and performing a series of row and column manipulations produces the isomorphism 
\beq\label{eq:AMF-E6}
\begin{pmatrix}
\phi & 6 & 0 & 0 \\
-\tfrac{1}{32} (12t+7) & 0 & 0 & 0 \\
0 & 0 & 1 & 0 \\
0 & 0 & \psi & 1
\end{pmatrix}
: 
A' \longrightarrow 
P_{\{0\}} \oplus P_{\{-3,-2,\ldots,3\}} 
\eeq
where $\phi=\frac{1}{4} ( u+u' ) ( 3 t u^2 -3uu'+u'^2+3t u'^2 )$ and $\psi=\frac{1}{24} ( u^3-3t u^3-7 u^2 u' -3 t ^2 u'+5 u u'^2-3 t u u'^2-5 u'^3-3 t u'^3 )$. 
This proves the first third of Corollary~\ref{cor:Etypemod}. 

\begin{rem}\label{rem:galE6}
One may wonder what 
the monoid 
$X_{u'}^\dagger \otimes X_u$ 
reduces to for the other solution $t = 1/\sqrt{3}$ of~\eqref{eq:stE6}, i.\,e.~what is the equivalent to the right-hand side of~\eqref{eq:AMF-E6}. As we will see momentarily, the other solution can be related to $t_\text{cft}$ via the action of an appropriate Galois group. 

Define $\zeta_d = \mathrm{e}^{2 \pi \text{i}/d}$ and consider the cyclotomic field $\mathbbm{k} = \mathbb{Q}(\zeta_d)$, i.\,e.~the field obtained from~$\mathbb{Q}$ by adjoining the $d$-th primitive root of unity~$\zeta_d$. The Galois group is isomorphic to the group of units in~$\mathbb{Z}_d$, $\mathrm{Gal}(\mathbbm{k}/\mathbb{Q}) \cong \mathbb{Z}_d^\times$. Given $\nu \in \mathbb{Z}_d^\times$ the action of the corresponding Galois group element $\sigma_\nu$ is $\sigma_\nu(\zeta_d^a) = \zeta_d^{\nu a}$.

Let now $V \in \mathbb{Q}[x]$ be a potential with rational coefficients and let~$M$ be a finite-rank matrix factorization of~$V$ over~$\mathbbm{k}[x]$. We may take $M = (\mathbbm{k}[x]^{2r}, d_M)$, where~$d_M$ is a matrix with entries in~$\mathbbm{k}[x]$. Let $\sigma \in \mathrm{Gal}(\mathbbm{k}/\mathbb{Q})$ be an element of the Galois group and denote by $\sigma(d_M)$ the matrix obtained by applying~$\sigma$ to each entry. Since $\sigma(V) = V$, $\sigma(d_M)$ is still a factorization of~$V$, and we set $\sigma(M) = (\mathbbm{k}[x]^{2r}, \sigma(d_M))$. Analogously, if $f : M \to N$ is a morphism with entries in $\mathbbm{k}[x]$, then $\sigma(f)$ is a morphism from $\sigma(M)$ to $\sigma(N)$.

Let us apply this to the isomorphism in \eqref{eq:AMF-E6}. Write $A'(t)$ for~$A'$ to highlight the $t$-dependence. We choose $\mathbbm{k} = \mathbb{Q}(\zeta_{12})$ so that all $P_S$ are matrix factorizations over $\mathbbm{k}[u,u']$. Since
\beq
	t_\text{cft} = -\tfrac13 \big( \zeta_{12} + \zeta_{12}^{-1} \big) \, , 
\eeq
also $A'(t_\text{cft})$ has entries with coefficients in~$\mathbbm{k}$. The same holds for the isomorphism~\eqref{eq:AMF-E6}, and so we get an isomorphism from $\sigma(A'(t_\text{cft}))$ to $\sigma(P_{\{0\}} \oplus P_{\{-3,-2,\ldots,3\}})$. 
Since the entries of $A'(t)$ are polynomials in $u,u',t$ with rational coefficients, we have $\sigma(A'(t_\text{cft})) = A'(\sigma(t_\text{cft})))$, and $\sigma(P_S) = P_{\sigma_*(S)}$, where $\sigma_*$ is the permutation of $\mathbb{Z}_{12}$ induced by the action of~$\sigma$ on the $12$-th roots of unity.

It turns out that the orbit of $t_\text{cft}$ under $\mathrm{Gal}(\mathbbm{k}/\mathbb{Q})$ covers all, namely both, roots of~\eqref{eq:stE6}: we have $\sigma_5(t_\text{cft}) = -\tfrac13 ( \zeta^5_{12} + \zeta_{12}^{-5} ) = 1/\sqrt{3}$ and $\sigma_{5*}(\{-3,-2,\dots,3\}) = \{ -5,-3,-2,0,2,3,5 \}$ and so we obtain the isomorphism
\beq
	X^\dagger_{u'} \otimes X_u 
	\cong
	\big( P_{\{0\}} \oplus P_{\{-5,-3,-2,0,2,3,5\}} \big) 
	\otimes_k I_{v^2}
	\quad \text{for} \quad 
	t  = \sigma_5(t_\text{cft}) \, .
\eeq
Note that by construction $A'(t_\text{cft})$ and $A'(\sigma_5(t_\text{cft}))$ are Morita equivalent, i.\,e.~the category $\operatorname{mod}(X^\dagger_{u'} \otimes X_u)$ does not depend on the choice of solution~$t$. The situation is analogous for the $\text{E}_7$- and $\text{E}_8$-singularities, cf.~Remarks~\ref{rem:galE7} and~\ref{rem:galE8}. 
\end{rem}

\subsubsection*{$\boldsymbol{W^{(\mathrm{E}_7)} \sim W^{(\mathrm{A}_{17})}}$}

In step~(i) we pick the matrix factorization $X_0 = (R^2 \oplus R^2 , d_{X_0})$ of $x^3 + x y^3  - v^2$ with twisted differential
\beq\label{eq:E7-calc-X0}
	d_{X_0}^1 = \begin{pmatrix} v & -x \\ x^2+y^3 & -v \end{pmatrix}
	\, , \quad
	d_{X_0}^0  = (d_{X_0}^1)^\#
	\, ,
\eeq
one of the seven indecomposable objects of $\mathrm{hmf}^{\mathrm{gr}}_{\mathbbm{C}[v,x,y],W^{(\mathrm{E}_7)} - v^2}$ listed in \cite{kst0511155}. 
In step (iii) we again choose a similarity transformation that removes the term $x u^3$ in the diagonal entries. The result of step (iv) reads
\begin{align} \label{eq:E7-calc-solution}
d_{X_u}^1 = \big(\begin{smallmatrix} a&b \\ c & d \end{smallmatrix}\big) \quad \text{with} \quad 
	a &= v -\tfrac{t^2- 10 t + 19}2 \, (s u)^9  + (t{-}2) \, y (s u)^5  + y^2 (s u) \, ,
	\nonumber \\
	b &= -x + (2 t{-}5) \, (s u)^6 + y (s u)^2  \, ,
	\nonumber \\
	c &= x^2+y^3 + (2 t{-}5)^2 \, (s u)^{12} + (2 t{-}5) \, x (s u)^6  
	\nonumber \\
	& \quad\quad + 2 (2 t{-}5) \, y  (s u)^8 + x y (s u)^2 + y^2 (s u)^4  \, ,
	\nonumber \\
	d &=  -v - \tfrac{t^2- 10 t + 19}2 \, (s u)^9  + (t{-}2) \, y (s u)^5  + y^2 (s u)
\end{align}
and $d_{X_0}^0  = (d_{X_0}^1)^\#$. This time~$s$ and~$t$ can be any solution of
\beq\label{eq:stE7}
	t^3 - 21 \, t + 37 = 0
	\, , \quad	
	s^{18} = 26220 \, t^2 + 67488 \, t - 376912  
	\, .
\eeq

We find that 
\beq
	\mathrm{qdim}_l(X_u) = -2s
	\, , \quad
	\mathrm{qdim}_r(X_u) = (-30 + 5 \, t + 2 \, t^2) s^{-1}
	 \, ,
\eeq
and the the possible $\mathbb{Q}$-gradings on~$X_u$ are given by
\begin{equation}
	X^0 = R[\alpha] \oplus R[\alpha-\tfrac13] \, , \quad
	X^1 = R[\alpha] \oplus R[\alpha-\tfrac13]
	\, , \quad \alpha \in \mathbb{Q} .
\end{equation}

\begin{lem}\label{lem:E7-calc-summary}
Let~$Y$ be a rank-three $\mathbb{Q}$-graded matrix factorization of $x^3 + xy^3 - u^{18} - v^2$. Suppose that (i)~$Y$ has non-zero quantum dimensions and (ii) when setting~$u$ to zero, $Y$ is equal to $X_0$ in~\eqref{eq:E7-calc-X0}. Then~$Y$ is isomorphic in $\mathrm{hmf}^{\mathrm{gr}}_{\mathbbm{C}[u,v,x,y],x^3 + xy^3 - u^{18} - v^2}$ to $X_u$ in~\eqref{eq:E7-calc-solution} for some solution $s,t$ of~\eqref{eq:stE7}.
\end{lem}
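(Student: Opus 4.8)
The plan is to mirror the proofs of Lemma~\ref{lem:D-calc-summary} and Lemma~\ref{lem:E6-calc-summary}, running through the four-step deformation procedure of Section~\ref{subsec:simple-sing} and showing that conditions (i) and (ii) pin $Y$ down, up to isomorphism, to the explicit family~\eqref{eq:E7-calc-solution}. (Although the statement says ``rank-three'', the factorization $X_0$ in~\eqref{eq:E7-calc-X0} and the claimed normal form $X_u$ in~\eqref{eq:E7-calc-solution} are both rank-two; I read ``rank-three'' as a typo for ``rank-two'' and carry out the argument for rank-two factorizations.)

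First I would fix the $\mathbb{Q}$-grading. Homogeneity of $W-V = x^3 + xy^3 - u^{18} - v^2$ forces $|x|=\tfrac23$, $|y|=\tfrac49$, $|u|=\tfrac19$ and $|v|=1$. Condition~(ii) says $Y|_{u=0}=X_0$, so the underlying $\mathbb{Z}_2$-graded module of $Y$ is $R^2\oplus R^2$ with the grading listed after~\eqref{eq:E7-calc-solution}, and its twisted differential equals $d_{X_0}$ plus corrections each carrying at least one factor of $u$. Writing the $(i,j)$-entry of $d^1_Y$ as the corresponding entry of $d^1_{X_0}$ plus $u\,p_{ij}$ with $p_{ij}\in\mathbbm{C}[u,v,x,y]$ homogeneous of the degree dictated by the grading (namely $|p_{11}|=|p_{22}|=\tfrac89$, $|p_{12}|=\tfrac59$, $|p_{21}|=\tfrac{11}9$) gives the most general such deformation and constitutes steps~(i) and~(ii).

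Next I would perform step~(iii): apply the most general degree-preserving similarity transformation $d^1_Y\mapsto \Phi\, d^1_Y\,\Phi^{-1}$, concretely a pair of unipotent row and column operations with off-diagonal entries of the appropriate degree, to absorb free coefficients, in particular the $x u^3$ terms in the diagonal entries. This yields a reduced ansatz with finitely many scalar parameters. For step~(iv) I impose $d_Y\circ d_Y = (W-V)\cdot\mathrm{id}$; since the second component is forced to be $d^0_Y=\tfrac{W-V}{\det(d^1_Y)}\,(d^1_Y)^{\#}$, polynomiality of $d^0_Y$ together with irreducibility of $W-V$ in $\mathbbm{C}[u,v,x,y]$ forces $\det(d^1_Y)=W-V$ up to a scalar, which I normalize to $1$ by rescaling. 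Solving this determinant equation among the reduced parameters and discarding the branch of vanishing quantum dimension leaves exactly~\eqref{eq:E7-calc-solution}, with the consistency conditions collapsing to the cubic $t^3-21\,t+37=0$ and $s^{18}=26220\,t^2+67488\,t-376912$ of~\eqref{eq:stE7}. Finally I would invoke Remark~\ref{rem:H(M)} (via \cite[Prop.\,8]{khovroz}) to confirm that $X_u$ is non-zero and that the reduction is an honest isomorphism in $\mathrm{hmf}^{\mathrm{gr}}$.

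The hard part will be step~(iv). Unlike the D-case, where the constraint reduces to the root-of-unity condition $s^{2b}=1$, and the E$_6$ case, where $t$ satisfies only the quadratic $t^2=\tfrac13$, here eliminating the $p_{ij}$-coefficients against the determinant equation is a genuinely larger nonlinear system whose consistency condition is the irreducible cubic in $t$. Checking that this system has a solution at all, and that every solution with non-zero quantum dimension is captured by~\eqref{eq:E7-calc-solution} up to the residual $18$-th-root-of-unity freedom in $s$ (interpreted via~\eqref{eq:P_{-k}-action} as tensoring with some $P_{\{-l\}}$, cf.\ Remark~\ref{rem:D-calc-s-choice}), is most reliably carried out with computer algebra, exactly as for the endomorphism and $X^\dagger\otimes X$ computations elsewhere in this section.
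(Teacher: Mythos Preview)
Your proposal is correct and follows precisely the paper's own approach: the lemma is stated as a summary of the four-step deformation computation laid out just before it, and your write-up reproduces exactly those steps (general homogeneous $u$-deformation of $X_0$, similarity transformation removing the $xu^3$ diagonal terms, imposing $\det(d^1_Y)=W-V$ via irreducibility, and solving for the parameters to land on~\eqref{eq:E7-calc-solution} and~\eqref{eq:stE7}). You are also right that ``rank-three'' in the statement should read ``rank-two''; the only superfluous ingredient in your outline is the appeal to Remark~\ref{rem:H(M)}, since the similarity transformation of step~(iii) is already an honest isomorphism in $\mathrm{ZMF}$, not merely a homotopy equivalence.
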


\medskip

As before we compute the endomorphisms of~$X_u$ in $\mathrm{hmf}^{\mathrm{gr}}_{\mathbbm{C}[u,v,x,y],x^3 + xy^3 - u^{18} - v^2}$ to be the 27-dimensional space 
\beq\label{eq:EndhmfX-E7}
\mathrm{End}
(X_u) 
\cong 
\Big(
\bigoplus_{i=0}^{16} \mathbbm{C}_{i\cdot \frac{2}{18}}
\Big)
\oplus 
\Big(
\bigoplus_{i=4}^{12} \mathbbm{C}_{i\cdot \frac{2}{18}}
\Big)
\oplus
\mathbbm{C}_{\frac{8}{18}}
\, , 
\eeq
and using the code of \cite{khovhom} together with row and column manipulations as in the previous example, for the solution
\beq 
t_\text{cft} = 3 \big( \zeta_{18} + \zeta^{-1}_{18}\big) - 2 \big(\zeta^2_{18} + \zeta^{-2}_{18}\big) ,
\eeq 
with $\zeta_d$ as in the $W_{E_6}$ case, to~\eqref{eq:stE7} we find that $X_{u'}^\dagger \otimes X_u$ is isomorphic, up to the factor $I_{v^2}$, to the rank-three graded matrix factorization
\beq\label{eq:AMF-E7}
P_{\{0\}} \oplus P_{\{-4,-3,\ldots,4\}} \oplus P_{\{-8,-7,\ldots,8\}} \, ,
\eeq
proving the second third of Corollary~\ref{cor:Etypemod}. 

\begin{rem}\label{rem:galE7}
As in Remark \ref{rem:galE6} one can check that the other solutions~$t$ to~\eqref{eq:stE7} form a single orbit under the Galois group $\mathrm{Gal}(\mathbb{Q}(\zeta_{18})/\mathbb{Q})$: the other two solutions are
\begin{align}
\sigma_5(t_\text{cft}) &= 3 \big( \zeta_{18}^5 + \zeta^{-5}_{18}\big) - 2 \big(\zeta^{10}_{18} + \zeta^{-10}_{18}\big) \, , \nonumber 
\\
\sigma_7(t_\text{cft}) &= 3 \big( \zeta_{18}^7 + \zeta^{-7}_{18}\big) - 2 \big(\zeta^{14}_{18} + \zeta^{-14}_{18}\big) \, .
\end{align}
Computing the actions of $\sigma_{5*}$ and $\sigma_{7*}$ on $\{-4,-3,\ldots,4\}$ and $\{-8,-7,\ldots,8\}$, for example $\sigma_{5*}(\{-4,-3,\ldots,4\}) = \{0,\pm 2,\pm 3,\pm 5,\pm 8\}$, one thus finds that 
\begin{align}
X^\dagger_{u'} \otimes X_u & \cong 
\big( P_{\{0\}} \oplus P_{\lbrace 0,\pm 2,\pm 3,\pm 5,\pm 8 \rbrace}\oplus P_{\{-8,-7,\ldots,8\}} \big) \otimes_k I_{v^2}
&  \text{for }   t = \sigma_5(t_\text{cft}) \, , \nonumber
\\
X^\dagger_{u'} \otimes X_u & \cong 
\big( P_{\{0\}} \oplus P_{\lbrace 0,\pm 3,\pm 4,\pm 7,\pm 8 \rbrace}\oplus P_{\{-8,-7,\ldots,8\}} \big) \otimes_k I_{v^2}
&  \text{for }  t = \sigma_7(t_\text{cft}) \, .
\end{align}
\end{rem}

\subsubsection*{$\boldsymbol{W^{(\mathrm{E}_8)} \sim W^{(\mathrm{A}_{29})}}$}

The $\mathrm{E}_8$-case is considerably more complicated than the cases already treated. This starts already in step (i) as the smallest factorizations of $W^{(\mathrm{E}_8)} - v^2$ are of rank four. Let us choose
\beq\label{eq:E8-calc-X0}
	d_{X_0}^1 = \begin{pmatrix}
		 -v & 0 & x & y \\
		0 & -v & y^4 & -x^2 \\
		x^2 & y & -v & 0 \\
		y^4 & -x & 0 & -v 
		\end{pmatrix}
		,\quad
	d_{X_0}^0 = \begin{pmatrix}
		v & 0 & x & y \\
		0 & v & y^4 & -x^2 \\
		x^2 & y & v & 0 \\
		y^4 & -x & 0 & v
		\end{pmatrix}
		 ,
\eeq
see \cite[Sect.\,5]{kst0511155}. This is a matrix factorization of $x^3 + y^5 - v^2$.

In step (ii), the most generic homogeneous deformation of $d_{X_0}^1$ (deforming also the zero entries, of course) has 82 free parameters. Via the similarity transformation in step (iii) one can reduce this to 60 parameters. We refrain from giving this general deformation explicitly. 

For step (iv) one has to use a different method than in the other cases, because now $\det(d_{X_0}^1) = (x^3 + y^5 - v^2)^2$ and imposing $\det(d_{X_u}^1) = (x^3 + y^5 - u^{30}-v^2)^2$ turns out to be impractical as it results in too many non-linear conditions. Instead, we make the ansatz $d_{X_u}^0 = (x^3 + y^5 - u^{30}-v^2)^{-1} \cdot (d_{X_u}^1)^\#$ and require that $d_{X_u}^0$ be a matrix with polynomial entries. As before, this leads to a (very long) matrix factorization of $x^3 + y^5 - u^{30}-v^2$ in terms of two parameters $s,t$, where $t$ satisfies an eighth order equation and $s^{30}$ is equal to some polynomial in $t$. The eighth order equation, however, is a product of two fourth order ones, and we select one of these and use it to simplify the matrix factorization. One is left with the matrix
$m := d_{X_u}^1$, where with $\varsigma=su$ the matrix entries $m_{ij}$ are as follows: 
\begin{align*}
m_{11} &= - v - \tfrac{(1 + t) (3 + t) (5 + 7t)}{64}  \varsigma^{15} - \tfrac{1 + t}4 \varsigma^5 x - \tfrac{19 + 47 t + 25 t^2 + 5 t^3}{192} \varsigma^9 y - \tfrac12 \varsigma^3 y^2 \, , \\
m_{12} &= \varsigma\, , \\
m_{13} &= x + \tfrac{(-1 + t) (23 + 36 t + 5 t^2)}{96} \varsigma^{10}\, , \\
m_{14} &=  y\, , \\
m_{21} &= \tfrac{-138089 - 562209 t - 600371 t^2 - 116355 t^3}{11520}) \varsigma^{29} 
+ \tfrac{-73 - 280 t - 285 t^2 - 50 t^3}{160} \varsigma^{19} x
\\ & \quad
+ \tfrac{-29 - 25 t + 25 t^2 + 5 t^3}{96} \varsigma^9 x^2 
+ \tfrac{-2107 - 8545 t - 9085 t^2 - 1735 t^3}{960} \varsigma^{23} y  
\\ & \quad
+ \tfrac{-33 - 57 t - 11 t^2 + 5 t^3}{64} \varsigma^{13} x y
+ \tfrac{(5 + 7 t) (13 + 36 t + 7 t^2)}{384} \varsigma^{17} y^2  
- \tfrac{3 + 4 t}{4} \varsigma^7 x y^2 
\\ & \quad 
+ \tfrac{-35 - 49 t + 7 t^2 + 5 t^3}{96} \varsigma^{11} y^3
- \varsigma x y^3 
- \tfrac12 (1 + t) \varsigma^5 y^4 \, , 
\\
m_{22} &= -v
+ \tfrac{(1 + t) (3 + t) (5 + 7 t)}{64} \varsigma^{15}
+ \tfrac{1 + t}{4} \varsigma^5 x
+ \tfrac{19 + 47 t + 25 t^2 + 5 t^3}{192} \varsigma^9 y
+ \tfrac12 \varsigma^3 y^2\, , \\
m_{23} &= y^4 
+ \tfrac{3587 + 14687 t + 15785 t^2 + 3125 t^3}{1920} \varsigma^{24}
+ \tfrac{(1 - t) (23 + 36 t + 5 t^2}{96} \varsigma^9 v
\\ & \quad
+ \tfrac{43 + 102 t + 67 t^2 + 12 t^3}{96} \varsigma^{14} x
- \tfrac{(1 + t) (81 + 126 t + 17 t^2)}{384} \varsigma^{18} y
+ \tfrac{2 + 3 t}{4} \varsigma^8 x y
\\ & \quad
+ \tfrac{(2 + t) (7 + 6 t - 5 t^2}{96} \varsigma^{12} y^2
+ \varsigma^2 x y^2 
+ \tfrac{1 + 2 t}{4} \varsigma^6 y^3\, , 
\\
m_{24} &= - x^2
+ \tfrac{(-1 + t) (23 + 36 t + 5 t^2)}{96} \varsigma^{10} x
+ \tfrac{2 + 21 t + 32 t^2 + 9 t^3}{48} \varsigma^{14} y\, , 
\\
m_{31} &= x^2
+ \tfrac{(1 - t) (23 + 36 t + 5 t^2)}{96} \varsigma^{10} x
- \tfrac{2 + 21 t + 32 t^2 + 9 t^3}{48} \varsigma^{14} y \, , 
\\
m_{32} &= y\, , \\
m_{33} &= - v
+ \tfrac{-37 - 39 t + 29 t^2 + 15 t^3}{192} \varsigma^{15} 
+ \tfrac{1 + t}4 \varsigma^5 x
+ \tfrac{-65 - 73 t + 37 t^2 + 5 t^3}{192} \varsigma^9 y
- \tfrac12 \varsigma^3 y^2\, , 
\\
m_{34} &= 
\tfrac{(1 - t) (23 + 36 t + 5 t^2)}{96} \varsigma^{11}
+ \varsigma x 
+ \tfrac{1 + t}2 \varsigma^5 y \, , 
\\
m_{41} &= y^4
+ \tfrac{3587 + 14687 t + 15785 t^2 + 3125 t^3}{1920} \varsigma^{24}
+ \tfrac{(-1 + t) (23 + 36 t + 5 t^2)}{96} \varsigma^9 v
\\ & \quad
+ \tfrac{43 + 102 t + 67 t^2 + 12 t^3}{96} \varsigma^{14} x
- \tfrac{(1 + t) (81 + 126 t + 17 t^2}{384} \varsigma^{18} y
+ \tfrac{2 + 3 t}4 \varsigma^8 x y
\\ & \quad
+ \tfrac{(2 + t) (7 + 6 t - 5 t^2)}{96} \varsigma^{12} y^2
+ \varsigma^2 x y^2 
+ \tfrac{1 + 2 t}4 \varsigma^6 y^3\, , \\
m_{42} &= -x 
+ \tfrac{(1 - t) (23 + 36 t + 5 t^2)}{96} \varsigma^{10} \, , 
\\
m_{43} &= 
-\tfrac{569 + 2615 t + 2855 t^2 + 425 t^3}{1920} \varsigma^{19}
+ \tfrac{17 + t - 37 t^2 - 5 t^3}{96} \varsigma^9 x
+ \tfrac{-17 - 17 t + 13 t^2 + 5 t^3}{64} \varsigma^{13} y 
\\ & \quad
- \tfrac{1 + 2 t}4 \varsigma^7 y^2
- \varsigma y^3 \, , 
\\
m_{44} &= -v
+ \tfrac{37 + 39 t - 29 t^2 - 15 t^3}{192} \varsigma^{15}
- \tfrac{1 + t}4 \varsigma^5 x  
+ \tfrac{65 + 73 t - 37 t^2 - 5 t^3}{192} \varsigma^9 y
+ \tfrac12  \varsigma^3 y^2 \, .
\end{align*}
The parameters $s,t$ can be any solution of
the equations
\begin{align}
& s^{30} = \tfrac{1}{4} (45308593275 \, t^3 - 32199587625 \, t^2 - 973905678975 \, t - 395277903075) 
\, , \nonumber 
\\
& 5 \, t^4 - 110 \, t^2 - 120 \, t - 31 = 0 \, .
\label{eq:stE8}
\end{align}

As already noted above, the matrix $d_{X_u}^0$ is given by $(x^3 + y^5 - u^{30}-v^2)^{-1} \cdot (d_{X_u}^1)^\#$, which has polynomial entries provided $s,t$ solve \eqref{eq:stE8}. 
It is now straightforward to determine the $\mathbb{Q}$-gradings on~$X_u$ to be given by
\begin{equation}
\begin{split}
	X^0 &= R[\alpha] \oplus R[\alpha-\frac{14}{15}] \oplus R[\alpha-\frac{1}{3}] \oplus R[\alpha-\tfrac35] \, , \\
	X^1 &= R[\alpha] \oplus R[\alpha-\frac{14}{15}] \oplus R[\alpha-\frac{1}{3}] \oplus R[\alpha-\tfrac35]
	\end{split}
\end{equation}
with $\alpha \in \mathbb{Q}$, 
and compute the quantum dimensions to be
\beq
	\mathrm{qdim}_l(X_u) = 2s
	\, , \quad
	\mathrm{qdim}_r(X_u) = \tfrac{5}{16} (-27 - 86 \, t - 3 \, t^2 + 4 \, t^3)  \, s^{-1} \ ,
\eeq
which are indeed non-zero for all choices of $s,t$. 
This concludes the proof of Theorem~\ref{thm:ADEorbifolds}. 

Because we have made a number of restricting assumptions before arriving at $(X,d_{X_u})$, we cannot claim a statement analogous to Lemmas \ref{lem:D-calc-summary}, \ref{lem:E6-calc-summary} and \ref{lem:E7-calc-summary}. 

\medskip

Finally we compute the endomorphisms of~$X_u$ in $\mathrm{hmf}^{\mathrm{gr}}_{\mathbbm{C}[u,v,x,y],x^5 + y^3 - u^{30} - v^2}$ to be the 60-dimensional space 
\beq\label{eq:EndhmfX-E8}
\mathrm{End}
(X_u) 
\cong 
\Big(
\bigoplus_{i=0}^{28} \mathbbm{C}_{i\cdot \frac{2}{30}}
\Big)
\oplus 
\Big(
\bigoplus_{i=5}^{23} \mathbbm{C}_{i\cdot \frac{2}{30}}
\Big)
\oplus 
\Big(
\bigoplus_{i=9}^{19} \mathbbm{C}_{i\cdot \frac{2}{30}}
\Big)
\oplus
\mathbbm{C}_{\frac{28}{30}}
\, .
\eeq
For the solution
\beq
	t_\text{cft} = -\tfrac15 \Big( 7 + 4 \big(\zeta_{30} + \zeta_{30}^{-1} \big) + 8 \big(\zeta_{30}^2 + \zeta_{30}^{-2} \big) - 16 \big(\zeta_{30}^3 + \zeta_{30}^{-3} \big) \Big)
\eeq
of~\eqref{eq:stE8}, $X_{u'}^\dagger \otimes X_u$ is isomorphic, up to the factor $I_{v^2}$, to the rank-four graded matrix factorization
\beq\label{eq:AMF-E8}
P_{\{0\}} \oplus P_{\{-5,-4,\ldots,5\}} \oplus P_{\{-9,-8,\ldots,9\}} \oplus P_{\{-14,-13,\ldots,14\}} \, .
\eeq
This concludes the proof of Corollary~\ref{cor:Etypemod}. 

\begin{rem}\label{rem:galE8}
The other solutions of~\eqref{eq:stE8} are found via the Galois group $\mathrm{Gal}(\mathbb{Q}(\zeta_{30})/\mathbb{Q})$ to be 
\begin{align}
\sigma_7(t_\text{cft}) &= 
	-\tfrac15 \Big( 7 + 4 \big(\zeta_{30}^{7} + \zeta_{30}^{-7} \big) + 8 \big(\zeta_{30}^{14} + \zeta_{30}^{-14} \big) - 16 \big(\zeta_{30}^{21} + \zeta_{30}^{-21} \big) \Big) \, , \nonumber \\
\sigma_{11}(t_\text{cft}) &= 
	-\tfrac15 \Big( 7 + 4 \big(\zeta_{30}^{11} + \zeta_{30}^{-11} \big) + 8 \big(\zeta_{30}^{22} + \zeta_{30}^{-22} \big) - 16 \big(\zeta_{30}^3 + \zeta_{30}^{-3} \big) \Big) \, , \nonumber \\
\sigma_{13}(t_\text{cft}) &= 
	-\tfrac15 \Big( 7 + 4 \big(\zeta_{30}^{13} + \zeta_{30}^{-13} \big) + 8 \big(\zeta_{30}^{26} + \zeta_{30}^{-26} \big) - 16 \big(\zeta_{30}^9 + \zeta_{30}^{-9} \big) \Big) \, . 
\end{align}
The corresponding decompositions of $X_{u'}^\dagger \otimes X_u$ are, with $Z = P_{\{0\}} \oplus P_{\{-14,-13,\ldots,14\}}$ and up to the factor $I_{v^2}$,
\begin{align}
& Z \oplus 
P_{\{0,\pm 2,\pm 5,\pm 7,\pm 9,\pm 14\}} \oplus P_{\{0,\pm 2,\pm 3,\pm 4,\pm 5,\pm 7,\pm 9,\pm 11,\pm 12,\pm 14\}} 
&&  \text{for }  t = \sigma_7(t_\text{cft}) \, , \nonumber \\
& Z \oplus 
P_{\{0,\pm 3,\pm 5,\pm 8,\pm 11,\pm 14\}} \oplus P_{\{0,\pm 2,\pm 3,\pm 5,\pm 6,\pm 8,\pm 9,\pm 11,\pm 13,\pm 14\}} 
&&  \text{for }  t = \sigma_{11}(t_\text{cft})\, , \nonumber \\
& Z \oplus 
P_{\{0,\pm 3,\pm 4,\pm 5,\pm 8,\pm 9,\pm 13\}} \oplus P_{\{0,\pm 1,\pm 4,\pm 5,\pm 8,\pm 9,\pm 12,\pm 13,\pm 14 \}} 
&&  \text{for }  t = \sigma_{13}(t_\text{cft}) \, .
\end{align}
\end{rem}

\subsubsection*{Another method to construct orbifold equivalences}

There is another way, slightly different from the one described at the beginning of Section~\ref{subsec:simple-sing}, of obtaining the orbifold equivalences $W^{(\mathrm{D}_{d/2+1})} \sim W^{(\mathrm{A}_{d-1})}$, $W^{(\mathrm{E}_6)} \sim W^{(\mathrm{A}_{11})}$, and $W^{(\mathrm{E}_7)} \sim W^{(\mathrm{A}_{17})}$. 
Roughly, this method starts with a matrix factorization~$M$ of a potential~$W$ and computes the deformations \cite{Laudal1983, Siqveland2001}~$M_u$ of~$M$. Then instead of setting the deformation parameters to be solutions of the obstruction equations (which would give factorizations of~$W$), one tries to re-interpret some of the parameters~$u$ as variables of another potential~$V$ while choosing the remaining parameters such that~$M_u$ becomes a factorization of $W-V$. 

As this method may prove useful to construct further orbifold equivalences, we illustrate it in more detail in the example of $W^{(\mathrm{E}_6)} \sim W^{(\mathrm{A}_{11})}$: 
\begin{enumerate}
\item Start with the matrix factorization~\eqref{eq:E6-calc-X0} of $W=W^{(\mathrm{E}_6)}-v^2$ and compute its deformations, using e.\,g.~the implementation of \cite{cdr}.\footnote{%
Of course the indecomposable object~\eqref{eq:E6-calc-X0} has no nontrivial deformations, but that does not matter here.} 
\item One obtains a 4-by-4 matrix~$D$ with polynomial entries in the variables $v,x,y$ and two deformation parameters $u_1, u_2$. By construction~$D$ squares to $W \cdot 1 + R$, where the matrix~$R$ vanishes if $u_1, u_2$ satisfy the obstruction equations. 
\item Interpret the parameter~$u_1$ as the (rescaled) variable~$u$ and choose $u_2 \in \mathbbm{C}[u]$ such that~$D$ becomes a factorization of $W^{(\mathrm{E}_6)}-u^{12}-v^2$. This turns out to be isomorphic to~$X_u$ of~\eqref{eq:E6-calc-solution} with $t=-1/\sqrt{3}$. 
\end{enumerate}

In the cases $W+v^2 \in \{ W^{(\mathrm{D}_{d/2+1})}, W^{(\mathrm{E}_6)}, W^{(\mathrm{E}_7)} \}$ this method is especially straightforward as the matrix~$D$ already squares to $(W+f)\cdot 1$, where~$f$ is a polynomial with leading term $u_1^{\,d/2}, u_1^{12}, u_1^{18}$, resp. 
On the other hand, for $W = W^{(\mathrm{E}_8)}-v^2$ the square of~$D$ is a more generic matrix, rendering the problem $D^2 = (W - u^{30}) \cdot 1$ much more computationally involved.

\subsection{Comparison to conformal field theory}
\label{sec:CFTcomparison}

In this subsection we describe how the orbifold equivalences we just described are precisely the expected ones from the Landau-Ginzburg/conformal field theory correspondence, and compare the conjectured algebra objects and defect spectra with those obtained.

Recall here that conformal field theories (as described in Section \ref{functorialapproach}) form a bicategory. Because of Proposition \ref{Frobbicatadj}, the general concept of orbifold equivalence is applicable, and then, there cannot be a topological defect joining two CFTs of different Virasoro central charge. As a consequence, equality of central charges provides a \textsl{necessary} condition for an orbifold equivalence to exist, cf.~Proposition~\ref{prop:necessary}. Contrary to the matrix factorization framework, for CFTs a useful \textsl{sufficient} condition is known for the existence of an orbifold equivalence \cite{ffrs}: 
\begin{quote}
Let $\mathcal{V}$ be a rational vertex operator algebra. 
Suppose two CFTs have unique bulk vacua and their algebra of bulk fields contains $\mathcal{V} \otimes_\mathbbm{C} \overline{\mathcal{V}}$ as a subalgebra. Then these two CFTs are orbifold equivalent.
\end{quote}
Here the bar over $\mathcal{V}$ indicates that the second factor is embedded in the anti-holomorphic fields. 

Those LG models whose potentials define simple singularities are believed to renormalize to CFTs that contain the $\mathcal N=2$ minimal super Virasoro vertex operator algebra with the same central charge. The latter are rational, so by the above criterion all these CFTs of the same central charge are orbifold equivalent.

One may expect topological defects between two infrared CFTs to have analogues already in the corresponding LG model. Furthermore, one may expect that composition of topological defects commutes with renormalisation group flow. If so, the orbifold equivalences of infrared CFTs should exist also for LG models. This is the reason why Theorem~\ref{thm:ADEorbifolds} is expected from the above CFT criterion.

After these qualitative considerations, we now turn to quantitative, more technical comparisons. The correspondence between topological defect lines of LG models of A-type singularities and those of the associated diagonal $\mathcal N=2$ CFTs motivates the following conjecture, whose ingredients we explain directly after stating it.

\begin{conj}\label{conj:tensor-equiv}
For any integer $d \geqslant 3$, consider the monoidal subcategory of the category $\mathrm{hmf}^{gr}(\mathbbm{C}[x,x'],x^d - x'^d)$ whose morphisms only have $\mathbb{Q}$-degree zero, and which is generated by $\{ P_{\{a,a+1,\dots,a+b\}} \,|\, a,b \in \mathbbm{Z}_d\}$ with respect to tensor products and direct sums. This subcategory is monoidally equivalent to $\mathcal{C}_d$.
\end{conj}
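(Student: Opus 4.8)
The plan is to reduce the conjecture, for each parity of $d$, to the structural results already available for the graded permutation factorizations, and to isolate the even case as the genuinely new content. First I would pin down the category in the statement. Fixing on each $P_S$ the self-dual, tensor-closed grading normalisation $\hat P_S = P_S\{(1-|S|)/d\}$, the monoidal subcategory generated under $\otimes$ and $\oplus$ by the consecutive sets $\{a,a+1,\dots,a+b\}$ (including those wrapping around $\bZ_d$, and recalling from Section \ref{sec:permutation-mf} that $P_\emptyset$ and $P_{\bZ_d}$ are zero objects) is, by Theorem \ref{thm:br-decomp} and Theorem \ref{thm:Pdgr-tensor-closed}, exactly the semisimple category $\Pdgr$ with simple objects $\hat P_{a:\lambda}$, $a\in\bZ_d$, $\lambda\in\{0,\dots,d-2\}$. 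The first concrete step is therefore to produce a bijection of simple objects and an isomorphism of fusion rings between $\Pdgr$ and $\mathcal{C}_d$: this is routine, since the fusion of Theorem \ref{thm:br-decomp} is the $\widehat{\s\uu}(2)_{d-2}$ fusion carried by the second index $\lambda$, graded by the $\bZ_d$ recorded in the first index $a$, which matches the defect fusion on the conformal-field-theory side.

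For odd $d$ the conjecture is essentially Theorem \ref{thm:main}: the tensor equivalence $G:\C(N{=}2,d)_{NS}\to\Pdgr$ constructed there supplies the full monoidal structure, so the statement follows once $\mathcal{C}_d$ is identified with $\C(N{=}2,d)_{NS}$. The substance is thus the even case, where the odd-$d$ argument does not transfer for three independent reasons. Proposition \ref{ddn} used that the image of $q^{-2}_d$ under $H^3_{ab}(\bZ_d,\bC^*)\to H^3(\bZ_d,\bC^*)$ is trivial, which fails for even $d$; the universal property Theorem \ref{upn} used that $[1,d]\in\E_{even}$, requiring $1+d$ even, i.e.\ $d$ odd; and, on the matrix-factorization side, the fundamental tensor generator $\hat P_{a:1}$ is \emph{not} self-dual for even $d$ (self-duality of $\hat P_{a:\lambda}$ forces $\lambda$ even), so the Temperley--Lieb universal property of Corollary \ref{ftl}, which hinges on a self-dual generator, is simply inapplicable. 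I would also emphasise a consequence worth flagging at the outset: the pointed subcategory of $\Pdgr$ generated by the invertibles $P_{\{a\}}$ is strictly associative for \emph{every} $d$ (Proposition \ref{prop:chi-functor} gives $\mu_{a,b}$ with trivial associativity), whereas the pointed part of $\C(N{=}2,d)_{NS}$ has non-trivial associator for even $d$. Hence for even $d$ the target $\mathcal{C}_d$ cannot be $\C(N{=}2,d)_{NS}$ on the nose; correctly identifying $\mathcal{C}_d$ as the defect fusion category of the \emph{diagonal} model, whose pointed $3$-cocycle is gauge-trivial, is itself part of the problem.

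My proposed route for even $d$ is to abandon the universal-property shortcut and compare the full associator, i.e.\ the $F$-symbols, directly. I would define the candidate functor on simples by $\hat P_{a:\lambda}\mapsto(\text{corresponding defect})$ and take its coherence data $\varphi_2$ from the explicit degree-zero embeddings of the direct summands of $\hat P_{m:1}\otimes\hat P_{n:\lambda}$ constructed in Appendix \ref{app:graded-tensor}, establishing full faithfulness from the computed $\mathrm{Hom}$-spaces (Lemma \ref{lem:hmfgrPSPR} and the preceding degree count) and verifying the decomposition isomorphisms via Remark \ref{rem:H(M)}. The transitive family of twist autoequivalences ${}_a(-)_b$ of Lemma \ref{lem:a()b-functor} then reduces all $F$-symbols to a small set. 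A natural organising device is the $\bZ_2$-grading of the fusion ring by the parity of $\lambda$: the even-$\lambda$ fusion subcategory \emph{does} admit a self-dual tensor generator (a $\lambda=2$ object), so a Temperley--Lieb-type argument can be run there, after which the full category is recovered by adjoining the non-self-dual object $\hat P_{a:1}$ together with its distinct dual and controlling the resulting extra coherence data. The remaining $F$-symbols would be matched against the $\mathcal N=2$ minimal-model $F$-symbols, which are products of quantum $6j$-symbols for $\s\uu(2)_{d-2}$ with the $\bZ_d$ pointed cocycle.

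The main obstacle is precisely this explicit associator matching for even $d$. Unlike the odd case, where semisimplicity together with Corollary \ref{ftl} lets one avoid computing a single $6j$-symbol, here one must show that the non-trivial pointed associator on the conformal-field-theory side becomes a coboundary in the gauge determined by the matrix-factorization decomposition isomorphisms, and that its coupling to the $\s\uu(2)$-part of the associator satisfies the pentagon with the correct normalisation. Establishing that the two gauge classes of full associator data agree --- rather than merely that the fusion rings, quantum dimensions, and $\mathrm{Hom}$-spaces match --- is what would upgrade the fusion-ring isomorphism to a genuine tensor equivalence, and I expect the wrap-around combinatorics of the decompositions in Appendix \ref{app:graded-tensor} and the pentagon verification to be the technically demanding core of the argument.
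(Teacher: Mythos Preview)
The statement is explicitly a \emph{conjecture} in the paper, not a theorem with a proof. The paper's own status report is: the candidate functor $P_{\{a,\dots,a+b\}}\mapsto U_{b,b+2a}$ matches fusion rules \cite{brunrogg1}; partial associator checks were done in \cite{carqrunkel1}; and for odd $d$ the full tensor equivalence is established as Theorem \ref{thm:main}. Your reduction of the odd case to Theorem \ref{thm:main}, and your identification of the generated subcategory with $\Pdgr$ via Theorems \ref{thm:br-decomp} and \ref{thm:Pdgr-tensor-closed}, therefore agree exactly with the paper's position.

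For even $d$ you do not give a proof but a strategy, and you say so explicitly; the paper likewise leaves even $d$ open. Your diagnosis of the three obstructions to transferring the odd-$d$ argument (non-self-duality of $\hat P_{a:1}$, the parity constraint putting $[1,d]$ outside $\E_{even}$, the behaviour of $q_d^{-2}$ under $H^3_{ab}\to H^3$) is sharper than anything the paper records, and the proposed route through direct $F$-symbol comparison using the embeddings of Appendix \ref{app:graded-tensor} is a natural one. One caution, however: your observation that the pointed part of $\Pdgr$ is strictly associative (via Proposition \ref{prop:chi-functor}) while, you assert, the pointed part of $\C(N{=}2,d)_{NS}$ carries a nontrivial class in $H^3(\bZ_d,\bC^*)$ for even $d$ would, if correct, \emph{disprove} the conjecture as stated rather than merely complicate its proof, since the paper identifies $\mathcal C_d$ with that NS representation category. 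Before building further on the assumption that the conjecture holds, you should settle whether that pointed $3$-cocycle is genuinely cohomologically nontrivial or is in fact a coboundary.
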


Where we have followed the notation introduced along this thesis.

We denote the simple objects of~$\mathcal{C}_d$ by $U_{l,m}$, where $l \in \{0,1,\dots,d-2\}$ and $m \in \{0,1,\dots,2d-1\}$. As previously mentioned in Chapter \ref{LGCFTTL}, via the coset construction, this describes the NS sector of the representations of the $\mathcal N=2$ minimal super Virasoro vertex operator algebra at central charge $c = 3 (1-2/d)$, see again \cite[App.\,A.2]{carqrunkel1} for details and references.\footnote{%
In \cite{carqrunkel1} the $(-)_{\textrm{NS}}$ is missing in (A.38) and (A.45). This is a typo or an error, depending on one's disposition towards the authors.
} 

\medskip

The status of Conjecture~\ref{conj:tensor-equiv} is currently as follows.
\begin{itemize}
\item[-] The functor which conjecturally provides the tensor equivalence acts on simple objects as $P_{\{a,a+1,\dots,a+b\}} \mapsto U_{b,b+2a}$. It is verified in \cite{brunrogg1} that this is compatible with the tensor product on the level of isomorphism classes.
\item[-] Some, but far from all, associativity isomorphisms were proved to be compatible in \cite{carqrunkel1}.
\item[-] For odd $d$, this was proved in Chapter \ref{LGCFTTL}.
\end{itemize}

\subsubsection*{Comparison of algebra objects}

According to \cite{tft1, Fjelstad:2006aw}, the full conformal field theories (with unique bulk vacuum) that can be constructed starting from a rational vertex operator algebra $\mathcal{V}$ are parametrised by Morita classes of special symmetric Frobenius algebras in the modular tensor category of representations of $\mathcal{V}$. The algebras relevant for the CFT describing the infrared fixed point of an LG model with ADE-type potential are predicted from \cite{gannon, gray} to be non-trivial only in the $\mathfrak{su}(2)$ factor of $\mathcal{D}$. More specifically, they are representatives of the ADE classification of Morita classes of such algebras in $\mathcal{C}_{d-2}^{\mathfrak{su}(2)}$ given in \cite{ostrik}. As objects in~$\mathcal{D}$ these algebras are
\begin{align} \label{eq:CFT-ADE-list}
F^{(\textrm{A}_{d-1})} &= U_{0,0} && \text{for $d \geqslant 2$} \, ,
\nonumber \\
F^{(\textrm{D}_{d/2+1})} &= U_{0,0} \oplus U_{d-2,0}  && \text{for $d\in 2\mathbb{Z}_+$} \, ,
\nonumber \\
F^{(\textrm{E}_{6})} &= U_{0,0} \oplus U_{6,0}  && \text{for $d=12$} \, ,
\nonumber \\
F^{(\textrm{E}_{7})} &= U_{0,0} \oplus U_{8,0} \oplus U_{16,0} && \text{for $d=18$} \, ,
\nonumber \\
F^{(\textrm{E}_{8})} &= U_{0,0} \oplus U_{10,0} \oplus U_{18,0} \oplus U_{28,0} && \text{for $d=30$} \, .
\end{align}

Topological defects between a diagonal, i.\,e.~A-type, CFT and another CFT from the above list with the same value of $d$ given by some algebra $F$ are described by $F$-modules in $\mathcal{D}$ \cite{Frohlich:2006ch}. This is consistent with the point of view of orbifold equivalences. There, the algebra describing the theory on one side of a topological defect $X$ in terms of the other is $X^\dagger \otimes X$, see \cite{carqrunkel1} for details. In the present setting one has $X = {}_FF$, $X^\dagger = F_F$, and $F_F \otimes_F {}_FF \cong F$ as algebras. 

On the matrix factorization side, the objects underlying the algebras describing the D- and E-type singularities as orbifolds of A-type singularities are those in \eqref{eq:PPinDcase}, \eqref{eq:AMF-E6}, \eqref{eq:AMF-E7} and~\eqref{eq:AMF-E8} above. Under the tensor equivalence of Conjecture \ref{conj:tensor-equiv}, they are indeed mapped to the corresponding objects in the list~\eqref{eq:CFT-ADE-list}. (It would of course be enough to land in the same Morita class, but for our choices of matrix factorizations we get the actual representatives chosen in \eqref{eq:CFT-ADE-list}.)

\subsubsection*{Comparison of defect spectra}

Given the above observations on the objects underlying the algebras establishing the orbifold equivalences, it is natural to expect that the matrix factorizations of the potential differences described in Section \ref{subsec:simple-sing} get mapped to the topological defects described by the modules ${}_FF$ for $F$ the corresponding algebra in \eqref{eq:CFT-ADE-list}. This expectation can be tested by comparing the spectra of chiral primaries. 

The chiral primaries in $\mathcal{D}$ are the ground states in the representations labelled $U_{l,l}$ with $l \in \{0,1,\dots,d-2\}$; their charge is $l/d$. The space of chiral primaries of holomorphic and antiholomorphic labels $l$ and $m$, resp., on the defect ${}_FF$ is isomorphic to the vector space
\beq
\Hom_F(F \otimes U_{l,l} \otimes U_{m,-m}, F) \cong \Hom(U_{l,l} \otimes U_{m,-m}, F) \, , 
\eeq
where `$\Hom_F$' comprises only $F$-module maps in $\mathcal{D}$, and `$\Hom$' all maps in $\mathcal {D}$. We refer to \cite{Frohlich:2006ch} for an explanation of this formula. 
The total charge (as seen on the LG side) of these chiral primaries is $(l+m)/d$.

\medskip

Let us consider the example $F = F^{(\textrm{E}_{6})}$ in some detail. The first thing to note is that $\Hom(U_{l,l} \otimes U_{m,-m}, U_{n,0})$ has dimension zero unless $l=m$. (Actually the fusion rules in $\mathcal{C}_{2d}^{\mathfrak{u}(1)}$ tell us that $l-m \cong 0 \mod 2d$, but for the given range on~$l$ and~$m$ this just amounts to $l=m$.) For $l=m$, the space is one-dimensional if~$n$ is even and $n/2 \leqslant l \leqslant d-2-n/2$, as follows from the fusion rules of $\mathcal{C}_{d-2}^{\mathfrak{su}(2)}$.
From the summand $U_{0,0}$ in $F^{(\textrm{E}_{6})}$ we therefore get one state at each charge $l/6$, $l \in \{0,1,\dots,10\}$, and from the summand $U_{6,0}$ another state of charge $l/6$ for each $l \in \{3,4,\dots,7\}$. 
Hence the total dimension of the space of chiral primaries is 16, and we have perfect agreement with~\eqref{eq:EndhmfX-E6}. 

It is straightforward to carry out the analogous computations for $F = F^{(\textrm{D}_{d/2+1})}$, $F = F^{(\textrm{E}_{7})}$ and $F = F^{(\textrm{E}_{8})}$, again consistent with the charges and multiplicities of the matrix factorization results listed in~\eqref{eq:PPinDcase}, \eqref{eq:EndhmfX-E7} and~\eqref{eq:EndhmfX-E8}, resp.

\section{Further examples of orbifold equivalent potentials}
\label{sec:BH}

At this point, one may wonder how to find further examples of orbifold equivalences, and if possible in a structured way: this is the aim of current work in progress \cite{roscamacho2}. For that, our main strategy/idea is via the so-called \textit{Berglund-H{\"u}bsch} transposition, which was first defined in \cite{berglundhuebsch}. Let $W=W \left( x_1,\ldots,x_n \right)$ be a potential, and fix $\mathbbm{k}=\mathbbm{C}$.

\begin{defn}
Consider a homogeneous polynomial $W \left( x_1,\ldots,x_n \right)$ whose number of variables is equal to the number of monomials in $W$, namely, $$W \left( x_1,\ldots,x_n \right)= \sum_{i=1}^n a_i \prod_{j=1}^n x_j^{E_{ij}}$$ for some coefficients $a_i \in \mathbbm{C}^*$ and $E_{ij} \in \mathbb{Z}_{\geq 0}$. The \textit{Berglund-H{\"u}bsch transpose} of $W$, $W^T$ is defined to be the polynomial: $$W^T \left( x_1,\ldots,x_n \right)=\sum_{i=1}^n a_i \prod_{j=1}^n x_j^{E_{ji}}$$ We denote as $E=\left( E_{ij} \right)$ the matrix of exponents, and as $E^T=\left( E_{ji} \right)$ its Berglund-H{\"u}bsch transposed matrix of exponents.
\end{defn}

As it was proved in \cite{berglundhuebsch} and explored in other later works (e.g. \cite{ebelingtakahashi}), this operation provides pairs of mirror symmetric manifolds. Mirror symmetry is expected to be a special case of orbifold equivalence, hence this strategy should be a candidate to generate new orbifold equivalences.

Let us start with the three ADE potentials from the previous section. $W_{A_{d-1}}$ as well as $W_{E_6}$ and $W_{E_8}$, have diagonal matrices of exponents and hence $E=E^T$ for these three cases. Thus, $W_{A_{d-1}}=W^T_{A_{d-1}}$, $W_{E_6}=W^T_{E_6}$ and $W_{E_8}=W^T_{E_8}$. That means: these three potentials are orbifold equivalent via the identity matrix factorization.

The remaining two polynomials, $W_{E_7}$ and $W_{D_{d+1}}$ are slightly different. For $W_{E_7}$,
\begin{equation}
E=\left( \begin{matrix} 3 & 0 \\ 1 & 3 \end{matrix} \right) \quad \rightsquigarrow \quad E^T=\left( \begin{matrix} 3 & 1 \\ 0 & 3 \end{matrix} \right)
\nonumber
\end{equation}
but the resulting Berglund-H{\"u}bsch transposed polynomial is the same as the original one, after an exchange of variables $x \leftrightsquigarrow y$. The only polynomial which will somehow change after Berglund-H\"{u}bsch transposing is $W_{D_{d+1}}$, giving $W^T_{D_{d+1}}=x^d y+y^2$.

One can check that $W_{D_{d-1}}^T$ is equivalent to:
\begin{itemize}
\item $W_{A_{d-1}}$: for the potential $W=x^d y+y^2-u^{2d}-v^2$, consider the matrix factorization given by the differential
\begin{equation}
d_1=\left( \begin{matrix}
x-a u & -\left(y-i u^d-v \right) \\
y-i u^d+v & y \frac{x^d-\left( a u \right)^d}{x-a u}
\end{matrix} \right)
\nonumber
\end{equation} where $a$ satisfies $a^d=-2i$, with $d_0=\mathrm{det} \left( d_1 \right) d_1^{-1}$. Computing the quantum dimensions of this matrix factorization, we find out that $$\mathrm{qdim}_r=-a^{-1}$$  $$\mathrm{qdim}_l=-a$$
Hence $W_{D_{d+1}^T}$ and $W_{A_{d-1}}$ are orbifold equivalent.
\item $W_{D_{d+1}}$: for the potential $W=u^d+v^2u-x^d y-y^2$, consider the matrix factorization given by the differential
\begin{equation}
d_1=\left( \begin{matrix}
\frac{u^d-\left( x^2 a^{-2} \right)^d}{u-a^{-2} x^2} +v^2 & -\left(y+\frac{x^d}{2}-\frac{x v}{a} \right) \\
-y-\frac{x^d}{2}+\frac{x v}{a} & u-a^{-2} x^2
\end{matrix} \right)
\nonumber
\end{equation} where $a$ satisfies $a^d=-2i$, with $d_0=\mathrm{det} \left( d_1 \right) d_1^{-1}$. Computing the quantum dimensions of this matrix factorization, we find out that $$\mathrm{qdim}_r=-a$$  $$\mathrm{qdim}_l=-\frac{2}{a}$$
\end{itemize}
But notice here that the  $W_{D_{d+1}^T}$ is $W_{D_{d+1}}$ after a clever change of variables ($y=\frac{xv}{a}-\frac{x^d}{2}$, $x^2=u a^2$), and the $W_{A_{d-1}} \sim_{orb} W_{D_{d+1}}$ equivalence was already known. Hence, strictly speaking, this is not a new orbifold equivalence.

As the simple singularities are only some special polynomials belonging to larger classes of polynomials, one may wonder what other kinds of polynomials can one expect, in the hope that maybe one could trace back other examples of orbifold equivalences via Berglund-H{\"u}bsch transposition.

Let us first review some already existing results \footnote{Polynomials over an algebraically closed field (say e.g. $\mathbbm{C}$) with an isolated singularity in 0, i.e. a single critical point at 0, which have finite-dimensional Jacobi ring, are called \textit{nondegenerate}. Potentials are indeed nondegenerate polynomials.}. According to Arnold \cite{arnold},

\begin{prop}
Any homogeneous potential of two variables of corank two\footnote{By cokernel of a function we mean the dimension of the kernel of the differential of the map.} contains with nonzero coefficients one of the following sets of monomials:
\begin{center}
\begin{tabular}{c|c|}
 & Monomials \\ \hline
Class I & $\lbrace x^a,y^b \rbrace$ \\
Class II & $\lbrace x^a,x y^b \rbrace$ \\
Class III & $\lbrace x^a y,x y^b \rbrace$ \\ \hline
\end{tabular}
\end{center}
(where $a,b \in \mathbb{N}$, $a,b \geq 2$).
\end{prop}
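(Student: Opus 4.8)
The plan is to reduce the statement to an elementary analysis of the monomial content of $W$, using only two structural inputs. First I would record the standing reductions forced by the hypotheses: homogeneity kills the constant term, the fact that $W$ is a potential (so that the origin is a critical point, in the sense of the footnote) kills the linear terms, and the corank-two hypothesis means that the Hessian of $W$ vanishes at $0$, killing the quadratic terms $x^2,xy,y^2$ as well. Hence every monomial occurring in $W$ has total degree $\geq 3$.

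Next I would establish the key non-divisibility constraint $x^2 \nmid W$ and $y^2 \nmid W$. Indeed, if $W = x^2\,u(x,y)$ then $\partial_x W = x\,(2u + x\,\partial_x u)$ and $\partial_y W = x^2\,\partial_y u$ both vanish along the whole line $\{x = 0\}$, so the critical locus of $W$ contains a positive-dimensional set and the Jacobi ring $\mathbb{C}[x,y]/\langle \partial_x W,\partial_y W\rangle$ is infinite-dimensional, contradicting that $W$ is a potential. (Conversely, the critical locus of a homogeneous potential is $\mathbb{C}^\times$-invariant, since $\nabla W$ is weighted homogeneous, and finite, hence equals $\{0\}$.) The same argument applied to $y$ gives $y^2 \nmid W$.

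The core of the argument is then a four-way case distinction according to whether a \emph{pure power} $x^a$ (a monomial involving no $y$) and a pure power $y^b$ (a monomial involving no $x$) occur in $W$; note that $x^a$ occurs for some $a$ exactly when $x \nmid W$, and $y^b$ occurs exactly when $y \nmid W$. \textbf{(A)} If both occur, $W$ already contains a Class~I set $\{x^a,y^b\}$. \textbf{(B)} If some $x^a$ occurs but no pure $y$-power does, then $x \mid W$; writing $W = x\,g$ and using $x^2 \nmid W$ gives $x \nmid g$, so $g$ contains a pure power $y^b$ and hence $W$ contains $x\,y^{b}$, a Class~II set $\{x^a, x y^b\}$, where the degree-$\geq 3$ constraint forces $b \geq 2$. \textbf{(C)} The mirror case, with $y^b$ occurring but no pure $x$-power, yields a Class~II set after the relabelling $x \leftrightarrow y$. \textbf{(D)} If neither a pure $x$-power nor a pure $y$-power occurs, then $x \mid W$ and $y \mid W$, so $xy \mid W$; writing $W = xy\,k$ and using $x^2,y^2 \nmid W$ forces $k$ to contain both a pure power $x^{p}$ and a pure power $y^{q}$, whence $W$ contains $x^{p+1}y$ and $x y^{q+1}$, a Class~III set $\{x^a y, x y^b\}$. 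In each case the degree-$\geq 3$ constraint translates into the exponent bounds $a,b \geq 2$ claimed in the statement.

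I expect the only real care to lie in the reduction step rather than in the combinatorics: one must justify passing from the algebraic hypothesis ``finite-dimensional Jacobi ring'' to the geometric statement that the critical locus is zero-dimensional, and from there to $x^2,y^2 \nmid W$; everything downstream is short divisibility bookkeeping in $\mathbb{C}[x,y]$. It is also worth emphasising that the proposition only asserts which monomials must be \emph{present} (the principal part of $W$), not a normal form up to coordinate change, so the harder finite-determinacy reduction needed to actually bring $W$ into one of Arnold's normal forms is not required here — which is precisely what makes this monomial-content statement accessible by the elementary route above.
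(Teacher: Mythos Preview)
The paper does not give its own proof of this proposition; it is quoted from Arnold's book \cite{arnold} and stated without argument. So there is nothing to compare your approach against except Arnold's original, and your elementary divisibility argument is in the same spirit as his. The core of your proof --- the non-divisibility constraint $x^2\nmid W$, $y^2\nmid W$ coming from finite-dimensionality of the Jacobi ring, followed by the four-way case split on which pure powers are present --- is correct and is exactly the kind of argument one uses for this statement.

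Two small points to clean up. First, your parenthetical ``$x^a$ occurs for some $a$ exactly when $x\nmid W$'' has the variables swapped: a pure $x$-power is present iff $W(x,0)\neq 0$ iff $y\nmid W$. Your subsequent case analysis uses the correct implication (``no pure $y$-power $\Rightarrow x\mid W$''), so this is only a slip in the preliminary remark, but fix it. Second, the attribution of hypotheses in your first paragraph is slightly muddled: it is not ``$W$ is a potential'' that kills the linear terms, but rather the corank hypothesis itself (in Arnold's sense the corank is that of the Hessian at a \emph{critical} point, so the hypothesis already presupposes $\nabla W(0)=0$); the footnote in the paper is confusingly worded, but the standard singularity-theory meaning --- which you correctly use --- is that corank two in two variables forces the Hessian to vanish, killing $x^2,xy,y^2$. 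With those cosmetic fixes your argument goes through.
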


Analogously, a similar phenomenon happens with potentials of three variables.

\begin{prop}
Every homogeneous potential on three variables of corank 3 contains with nonzero coefficients (for a suitable numbering of the variables) one or other of the following sets of monomials: 
\begin{center}
\begin{tabular}{c|c|}
 & Monomials \\ \hline
Class I & $\lbrace x^a,y^b,z^c \rbrace$ \\
Class II & $\lbrace x^a,y^b,z^c y \rbrace$ \\
Class III & $\lbrace x^a, y^b x,x z^c \rbrace$ \\ 
Class IV & $\lbrace x^a,y^b z,z^c y \rbrace$ \\
Class V & $\lbrace x^a,y^b z, z^c x \rbrace$ \\
Class VI & $\lbrace x^a y,y^b x,z^c x \rbrace$ \\
Class VII & $\lbrace x^a y, y^b z,z^c x \rbrace$ \\
\hline
\end{tabular}
\end{center}
where $a,b,c \in \mathbb{N}$, $a,b,c \geq 2$, except for classes III and VI there are some conditions on the powers $a,b,c$:
\begin{enumerate}
\item For Class III, the least common multiple $\left[ b,c \right]$ of $b$ and $c$ is divisible by $a-1$.
\item For Class VI, $\left( b-1 \right) c$ is divisible by the product of $a-1$ and the greatest common divisor $\left( b,c \right)$ of $b$ and $c$.
\end{enumerate}
\label{3variablesArnold}
\end{prop}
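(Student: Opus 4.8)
The plan is to deduce the classification from a single finite-codimension lemma together with a purely combinatorial enumeration, reserving the arithmetic side-conditions in Classes III and VI for a separate weight analysis. First I would record the key lemma: if $W \in \mathbb{C}[x_1,x_2,x_3]$ is a potential, so that $\mathrm{Jac}(W) = \langle \partial_1 W, \partial_2 W, \partial_3 W\rangle$ has finite-dimensional quotient, then for each index $i$ the polynomial $W$ must contain, with nonzero coefficient, a monomial of one of the two shapes $x_i^{a_i}$ or $x_i^{a_i} x_j$ with $j \neq i$. To prove this, fix $i$ and use that the origin is the only common zero of $\partial_1 W,\partial_2 W,\partial_3 W$ (equivalently, finite-dimensionality of the Jacobi ring) to produce, via the Nullstellensatz, an $N$ with $x_i^{N} \in \mathrm{Jac}(W)$; write $x_i^{N} = \sum_j h_j\,\partial_j W$ and apply the ring homomorphism $\phi$ restricting to the $x_i$-axis ($x_k \mapsto 0$ for $k \neq i$). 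Since $\phi(x_i^N) \neq 0$, some $\phi(\partial_j W)$ is a nonzero power of $x_i$; but the only monomials of $W$ whose $\partial_j$ survives $\phi$ are $x_i^{a_i}$ (when $j=i$) and $x_i^{a_i} x_j$ (when $j \neq i$), which gives the claim. The corank-three hypothesis (vanishing Hessian, i.e.\ trivial quadratic part) then forces $a_i \geqslant 2$.

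Second, I would encode this data as a directed graph $\Gamma$ on the three vertices $\{x,y,z\}$: each vertex carries either a self-loop (a monomial $x_i^{a_i}$) or an out-edge $i \to j$ (a monomial $x_i^{a_i} x_j$). The lemma guarantees out-degree at least one at every vertex, so by selecting one certifying monomial per variable we obtain an endofunction $f\colon\{x,y,z\}\to\{x,y,z\}$, i.e.\ a \emph{functional graph}. The heart of the argument is then the observation that the seven monomial sets listed in Classes I--VII are \emph{exactly} the seven isomorphism classes of functional graphs on three nodes (up to the $S_3$-action relabelling the variables): three fixed points (I), two fixed points with one pendant edge (II), one fixed point with two pendants into it (III), a fixed point together with a $2$-cycle (IV), a fixed point with a length-two chain feeding into it (V), a $2$-cycle with a pendant edge (VI), and a single $3$-cycle (VII). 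Translating each graph back through the dictionary self-loop $\leftrightarrow x_i^{a_i}$, edge $i\to j \leftrightarrow x_i^{a_i}x_j$ reproduces the tabulated monomials verbatim, and the matching two-variable statement is the analogous enumeration of the three functional graphs on two nodes.

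Finally I would address the side-conditions. For a fixed choice of certifying monomials, quasi-homogeneity requires all of them to share one weighted degree $d$; solving the resulting linear system for the weights $w_1,w_2,w_3$ yields positive rational weights forced only by $a_i \geqslant 2$ precisely when the exponent matrix of the chosen monomials is invertible, which happens in Classes I, II, IV, V and VII. In Classes III and VI the exponent matrix is degenerate: the pointer monomials alone cut out a positive-dimensional critical locus (for example $x^a+xy^b+xz^c$ is singular along $\{x=0,\ y^b+z^c=0\}$), so an isolated singularity can be achieved only after adjoining further monomials of weighted degree $d$. The content of the conditions $(a-1)\mid [b,c]$ for III and $(b-1)c$ divisible by $(a-1)\cdot(b,c)$ for VI is exactly that such additional degree-$d$ monomials with nonnegative integer exponents exist and suffice to make $\mathrm{Jac}(W)$ finite-dimensional. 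I expect this last step to be the main obstacle: it is the only place where the combinatorics is not self-contained, and one must combine the explicit description of the weight system with a genuine isolated-singularity (finite Milnor number) computation to show the divisibility conditions are both necessary and sufficient. The necessity direction, ruling out all degenerate weight assignments, is the delicate part, and is where I would lean on Arnold's normal-form machinery \cite{arnold}.
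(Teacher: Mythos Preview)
The paper does not supply its own proof of this proposition: it is stated as a result of Arnold \cite{arnold} and simply quoted. So there is nothing to compare against directly; your task is to give a self-contained argument, and the one you outline is essentially the Kreuzer--Skarke graph method that the paper invokes a few lines later when discussing invertible polynomials.

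Your proof is correct in outline and the key lemma is proved cleanly. The functional-graph enumeration is right (Burnside on the $27$ endofunctions of a three-element set gives $(27+9+6)/6=7$ orbits, matching Classes I--VII), and your identification of Classes III and VI as the degenerate cases where the exponent matrix fails to be invertible is exactly the point. One small remark: your corank-three argument gives $a_i\geq 2$ for pointer monomials $x_i^{a_i}x_j$, but for root monomials $x_i^{a_i}$ the vanishing-Hessian condition actually forces $a_i\geq 3$; the paper's uniform bound ``$a,b,c\geq 2$'' is a mild imprecision inherited from the source, not an error on your part.

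The only genuine work left, as you correctly flag, is the arithmetic in step three. The necessity direction for Class III is a short computation: the weights are $|x|=2/a$, $|y|=2(a-1)/(ab)$, $|z|=2(a-1)/(ac)$, and an extra monomial $y^pz^q$ of total degree $2$ exists iff $(a-1)(pc+qb)=abc$, which forces $(a-1)\mid bc/(b,c)=[b,c]$. Sufficiency (that such a monomial actually makes the Jacobi ring finite-dimensional) requires checking that the augmented system of partials has the origin as its only common zero; this is elementary but does need to be written out. Class VI is analogous but with a more involved weight system. You do not need heavy normal-form machinery here --- the direct weight computation suffices --- so your expectation that this is ``the main obstacle'' is slightly pessimistic.
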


A special kind of polynomials of certain interest in several papers by Takahashi and Ebeling (e.g. \cite{ebelingtakahashi}) are the so-called \textit{invertible polynomials}, which one can define with a given potential and its Berglund-H\"{u}bsch transpose.

\begin{defn}
A homogeneous polynomial $W \left( x_1,\ldots,x_n \right)$ is \textit{invertible} if the following conditions are satisfied:
\begin{enumerate}
\item The number of variables has to be equal to the number of monomials in $W$, namely, $$W \left( x_1,\ldots,x_n \right)= \sum_{i=1}^n a_i \prod_{j=1}^n x_j^{E_{ij}}$$ for some coefficients $a_i \in \mathbbm{C}^*$ and $E_{ij} \in \mathbb{Z}_+$.
\item The matrix $E:=\left( E_{ij} \right)$ is invertible over $\mathbb{Q}$.
\item The Jacobian rings of $W$ and $W^T$ have both to be finite dimensional algebras over $\mathbbm{C}$ and the dimension of the Jacobi rings of $W$ and $W^T$ have to be greater or equal than 1.
\end{enumerate}
\end{defn}

In \cite{kreuzerskarke}, one can find the following result, claiming that any invertible polynomial $W$ is a (Thom-Sebastiani) \footnote{\cite{wang} If the sum of two homogeneous potentials $W' \left( x_1,\ldots,x_l \right)+W''\left( x_{l+1},\ldots,x_n \right)$ of degree $d$ for a choice of homogeneous coordinates $\left( x_i \right)_{i=1}^n$ and some $2 \leq l \leq n-1$ represents a homogeneous potential $W$  of degree $d$, we say this sum is a \textit{Thom-Sebastiani sum} of potentials.} sum of invertible polynomials of the following types:
\begin{enumerate}
\item $x_1^{a_1}$ (``\textit{Fermat type}");
\item $x_1^{a_1}x_2+x_2^{a_2}x_3+\ldots+x_{m-1}^{a_{m-1}}x_m+x_m^{a_m}$ (``\textit{chain type}"; $m \geq 2$);
\item $x_1^{a_1}x_2+x_2^{a_2}x_3+\ldots+x_{m-1}^{a_{m-1}}x_m+x_m^{a_m}x_1$ (``\textit{loop type}"; $m \geq 2$)".
\end{enumerate}

The $E$ and $E^T$ matrices for the Fermat type are in both cases a diagonal matrix. For the chain case, they look slightly different:
\begin{equation}
\begin{split}
E &=\left( 
\begin{matrix}
a_1 & 1 & 0 & 0 & \ldots & 0 & 0 \\
0 & a_2 & 1 & 0 & \ldots & 0 & 0 \\
0 & 0 & a_3 & 1 & \ldots & 0 & 0 \\
\vdots & \vdots & \vdots & \vdots & \ddots & \vdots & \vdots \\
0 & 0 & 0 & 0 & \ldots & a_{m-1} & 1 \\
0 & 0 & 0 & 0 & \ldots & 0 & a_{m} 
\end{matrix}
\right) \\
E^T &=\left( 
\begin{matrix}
a_1 & 0 & 0 & 0 & \ldots & 0 & 0 \\
1 & a_2 & 0 & 0 & \ldots & 0 & 0 \\
0 & 1 & a_3 & 0 & \ldots & 0 & 0 \\
\vdots & \vdots & \vdots & \vdots & \ddots & \vdots & \vdots \\
0 & 0 & 0 & 0 & \ldots & a_{m-1} & 0 \\
0 & 0 & 0 & 0 & \ldots & 1 & a_{m} 
\end{matrix}
\right)
\nonumber
\end{split}
\end{equation}
and same for the loop case,
\begin{equation}
\begin{split}
E &=\left( 
\begin{matrix}
a_1 & 1 & 0 & 0 & \ldots & 0 & 0 \\
0 & a_2 & 1 & 0 & \ldots & 0 & 0 \\
0 & 0 & a_3 & 1 & \ldots & 0 & 0 \\
\vdots & \vdots & \vdots & \vdots & \ddots & \vdots & \vdots \\
0 & 0 & 0 & 0 & \ldots & a_{m-1} & 1 \\
1 & 0 & 0 & 0 & \ldots & 0 & a_{m} 
\end{matrix}
\right) \\
E^T &=\left( 
\begin{matrix}
a_1 & 0 & 0 & 0 & \ldots & 0 & 1 \\
1 & a_2 & 0 & 0 & \ldots & 0 & 0 \\
0 & 1 & a_3 & 0 & \ldots & 0 & 0 \\
\vdots & \vdots & \vdots & \vdots & \ddots & \vdots & \vdots \\
0 & 0 & 0 & 0 & \ldots & a_{m-1} & 0 \\
0 & 0 & 0 & 0 & \ldots & 1 & a_{m} 
\end{matrix}
\right)
\nonumber
\end{split}
\end{equation}

These matrices allow us to compute the degrees of the variables for each considered case. These values are given in detail by Table \ref{degreesvariables}\footnote{The sum over $l'$ between $k+1$ and $k-1$ in the numerator of the degree $\vert x_k^{E^T} \vert$ for the loop case is meant to be run between $k+1$ to $m$ and from $1$ to $k-1$. We shortcut notation in the written down way hoping it is not too confusing.}. 
\begin{table}
\begin{center}
\begin{tabular}{c|c|c|}
& $\vert x_k^E \vert$ & $\vert x_k^{E^T} \vert$\\ \hline

Fermat & $\frac{2}{a_k}$ & $\frac{2}{a_k}$ \\ 

\hline

Chain & $2 \sum\limits_{i=k}^m \left( -1 \right)^{i-k} \prod\limits_{l=k}^i a_l^{-1}$ & $2 \sum\limits_{i=1}^k \left( -1 \right)^{k-i} \prod\limits_{l=i}^k a_l^{-1}$ \\ 

\hline

Loop & $2 \frac{1+\mathbf{\sum\limits_{l'=k-1}^{k+1}} \prod\limits_{i'=k-1}^{l'} \left( -a_{i'} \right)}{1+\left(-1 \right)^{m+1} \prod\limits_{i'=1}^m a_{i'}}$ & $2 \frac{1+\sum\limits_{l'=k+1}^{k-1} \prod\limits_{i'=k+1}^{l'} \left( -a_{i'} \right)}{1+\left(-1 \right)^{m+1} \prod\limits_{i'=1}^m a_{i'}}$  \\ \hline 
\end{tabular}
\caption{Degrees of variables of invertible polynomials. $2 \leq k \leq m$; $l,i,l',i'$ mod $m$ with representatives $\lbrace 1,\ldots,m \rbrace$. Boldface indices means we sum backwards.}
\label{degreesvariables}
\end{center}
\end{table}
These formulas are easily proved by induction. For instance, take the case of chain polynomials. For a start, compute the degrees of $x_m$, $x_{m-1}$ and $x_{m-2}$ from $E$:
\begin{equation}
\begin{split}
\vert x_m^E \vert &=\frac{2}{a_m} \\
\vert x_{m-1}^E \vert &=\frac{2}{a_{m-1}} \left( 1-\frac{1}{a_m} \right) \\
\vert x_{m-2}^E \vert &=\frac{2}{a_{m-2}} \left( 1-\frac{1}{a_{m-1}}+\frac{1}{a_{m-1} a_m} \right)
\end{split}
\nonumber
\end{equation}
Then, claim that $\vert x_k^E\vert=2 \sum\limits_{i=k}^m \left( -1 \right)^{i-k} \prod\limits_{l=k}^i a_l^{-1}$, and prove by induction. The induction base is clear for $k=m,m-1,m-2$, and proceed with the induction step:
\begin{equation}
\begin{split}
\vert x_{k+1} \vert &=\frac{1}{a_{k+1}} \left( 2-\vert x_k \vert \right)=\frac{2}{a_{k+1}} \left( 1-\sum\limits_{i=k}^m \left( -1 \right)^{i-k} \prod\limits_{l=k}^i a_l^{-1} \right) \\ &=\frac{2}{a_{k+1}}+2 \sum\limits_{i=k}^m \left( -1 \right)^{i-k-1} \prod\limits_{l=k}^i a_{k+1}^{-1} a_l^{-1} =2 \sum\limits_{i=k+1}^m \left( -1 \right)^{i-k-1} \prod\limits_{l=k+1}^i a_l^{-1}
\end{split}
\nonumber
\end{equation}
as expected.

Thanks to this result, we can state that in fact,

\begin{lem}
Invertible polynomials have the same central charge than their Berglund-H\"{u}bsch transpose.
\label{invBHsamecharge}
\end{lem}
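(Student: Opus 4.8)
The plan is to reduce the statement to a single transparent piece of linear algebra, bypassing the explicit degree formulas in Table \ref{degreesvariables} altogether. Recall that the central charge of a homogeneous potential $W$ in $n$ variables is $c_W = \sum_{i=1}^n (1 - |x_i|)$, where $|x_i|$ is the degree assigned to $x_i$. For an invertible polynomial $W = \sum_{i=1}^n a_i \prod_j x_j^{E_{ij}}$, homogeneity of degree $2$ says precisely that each monomial has degree $2$, i.e. $\sum_j E_{ij} |x_j| = 2$ for all $i$. Writing $\mathbf{q} = (|x_1|, \ldots, |x_n|)^{\mathsf T}$ and $\mathbf{1} = (1, \ldots, 1)^{\mathsf T}$, this is the linear system $E \mathbf{q} = 2 \mathbf{1}$.

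First I would invoke condition (2) in the definition of an invertible polynomial, namely that $E$ is invertible over $\mathbb{Q}$; this yields the unique solution $\mathbf{q} = 2\, E^{-1} \mathbf{1}$ and in particular pins down the variable degrees. The total central charge can then be written compactly as
$$c_W = n - \mathbf{1}^{\mathsf T} \mathbf{q} = n - 2\, \mathbf{1}^{\mathsf T} E^{-1} \mathbf{1}.$$
Since $W^T = \sum_i a_i \prod_j x_j^{E_{ji}}$ has the same number of variables and monomials, and its matrix of exponents is exactly $E^{\mathsf T}$ (again invertible over $\mathbb{Q}$), the analogous computation gives $\mathbf{q}^T = 2\, (E^{\mathsf T})^{-1} \mathbf{1} = 2\, (E^{-1})^{\mathsf T} \mathbf{1}$, and hence $c_{W^T} = n - 2\, \mathbf{1}^{\mathsf T} (E^{-1})^{\mathsf T} \mathbf{1}$.

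The crux, and really the only step, is the observation that $\mathbf{1}^{\mathsf T} E^{-1} \mathbf{1}$ is a scalar, i.e. a $1 \times 1$ matrix, so it coincides with its own transpose: $\mathbf{1}^{\mathsf T} (E^{-1})^{\mathsf T} \mathbf{1} = (\mathbf{1}^{\mathsf T} E^{-1} \mathbf{1})^{\mathsf T} = \mathbf{1}^{\mathsf T} E^{-1} \mathbf{1}$. Therefore $\sum_k |x_k^{E}| = \sum_k |x_k^{E^T}|$, and consequently $c_W = c_{W^T}$. There is no genuine obstacle here; the only care needed is the bookkeeping, namely confirming that $W$ and $W^T$ carry the same $n$ (immediate from the definition of the Berglund--H\"ubsch transpose) and that both matrices of exponents are invertible so that the degree vectors are well defined. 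As a sanity check one could instead verify the equality block by block for the Fermat, chain and loop atoms using the formulas in Table \ref{degreesvariables} together with additivity of $c$ under Thom--Sebastiani sums, but the matrix argument makes the block decomposition unnecessary.
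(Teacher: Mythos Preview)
Your proof is correct and takes a genuinely different route from the paper's. The paper proceeds case by case through the Kreuzer--Skarke atoms: it declares the Fermat case trivial, then for the chain (and, by analogy, the loop) type it appeals to the explicit degree formulas in Table~\ref{degreesvariables} and argues that summing $|x_k^E|$ over $k$ from $1$ to $m$ gives the same collection of products $\prod a_l^{-1}$ as summing $|x_k^{E^T}|$, since the index ranges are merely transposed. Your argument, by contrast, never opens the box: encoding homogeneity as $E\mathbf{q}=2\mathbf{1}$ and using the invertibility of $E$ over $\mathbb{Q}$ turns the whole lemma into the one-line identity $\mathbf{1}^{\mathsf T}E^{-1}\mathbf{1}=(\mathbf{1}^{\mathsf T}E^{-1}\mathbf{1})^{\mathsf T}=\mathbf{1}^{\mathsf T}(E^{\mathsf T})^{-1}\mathbf{1}$. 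This is cleaner, avoids both the type decomposition and the explicit formulas, and applies uniformly to any invertible polynomial. The paper's approach has the minor virtue of making the combinatorics of chain and loop degrees visible, but your linear-algebra reduction is the more conceptual proof.
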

\begin{proof}
The Fermat case is trivial. For the other two cases, perform the sum of all $\vert x_k^E \vert$ (resp. $\vert x_k^{E^T} \vert$). Let us explain in a bit of detail the case of chain as an enlightening case --the loop one is analogous. We need to sum all $\vert x_k^E \vert$ for values of $k$ between 1 and $m$, and the same with $\vert x_k^{E^T} \vert$. For $\vert x_k^E \vert$, we sum between $k$ and $m$ products of $a_l^{-1}$ between $k$ and $i$, where for $\vert x_k^{E^T} \vert$ we sum between $1$ and $k$ products of $a_l^{-1}$ between $i$ and $k$. One can realize here that we are only transposing the way we run over the indices of the sums and products -- but still, covering the same values. Hence we obtain the same total sum and thus the same central charge.
\end{proof}

Looking again at the Arnold classification, we can recognize that:
\begin{itemize}
\item For two variables, Class I is clearly Fermat, Class II is chain and Class III is obviously loop.
\item For three variables, let us call those classes which are not of any invertible kind (nor combinations of these) simply \textit{not invertible}. Then: 
\begin{center}
\begin{tabular}{c|c}
Class & Invertibility \\\hline
I & Fermat \\
II & Fermat+chain \\
III & not invertible \\
IV & Fermat+loop \\
V & chain \\
VI & not invertible \\
VII& loop \\ \hline
\end{tabular}
\end{center}
\end{itemize}

An obvious corollary then for Lemma \ref{invBHsamecharge} is that potentials of Classes I, II, IV, V and VII have the same charge than their corresponding Berglund-H{\"u}bsch transpose. Surprisingly, a direct computation pointed out that:
\begin{lem}
For three variables, polynomials of Classes III and VI have the same central charge as their corresponding Berglund-H{\"u}bsch transposes.
\end{lem}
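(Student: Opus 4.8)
The plan is to reduce the statement to a single linear-algebra observation, namely the one that already underlies the proof of Lemma~\ref{invBHsamecharge}, but in a form that does not require the exponent matrix to be invertible in the strict Berglund-H\"ubsch sense. First I would recall that the central charge depends only on the weight system: writing $\mathbf{d}=(|x|,|y|,|z|)^{T}$ for the vector of variable degrees and $\mathbf{1}=(1,1,1)^{T}$, one has $c_{W}=3-\mathbf{1}^{T}\mathbf{d}$. The degrees are fixed by requiring $W$ to be homogeneous of degree $2$, i.e.\ by the linear system $E\,\mathbf{d}=2\,\mathbf{1}$, where $E$ is the matrix of exponents of the three weight-determining monomials.

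The point specific to Classes III and VI is that, although the genuine (nondegenerate) potential carries a fourth, compensating monomial $y^{p}z^{q}$, the three weight-determining monomials already assemble into a \emph{square} exponent matrix. For Class III these are $x^{a},xy^{b},xz^{c}$, giving
\[
E=\begin{pmatrix} a & 0 & 0 \\ 1 & b & 0 \\ 1 & 0 & c \end{pmatrix},
\]
and for Class VI they are $x^{a}y,\,xy^{b},\,xz^{c}$, giving a matrix with $\det E=c(ab-1)$; in both cases $E$ is invertible over $\bQ$ for $a,b,c\ge 2$. I would then observe that the compensating monomial only imposes a homogeneity constraint on its own exponents $p,q$ (this is exactly the divisibility condition recorded in Proposition~\ref{3variablesArnold}) and does not enter the system $E\,\mathbf{d}=2\,\mathbf{1}$. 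Hence $\mathbf{1}^{T}\mathbf{d}=2\,\mathbf{1}^{T}E^{-1}\mathbf{1}$ and $c_{W}=3-2\,\mathbf{1}^{T}E^{-1}\mathbf{1}$.

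The Berglund-H\"ubsch transpose has weight-determining matrix $E^{T}$, so its central charge is $c_{W^{T}}=3-2\,\mathbf{1}^{T}(E^{T})^{-1}\mathbf{1}=3-2\,\mathbf{1}^{T}(E^{-1})^{T}\mathbf{1}$. The proof then closes with the elementary identity $\mathbf{1}^{T}M\,\mathbf{1}=\sum_{i,j}M_{ij}=\sum_{i,j}M_{ji}=\mathbf{1}^{T}M^{T}\mathbf{1}$, valid for any square matrix $M$, applied to $M=E^{-1}$: this yields $\mathbf{1}^{T}E^{-1}\mathbf{1}=\mathbf{1}^{T}(E^{-1})^{T}\mathbf{1}$ and therefore $c_{W}=c_{W^{T}}$. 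This is the conceptual counterpart of the index-swapping argument used for invertible polynomials in Lemma~\ref{invBHsamecharge}, and in fact proves the sharper general fact that any homogeneous potential whose weights are fixed by a square exponent matrix shares its central charge with its transpose, explaining uniformly why all seven Arnold classes behave well.

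The step I expect to be the main obstacle is not the computation, which is immediate once the identity above is isolated, but the bookkeeping of justifying that for the compensated Class III and VI potentials the relevant transpose is the one obtained by transposing the square core matrix $E$, and that the transposed side again admits a degree-$2$ compensating monomial (so that $c_{W^{T}}$ is genuinely the central charge of a bona fide potential). I would dispatch this case by case, verifying that $(E^{T})^{-1}$ produces positive rational weights, that their sum equals that of $E^{-1}$ by the scalar identity, and that the divisibility conditions ensuring nondegeneracy survive the transposition.
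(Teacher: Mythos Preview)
Your main argument is correct and is considerably cleaner than what the paper offers: the paper simply asserts the result follows from ``a direct computation'' and gives no details, whereas you isolate the single identity $\mathbf{1}^{T}E^{-1}\mathbf{1}=\mathbf{1}^{T}(E^{-1})^{T}\mathbf{1}$ that makes the statement transparent and, as you note, handles all seven Arnold classes uniformly. Your observation that the compensating fourth monomial places a constraint on $(p,q)$ rather than on the weight vector is exactly the point that allows the $3\times3$ core matrix to carry the computation.

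Your final paragraph, however, is off target. You anticipate having to verify that the transposed side ``again admits a degree-$2$ compensating monomial'' so that $W^{T}$ is a bona fide potential. In fact the paper explicitly records, immediately after the lemma, that the Berglund--H\"ubsch transposes of Classes III and VI do \emph{not} have finite-dimensional Jacobi ring; for instance $W^{T}=x^{a}yz+y^{b}+z^{c}$ has the whole $x$-axis in its critical locus. The lemma is therefore a statement about the formally defined quantity $c_{W^{T}}=3-\sum_i|x_i^{E^{T}}|$, which is perfectly well-defined because $E^{T}$ is invertible, and nothing more. So drop that last step: it is not needed, and the check you propose would actually fail.
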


But a crucial problem arising might be that of the shape of the potentials: at least for the case of three variables, the Berglund-H\"ubsch transposed polynomials of polynomials of Classes III and VI do not have finite dimensional Jacobian ring and hence should not be of our interest. In general, we do not know if this is the case for all the classes of polynomials which are not of invertible type.

Because of this, one may adventure herself to state that, conjecturally, only invertible polynomials are going to be interesting regarding finding orbifold equivalences via Berglund-H\"ubsch transposition, as these provide non-degenerate polynomials when Berglund-H\"ubsch transposed. It is work in progress to prove this, and then find new orbifold equivalences between potentials and their Berglund-H{\"u}bsch transpose. This preliminary work should serve as a good source for chasing new equivalences, and we hope it provides more examples or orbifold equivalences between potentials and their Berglund-H{\"u}bsch transpose soon.

\newpage
\addcontentsline{toc}{chapter}{Outlook}
\chapter*{Outlook}

There is still a great amount of work to do in order to reach a better understanding of the Landau-Ginzburg/CFT correspondence, and several immediate follow-up projects from the results presented in this thesis, which seem to be only the top of the iceberg.

To begin with, extending an equivalence between categories like the one proved in Chapter \ref{LGCFTTL} to the case where $d$ is even is an inmediate follow-up project. In addition, recall that for this result we only considered the bosonic part of the $N=2$ superconformal algebra -- but we do not have an intuition of what should the corresponding equivalence be for the fermionic part. In this paper we focused exclusively on the $N=2$ minimal models, but one may wish to explore other models like e.g. the Kazama-Suzuki models, where in \cite{Behr:2014bta} matrix factorizations were already detected.

Concerning Chapter \ref{ch:orbeq}, we hope to find more orbifold equivalences, hopefully via the Berglund-H\"ubsch transposition (as explained in Section \ref{sec:BH}). In addition, it would be interesting to analyse further orbifold equivalences in the framework of quivers (as explained in Remark 5.2.8.\ref{quiver}). Further higher categorical aspects of matrix factorizations should also be an object of future study.

In addition, on a more representation-theoretical side, it is still missing some work relating maximal Cohen-Macaulay modules and representations of vertex operator algebras. 
On first glance, one might not realize how deeply connected these structures seem to be. Light shed by the Landau-Ginzburg/CFT correspondence on their relationship may be useful for the representation theory community.

E.g. understanding the orbifold equivalence in terms of Cohen-Macaulay modules may also give some new insights into this area of mathematics.

It is the intention of the author to continue in this line of research --involving mathematics and physics in such a beautiful way-- for a long time, hopefully giving more and many results soon. In the end, this is only the beginning.

\quad

\quad

\begin{appendices}

\chapter{Proof of Theorem \ref{thm:Pdgr-tensor-closed}}\label{app:graded-tensor}

Semi-simplicity of $\Pdgr$ follows from Lemma \ref{lem:hmfgrPSPR}, as does the list of simple objects.

\medskip

Let $\lambda,\mu \in \{0,1,\dots,d{-}2\}$ and $a,b \in \bZ_d$.
To show the decomposition rule 
\beq\label{eq:P-hat-decomp}
	\hat P_{a:\lambda}  \otimes \hat P_{b:\mu} \simeq \bigoplus_{\nu=|\lambda-\mu|~\mathrm{step}\,2}^{\min(\lambda+\nu,2d-4-\lambda-\nu)} \hat P_{a+b-\frac12(\lambda+\mu-\nu):\nu} \quad ,
\eeq
we verify the cases $\lambda=0$ and $\lambda=1$ explicitly. The general case follows by a standard argument using induction on $\lambda$.

\medskip
\noindent
{\em Case $\lambda=0$:} The isomorphism $\hat P_{a:0}  \otimes \hat P_{b:\mu} \simeq \hat P_{a+b:\mu}$ is immediate from the isomorphism $\hat P_{a:0} \simeq {}_{-a}I$ given in \eqref{eq:PS-with-twisted-action-iso-to-PS}, the isomorphism ${}_{-a}I \otimes \hat M \to {}_{-a}M$ provided by ${}_{-a}(\lambda_M)$ for any matrix factorization $M$, and ${}_{-a}(P_{S}) \simeq P_{S+a}$, again from \eqref{eq:PS-with-twisted-action-iso-to-PS}.

\medskip
\noindent
{\em Case $\lambda=1$:}
For $\mu=0$ the isomorphism $\hat P_{a:1} \otimes \hat P_{b:0} \simeq P_{a+b:1}$ constructed as in case $\lambda=0$, using $P_{b:0} \simeq I_{-b}$. To show the decomposition \eqref{eq:P-hat-decomp} for $\mu \in \{1,2,\dots,d-2 \}$
we start by giving maps
$$
	g^- \,:\, \hat P_{a+b+1:\mu-1} \longrightarrow \hat P_{a:1} \otimes \hat P_{b:\mu}
	\quad , \quad
	g^+ \,:\, \hat P_{a+b:\mu+1} \longrightarrow \hat P_{a:1} \otimes \hat P_{b:\mu}
$$
in $\ZMFbigr$. Write $A = \hat P_{a:1}$, $B = P_{b:\mu}$, 
$Q_- = \hat P_{a+b+1:\mu-1}$ and $Q_+ = P_{a+b:\mu+1}$. We have to find $g^\varepsilon_{ij}$ that fit into the diagram
\beq\label{eq:P1Plam-embed-summands}
\xymatrix{
Q_\ve \ar[ddd] & & \bC[x,z]\left\{\frac{\mu+2+\ve}{d}-1\right\} \ar@/^10pt/[rrrr]^{q_\ve(x,z)} \ar[ddd]_{\left(\begin{array}{c} \scriptstyle{g_{10}^\ve} \\ \scriptstyle{g_{01}^\ve} \end{array}\right)} &&&& \bC[x,z]\left\{-\frac{\mu+2}{d}\right\} \ar@/^10pt/[llll]^{\frac{x^d-z^d}{q_\ve(x,z)}} \ar[ddd]^{\left(\begin{array}{r} \scriptstyle{g_{00}^\ve} \\ \scriptstyle{g_{11}^\ve} \end{array}\right)} \\ \\ \\
A\ot B && {\begin{array}{c}{\bC[x,y,z]\left\{\frac{3-\mu}{d}-1\right\}} \\ {\op} \\ {\bC[x,y,z]\left\{\frac{1+\mu}{d}-1\right\}} \end{array} }\ar@/^10pt/[rrrr]^{\left(\begin{array}{rr} \scriptstyle{p_1(x,y)} & \scriptstyle{p_\mu(y,z)} \\ \frac{y^d-z^d}{p_\mu(y,z)} &  \frac{x^d-y^d}{p_1(x,y)} \end{array}\right)} &&&& 
{\begin{array}{c}{\bC[x,y,z]\left\{-\frac{\mu+1}{d}\right\}} \\ {\op} \\ {\bC[x,y,z]\left\{\frac{5+\mu}{d}-2\right\}} \end{array} }
\ar@/^10pt/[llll]^{\left(\begin{array}{rr} \frac{x^d-y^d}{p_1(x,y)} & \scriptstyle{-p_\mu(z,y)} \\ \frac{y^d-z^d}{p_\mu(y,z)} & \scriptstyle{p_1(x,y)} \end{array}\right)}
}
\eeq
Here,
\begin{align*}
	p_1(x,y) &= (x-\eta^{a}y)(x-\eta^{a+1}y) ~,&
	q_-(x,z) &= \prod_{j=a+b+1}^{a+b+\mu} (x-\eta^j z) ~,
	\nonumber \\
	p_\mu(y,z) &= \prod_{j=b}^{b+\mu} (y-\eta^j z) ~, &
	q_+(x,z) &= \prod_{j=a+b}^{a+b+\mu+1} (x-\eta^j z) \ .
\end{align*}
Comparing $\bC$-degrees determines the polynomial degrees of the individual maps to be
\begin{align*}
\mathrm{deg}(g^\varepsilon_{10}) &= \mu-\tfrac12(1-\varepsilon)  ~,& 
\mathrm{deg}(g^\varepsilon_{00}) &= \tfrac12(1-\varepsilon) ~, 
\nonumber\\
\mathrm{deg}(g^\varepsilon_{01}) &= \tfrac12(1+\varepsilon) ~,&
\mathrm{deg}(g^\varepsilon_{11}) &= d-2 - \mu  -\tfrac12(1+\varepsilon) \ . 
\end{align*}
Commutativity of \eqref{eq:P1Plam-embed-summands} is equivalent to
\begin{align*}
&\text{(i)}& q_\varepsilon(x,z) \, g^\varepsilon_{00}(x,y,z) &= p_1(x,y) \, g^\varepsilon_{10}(x,y,z) + p_\mu(y,z) \, g^\varepsilon_{01}(x,y,z)
\nonumber \\
&\text{(ii)}& q_\varepsilon(x,z) \, g^\varepsilon_{11}(x,y,z) &= -\frac{y^d-z^d}{p_\mu(y,z)} \, g^\varepsilon_{10}(x,y,z) + \frac{x^d-y^d}{p_1(x,y)} \, g^\varepsilon_{01}(x,y,z)
\end{align*}
These conditions imply the remaining two conditions.
Let us show how one arrives at $g^-$ in some detail and then just state the result for $g^+$. 

\medskip

We have $\mathrm{deg}(g^-_{01})=0$ and we make the ansatz $g^-_{01}=1$ (choosing $g^-_{01}=0$ forces $g^-=0$, so this is really a normalisation condition).
The polynomial $g^-_{00}$ is of degree 1, so $g^-_{00}(x,y,z) = \alpha x + \beta y + \gamma z$ for some $\alpha,\beta,\gamma \in \bC$. Condition (i) determines $g^-_{10}$ uniquely to be
$$
	g^-_{10}(x,y,z) = \frac{q_-(x,z)(\alpha x + \beta y + \gamma z) - p_\mu(y,z)}{p_1(x,y)} \ .
$$
We need to impose the condition that $g^-_{10}$ is a polynomial. 
This amounts to verifying that the numerator has zeros for $y=\eta^{-a}x$ and $y=\eta^{-a-1}x$.
Using 
\begin{align*}
p_\mu(\mu^{-a}x,z) &=  q_-(x,z) \, \eta^{-a(\mu+1)} (x-\eta^{a+b}z) \ ,
\nonumber \\
p_\mu(\mu^{-a-1}x,z) &= q_-(x,z) \, \eta^{-(a+1)(\mu+1)}  (x-\eta^{a+b+\mu+1}z)
\end{align*}
gives the unique solution
$$
	g^-_{00}(x,y,z) = \eta^{-a\mu} \Bigg( - \eta^{-a-1} \frac{1-\eta^{-\mu}}{1-\eta^{-1}} \, x
	+ \frac{1 -\eta^{-\mu-1}}{1-\eta^{-1}} \, y
	- \eta^{b} \,z \Bigg) \ .
$$
Finally, a short calculation shows that condition (ii) is equivalent to
$$
	g_{11}^- = \frac{1}{p_1(x,y)}\Bigg( \frac{x^d-z^d}{q_-(x,z)} - \frac{y^d-z^d}{p_\mu(y,z)} \, g^-_{00}(x,y,z) \Bigg) \ .
$$
The term in brackets is clearly a polynomial. To show that it is divisible by $p_1(x,y)$, one simply verifies that the term is brackets is zero for $y = \eta^{-a}x$ and $y = \eta^{-a-1}x$.

For $g^+$ the calculation works along the same lines with the result
\begin{align*}
	g^+_{00} &= 1  
	\nonumber \\
	g^+_{01} &= 
		\eta^{a (\mu+1)  } \Bigg( \frac{1-\eta^{\mu+2}}{1-\eta} \, x 
		- \eta^{a+1} \frac{1 - \eta^{\mu+1}}{1-\eta} \, y - \eta^{a+b+\mu+1} \, z \Bigg)
	\nonumber \\
	g^+_{10} &= \frac{q_+(x,z) - p_\mu(y,z) \, g_{01}^+(x,y,z)}{p_1(x,y)}
	\nonumber \\
	g^+_{11} &= \frac{1}{p_1(x,y)}\Bigg( \frac{x^d-z^d}{q_+(x,z)} \, g^+_{01}(x,y,z) - \frac{y^d-z^d}{p_\mu(y,z)} \Bigg)
\end{align*}
As above, one verifies that the $g^+_{10}$ and $g^+_{11}$ are indeed polynomials in $x,y,z$.

\medskip

We will now establish that $(g^-,g^+) : \hat P_{a+b+1:\mu-1} \oplus \hat P_{a+b:\mu+1} \longrightarrow \hat P_{a:1} \otimes \hat P_{b:\mu}$ is an isomorphism in $\HMFbi$ (and thereby also in $\HMFbigr$ as $g^\pm$ have $\bC$-degree 0). We do this by employing Remark \ref{rem:H(M)} (see \cite[Cor.\,4.9]{Wu09} for the corresponding graded statement), that is, by showing that $(H(g^-),H(g^+)) : H(\hat P_{a+b+1:\mu-1}) \oplus H(\hat P_{a+b:\mu+1}) \longrightarrow H(\hat P_{a:1} \otimes \hat P_{b:\mu})$ is an isomorphism.

For $\hat P_S$ we have $H(\hat P_\emptyset) = H(\hat P_{\bZ_d}) = 0$ and
$H(\hat P_S) = \bC \oplus \bC$ if $S \neq \emptyset, \bZ_d$. The first case occurs only for $\mu=d-2$, where $H(\hat P_{a+b:\mu+1})=0$.

For $H(\hat P_{a:1} \otimes \hat P_{b:\mu})$ we need to compute the homology of the complex
$$
\xymatrix{
{\begin{array}{c}\bC[y] \\ {\op} \\ \bC[y] \end{array} }
 \ar@/^10pt/[rrrr]^{\left(\begin{array}{rr} \scriptstyle{\eta^{2a+1}y^2} & \scriptstyle{y^{\mu+1}} \\ \scriptstyle{-y^{d-\mu-1}} &  \scriptstyle{-\eta^{-2a-1}y^{d-2}} \end{array}\right)} &&&& 
{\begin{array}{c}\bC[y] \\ {\op} \\ \bC[y] \end{array} }
\ar@/^10pt/[llll]^{\left(\begin{array}{lr} \scriptstyle{-\eta^{-2a-1}y^{d-2}} & \scriptstyle{-y^{\mu+1}} \\ \scriptstyle{y^{d-\mu-1}} &  \scriptstyle{\eta^{2a+1}y^{2}} \end{array}\right)}
}
$$
Define the vectors $v_0 := (\eta^{2a+1} , - y^{d-\mu-3})$ (this has second entry equal to $y^{-1}$ for $\mu=d-2$, but the results below are polynomial nonetheless) and $v_1 = (-y^{\mu-1} , \eta^{2a+1})$. 
One finds
\begin{align*}
	\mathrm{ker}(\bar d_0) &= \bC[y] v_0 \cdot \begin{cases} 1 &; \mu < d-2 \\ y &; \mu = d-2 \end{cases}
	&
	\mathrm{ker}(\bar d_1) &=  \bC[y] v_1
\nonumber	\\
	\mathrm{im}(\bar d_1) &= y^2 \bC[y] v_0
&
	\mathrm{im}(\bar d_0) &= y \bC[y] v_1\cdot \begin{cases} 1 &; \mu < d-2 \\ y &; \mu = d-2 \end{cases}
\end{align*}
Writing $[\cdots]$ for the homology classes, the homology groups 
$H_i := H_i(\hat P_{a:1} \otimes \hat P_{b:\mu})$
are given by
$$
	H_0 = 
	\begin{cases} \{ [v_0] , [yv_0] \} &; \mu < d-2 \\ \{ [yv_0] \} &; \mu = d-2 \end{cases}
	\quad , \qquad
	H_1 = 
	\begin{cases} \{ [v_1] , [yv_1] \} &; \mu < d-2 \\ \{ [v_1] \} &; \mu = d-2 \end{cases}
	\qquad .
$$
The map $(g^-,g^+)$ acts on homology by, for $\mu < d-2$,
\begin{align*}
	H_0(g^-,g^+) &=~
	\Big(~~ \eta^{-2a-1} \beta^- \, [y v_0] ~~,~~ \eta^{-2a-1} [v_0]~~ \Big) \ ,
	\nonumber \\
	H_1(g^-,g^+) &=~ 	\Big( ~~\eta^{-2a-1} [v_1] ~~,~~ \eta^{-2a-1} \beta^+ \, [y v_1]~~ \Big) \ .
\end{align*}
Here $\beta^-$ is the coefficient of $y$ in $g^-_{00}$ and $\beta^+$ is the coefficient of $y$ in $g^+_{01}$.
For $\mu = d-2$, the second entry in the above maps is absent, as $H(\hat P_{a+b:\mu+1})=0$ in this case. Altogether we see that $H(g^-,g^+)$ is indeed an isomorphism.

\medskip

This proves the decomposition 
$\hat P_{a:1} \otimes \hat P_{b:\mu} ~\simeq~
\hat P_{a+b+1:\mu-1} \oplus \hat P_{a+b:\mu+1}$ 
in $\HMFbigr$ and
completes the proof of Theorem \ref{thm:Pdgr-tensor-closed}.

\chapter{Equivariant objects and
pointed categories}\label{sec:equiv+pointed}

Here we collect some (well known) categorical trivialities which allow us to avoid difficult calculations with matrix bifactorizations. 
Throughout Appendix \ref{sec:equiv+pointed} all tensor (and in particular all fusion) categories will be assumed to be strict. 
In labels for some arrows in our diagrams we suppress tensor product symbols for compactness. 

\section{Categories of equivariant objects}\lb{ceo}

Let $G$ be a group. 
An {\em action} of $G$ on a tensor category $\C$ is a monoidal functor $F:\underline{G}\to\Aut_\ot(\C)$ from the discrete monoidal category $\underline{G}$ to the groupoid of tensor autoequivalences of $\C$.
More explicitly, a $G$-action on $\C$ consists of a collection $\{F_g\}_{g\in G}$ of tensor autoequivalences $F_g:\C\to\C$ labelled by elements of $G$ together with natural isomorphisms $\phi_{f,g}:F_f\circ F_g\to F_{fg}$ of tensor functors such that $\phi_{f,e} = 1,\phi_{e,g} =1$ and such that the diagram 
$$\xymatrix{F_f\circ F_g\circ F_h\ar[rr]^{\phi_{f,g}\circ 1} \ar[d]_{1\circ\phi_{g,h}} && F_{fg}\circ F_g \ar[d]^{\phi_{fg,h}} \\ 
F_f\circ F_{gh} \ar[rr]^{\phi_{f,gh}} && F_{fgh}}$$
commutes for any $f,g,h\in G$.

Let $\C$ be a tensor category together with a $G$-action.
An object $X\in\C$ is {\em $G$-equivariant} if it comes equipped with a collection of isomorphisms $x_g:X\to F_g(X)$ such that 
the diagram 
$$\xymatrix{X\ar[rr]^{x_{fg}} \ar[d]_{x_{f}} && F_{fg}(X) \\ 
F_f(X) \ar[rr]_{F_f(x_g)} && F_f(F_g(X)) \ar[u]_{\phi_{f,g}} }$$
commutes for any $f,g\in G$.

A morphism $a:X\to Y$ between $G$-equivariant objects $(X,x_g), (Y,y_g)$ is {\em $G$-equivariant} if 
the diagram 
$$\xymatrix{X\ar[rr]^{x_g} \ar[d]_{a} && F_{g}(X) \ar[d]^{F_g(a)}\\ 
Y \ar[rr]^{y_g} && F_g(Y) }$$
commutes for any $g\in G$.
Denote by $\C^G$ the category of $G$-equivariant objects in $\C$.

\bpr\label{prop:CG-is-tensor}
Let $\C$ be a strict tensor category with a $G$-action. 
Then the category $\C^G$ is strict tensor with 

tensor product $(X,x_g)\ot(Y,y_g) = (X\ot Y,(x|y)_g)$, where $(x|y)_g$ is defined by
$$\xymatrix{X\ot Y \ar[rr]^(.4){x_gy_g} && F_g(X)\ot F_g(Y) \ar[rr]^(.55){(F_g)_{X,Y}} && F_g(X\ot Y)}$$
and with

unit object $(I,\iota)$, where $\iota_g:I\to F_g(I)$ is the unit isomorphism of the tensor functor $F_g$.
\epr
\bpf
All we need to check is that the $G$-equivariant structures of the triple tensor products $(X,x_g)\ot((Y,y_g)\ot(Z,z_g))$ and $((X,x_g)\ot(Y,y_g))\ot(Z,z_g)$ coincide. These $G$-equivariant structures $x|(y|z), (x|y)|z$ are the top and the bottom outer paths of the diagram
$$\xymatrix{X\ot Y\ot Z\ar[rrd]^{x_gy_gz_g} \\ && F_g(X)\ot F_g(Y)\ot F_g(Z) \ar[rr]^{(F_g)_{X,Y}\id} \ar[d]_{\id(F_g)_{Y,Z}} && F_g(X\ot Y)\ot F_g(Y) \ar[d]^{(F_g)_{XY,Z}} \\ &&
F_g(X)\ot F_g(Y\ot Z) \ar[rr]^{(F_g)_{X,YZ}} && F_g(X\ot Y\ot Z)}$$
whose commutativity is the coherence of the tensor structure of $F_g$.
\epf

Clearly the forgetful functor
$$\C^G\ \to\ \C,\qquad (X,x)\mapsto X$$ is tensor.

\bre\lb{rem2}
It is possible to define more general $G$-actions on (tensor) categories involving associators for $G$ (3-cocycles for $G$).
All constructions generalise straightforwardly.
\ere

\section{Inner actions and monoidal centralisers of pointed subcategories}\lb{iac}

An object $P$ of a tensor category $\C$ is {\em invertible} if the dual object $P^*$ exists and the evaluation $\mathrm{ev}_P:P^*\ot P\to I$ and coevaluation $\mathrm{coev}_P:I\to P\ot P^*$ maps are isomorphisms. Clearly an invertible object is simple since $\C(P,P)\simeq\C(I,I)=\kk$.

The set $Pic(\C)$ of isomorphism classes of invertible objects is a group with respect to the tensor product (the {\em Picard group} of $\C$).
Choosing a representative $s(p)$ in each isomorphism class $p\in Pic(\C)$ and isomorphisms $\sigma(p,q):s(p)\ot s(q)\to s(pq)$ for each pair $p,q\in Pic(\C)$ allows us to define a function $\alpha:Pic(\C)^{\times 3}\to \kk^*$ (here $\kk^*$ is the multiplicative group of non-zero elements of $\kk$). Indeed for $p,q,r\in Pic(\C)$ the composition 
$$
s(pqr) \xrightarrow{\sigma(pq,r)^{-1}}  s(pq)\ot s(r) \xrightarrow{\sigma(p,q)^{-1}\,\id}  s(p)\ot s(q)\ot s(r) \xrightarrow{\id\,\sigma(q,r)}  s(p)\ot s(qr) \xrightarrow{\sigma(p,qr)}  s(pqr)$$
is an automorphism of $s(pqr)$ and thus has a form $\alpha(p,q,r)\id_{s(pqr)}$ for some $\alpha(p,q,r)\in \kk^*$.
It is easy to see that $\alpha$ is a 3-cocycle and that the class $[\alpha]\in H^3(Pic(\C),\kk^*)$ does not depend on the choice of $s$ and $\sigma$. 

A tensor category $\C$ is {\em pointed} if all its simple objects are invertible. 
A fusion pointed category $\C$ can be identified with the category $\V(G,\alpha)$ of $G$-graded vector spaces, where $G=Pic(\C)$ and with the associativity constraint twisted by $\alpha\in H^3(Pic(\C),\kk^*)$.

Let $P$ be an invertible object of a tensor category $\C$.
The functor
$$P\ot-\ot P^*:\C\to \C,\qquad X\mapsto P\ot X\ot P^*$$ comes equipped with a monoidal structure 
$$\xymatrix{P\ot X\ot P^*\ot P\ot Y\ot P^* \ar[rr]^(.6){\id\,\mathrm{ev}_P\,\id} && P\ot X\ot Y\ot P^*}$$
making it a tensor autoequivalence, the {\em inner autoequivalence} corresponding to $P$.
The assignment $P\mapsto P\ot-\ot P^*$ defines a homomorphism of groups
$Pic(\C)\ \to\ Aut_\ot(\C)\ .$

The {\em monoidal centraliser} $\Z_\D(F)$ of a tensor functor $\C\to\D$ is the category of pairs $(Z,z)$, where $Z\in\D$ and $z_X:Z\ot F(X)\to F(X)\ot Z$ are a collection of isomorphisms, natural in $X\in\C$, such that $Z_I=\id$ and such that the diagram
$$\xymatrix{Z\ot F(X\ot Y) \ar[d]_{\id\,F_{X,Y}} \ar[rr]^{z_{XY}} && F(X\ot Y)\ot Z \ar[d]^{F_{X,Y}\,\id} \\ Z\ot F(X)\ot F(Y)\ar[dr]_{z_X\,\id} && F(X)\ot F(Y)\ot Z\\ & F(X)\ot Z\ot F(Y) \ar[ru]_{\id\,z_Y} }$$
commutes for any $X,Y\in\C$.
A morphism $(Z,z)\to(Z',z')$ in 

	$\Z_\D(F)$ 
is a morphism $f:Z\to Z'$ in $\D$ such that the diagram
$$\xymatrix{Z\ot F(X)\ar[rr]^{z_X} \ar[d]_{f\,\id} && F(X)\ot Z \ar[d]^{\id\,f} \\ Z'\ot F(X)\ar[rr]^{z'_X} && F(X)\ot Z'}$$
commutes for any $X\in\C$.

\bpr
Let $F:\C\to\D$ be a tensor functor between strict tensor categories. 
Then the monoidal centraliser $\Z_\D(F)$ is strict tensor with the tensor product $(Z,z)\ot(Z',z') = (Z\ot Z',z|z')$ where $(z|z')_X$ is defined by
$$\xymatrix{Z\ot Z'\ot F(X)\ar[rr]^{(z|z')_{X}} \ar[dr]_{\id\,z'_X} &&
	F(X) \ot Z\ot Z' 
 \\ & Z\ot F(X)\ot Z' \ar[ru]_{z_X\,\id} }$$
and with the unit object $(I,1)$.
\epr
\bpf
Note that the monoidal centraliser $\Z_\D(Id_\D)$ of the identity functor $Id_\D:\D\to\D$ is the monoidal centre $\Z(\D)$. 
The proof of the proposition is identical to the proof of monoidality of the monoidal centre (see \cite{js0}).
\epf

Clearly the forgetful functor
$$\Z_\D(F)\ \to\ \D,\qquad (Z,z)\mapsto Z$$ is tensor.

Let $G$ be a group and $\V(G)$ be the pointed tensor category whose group of isomorphism classes of objects is $G$ and which has trivial associator.
A tensor functor $F:\V(G)\to\C$ gives rise to the action of $G$ on $\C$ by inner autoequivalences $F_g(X) = F(g)\ot X\ot F(g)^*,\ g\in G$.

\bth\lb{cps}
Let $G$ be a group and let $\C$ be a tensor category with a tensor functor $F:\V(G)\to\C$. 
Then the monoidal centraliser $\Z_\C(F)$ is tensor equivalent to the category of $G$-equivariant objects $\C^G$, where the $G$-action is defined by the functor $\V(G)\to\C$ as above.
\eth
\bpf
Define a functor $\Z_\C(F)\to \C^G$ by assigning to $(Z,z)\in\Z_\C(F)$ a $G$-equivariant object $(Z,\tilde z_g)_{g\in G}$ with $\tilde z_g:Z\to F_g(X) = F(g)\ot X\ot F(g)^*$ given by
$$\xymatrix{Z \ar[rr]^-{\id\,\mathrm{coev}_{F(g)}} && Z\ot F(g)\ot F(g)^*\ar[rr]^{z_g\,\id} && F(g)\ot Z\ot F(g)^*}\quad .$$
It is straightforward to see that this is a tensor equivalence.
\epf

\section{Tensor functors from products with pointed categories}

Recall from
	\cite{deligne,BaKiBook}
that the {\em Deligne product} $\C\boxtimes\D$ of $\kk$-linear semi-simple categories $\C$ and $\D$ is a semi-simple category with simple objects $X\boxtimes Y$ for $X$ and $Y$ being simple objects of $\C$ and $\D$ correspondingly.
One can extend the definition of $X\boxtimes Y$ to arbitrary $X\in\C$ and $Y\in\D$. The hom spaces between these objects are
$$(\C\boxtimes\D)(X\boxtimes Y,X'\boxtimes Y') = \C(X,X')\ot_\kk \D(Y,Y')\ ,$$
where on the right is the tensor product of vector spaces over $\kk$.

The Deligne product of fusion categories is fusion with the unit object $I\boxtimes I$ and the tensor product defined by
$$(X\boxtimes Y)\ot(X'\boxtimes Y') = (X\ot X')\boxtimes (Y\ot Y')\ .$$
The Deligne product of fusion categories has another universal property, which we describe next.

We say that a pair of tensor functors $F_i:\C_i\to\D$ has {\em commuting images} if they come equipped with a collection of isomorphisms
$c_{X_1,X_2}:F_1(X_1)\ot F_2(X_2)\to F_2(X_2)\ot F_1(X_1)$ natural in $X_i\in\C_i$ and such that the following diagrams commute for all $X_i,Y_i\in \C_i$:
$${\xymatrixcolsep{1pc}\xymatrix{F_1(X_1)\ot F_2(I)\ar[rr]^{c_{X_1,I}} \ar[d] && F_2(I)\ot F_1(X_1) \ar[d] \\ F_1(X_1)\ot I\ar@{=}[r] & F_1(X_1) \ar@{=}[r] & I\ot F_1(X_1)}}\qquad 
{\xymatrixcolsep{1pc}\xymatrix{F_1(I)\ot F_2(X_2)\ar[rr]^{c_{I,X_2}} \ar[d] && F_2(X_2)\ot F_1(I) \ar[d] \\ I\ot F_2(X_2)\ar@{=}[r] & F_2(X_2) \ar@{=}[r] & F_2(X_2)\ot I}}$$
$$\xymatrix{F_1(X_1\ot Y_1)\ot F_2(X_2) \ar[rr]^{c_{X_1Y_1,X_2}} \ar[d]_{(F_1)_{X_1,Y_1}1} && F_2(X_2)\ot F_1(X_1\ot Y_1) \ar[d]^{1(F_1)_{X_1,Y_1}} \\ F_1(X_1)\ot F_1(Y_1)\ot F_2(X_2)  \ar[rd]_{1c_{Y_1,X_2}} && F_2(X_2)\ot F_1(X_1)\ot F_1(Y_1) \\ & F_1(X_1)\ot F_2(X_2)\ot F_1(Y_1)  \ar[ru]_{c_{X_1,X_2}1}}$$
$$\xymatrix{F_1(X_1)\ot F_2(X_2\ot Y_2) \ar[rr]^{c_{X_1,X_2Y_2}} \ar[d]_{1(F_2)_{X_2,Y_2}} && F_2(X_2\ot Y_2)\ot F_1(X_1) \ar[d]^{(F_2)_{X_2,Y_2}1} \\ F_1(X_1)\ot F_2(X_2)\ot F_2(Y_2)  \ar[rd]_{c_{X_1,X_2}1} && F_2(X_2)\ot F_2(Y_2)\ot F_1(X_1) \\ & F_2(X_2)\ot F_1(X_1)\ot F_2(Y_2)  \ar[ru]_{1c_{X_1,Y_2}}}$$

\bpr\lb{upd}
The Deligne product $\C_1\boxtimes\C_2$ of fusion categories $\C_1$ and $\C_2$ is the initial object among pairs of tensor functors $F_i:\C_i\to\D$ with commuting images, that is for a pair of tensor functors $F_i:\C_i\to\D$ with commuting images there is a 
	unique
tensor functor $F:\C_1\boxtimes\C_2\to\D$ making the diagram 
$$\xymatrix{\C_1 \ar[rd] \ar[rdd]_{F_1} && \C_2 \ar[ld] \ar[ldd]^{F_2} \\ & \C_1\boxtimes\C_2 \ar@{.>}[d]^F \\ & \D}$$
commutative.
\epr
\bpf
Note that the assignments $X_1\mapsto X_1\boxtimes I,\ X_2\mapsto I\boxtimes X_2$ define a pair of tensor functors $\C_i\to C_1\boxtimes\C_2$ with commuting images.

Conversely let $F_i:\C_i\to\D$ be a pair of tensor functors with commuting images.
Define $F:\C_1\boxtimes\C_2\to\D$ by $F(X_1\boxtimes X_2) = F_1(X_1)\ot F_2(X_2)$. 
	Since $\C_1$ and $\C_2$ are fusion, this determines $F$ uniquely as a $\kk$-linear functor.
	The monoidal structure for $F$ is uniquely determined to be 
$$\xymatrix{
F(X_1\boxtimes X_2)\ot F(Y_1\boxtimes Y_2) \ar@{=}[d] \ar[rrr]^{F_{X_1\boxtimes X_2,Y_1\boxtimes Y_2}} &&& F\big((X_1\boxtimes X_2)\ot(Y_1\boxtimes Y_2)\big) \ar@{=}[d] \\
F(X_1)\ot F(X_2)\ot F(Y_1)\ot F(Y_2) \ar[d]_{1c_{X_2,Y_1}1} &&& F\big((X_1\ot Y_1)\boxtimes(X_2\ot Y_2)\big) \ar@{=}[d] \\
F(X_1)\ot F(Y_1)\ot F(X_2)\ot F(Y_2) \ar[rrr]^{(F_1)_{X_1,Y_1}(F_2)_{X_2,Y_2}} &&& F_1(X_1\ot Y_1)\ot F_2(X_2\ot Y_2) }$$
It is straightforward to check that this definition satisfies the coherence axioms of a monoidal structure. 
\epf

\bre\lb{ciz}
Note that the data of a pair of tensor functors $F_i:\C_i\to\D$ with commuting images amounts to a tensor functor $\C_1\to \Z_\D(F_2)$ whose composition with the forgetful functor $\Z_\D(F_2)\to\D$ equals $F_1$.
\ere

\bth\lb{fpp}
Let $\C$ be a fusion
category and let $G$ be a finite
group.
Then the data of a tensor functor $\C\boxtimes\V(G)\to\D$ amounts to a tensor functor $\V(G)\to\D$ and a tensor functor $\C\to \D^G$, where the $G$-action is defined by the functor $\V(G)\to\D$ as in Appendix \ref{iac}.
\eth
\bpf
By Proposition \ref{upd} a tensor functor $\C\boxtimes\V(G)\to\D$ corresponds to a pair of tensor functors $F:\V(G)\to\D,\ F':\C\to\D$ with commuting images.
By Remark \ref{ciz} this is equivalent to a tensor functor $\C\to\Z_\D(F)$ to the centraliser of $F$.
Finally by Theorem \ref{cps} the centraliser $\Z_\D(F)$ is canonically equivalent to the category of equivariant objects $\D^G$.
\epf

\bre
It is possible to extend Theorem \ref{fpp} to the case of pointed categories $\V(G,\alpha)$ with non-trivial associators $\alpha\in Z^3(G,\kk^*)$.
As for Remark \ref{rem2} all constructions generalise straightforwardly.
\ere
\end{appendices}

\chapter*{Acknowledgements}

First and foremost, I would like to warmly thank my advisor, Ingo Runkel, for having given me the honour of being his student, his time, lots of patience and all the generously shared wisdom. Also, for his complete support not only on the daily research but also on applying for positions and the very careful proof-reading of this thesis. He will be sincerely missed when I leave Hamburg.

I gratefully acknowledge the Department of Algebra and Number Theory and the Research Training Group 1670 ``Mathematics inspired by string theory and quantum field theory" of the University of Hamburg for not only financial and mathematical but also the best possible of human support, specially from Jennifer Maier, Alexander Barvels, Mart\'{i}n Mombelli, Susama Agarwala, Rosona Eldred, Giovanni Bazzoni, Peter-Simon Dieterich, Lana Casselmann and Nezhla Aghaee. Thanks so much to Jeffrey Morton for uncountable great discussions on work and scientific life. Floyd, Pawel Sosna and Alessandro Valentino proof-read parts of this thesis, which would look way worse if it wasn't because of their corrections.

A mi familia por darme fuerzas para luchar por lo imposible. Esta tesis est{\'a} dedicada a mis abuelos y a Pepe, a los cuales espero haberles hecho sentir orgullosos de m{\'i}.

Without my friends I definitely could not have made it till here: Manu (por nuestras cervezacas, nuestras aventuras por Europa y todas nuestras risas juntos), Caro (entre nuestros suicidios musicales y sesiones de despotrique, la desintegraci\'{o}n por Riemann ha sido de lo m\'{a}s terap\'{e}utica y divertida), Lola y su mam\'{a} Alexandra, Ra{\'u}l i la seva Marta y Jose (gracias por ver m{\'a}s all{\'a} de la oscuridad de la noche). Susanne, sin tu ayuda yo nunca hubiera podido llegar tan lejos, y no te lo voy a poder agradecer nunca lo suficiente. Sara und Arne, meine Lieben, ihr seid geil! Floyd and Denise, life is unfair making me meet such wonderful people like you and then taking them an ocean away from me. There is many more wonderful people --that could fill several pages-- which is a shame I'm neglecting in this list, but that I love and thank you the same (and I hope guys you know that).

Vielen lieben Dank auch an die Ninjutsu Akademie Hamburg, nicht nur f\"ur das tolle Training, sondern insbesondere auch f{\"u}r die vielen gemeinsamen Stunden voller Lachen.

Finally, I also owe a big thank you to Armin van Buuren, Herman-Jan Wildervank, Willem van Hanegem and Wardt van der Harst, Dimitri and Michael Thivaios, Ferry Corsten, Orjan Nilsen, Brian Wayne Transeau, Sander Ketelaars, Alexander Popov and Dmitry Almazov for providing an amazing soundtrack to this work and inspiring many ideas. Without your music, guys, this journey would have never been so epic.

\newpage

\newcommand\arxiv[2]      {\href{http://arXiv.org/abs/#1}{#2}}
\newcommand\doi[2]        {\href{http://dx.doi.org/#1}{#2}}
\newcommand\httpurl[2]    {\href{http://#1}{#2}}

\newpage

\thispagestyle{empty}
\chapter*{}

\section*{Summary}

In this thesis we focus on the Landau-Ginzburg/conformal field theory correspondence, a correspondence which in particular predicts some relation between defects of Landau-Ginzburg models --described by matrix factorizations-- and defects in conformal field theories -- described by representations of vertex operator algebras. Matrix factorizations are the most important concept of this thesis and our main tool. With them, we describe several results conjectured in the physics literature. 

On the first chapter we introduce some categorical background necessary for the later chapters. On the second, we introduce matrix factorizations and on the third describe the Landau-Ginzburg/conformal field theory correspondence (which up to date has no clear mathematical conjecture).

Chapter 4 deals with the first result of this thesis. We describe a tensor equivalence between a category generated via direct sums of permutation-type matrix factorizations with consecutive sets with morphisms of $\mathbbm{C}$-degree zero and the subcategory of representations of the vertex operator algebra associated to the coset $\frac{\hat{\mathfrak{su}}(2)_{d-2} \oplus \hat{\mathfrak{u}}(1)_{4}}{\hat{\mathfrak{u}}(1)_{2d}}$ generated by direct sums of simples (which are of the shape $\left[ l,m,s \right]$ satisfying that $l+m \in 2 \mathbb{Z}$ and $s=0$. We prove this tensor equivalence with the help of Temperley-Lieb categories.

Chapter 5 deals with the second result of this thesis. We prove several equivalences of categories involving Landau-Ginzburg models described by simple singularities (which have an ADE classification). We support this result on the theory of orbifold completion developed by Carqueville and Runkel, where an equivalence relation between potentials was described. We also include some remarks about a possible strategy to find more of these equivalence relations.

\newpage

\quad

\quad
\quad

\quad

\thispagestyle{empty}
\section*{Zusammenfassung}

In dieser Dissertation betrachten wir die Landau-Ginzburg/konformale Feldtheorie Korrespondenz, die eine Beziehung zwischen Defekten in Landau-Ginzburg Modellen (beschrieben durch Matrixfaktorisierungen) und Defekten in konformalen Feldtheorien
(beschrieben durch Darstellungen von vertex operator Algebren) vorhersagt. Matrixfaktorisierungen sind die Hauptkonzepte in dieser Arbeit und unser allgemeines Werkzeug. Mit diesem beschreiben wir mehrere in der Physikliteratur vorhergesagte Ergebnisse.

Im ersten Kapitelbeschreiben wir einigen Grundlageder Kategorientheorie, die man f\"ur die sp\"atere Kapitel braucht. Im zweiten Kapitel f\"uhren wir Matrixfaktorisierungen ein und im dritten die Landau-Ginzburg/konformale Feldtheorie Korrespondenz (die bis heute nicht durch eine klare mathematische Vermutung beschrieben wird).

Kapitel 4 besch\"aftigt sich mit den ersten Ergebnissen dieser Arbeit. Wir beweisen eine Tensor\"aquivalenz von einer Kategorie, dargestellt durch direkte Summen von  Matrixfaktorisierungen vom Permutationstyp mit konsekutiver Menge und Morphismen mit $\mathbbm{C}$-Grad Null, und der Unterkategorie von Darstellungen von der vertex Operator Algebra, welche zu den Nebenklassen $\frac{\hat{\mathfrak{su}}(2)_{d-2} \oplus
\hat{\mathfrak{u}}(1)_{4}}{\hat{\mathfrak{u}}(1)_{2d}}$ assoziiert ist, dargestellt durch direkte Summen von einfachen Objekten (die $\left[l,m,s \right]$ aussehen), sodass $l+m \in 2 \mathbb{Z}$ und $s=0$. Wir behelfen uns mit Temperley-Lieb Kategorien, um diesen Satz zu beweisen.

Kapitel 5 besch\"aftigt sich mit den zweiten Ergebnissen dieser Arbeit. Wir beweisen mehrere Kategorien\"aquivalenzen mit Landau-Ginzburg Modellen, beschrieben durch einfache Singularit\"aten (diese haben eine ADE Klassifizierung). Wir unterst\"utzen diese Ergebnisse mit der Theorie der Vervollst\"andigung von Orbifaltigkeiten, entwickelt von Carqueville und Runkel, wobei eine\"Aquivalenzrelation zwischen Potentialen beschrieben ist. Wir enden mit einigen Bemerkungen zu einer m\"oglichen Strategie, um mehrere dieser \"Aquivalenzrelationen zu finden.


\begin{thebibliography}{ZZZ}
\setlength{\itemsep}{-0em}
\small

\bibitem[Ab]{abramsky} S.~Abramsky, \textsl{Temperley-Lieb algebra: from knot theory to logic and computation via quantum mechanics}, Mathematics of Quantum Computing and Technology, Goong Chen, Louis Kauffman and Sam Lomonaco, eds. Taylor and Francis, 515--558, 2007. Also available in \url{http://www.math.helsinki.fi/logic/sellc-2010/course/final.pdf}.

\bibitem[Abr]{abrikosov} A.~A.~Abrikosov, Zh. Eksp. Teor. Fiz. \textbf{32} (1957), 1442--1450 (English translation: Sov. Phys. JETP \textbf{5} (1957), 1174--1182]). Scans of translated article can be found in \url{http://www.mn.uio.no/fysikk/english/research/groups/amks/superconductivity/vortex/1957.html}.

\bibitem[Ada]{Adamovic:1998}
D.~Adamovi\'c,
\textit{Representations of N=2 superconformal vertex algebra}
\doi{10.1155/S1073792899000033}{Int.\ Math.\ Research Notices (1999) 61--79}
\arxiv{math/9809141}{arXiv:math.QA/9809141}.


\bibitem[Ade]{ad1}
M.~Ademollo, L.~Brink, A.~D'Adda, R.~D'Auria, E.~Napolitano, S.~Sciuto, E.~Del Giudice, P.~Di Vecchia, S.~Ferrara, F.~Gliozzi, R.~Musto, R.~Pettorino and J.~H.~Schwarz,
\textit{Dual String Models With Nonabelian Color and Flavor Symmetries},
\doi{10.1016/0550-3213(76)90590-3}{Nucl.\ Phys.\ B {\bf 111} (1976), 77--110}. 



\bibitem[Ar]{arnold} V.~I.~Arnold, S.~M.~Gusein-Zade and A.~N.~Varchenko, \textsl{Singularities of differentiable maps, Vol. 1}, \doi{10.1007/978-0-8176-8340-5}{Monographs in Mathematics \textbf{82} (1985), Birkh{\"a}user}.



\bibitem[BBR]{bbr1205.4647}
C.~Bachas, I.~Brunner and D.~Roggenkamp, 
\textsl{A worldsheet extension of $O(d,d | \mathbb{Z})$}, 
\doi{10.1007/JHEP10(2012)039}{JHEP \textbf{1210} (2012) 039},
\href{http://arxiv.org/abs/1205.4647}{arXiv:1205.4647 [hep-th]}.

\bibitem[BK]{BaKiBook}
B.~Bakalov and A.~A.~Kirillov~Jr.,  
\textsl{Lectures on Tensor Categories and Modular Functors}, \href{http://www.ams.org/bookstore-getitem/item=ULECT-21}{University Lecture Series \textbf{21} (2001), American Mathematical Society}. 


\bibitem[Ba1]{barron1} K.~Barron, \textsl{On axiomatic aspects of N=2 vertex superalgebras with odd formal variables, and deformations of N=1 vertex superalgebras}, \href{http://arxiv.org/abs/0710.5755}{arXiv:0710.5755 [math.QA]}.

\bibitem[Ba2]{barron2} K.~Barron, \textsl{Twisted modules for N=2 supersymmetric vertex operator superalgebras arising from finite automorphisms of the N=2 Neveu-Schwarz algebra}, \href{http://arxiv.org/abs/1110.0229}{arXiv:1110.0229 [math.QA]}.

\bibitem[BPZ]{bpz} A.~A.~Belavin, A.~M.~Polyakov and A.~B.~Zamolodchikov, \textsl{Infinite conformal symmetry in two-dimensional quantum field theory}, \doi{10.1016/0550-3213(84)90052-X}{Nucl. Phys. B \textbf{241} (1984), 333--380}.

\bibitem[Be]{benabou} J.~B{\'e}nabou, \textsl{Introduction to bicategories}, \doi{10.1007/BFb0074299}{Reports of the Midwest Category Seminar \textbf{47} (1967), 1--77}.

\bibitem[BF]{Behr:2014bta}
  N.~Behr and S.~Fredenhagen,
  \textit{Matrix factorizations for rational boundary conditions by defect fusion},
  \arxiv{1407.7254}{arXiv:1407.7254 [hep-th]}.

\bibitem[BH]{berglundhuebsch} P.~Berglund and T.~H\"{u}bsch, \textsl{A generalized construction of mirror manifolds}, \doi{10.1016/0550-3213(93)90250-S}{Nucl. Phys. B \textbf{393} (1993), 377--391}, \arxiv{hep-th/9201014}{arXiv:hep-th/9201014}.

\bibitem[Boc]{bocherds} R.~Bocherds, \textsl{Vertex algebras, Kac-Moody algebras, and the Monster}, \href{http://www.pnas.org/content/83/10/3068.full.pdf}{Proc. Natl. Acad. Sci. USA \textbf{83} (1986), 3068--3071}.

\bibitem[Bor]{borceux} F.~Borceux, \textsl{Handbook of categorical algebra I: basic category theory}, \doi{10.1017/CBO9780511525858}{\textit{Encyclopedia of Mathematics and its applications} \textbf{50} (1994), Cambridge University Press}.



\bibitem[BHLS]{bhls0305}
I.~Brunner, M.~Herbst, W.~Lerche and B.~Scheuner, \textsl{Landau-{G}inzburg
  {R}ealization of {O}pen {S}tring {TFT}}, \doi{10.1088/1126-6708/2006/11/043}{JHEP \textbf{0611} (2003), 043},
  \href{http://www.arxiv.org/abs/hep-th/0305133}{arXiv:hep-th/0305133}.


\bibitem[BR1]{brunrogg1} I.~Brunner and D.~Roggenkamp, \textsl{B-type defects in {L}andau-{G}inzburg models}, \doi{10.1088/1126-6708/2007/08/093}{JHEP \textbf{0708} (2007), 093}, \href{http://arxiv.org/abs/0707.0922}{arXiv:0707.0922 [hep-th]}.

\bibitem[BR2]{brunrogg2} I.~Brunner and D.~Roggenkamp, \textsl{Defects and bulk perturbations of boundary Landau-Ginzburg orbifolds}, \doi{10.1088/1126-6708/2008/04/001}{JHEP \textbf{0804} (2008), 001}, \href{http://arxiv.org/abs/0712.0188}{arXiv:0712.0188 [hep-th]}.

\bibitem[Bu]{buchweitz} R.-O. Buchweitz, \textit{Maximal Cohen-Macaulay modules and Tate-cohomology over Gorenstein rings}; unpublished manuscript, available at \url{https://tspace.library.utoronto.ca/handle/1807/16682}, 1987.

\bibitem[Ca]{Carpi:2012va}
S.~Carpi, R.~Hillier, Y.~Kawahigashi, R.~Longo and F.~Xu,
  \textsl{$N=2$ superconformal nets},
  \arxiv{1207.2398}{arXiv:1207.2398 [math.OA]}.

\bibitem[Car]{carqueville} N.~Carqueville, \textsl{Matrix factorisations and open topological string theory},\doi{10.1088/1126-6708/2009/07/005}{JHEP \textbf{07} (2009), 005}, \href{http://arxiv.org/abs/0904.0862}{arXiv:0904.0862 [hep-th]}.


\bibitem[CDR]{cdr}
N.~Carqueville, L.~Dowdy, and A.~Recknagel, 
\textsl{Algorithmic deformation of matrix factorizations}, 
\doi{10.1007/JHEP04(2012)014}{JHEP \textbf{04} (2012), 014}, 
  \href{http://arxiv.org/abs/1112.3352}{arXiv:1112.3352 [hep-th]}.


\bibitem[CM1]{carqmurfet} N.~Carqueville and D.~Murfet, \textsl{Adjunctions and defects in Landau-Ginzburg models}, \arxiv{1208.1481}{arXiv:1208.1481 [math.AG]}.

\bibitem[CM2]{khovhom}
N.~Carqueville and D.~Murfet, 
\textsl{Computing {K}hovanov-{R}ozansky homology and defect fusion}, 
\doi{10.2140/agt.2014.14.489}{Algebraic and Geometric Topology \textbf{14} (2014), 489--537},
\href{http://arxiv.org/abs/1108.1081}{arXiv:1108.1081 [math.QA]}. 

\bibitem[CRCR]{crcr} N.~Carqueville, A.~Ros Camacho and I.~Runkel, \textsl{Orbifold equivalent potentials}, \arxiv{1311.3354}{arXiv:1311.3354 [math.QA]}.

\bibitem[CR1]{carqrunkel1} N.~Carqueville and I.~Runkel, \textsl{On the monoidal structure of matrix bi-factorizations}, \doi{10.1088/1751-8113/43/27/275401}{J. Phys. A: Math. Theor. \textbf{43} (2010), 275401}, \arxiv{0909.4381}{arXiv:0909.4381 [math-ph]}.

\bibitem[CR2]{carqrunkel2} N.~Carqueville and I.~Runkel, \textsl{Orbifold completion of defect bicategories}, \arxiv{1210.6363}{arXiv:1210.6363 [math.QA]}.

\bibitem[CR3]{carqrunkel3} N.~Carqueville and I.~Runkel, \textsl{Rigidity and defect actions in Landau-Ginzburg models}, \doi{10.1007/s00220-011-1403-x}{Comm. Math. Phys. \textbf{310} (2012) 135--179}, \arxiv{1006.5609}{arXiv:1006.5609 [hep-th]}.

\bibitem[CV]{cecottivafa} C.~Cecotti and C.~Vafa, \textsl{On Classification of $N=2$ supersymmetric theories}, \doi{10.1007/BF02096804}{Comm. Math. Phys. \textbf{158} (1993), 569--644}, \href{http://arxiv.org/abs/hep-th/9211097}{arXiv:hep-th/9211097}.

\bibitem[Da]{davydov} A.~Davydov, \textsl{Lectures on Temperley-Lieb categories}, in preparation.

\bibitem[DRCR]{drcr} A.~Davydov, A.~Ros~Camacho and I.~Runkel, \textit{$N=2$ minimal conformal field theories and matrix bifactorisations of $x^d$}, \arxiv{1409.2144}{arXiv:1409.2144 [math.QA]}.

\bibitem[DKR]{dkr} A.~Davydov, L.~Kong and I.~Runkel, \textsl{Field theories with defects and the centre functor}, \href{http://www.ams.org/bookstore?fn=20&arg1=pspumseries&ikey=PSPUM-83}{Mathematical Foundations of Quantum Field Theory and Perturbative String Theory, Proceedings of Symposia in Pure Mathematics, AMS, 2011}, \href{http://arxiv.org/abs/1107.0495}{arXiv:1107.0495 [math.QA]}.

\bibitem[De]{deligne} P.~Deligne, \textsl{Cat\'{e}gories tannakiennes}, \doi{10.1007/978-0-8176-4575-5_3}{The Grothendieck Festschrift, Vol. II., Progr. Math. \textbf{87} (1990), 111--195, Birkh\"{a}user}.


\bibitem[DiV]{dv} P.~Di Vecchia, J.~L.~Petersen, M.~Yu and H.~B.~Zheng, \textsl{Explicit construction of unitary representations of the N = 2 superconformal algebra}, \doi{10.1016/0370-2693(86)91099-3}{Phys.\ Lett.\ B \textbf{174} (1986), 280--284}.


\bibitem[Do]{douglas} M.~R.~Douglas, \textsl{D-branes, Categories and $N=1$ Supersymmetry}, \doi{10.1063/1.1374448}{J. Math. Phys. \textbf{42} (2001), 2818--2843}, \arxiv{hep-th/0011017}{arXiv:hep-th/0011017}.

\bibitem[DM]{dm} T.~Dyckerhoff and D.~Murfet, \textsl{Pushing forward matrix factorizations},
\doi{10.1215/00127094-2142641}{Duke Math. J. \textbf{162}, Number 7 (2013), 1249--1311}, \href{http://arxiv.org/abs/1102.2957}{arXiv:1102.2957 [math.AG]}.

\bibitem[ET]{ebelingtakahashi} W.~Ebeling and A.~Takahashi, \textsl{Mirror symmetry between orbifold curves and cusp singularities with group action}, \doi{10.1093/imrn/rns115}{Int. Math. Res. Not. \textbf{10} (2013), 2240--2270}, \arxiv{1103.5367}{arXiv:1103.5367 [math.AG]}.

\bibitem[EG]{Eholzer:1996zi}
W.~Eholzer and M.~R.~Gaberdiel,
\textsl{Unitarity of rational N=2 superconformal theories},
\doi{10.1007/BF02885672}{Comm.\ Math.\ Phys.\  {\bf 186} (1997), 61--85}, \arxiv{hep-th/9601163}{arXiv:hep-th/9601163}.

\bibitem[Ei]{eisenbud} D.~Eisenbud, \textsl{Homological algebra on a complete intersection, with an application to group representations}, \doi{10.2307/1999875 }{Trans. Amer. Math. Soc. \textbf{260} (1980), 35--64}.

\bibitem[EO]{eo}
P. Etingof and V. Ostrik, \textsl{Module categories over representations of $SL_q(2)$ and graphs}, \doi{10.4310/MRL.2004.v11.n1.a10}{Math. Res. Lett. \textbf{11} (2004), no. 1, 103--114}, \arxiv{math.QA/0302130}{arXiv:math/0302130 [math.QA]}.

\bibitem[FFRS]{Fjelstad:2006aw}
J.~Fjelstad, J.~Fuchs, I.~Runkel and C.~Schweigert,
\textsl{Uniqueness of open/closed rational CFT with given algebra of open states},
\httpurl{projecteuclid.org/euclid.atmp/1221834534}{Adv.\ Theor.\ Math.\ Phys.\ {\bf 12} (2008), 1283--1375}, \arxiv{hep-th/0612306}{arXiv:hep-th/0612306}.

\bibitem[FRS5]{tft5}
J.~Fjelstad, J.~Fuchs, I.~Runkel and C.~Schweigert,
\textsl{TFT construction of RCFT correlators V: Proof of modular invariance and factorization}, \href{http://www.tac.mta.ca/tac/volumes/16/16/16-16abs.html}{Theor. Appl. Categor. \textbf{16} (2006) 342--433}, \arxiv{hep-th/0503194}{arXiv:hep-th/0503194}.

\bibitem[FLM]{frenkel} I.~Frenkel, J.~Lepowsky and A.~Meurman, \textsl{Vertex Operator Algebras and the Monster}, Pure and Applied Math. \textbf{134} (1988), Academic Press.

\bibitem[FFRS1]{Frohlich:2003hg}
J.~Fr\"ohlich, J.~Fuchs, I.~Runkel and C.~Schweigert,
\textit{Algebras in tensor categories and coset conformal field theories},
\doi{10.1002/prop.200310162}{Fortsch.\ Phys.\  {\bf 52} (2004) 672--677}, \arxiv{hep-th/0309269}{arXiv:hep-th/0309269}.

\bibitem[FFRS2]{Frohlich:2003hm}
J.~Fr\"ohlich, J.~Fuchs, I.~Runkel and C.~Schweigert,
\textit{Correspondences of ribbon categories},
\doi{10.1016/j.aim.2005.04.007}{Adv.\ Math.\ {\bf 199} (2006) 192--329},
\arxiv{math/0309465}{arXiv:math/0309465 [math.CT]}.

\bibitem[FFRS3]{ffrs}
J.~Fr\"ohlich, J.~Fuchs, I.~Runkel and C.~Schweigert,
\textsl{Defect lines, dualities, and generalised orbifolds}, 
\doi{10.1142/9789814304634_0056}{Proceedings of the XVI International Congress on Mathematical Physics, Prague, August 3--8, 2009}, \href{http://arxiv.org/abs/0909.5013}{arXiv:0909.5013 [math-ph]}.

\bibitem[FFRS4]{Frohlich:2006ch}
J.~Fr\"ohlich, J.~Fuchs, I.~Runkel and C.~Schweigert,
\textsl{Duality and defects in rational conformal field theory},
\doi{10.1016/j.nuclphysb.2006.11.017}{Nucl.\ Phys.\ B {\bf 763} (2007), 354--430}, \arxiv{hep-th/0607247}{arXiv:hep-th/0607247}.


\bibitem[FGRS]{fgrs0705.3129}
J.~Fuchs, M.~R.~Gaberdiel, I.~Runkel and C.~Schweigert,
\textsl{Topological defects for the free boson CFT},
\doi{10.1088/1751-8113/40/37/016}{J.~Phys.~A {\bf 40} (2007), 11403}, 
\href{http://arxiv.org/abs/0705.3129}{arXiv:0705.3129 [hep-th]}.

\bibitem[ICM]{icmfrs}
J.~Fuchs, I.~Runkel and C.~Schweigert,
\textsl{Categorification and correlation functions in conformal field theory}, invited contribution to ICM 2006, \arxiv{math/0602079}{arXiv:math/0602079 [math.CT]}.


\bibitem[FRS1]{tft1}
J.~Fuchs, I.~Runkel and C.~Schweigert,
\textsl{TFT construction of RCFT correlators I: Partition functions},
\doi{10.1016/S0550-3213(02)00744-7}{Nucl.\ Phys.\ B {\bf 646} (2002), 353--497}, \arxiv{hep-th/0204148}{arXiv:hep-th/0204148}.

\bibitem[FRS2]{tft2}
J.~Fuchs, I.~Runkel and C.~Schweigert,
\textsl{TFT construction of RCFT correlators II: Unoriented world sheets},
\doi{10.1016/j.nuclphysb.2003.11.026}{Nucl.\ Phys.\ B {\bf 678} (2004), 511--637}, \arxiv{hep-th/0306164}{arXiv:hep-th/0306164}.

\bibitem[FRS3]{tft3}
J.~Fuchs, I.~Runkel and C.~Schweigert,
\textsl{TFT construction of RCFT correlators III: Simple currents},
\doi{10.1016/j.nuclphysb.2004.05.014}{Nucl.\ Phys.\ B {\bf 694} (2004), 277--353}, \arxiv{hep-th/0403157}{arXiv:hep-th/0403157}.

\bibitem[FRS4]{tft4}
J.~Fuchs, I.~Runkel and C.~Schweigert,
\textsl{TFT construction of RCFT correlators IV: Structure constants and correlation functions},
\doi{10.1016/j.nuclphysb.2005.03.018}{Nucl.\ Phys.\ B {\bf 715} (2005), 539--638}, \arxiv{hep-th/0412290}{arXiv:hep-th/0412290}.


\bibitem[GGS]{gaiotto} D.~Gaiotto, S.~Gukov and N.~Seiberg, \textsl{Surface Defects and Resolvents}, \doi{}{JHEP \textbf{1309} (2013), 070--123}, \arxiv{1307.2578}{arXiv:1307.2578 [hep-th]}

\bibitem[Ga]{gannon} T.~Gannon, \textsl{$U(1)^m$ modular invariants, $N=2$ minimal models, and the quantum Hall effect}, \doi{10.1016/S0550-3213(97)00032-1}{Nucl.\ Phys.\ B {\bf 491} (1997), 659--688}, \arxiv{hep-th/9608063}{arXiv:1201.6273 [hep-th]}.


\bibitem[Ge]{gepner} D.~Gepner, \textsl{String theory on Calabi-Yau manifolds: the three generations case}, Nucl. Phys. \textbf{B} 296 (1988) 757; Phys. Lett. \textbf{B} 199 (1987) 380; \arxiv{hep-th/9301089}{arXiv:hep-th/9301089}.

\bibitem[GL]{ginzburglandau} V.~L.~Ginzburg and L.~D.~Landau, Zh. Eksp. Teor. Fiz. 20, 1064 (1950). English translation in: \textsl{L. D. Landau, Collected papers} (Oxford: Pergamon Press, 1965) p. 546.



\bibitem[GW]{gw} F.~M.~Goodman and H.~Wenzl, \textit{Ideals in the Temperley-Lieb category}, Appendix to {\em A magnetic model with a possible Chern-Simons phase} by M.~Freedman, 
\doi{10.1007/s00220-002-0785-1}{Comm.\ Math.\ Phys.\ {\bf 234} (2003), 129--183}, \arxiv{math.QA/0206301}{arXiv:math.QA/0206301}.

\bibitem[Gr]{gray} O.~Gray, \textsl{On the complete classification of the unitary $N=2$ minimal superconformal field theories}, \doi{10.1007/s00220-012-1478-z}{Comm. Math. Phys. \textbf{312} (2012), 611--654}, \href{http://arxiv.org/abs/0812.1318}{arXiv:0812.1318 [hep-th]}.

\bibitem[GVW]{greene} B.~Greene, C.~Vafa and N.~P.~Warner, \textsl{Calabi-Yau manifolds and renormalization group flows}, \doi{10.1016/0550-3213(89)90471-9}{Nucl. Phys. B \textbf{324} (1989), 371--390}.


\bibitem[HKKP]{mirrorsymmetrybook} K.~Hori, S.~Katz, A.~Klemm, R.~Pandharipande, R.~Thomas, C.~Vafa, R.~Vakil and E.~Zaslow, \textsl{Mirror symmetry}, \href{http://math.stanford.edu/~vakil/files/mirrorfinal.pdf}{Clay Mathematics Monographs \textbf{1}}.

\bibitem[HW]{Hori:2004ja}
  K.~Hori and J.~Walcher,
  \textsl{F-term equations near Gepner points},
  \doi{10.1088/1126-6708/2005/01/008}{JHEP {\bf 0501} (2005) 008}
  \arxiv{hep-th/0404196}{arXiv:hep-th/0404196}.

\bibitem[HW1]{howewest1} P.~Howe and P.~West, \textsl{{$N=2$} superconformal models, {L}andau-{G}insburg hamiltonians and the $\epsilon$ expansion}, \doi{10.1016/0370-2693(89)91619-5}{Phys. Lett. B \textbf{223} (1989), 377--385}.

\bibitem[HW2]{howewest2} P.~Howe and P.~West, \textsl{Chiral {C}orrelators {I}n {L}andau-{G}inzburg {T}heories {A}nd {$N=2$} {S}uperconformal {M}odels}, \doi{10.1016/0370-2693(89)90950-7}{Phys. Lett. B \textbf{227} (1989), 397--405}. 

\bibitem[HW3]{howewest3} P.~Howe and P.~West, \textsl{Fixed points in multifield {L}andau-{G}insburg models}, \doi{10.1016/0370-2693(90)90068-H}{Phys. Lett. B \textbf{244} (1990), 270--274}.

\bibitem[Hu]{huang} Y.-Z.~Huang, \text{Rigidity and modularity of vertex tensor categories}, \doi{10.1142/S0219199708003083}{Comm. Contemp. Math. \textbf{10} (2008), 871--918}, \arxiv{math/0502533}{arXiv:math/0502533 [math.QA]}.


\bibitem[JS]{js} A.~Joyal and R.~Street, \textit{Braided tensor categories},
\doi{10.1006/aima.1993.1055}{Adv.\ Math.\ {\bf 102} (1993), 20--78}.

\bibitem[JS2]{js0}
A.~Joyal and R.~Street,
\textit{Tortile Yang-Baxter operators in tensor categories},
\doi{10.1016/0022-4049(91)90039-5}{J.\ Pure Appl.\ Algebra {\bf 71} (1991), 43--51}.

\bibitem[Kac]{kac} V.~G.~Kac and W.~Wang, \textsl{Vertex operator superalgebras and their representations}, \href{http://arxiv.org/abs/hep-th/9312065}{arXiv:hep-th/9312065}. 

\bibitem[KST]{kst0511155}
H.~Kajiura, K.~Saito and A.~Takahashi, \textsl{Matrix Factorizations and Representations of Quivers II: type ADE case}, \doi{10.1016/j.aim.2006.08.005}{Adv. Math. \textbf{211} (2007), 327--362}, \arxiv{math.AG/0511155}{arXiv:math/0511155 [math.AG]}. 

\bibitem[KL]{kapustinli} A.~Kapustin and Y.~Li, \textsl{D-branes in Landau-Ginzburg models and algebraic geometry}, \doi{10.1088/1126-6708/2003/12/005}{JHEP \textbf{0312} (2003), 005}, \href{http://arxiv.org/abs/hep-th/0306001}{arXiv:hep-th/0306001}.

\bibitem[KR]{kapustinrozansky} A.~Kapustin and L.~Rozansky, \textsl{On the relation between open and closed topological strings}, \doi{10.1007/s00220-004-1227-z}{Comm. Math. Phys. \textbf{252} (2004), 393--414}, \href{http://www.arxiv.org/abs/hep-th/0405232}{arXiv:hep-th/0405232}.

\bibitem[KMS]{KMS} D.~A.~Kastor,~E.~J.~Martinec and S.~H.~Shenker, \textsl{RG flow in N=1 discrete series}, \doi{Review of the elements of 2-categories}{Nucl. Phys. B \textbf{316} (1989), 590--608}.

\bibitem[KMvB]{kmvdb} B.~Keller, D.~Murfet and M. Van den Bergh, \textsl{On two examples by Iyama and Yoshino}, \doi{10.1112/S0010437X10004902}{Compos. Math. \textbf{147} (2011), no.2, 591--612}, \href{http://arxiv.org/abs/0803.0720}{arXiv:0803.0720 [math.AC]}.

\bibitem[Ke]{kelly} G.~M.~Kelly and R.~Street, \textsl{Review of the elements of 2-categories}, \doi{10.1007/BFb0063101}{Category Seminar, Lecture Notes in Mathematics \textbf{420} (1974), 75--103}.


\bibitem[KhR]{khovroz} 
M.~Khovanov and L.~Rozansky,
\textsl{Matrix factorizations and link homology},
\doi{doi:10.4064/fm199-1-1}{Fundamenta Mathematicae {\bf 199} (2008), 1--91}
\arxiv{math/0401268}{arXiv:math/0401268 [math.QA]}.

\bibitem[KO]{Kirillov:2001ti}
  A.~A.~Kirillov and V.~Ostrik,
  \textsl{On q-analog of McKay correspondence and ADE classification of sl(2) conformal field theories},
  \doi{10.1006/aima.2002.2072}{Adv.\ Math.\ {\bf 171} (2002), 183--227}, \arxiv{math/0101219}{arXiv:math/0101219 [math.QA]}.


\bibitem[Ko]{kontsevich} M.~Kontsevich, \textsl{Homological algebra of mirror symmetry}, \arxiv{alg-geom/9411018}{arXiv:alg-geom/9411018}.


\bibitem[KS]{kreuzerskarke} M.~Kreuzer and H.~Skarke, \textsl{On the classification of quasihomogeneous functions}, \doi{10.1007/BF02096569}{Comm. Math. Phys. \textbf{150} (1992), 137--147}, \arxiv{hep-th/9202039}{arXiv:hep-th/9202039}.

\bibitem[Lau]{Laudal1983}
O.~A.~Laudal, \textsl{Matric Massey products and formal moduli I}, 
  \doi{10.1007/BFb0075462}{Algebra, Algebraic Topology and their Interactions, Lecture Notes in Mathematics \textbf{1183} (1983), 218--240}.
  
\bibitem[Laz]{lazaroiu} C.~I.~Lazaroiu, \textsl{On the boundary coupling of topological Landau-Ginzburg models}, \doi{10.1088/1126-6708/2005/05/037}{JHEP \textbf{0505} (2005), 037}, \href{http://www.arxiv.org/abs/hep-th/0312286}{arXiv:hep-th/0312286}.

\bibitem[LVW]{lerchevafawarner}
W.~Lerche, C.~Vafa and N.P.~Warner, \textsl{Chiral rings in N=2 superconformal theories}, \doi{10.1016/0550-3213(89)90474-4}{Nucl.\ Phys.\  B {\bf 324} (1989), 427--474}.

\bibitem[McL]{maclane} S.~Mac Lane, \textsl{Categories for the Working Mathematician}, Graduate texts in Mathematics (1978), Springer.

\bibitem[McN]{macnamee} D.~McNamee, \textsl{On the mathematical structure of topological defects in Landau-Ginzburg models}, M.Sc. thesis, Trinity College Dublin, 2009.

\bibitem[Mar]{martinec} E.~J.~Martinec, \textsl{Algebraic {G}eometry and {E}ffective {L}agrangians}, \doi{10.1016/0370-2693(89)90074-9}{Phys. Lett.~B \textbf{217} (1989), 431--437}.

\bibitem[MS]{mooreseiberg} G.~Moore and N.~Seiberg, \textsl{Polynomial equations for rational conformal field theories}, \doi{10.1016/0370-2693(88)91796-0}{Phys. Lett. B \textbf{212} (1988), 451--460}; \textsl{Classical and quantum conformal field theory}, \doi{10.1007/BF01238857}{Comm. Math. Phys. \textbf{123} (1989), 177--254}; \textsl{Taming the conformal zoo}, \doi{10.1016/0370-2693(89)90897-6}{Phys. Lett. B \textbf{220} (1989), 422--430}.

\bibitem[Mo]{scott} S.~Morrison, \textsl{A formula for the Wenzl-Jones projectors}, \url{http://tqft.net/math/JonesWenzlProjections.pdf}.

\bibitem[Mue]{mueger} M.~M{\"u}ger, \doi{10.1112/S0024611503014187}{Proc.\ Lond.\ Math.\ Soc.\ {\bf 87} (2003), 291--308} \arxiv{math/0201017}{arXiv:math.CT/0201017}.


\bibitem[Nee]{neeman}
A.~Neeman, \textsl{Triangulated categories}, Annals of Mathematics Studies \textbf{148} (2001), Princeton University Press.
  


\bibitem[Or1]{orlov} D.~Orlov, \textsl{Derived Categories of Coherent Sheaves and Triangulated Categories of Singularities}, \doi{10.1007/978-0-8176-4747-6_16}{Algebra, arithmetic, and geometry: in honor of Yu. I. Manin. Vol. II, 503--531, Progr. Math. \textbf{270} (2009), Birkhauser}, \arxiv{math.ag/0503632}{arXiv: math/0503632 [math.AG]}.

\bibitem[Or2]{orlov2} D.~Orlov, \textsl{Triangulated categories of singularities and D-branes in Landau-Ginzburg models}, Proc. Steklov Inst. Math. 2004, no. 3 (246), 227--248, \arxiv{math/0302304}{arXiv:math/0302304 [math.AG]}.

\bibitem[Ost]{ostrik}
V.~Ostrik, \textsl{Module categories, weak Hopf algebras and modular invariants}, 
\doi{10.1007/s00031-003-0515-6}{Transform. Groups \textbf{8} (2003), 177--206}, 
  \href{http://www.arxiv.org/abs/math/0111139}{arXiv:math/0111139 [math.QA]}.
  
\bibitem[Pa]{Pareigis}
B. Pareigis, \textsl{On braiding and dyslexia}, \doi{10.1006/jabr.1995.1019 }{J. of Algebra \textbf{171} (1995), 413--425}, also available at \url{http://www.mathematik.uni-muenchen.de/~pareigis/Papers/Dyslexia.pdf}.

\bibitem[RR]{ReitenRiedtmann}
I.~Reiten and C.~Riedtmann, \textsl{Skew group algebras in the representation theory of Artin algebras}, \doi{10.1016/0021-8693(85)90156-5}{J.~of~Algebra \textbf{92} (1985), 224--282}.


\bibitem[RC]{roscamacho2} A.~Ros Camacho, \textsl{Further orbifold equivalent potentials}, work in progress.

\bibitem[Sch]{schweigert} C.~Schweigert, \textsl{Lecture notes on Hopf algebras, quantum groups and topological field theories}, \url{http://www.math.uni-hamburg.de/home/schweigert/ws12/hskript.pdf}.



\bibitem[Siq]{Siqveland2001}
A.~Siqveland, \textsl{The method of computing formal moduli}, 
\doi{10.1006/jabr.2001.8757}{J.~of~Algebra \textbf{241} (2001), 291--327}.


\bibitem[Tu]{turaev} V.~Turaev, \textsl{Quantum invariants of knots and 3-manifolds}, de Gruyter Studies in Mathematics \textbf{18} (1994), Walter de Gruyter \& Co.

\bibitem[VW]{vafawarner} C.~Vafa and N.~Warner, \textsl{Catastrophes and the classification of conformal theories}, \doi{10.1016/0370-2693(89)90473-5}{Phys. Lett.~B \textbf{218} (1989), 51--58}.

\bibitem[vEk]{vanekeren} J.~van~Ekeren, \textsl{Modular Invariance for Twisted Modules over a Vertex Operator Superalgebra}, \doi{10.1007/s00220-013-1758-2}{Comm. Math. Phys. \textbf{322} (2013), 333--371}, \href{http://arxiv.org/abs/1111.0682}{arXiv:1111.0682 [math.RT]}.

\bibitem[Wa]{wang} Z.~Wang, \textit{On homogeneous polynomials determined by their Jacobian ideal}, \arxiv{1402.3810}{arXiv:1402.3810 [math.AG]}.



\bibitem[Wi]{witten} E.~Witten, \textsl{Phases of $N=2$ theories in 2 dimensions}, \doi{10.1016/0550-3213(93)90033-L}{Nucl. Phys. B \textbf{403} (1993), 159--222}, \arxiv{hep-th/9301042}{arXiv:hep-th/9301042}.

\bibitem[Wn]{wenzl} H.~Wenzl, \textsl{On sequences of projections}, C. R. Math. Rep. Acad. Sci. Canada \textbf{9} (1987), n0. 1, 5--9. MR 88k:46070.

\bibitem[Wu]{Wu09} H.~Wu, \textit{A colored $sl(N)$-homology for links in $S^3$}, \arxiv{0907.0695}{arXiv:0907.0695 [math.GT]}.

\bibitem[Yo]{yoshinobuch} Y.~Yoshino, \textsl{Maximal Cohen-Macaulay Modules Over Cohen-Macaulay Rings}, \doi{10.1017/CBO9780511600685}{London Mathematical Society Lecture Note Series \textbf{146} (1990), Cambridge University Press}. 

\bibitem[Yo2]{yoshino} Y.~Yoshino, \textit{Tensor product of matrix factorizations},  \httpurl{http://projecteuclid.org/euclid.nmj/1118766411}{Nagoya Math.\ J.\ \textbf{152} (1998), 39--56}.




\end{thebibliography}
\end{document}